\let\oldnl\nl
\newcommand{\nonl}{\renewcommand{\nl}{\let\nl\oldnl}}
\newcommand{\hbb}[1]{\ensuremath{\mathbb{#1}}}
\newcommand{\Ai}{\hbb{A}}
\newcommand{\Ci}{\hbb{C}}
\newcommand{\Fi}{\hbb{F}}
\newcommand{\Ki}{\hbb{K}}
\newcommand{\Ni}{\hbb{N}}
\newcommand{\Qi}{\hbb{Q}}
\newcommand{\Ri}{\hbb{R}}
\newcommand{\Li}{\hbb{L}}
\newcommand{\hcal}[1]{\ensuremath{\mathcal{#1}}}
\newcommand{\Bc}{\hcal{B}}
\newcommand{\Nc}{\hcal{N}}
\newcommand{\Oc}{\hcal{O}}
\newcommand{\Ut}{\tilde U}
\newcommand{\At}{\tilde A}
\newcommand{\Gt}{\tilde G}
\newcommand{\Ht}{\tilde H}
\newcommand{\St}{\tilde S}
\newcommand{\Tt}{\tilde T}
\theoremstyle{plain}
\newtheorem{thm}{Theorem}
\newtheorem{prop}{Proposition}
\newtheorem{cor}{Corollary}
\newtheorem{lem}{Lemma}
\theoremstyle{definition}
\newtheorem{dfn}{Definition}
\newtheorem{xmp}{Example}
\theoremstyle{remark}
\newtheorem{rem}{Remark}
\newcommand{\RNP}{\texttt{RNP}}
\newcommand{\Edata}{\texttt{EdgeData}}
\newcommand{\abhyankar}{\texttt{Abhyankar}}
\newcommand{\AbhyankarMoh}{\texttt{AbhyankarTest}}
\newcommand{\irreducible}{\texttt{Irreducible}}
\newcommand{\Edgepoly}{\texttt{BoundaryPol}}
\newcommand{\NPA}{\texttt{ARNP}}
\newcommand{\PNPA}{\texttt{Pseudo-ARNP}}
\newcommand{\PIrr}{\texttt{Pseudo-Irreducible}}
\newcommand{\PDegenerated}{\texttt{Pseudo-Degenerated}}
\newcommand{\AppRoot}{\texttt{AppRoot}}
\newcommand{\Expand}{\texttt{Expand}}
\newcommand{\Primitive}{\texttt{Primitive}}
\newcommand{\quorem}{\textnormal{\texttt{QuoRem}}}
\newcommand{\onestep}{\textnormal{\texttt{HenselStep}}}
\newcommand{\assign}{{\;\leftarrow{}\;}}%
\renewcommand{\O}{\textrm{\Oc}}
\newcommand{\Ot}{\O\tilde\,\,}
\newcommand{\M}{\textup{\textsf{M}}}
\newcommand{\I}{\textup{\textsf{I}}}
\newcommand{\dy}{{d}}
\newcommand{\dx}{{n}}
\newcommand{\val}[1][x]{\ensuremath{v_{#1}}}
\newcommand{\D}{\textrm{Data}}
\newcommand{\C}{\textrm{C}}
\newcommand{\Cont}{\textrm{Cont}}
\newcommand{\Res}{\textrm{Res}}
\newcommand{\Card}{\textrm{Card}}
\newcommand{\Conv}{\textrm{Conv}}
\newcommand{\vF}[1][]{{\ensuremath{\delta_{#1}}}}
\newcommand{\edgepoly}{boundary polynomial}
\newcommand{\True}{\texttt{True}}
\newcommand{\False}{\texttt{False}}
\newcommand{\Char}{\textrm{Char}}
\newcommand{\NP}{\Nc}
\newcommand{\NPb}{\NP^-}
\newcommand{\algclos}[1]{\overline{#1}}
\newcommand{\tronc}[2]{{\left\lceil #1 \right\rceil}^{#2}}
\newcommand{\tc}[2][y]{\mbox{tc}_{#1}\left(#2\right)}
\newcommand{\Z}{\underline Z}
\begin{document}

\title{Using approximate roots for irreducibility and equi-singularity issues in $\Ki[[x]][y]$}

\author{%
  Adrien POTEAUX,\\%
  {CRIStAL-INRIA}\\%
  { Universit\'e de Lille}\\%
  {UMR CNRS 9189, B\^atiment M3}\\%
  {59655 Villeneuve d'Ascq, France}\\%
  \texttt{adrien.poteaux@univ-lille.fr}
  \vspace{3mm}
  \and Martin WEIMANN,\\%
  {GAATI\footnote{Current delegation. Permanent position at LMNO,
      University of Caen-Normandie, BP 5186, 14032 Caen Cedex,
      France.}}\\%
      {Universit\'e de Polyn\'esie Fran\c{c}aise}\\%
  {BP 6570, 98702 Faa'a}\\%
  \texttt{martin.weimann@upf.pf}%
}


\maketitle

\begin{abstract}
  We provide an irreducibility test in the ring $\Ki[[x]][y]$ whose
  complexity is quasi-linear with respect to the discriminant
  valuation, assuming the input polynomial $F$ square-free and $\Ki$ a perfect field of
  characteristic zero or greater than $\deg(F)$. The algorithm uses
  the theory of approximate roots and may be seen as a generalization
  of Abhyankhar's irreducibility criterion to the case of non algebraically
  closed residue fields. More generally, we show that we can test
  within the same complexity if a polynomial is pseudo-irreducible, a
  larger class of polynomials containing irreducible ones.  If $F$ is
  pseudo-irreducible, the algorithm computes also the
  discriminant valuation of $F$ and the equisingularity classes of the germs of plane curves defined by $F$ along the fiber $x=0$.
\end{abstract}

\section{Introduction}

\paragraph{Context and main result.} This paper provides new
complexity results for testing the irreducibility of polynomials with
coefficients in a ring of formal power series of characteristic zero
or big enough. We consider $\Ki$ a perfect field, $x$ and $y$ two
indeterminates over $\Ki$ and $F\in\Ki[[x]][y]$ a polynomial of degree
$\dy$. In all of the sequel, we will assume that the following
hypothesis holds:
\begin{center}
  \emph{The characteristic of $\Ki$ is either $0$ or greater than $\dy$.}
\end{center}
Assuming $F$ square-free, we let $\vF$ be the $x$-valuation of the
resultant between $F$ and its $y$-derivative
$F_y$. We prove: 
\begin{thm}\label{thm:main}
  There exists an algorithm which tests if $F$ is irreducible in $\Ki[[x]][y]$ with an expected
  $\Ot(\vF+\dy)$ operations over $\Ki$ and two univariate
irreducibility tests over $\Ki$ of degree at most $\dy$. 
\end{thm}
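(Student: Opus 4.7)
The plan is to reduce irreducibility testing to \emph{pseudo-irreducibility} testing, which is the main technical engine of the paper, and then to pay for the gap between the two notions by exactly two univariate irreducibility tests over $\Ki$.

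First, I would design and analyze an algorithm \PIrr{} that decides pseudo-irreducibility in $\Ot(\vF+\dy)$ arithmetic operations. The structure is recursive along the Newton polygon: compute the Newton polygon $\NP(F)$; if it is not a single edge, return \False; otherwise compute the boundary polynomial $\phi$ along that edge via \Edgepoly, and check whether it is (up to a unit) a pure power $\psi^e$ of an irreducible-looking univariate polynomial. If so, use \AppRoot{} to compute the $e$-th approximate root $\psi(F)$ of $F$, expand $F$ in the $\psi(F)$-adic basis via \Expand, and recurse on the resulting smaller data. This is the generalization of Abhyankar's criterion (\abhyankar) to non-algebraically-closed residue fields, and it produces, along the way, the data needed for the equisingularity classes and for $\vF$ itself.

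Second, I would bridge pseudo-irreducibility and genuine irreducibility. By construction, a pseudo-irreducible $F$ factors in $\Ki[[x]][y]$ in a way controlled by the residual univariate polynomials produced at the edges of the recursion. When the characteristic assumption $\Char(\Ki)=0$ or $>\dy$ holds, a Hensel-type argument (applied at the top edge and after the recursive collapse) shows that $F$ is irreducible if and only if it is pseudo-irreducible \emph{and} two explicit residual univariate polynomials over $\Ki$, each of degree $\le \dy$, are themselves irreducible. This yields precisely the ``two univariate irreducibility tests'' in the statement.

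The main obstacle is the complexity analysis of \PIrr. The naive recursive call tree can be as deep as $\Omega(\dy)$ and each Newton polygon manipulation or $\psi$-adic expansion is not a priori cheap. The key point to control is that the total amount of non-trivial coefficient data encountered over the entire recursion is essentially bounded by the discriminant valuation $\vF$ (each recursive descent ``consumes'' a known amount of discriminantal mass coming from the slopes and from the residual multiplicities), while the purely structural operations contribute $\Ot(\dy)$. This amortization, combined with quasi-linear fast arithmetic for \AppRoot, \Expand{} and \onestep, yields the announced $\Ot(\vF+\dy)$ bound. Once \PIrr{} is certified within this budget, the two final univariate irreducibility tests add only their own cost without affecting the claimed $\Ot(\vF+\dy)$ part, completing the proof of Theorem~\ref{thm:main}.
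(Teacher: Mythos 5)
Your overall strategy coincides with the paper's: test pseudo-irreducibility via approximate roots and boundary polynomials, then close the gap to genuine irreducibility with univariate tests. However, two of your key claims are wrong in ways that would sink the proof if taken literally. First, the recursion depth is \emph{not} $\Omega(\dy)$ with an amortization over discriminantal mass; it is $O(\log \dy)$, because the Abhyankar shift (killing the coefficient of $y^{N_k-1}$) forces $q_k\ell_k\ge 2$ at every pseudo-degenerate step, so the degrees $N_k$ at least halve. This logarithmic depth is the entire reason the bound is $\Ot(\vF+\dy)$ rather than $\Ot(\dy\vF)$ (the cost of the older Newton--Puiseux approach); an amortization argument over a depth-$\Omega(\dy)$ tree of the kind you sketch does not exist here. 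You also omit the truncation bound $\eta(F)\le 2\vF/\dy$ (Proposition~\ref{prop:sigma}) and the precision-doubling mechanism: without truncating all computations modulo $x^{\eta(F)+1}$, the calls to \AppRoot{} and \Expand{} have no a priori cost bound at all, and since $\vF$ is unknown in advance one must restart with doubled precision when the computed polygon is not certified.

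Second, the ``two univariate irreducibility tests'' do not come from a Hensel-type argument. The bridge is Corollary~\ref{cor:pseudoIrrvsIrr}: $F$ monic is irreducible iff it is pseudo-irreducible \emph{and} the final residue ring $\Ki_g$ is a field. The count of two tests is an artifact of how $\Ki_g$ must be represented to stay within budget: in the Weierstrass case one keeps a primitive representation $\Ki_g=\Ki[Z]/(Q)$ and needs only \emph{one} test, but in the monic non-Weierstrass case the inequality $\dy f\le 2\vF$ fails, so the algorithm keeps a height-two tower $\Ki_g=\Ki_{P_1}[Z]/(Q)$ and must test irreducibility of $P_1$ over $\Ki$ and of $Q$ over $\Ki_{P_1}$ separately --- hence two tests, the second costing $\Ot(\I(\dy))$ by the hypothesis $d'\I(d)\le\I(dd')$. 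Finally, a point your sketch glosses over but which is the technical heart of the paper: at step $k$ one never computes the transformed polynomial $H_k$ (whose Puiseux transform can have size $\Omega(\dy\vF)$); instead Theorems~\ref{thm:NPphi}, \ref{thm:EdgePoly} and \ref{thm:HPsiPhi} show that its Newton polygon and boundary polynomial can be read off the $\Psi$-adic expansion of $F$ itself with respect to \emph{all} the approximate roots $\psi_{-1},\ldots,\psi_k$ computed so far, together with the recursively updated vectors $V$ and $\Lambda$. Without this, ``recurse on the resulting smaller data'' has no content.
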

If $F$ is  Weierstrass\footnote{We recall that in our
  context, $F=\sum_{i=0}^\dy a_i(x)\,y^i$ is Weierstrass if $a_\dy=1$
  and $a_i(0)=0$ for $i<\dy$}, the complexity drops to $\Ot(\vF)$ operations over $\Ki$ and one univariate
irreducibility test of degree at most $\dy$. The notation $\Ot()$ hides logarithmic
factors. Our algorithm is Las Vegas, due to the computation of
primitive elements; it should become deterministic via the preprint
\cite{HoLe18}. See Section \ref{sec:comp} for more details, including
our complexity model.

We say that $F$ is absolutely irreducible if it is irreducible in
$\algclos{\Ki}[[x]][y]$, where $\algclos\Ki$ stands for the algebraic
closure of $\Ki$. In such a case, we avoid univariate irreducibility
tests and there is no need to deal with extensions of residue
fields. We get:
\begin{thm}\label{thm:absolute}
  There exists a deterministic algorithm which tests if $F$ is absolutely irreducible
  with $\Ot(\vF+d)$ operations over $\Ki$, which is $\Ot(\vF)$ when $F$
  is Weierstrass.
\end{thm}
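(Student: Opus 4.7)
The plan is to run the algorithm underlying Theorem \ref{thm:main} on $F$, but to specialize every step to the absolute setting. In Theorem \ref{thm:main}, two sources of overhead appear: (i) two calls to a univariate irreducibility test over $\Ki$, and (ii) the Las Vegas cost of computing a primitive element whenever a nontrivial extension of the current residue field is created along the approximate root recursion. Both phenomena only occur because the residue field can genuinely grow; over $\algclos{\Ki}$ no extension is permitted, so both should disappear entirely.

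Concretely, the Abhyankar-type pseudo-irreducibility criterion tested at each recursive stage requires the \edgepoly{} attached to the current edge of the Newton polygon to be a pure power $c\,P(y)^m$ of an irreducible $P$ over the current residue field. Over $\algclos{\Ki}$, irreducibility forces $\deg P=1$, and the test degenerates into checking that the \edgepoly{} has the shape $c\,(y-a)^m$. Since no extension is ever adjoined, the residue field remains equal to $\Ki$ at every level, and this pure-power test can be carried out deterministically over $\Ki$ in softly linear time in the degree, e.g.\ via a single \texttt{gcd} with the derivative followed by a comparison with a suitable power. This replaces the costly univariate irreducibility test of Theorem \ref{thm:main} by a cheap deterministic primitive, and simultaneously removes any need for primitive element computation, which was the only source of randomness.

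Summing the modified local cost along the recursion exactly as in the proof of Theorem \ref{thm:main} then yields the announced $\Ot(\vF+d)$ bound, which drops to $\Ot(\vF)$ in the Weierstrass case. The main point to verify with care is that the pure-power-of-a-linear-factor condition is both necessary and sufficient for absolute irreducibility at every recursive stage, i.e.\ that the pseudo-irreducibility criterion of Theorem \ref{thm:main} specializes cleanly to absolute irreducibility over $\algclos{\Ki}$. This should follow directly from the analysis already performed for Theorem \ref{thm:main}, combined with the classical Newton--Puiseux description of branches over an algebraically closed base, where every edge datum is forced to be linear.
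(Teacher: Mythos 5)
Your proposal matches the paper's proof: the paper likewise runs the pseudo-irreducibility algorithm \PIrr{} but returns \False{} as soon as some residual polynomial has degree $\ell_k>1$, so that every residue ring stays equal to $\Ki$, both the Las Vegas primitive-element computations and the univariate irreducibility tests disappear, and the deterministic $\Ot(\vF+\dy)$ bound (resp.\ $\Ot(\vF)$ in the Weierstrass case, where $\ell_1=1$ forces $q_1>1$, $m_1>0$ and hence $\dy\le 2\vF$) follows from the same complexity accounting. One minor imprecision: the condition tested at each stage is that the residual polynomial $P$ be linear, i.e.\ that the \edgepoly{} equal the quasi-homogeneous power $c\,(y^q-a\,x^m)^N$, not $c\,(y-a)^m$.
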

\paragraph{Pseudo-Irreducible polynomials.} If $F$ is irreducible, the
algorithms above compute also the discriminant valuation $\vF$ and the
number of absolutely irreducible factors together with their sets of
characteristic exponents and pairwise intersection
multiplicities. These numerical data capture the main relevant
information about the singularities of the germs of plane curves
defined by $F$ along $x=0$.  In particular, they uniquely determine
their equisingularity classes, hence their topological classes if $\Ki=\Ci$. It turns out that we can
compute these invariants within the same complexity (avoiding
furthermore any univariate irreducibility test) for a larger class of
polynomial: we say that $F$ is \emph{pseudo-irreducible} (the terminology \emph{balanced} will also be used in the sequel) if its irreducible
factors in $\algclos{\Ki}[[x]][y]$ have same characteristic exponents
and same sets of pairwise intersection multiplicities (see Section
\ref{sec:equising}). If $F$ is irreducible in $\Li[[x]][y]$ for some
field extension $\Li$ of $\Ki$, then it is pseudo-irreducible by a Galois
argument, but the converse does not hold.

\begin{thm}\label{thm:main2}
  There exists an algorithm which tests if $F$ is pseudo-irreducible with an expected
  $\Ot(\dy+\vF)$ operations over $\Ki$, which is $\Ot(\vF)$ if $F$ is
  Weierstrass. If $F$ is pseudo-irreducible, the algorithm computes 
  $\vF$ and the number of irreducible factors in
  $\algclos{\Ki}[[x]][y]$ together with their characteristic exponents
  and pairwise intersection multiplicities.
\end{thm}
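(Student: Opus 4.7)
The plan is to design a recursive algorithm \PIrr\ that mirrors the Abhyankar-type strategy underlying Theorems \ref{thm:main} and \ref{thm:absolute}, but replaces the irreducibility test on the boundary polynomials by the weaker condition of being a square-free power. At each stage of the recursion, the algorithm computes the Newton polygon of the current polynomial via \NP; if this polygon has more than one edge, the polynomial cannot be pseudo-irreducible and we return \False. Otherwise, we compute the associated boundary polynomial via \Edgepoly\ and test whether it is of the form $c\cdot H^k$ for some square-free $H\in\Ki[z]$. This test can be performed by a single gcd with the derivative and thus avoids any univariate irreducibility check over $\Ki$, which is precisely what allows dropping the two univariate irreducibility tests of Theorem \ref{thm:main}.

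When the test succeeds, we call \AppRoot\ to compute the approximate root of degree $\dy/k$ and then \Expand\ to obtain the corresponding expansion of $F$; the recursion continues on these new data. Along the way we record the slope and length of each edge together with the exponent $k$, from which the characteristic exponents and pairwise intersection multiplicities of the absolutely irreducible factors can be reconstructed by the standard formulas, and $\vF$ is obtained by summing the contributions of the successive edges. Correctness reduces to the statement that pseudo-irreducibility, defined by a condition on the characteristic data of the absolutely irreducible factors, is equivalent to this square-free-power property holding at every level of the recursion; this is a Galois-theoretic variant of the Abhyankar criterion and relies on the fact that the Galois group of $\algclos\Ki$ permutes the absolutely irreducible factors while preserving their Newton data.

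For the complexity bound, we rely on the quasi-linear subroutines \NP, \Edgepoly, \AppRoot\ and \Expand\ established earlier in the paper. Denoting by $\vF[i]$ and $\dy_i$ respectively the partial discriminant contribution and the degree of the approximate polynomial at level $i$, the cost at level $i$ is $\Ot(\vF[i]+\dy_i)$; a telescoping argument based on $\sum_i \vF[i]\le \vF$ together with the strict decrease $\dy_{i+1}=\dy_i/k_i$ with $k_i\ge 2$, whence $\sum_i \dy_i=O(\dy)$, yields the announced bound $\Ot(\vF+\dy)$. In the Weierstrass case the very first edge already contributes at least $\dy$ to $\vF$, so the additive $\dy$ term is absorbed and the bound improves to $\Ot(\vF)$.

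The main obstacle will not be the algorithmic design but the combinatorial bookkeeping: one must certify that the equisingularity data can be reconstructed exactly from the slopes and the exponents $k_i$ even though each Newton polygon step may hide several Galois orbits of absolutely irreducible factors, and that $\vF$ is recovered on the nose (not merely up to a bounded factor), so that the telescoping of the per-level costs against $\vF$ is valid. This amounts to establishing an Abhyankar--Moh type formula relating the discriminant valuation to the numerical Newton data extracted along the recursion, in the pseudo-irreducible — rather than irreducible — setting.
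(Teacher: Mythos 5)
Your high-level architecture coincides with the paper's: recurse on approximate roots, replace the univariate irreducibility test on each residual polynomial by a square-free-power test, and read the invariants off the recorded edge data. But there is a concrete error and two substantive gaps. The error: your algorithm returns \False{} as soon as a Newton polygon has more than one edge. A balanced polynomial need not have a straight Newton polygon (e.g. $F=(y-x)(y+x^2)$ is balanced, yet its polygon has two edges), so this step would misclassify some of the very polynomials the theorem is about. This is why Definition \ref{def:pseudodeg} only requires the \emph{lower} boundary polynomial to be a power of a square-free $P$ and explicitly allows $P(0)$ to be zero or a zero divisor when the lower slope is an integer ($q=1$): that relaxation, with its attendant complications (zero divisors $z_k$, Remark \ref{rem:z1}), is exactly what is needed to capture all balanced polynomials.

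The first gap is correctness: the equivalence between balancedness and the square-free-power property at every level cannot be obtained by the Galois argument you invoke. Galois conjugation shows only that a polynomial irreducible over some extension is balanced; the converse fails, and the factors of a balanced polynomial need not form a single Galois orbit (the introduction states this explicitly). The paper's proof instead analyses the structure of the pseudo-rational Puiseux expansions (Proposition \ref{prop:CPE}, Lemma \ref{lem:abzeta}), applies Noether--Merle's formula to get the intersection multiplicities, and proves the hard converse (Proposition \ref{prop:equisingpseudoirr}) that failure of pseudo-degeneracy at stage $g$ forces two factors to differ in characteristic exponents or intersection sets. The explicit formulas $\C(F)=(B_0;B_k\,|\,q_k>1)$, $\Gamma(F)=(M_k\,|\,\ell_k>1)$ and the expression for $\vF$ (Corollary \ref{cor:disc}), which you correctly flag as the ``main obstacle,'' are precisely this missing content; invoking ``standard formulas'' leaves the second half of the theorem unproved. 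The second gap is the complexity accounting: the paper does not telescope partial discriminant contributions per level — every level works at the global precision $\eta(F)\le 2\vF/\dy$ and there are only $O(\log \dy)$ levels since $q_k\ell_k\ge 2$ — and the additive $\Ot(\dy)$ term comes from arithmetic in the residue rings $\Ki_k$ (primitive elements, and the bivariate representation needed in the monic non-Weierstrass case), none of which appears in your analysis. You also leave the non-monic case untreated (handled in the paper via a projective change of coordinates and Lemma \ref{lem:invariance}).
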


Note that if $F$ is pseudo-irreducible, all its absolutely irreducible
factors have same degree, and the algorithm computes it. We can
compute also the degrees, residual degrees and ramifications indices of the irreducible factors of $F$ in
$\Li[[x]][y]$ over any given field extension $\Li$ of $\Ki$ by
performing an extra univariate factorization of degree $d$ over $\Li$.

\paragraph{Bivariate case.} If $F\in \Ki[x,y]$ is a square-free
bivariate polynomial of bidegree $(\dx,\dy)$, we have
$\vF\le 2 \dx \dy-\dx$, hence our algorithms are quasi-linear with respect
to the arithmetic size $nd$ of $F$. In fact, we can avoid the square-free
hypothesis in this case:
\begin{thm}\label{thm:nonsqrfree}
  If $F\in \Ki[x,y]$, then the previous irreducibility or
  pseudo-irreducibility tests have complexity $\Ot(\dx \dy)$ up to
  univariate irreducibility tests, and so without assuming
  square-freeness of $F$.
\end{thm}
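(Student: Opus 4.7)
The plan is to reduce to the square-free case already handled by Theorems \ref{thm:main}, \ref{thm:absolute} and \ref{thm:main2}, via two ingredients.

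First, when $F\in\Ki[x,y]$ is square-free of bidegree $(\dx,\dy)$, the resultant $\res(F,F_y)\in\Ki[x]$ is the determinant of a Sylvester matrix of order $2\dy-1$ whose entries are the $y$-coefficients of $F$ and $F_y$, which all have $x$-degree at most $\dx$. Hence $\deg_x\res(F,F_y)\le\dx(2\dy-1)$, so $\vF=O(\dx\dy)$, and the algorithms of Theorems \ref{thm:main}--\ref{thm:main2} already achieve the announced $\Ot(\dx\dy)$ complexity (up to the univariate irreducibility tests) on square-free bivariate input.

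Second, I would remove the square-free assumption by a preprocessing step. Starting with $F\in\Ki[x,y]$ of bidegree at most $(\dx,\dy)$, first strip off the largest power of $x$ dividing every coefficient; this factor contributes only the prime $x\in\Ki[[x]][y]$, which is recorded separately. Then compute the square-free part $F^*:=F/\gcd(F,F_y)$ in $\Ki[x,y]$ using fast bivariate gcd, at a cost of $\Ot(\dx\dy)$; the result has bidegree at most $(\dx,\dy)$, is square-free in $\Ki[x,y]$ and, once the $x$-content has been removed, also in $\Ki[[x]][y]$.

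To conclude, observe that $F$ and $F^*$ share the same set of distinct irreducible factors in $\algclos{\Ki}[[x]][y]$, differing only by the multiplicities carried by the quotient $F/F^*$. Hence $F$ is pseudo-irreducible iff $F^*$ is, and $F$ is irreducible iff $F^*$ is irreducible and $F/F^*$ is a unit of $\Ki[[x]][y]$; this last verification takes $\Ot(\dx\dy)$. Applying the relevant algorithm of Theorems \ref{thm:main}--\ref{thm:main2} to $F^*$ then finishes the test within $\Ot(\dx\dy)$ operations plus the announced univariate irreducibility tests. The main technical point will be bookkeeping the multiplicities so as to recover the invariants of $F$ itself -- notably $\vF$ -- from those of $F^*$ and the quotient $F/F^*$, but this reduces to elementary manipulations on the factorization data returned by the underlying algorithm.
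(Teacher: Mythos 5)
Your reduction hinges on computing the square-free part $F^*=F/\gcd(F,F_y)$ in $\Ot(\dx\dy)$ operations, and this is precisely the step that is not available: the paper explicitly warns, right after the statement of the theorem, that checking square-freeness (a fortiori computing $\gcd(F,F_y)$) costs $\Ot(\dx\dy^2)$ with usual algorithms. There is no known quasi-linear-in-$\dx\dy$ bivariate gcd, essentially because the subresultant coefficients have $x$-degree up to $\Theta(\dx\dy)$, so an evaluation--interpolation scheme needs $\Theta(\dx\dy)$ specializations, each costing $\Ot(\dy)$. So your preprocessing step blows the complexity budget, and the whole point of the theorem is to get around exactly this obstruction. (Your first paragraph, bounding $\vF=O(\dx\dy)$ for square-free bivariate input, is correct and is also used by the paper.)

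The paper's actual argument avoids any gcd computation: it runs \PIrr{} directly on $F$ with the fixed precision $4\dx$, which is a valid truncation bound ($\eta(F)\le 2\vF/\dy\le 4\dx$) whenever $F$ is square-free, and it returns \False{} if the internal Newton-polygon consistency test ever fails. If $F$ is square-free the run is correct and costs $\Ot(\dx\dy)$; if $F$ is not square-free, then no \edgepoly{} $\bar H_k$ is ever the power of a square-free polynomial, so the algorithm can never reach $N_k=1$ and must fail one of its internal tests within the same budget -- and \False{} is the correct answer, since a non-square-free $F$ is neither irreducible nor pseudo-irreducible. This last point also flags a semantic slip in your proposal: you assert that $F$ is pseudo-irreducible iff $F^*$ is, but pseudo-irreducibility is defined as the algorithm answering \True{}, and for non-square-free $F$ (e.g. $F=G^2$ with $G$ irreducible) the answer is \False{} even though $F^*$ is pseudo-irreducible. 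Your equivalence for plain irreducibility is fine, but the recovery of $\vF(F)$ from $\vF(F^*)$ that you defer to ``elementary bookkeeping'' is moot, since $\vF(F)$ is not even finite when $F$ is not square-free.
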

Note that this does not mean that we can check square-freeness of $F$
within $\Ot(\dx \dy)$ operations (this costs $\Ot(\dx \dy^2)$
operations with usual algorithms). Also, note that there is no hope to
test irreducibility of a non square-free polynomial
$F\in \Ki[[x]][y]$, as this would require to deal with an infinite
precision. 

\paragraph{Local case.} Our algorithms provide also (pseudo)-irreducibility tests in the local rings $\Ki[[x,y]]$ or
$\overline{\Ki}[[x,y]]$. To this aim, we first apply the Weierstrass
Preparation Theorem and compute a factorization $F=UH$ up to a
suitable precision using a Hensel like strategy, with
$H\in \Ki[[x]][y]$ a Weierstrass polynomial and $U$ a unit in
$\Ki[[x,y]]$, and we eventually check the (pseudo)-irreducibility of $H$ using
algorithms above. Unfortunately, if $F$ is non Weierstrass, the
computation of $H$ up to a suitable precision is $\Omega(\dy\vF)$ in
the worst case scenario (\cite[Example 5]{PoWe17} provides an
explicit family of polynomials $F\in \Ki[x,y]$ for which a local
irreducibility test in $\Ki[[x,y]]$ is cubic in the total degree).

\paragraph{Main ideas.} All algorithms are based on the same idea. We
recursively compute some well chosen approximate roots
$\psi_0,\ldots,\psi_g$ of $F$. At each step, we compute the
$(\psi_0,\ldots,\psi_k)$-adic expansion of $F$. We deduce the $k$-th
generalised Newton polygon and check if it is straigth. If so,
we compute the related \edgepoly{} and test if it is the power of some
irreducible polynomial. In such a case, we deduce the degree of the
next approximate root $\psi_{k+1}$ that has to be computed.  The
algorithm gives moreover the characteristic exponents of $F$, and so
without performing any blow-ups and liftings inherent to the classical
Newton-Puiseux algorithm. Such a strategy was developped by Abhyankar
for testing irreducibility in $\Ci[[x]][y]$ in \cite{Ab89}. A major
difference here is that testing irreducibility for non algebraically
closed residue field $\Ki$ requires to compute also the \edgepoly{}s,
a key point which is not an issue in Abhyankhar's algorithm.  Also, in
order to perform a unique univariate irreducibility test over $\Ki$,
we rely on dynamic evaluation and rather check if the \edgepoly{}s are
powers of a square-free polynomial. The pseudo-irreducibility test is based on such a modification, allowing moreover several edges of the Newton polygon in some particular cases.

\paragraph{Related results.} Factorization in $\Ki[[x]][y]$ (and
\textit{a fortiori} irreducibility test) is an important issue in the
algorithmic of algebraic curves, both for local aspects (studying
plane curves singularities) and for global aspects (e.g. computing
integral basis of function fields \cite{vH94}, computing the geometric
genus \cite{PoWe17}, factoring polynomials in $\Ki[x,y]$ taking
advantage of critical fibers \cite{We16}, etc). Probably the most
classical approach for factoring polynomials in $\Ki[[x]][y]$ is
derived from the Newton-Puiseux algorithm, as a combination of
blow-ups (monomial transforms and shifts) and Hensel liftings. This
approach allows moreover to compute the roots of $F$ - represented as
fractional Puiseux series - up to an arbitrary precision. The
Newton-Puiseux algorithm has been studied by many authors (see
e.g. \cite{Du89, DeDiDu85, Wa00, Te90, Po08, PoRy11, PoRy12, PoRy15,
  PoWe17} and the references therein). Up to our knowledge, the best
current arithmetic complexity was obtained in \cite{PoWe17}, using a
divide and conquer strategy leading to a fast Newton-Puiseux algorithm
(hence an irreducibility test) which computes the singular parts of
all Puiseux series above $x=0$ in an expected $\Ot(\dy\,\vF)$
operations over $\Ki$%
. There exists also other methods for factorization, as the Montes
algorithm which allow to factor polynomials over general local fields
\cite{Mo99,GuMoNa12} with no assumptions on the characteristic of the
residue field. Similarly to the algorithms we present in this paper,
Montes et al. compute higher order Newton polygons and \edgepoly{}s
from the $\Phi$-adic expansion of $F$, where $\Phi$ is a sequence of
some well-chosen polynomials which is updated at each step of the
algorithm. With our notations, this leads to an irreducibility test in
$\Ot(\dy^2+\vF^2)$ \cite[Corollary 5.10 p.163]{BaNaSt13} when $\Ki$ is
a ``small enough'' finite field\footnote{This restriction on the field
  $\Ki$ is due to the univariate factorization complexity. It could be
  probably avoided by using dynamic evaluation.}. In particular, their
work provide a complete description of \emph{augmented valuations},
apparently rediscovering the one of MacLane
\cite{Ma36a,Ma36b,Ru14}. The closest
related result to this topic is the work of Abhyanhar \cite{Ab89},
which provides a new irreducibility test in $\Ci[[x]][y]$ based on
approximate roots, generalised to algebraically closed residue fields
of arbitrary characteristic in \cite{CoMo03}. No complexity estimates
have been made up to our knowledge, but we will prove that Abhyanhar's
irreducibility criterion is $\Ot(\vF)$ when $F$ is Weierstrass. In
this paper, we extend this result to non algebraically closed residue
field $\Ki[[x]][y]$ of characteristic zero or big enough. In some
sense, our approach establishes a bridge between the Newton-Puiseux
algorithm, the Montes algorithm and Abhyankar's irreducibility
criterion.  Let us mention also \cite{GaGw10,GaGw12} where an other
irreducibility criterion in $\algclos{\Ki}[[x]][y]$ is given in terms
of the Newton polygon of the discriminant curve of $F$, without
complexity estimates, and \cite{Pop02}, which provides a good
reference for the relations between approximate roots, Puiseux series
and resolution of singularities of an irreducible Weierstrass
polynomial $F\in \Ci[[x]][y]$.

\paragraph{Organisation.} In Section \ref{sec:ARNP} below, we describe
briefly the rational Newton-Puiseux algorithm of Duval \cite{DeDiDu85}
and its improved version of \cite{PoRy15} due to the so-called
Abhyankar trick. In section \ref{sec:phi}, we show how to recover the
edge data of $F$ from its $\Phi$-adic expansion, where $\Phi$ is the
collection of minimal polynomials of the truncated Puiseux series of
$F$. We show in
Section \ref{sec:psi} that $\Phi$ can be replaced by a collection
$\Psi$ of approximate roots of $F$ which can be computed
in the aimed complexity bound. Section \ref{sec:absolute} is dedicated to the absolute case, and a new proof of Abhyankhar's irreducibility criterion is given. In Section \ref{ssec:PseudoIrr}, we allow residual polynomials to be square-free, leading to the notion of pseudo-irreducible polynomials. 
Section
\ref{sec:comp} is dedicated to
complexity issues and to the proofs of Theorems \ref{thm:main}, \ref{thm:absolute} and \ref{thm:nonsqrfree}. We show in Section \ref{sec:equising} that a polynomial is pseudo-irreducible if and only if  its absolutely irreducible factors are equisingular and have same sets of pairwise intersection sets (balanced polynomials), in which case we give explicit formulas for characteristic exponents and intersection multiplicities in terms of the edges data, thus proving Theorem \ref{thm:main2}. We conclude in Section \ref{sec:conc} with ongoing researches about factorization of polynomials over general local fields of arbitrary residual characteristic.


\section{The Newton-Puiseux algorithm and Abhyankar
  trick}\label{sec:ARNP}

\paragraph{Classical definitions.} We first recall classical
definitions that play a central role for our purpose, namely the
Newton polygon and the residual polynomial. In the following, we
denote $F=\sum_{i=0}^\dy a_i(x)\,y^i$ and $\val$ the usual
$x$-valuation of $\Ki[[x]]$.
\begin{dfn}\label{dnf:NP}
  The \textit{Newton polygon} of $F$ is the lower convex hull $\NP(F)$
  of the set of points $(i,\val(a_i))$ for $i=0,\ldots,\dy$. The
  \textit{principal Newton polygon} $\NPb(F)$ is the union of edges of
  negative slopes of $\NP(F)$.
\end{dfn}

Note that $\NPb(F)=\NP(F)$ if $F$ is Weierstrass.  The Newton polygon
is used at the first call of our main algorithms, while the principal
Newton polygon is used for recursive calls. It is well known that
irreducibility in $\Ki[[x]][y]$ (resp. in $\Ki[[x,y]]$) implies
straightness of $\NP(F)$ (resp. $\NPb(F)$), a single point being
straight by convention. However, straightness condition is not
sufficient.

\begin{dfn}\label{dfn:edgePol}
  Given the (principal or not) Newton polygon $\NP$ of $F$, we call
  $\bar{F}:=\sum_{(i,j)\in \NP} a_{ij} x^j y^i$ the \emph{\edgepoly{}}
  of $F$.
\end{dfn}
  
\begin{dfn}\label{dfn:degeneracy}
  We say that $F$ is \emph{degenerated} over $\Ki$ with respect to
  $\NP$ if its \edgepoly{} $\bar{F}$ is the power of an irreducible
  quasi-homogeneous polynomial.
\end{dfn}

In other words, $F$ is degenerated if and only if $\NP$ is straight of
slope $-m/q$ with $q,m$ coprime, $q>0$, and if
\begin{equation}\label{eq:quasihom}
  \bar{F}=c \left(P\left( \frac{y^q}{x^{m}} \right) \,
    x^{\varepsilon m\deg(P)}\right)^N
\end{equation}
with $c\in \Ki^\times$, $N\in \Ni$ and $P\in \Ki[Z]$ monic and
irreducible, and where $\varepsilon=1$ if $m\ge 0$ and $\varepsilon=0$
otherwise.  We call $P$ the \emph{residual polynomial}\footnote{In the
  Montes algorithm \cite[Definition 1.9, page 368]{GuMoNa12}, the
  residual polynomial would rather design $P^{qN}$ in our context}
of $F$. We call the tuple $(q,m,P,N)$ the \emph{edge data} of the
degenerated polynomial $F$ and denote \Edata{} an algorithm computing
this tuple.

\paragraph{Newton-Puiseux irreducibility test.}
If $F$ is irreducible in $\Ki[[x]][y]$, it is degenerated. The
converse holds if $N=1$. If $N>1$, the Newton-Puiseux algorithm
ensures that $F$ is irreducible if and only if all the successive
so-called Puiseux transforms of $F$ (line \ref{RNP:puiseux} of \RNP{})
are degenerated until we reach an edge data with $N=1$.

We let $\ell:=\deg(P)$ and $\Ki_P:=\Ki[Z]/(P(Z))$. We denote by
$z\in \Ki_P$ the residue class of $Z$. Finally, we let
$s,t$ be the unique integers such that the B\'ezout relation $qs-mt=1$
holds with $0\leq t < q$. The rational version of the Newton-Puiseux
algorithm of Duval \cite{Du89} induces the following irreducibility
test. Therein, we check degeneracy with respect to $\NP(F)$ at the
first call and with respect to $\NPb(F)$ at the recursive calls.
\vspace{2mm}

\begin{algorithm}[H]
  \nonl\TitleOfAlgo{\RNP($F,\Ki$)\label{algo:RNP}}%
  \KwIn{$F\in\Ki[[x]][y]$ of degree $\dy>0$.}%
  \KwOut{\True{} if $F$ is irreducible in $\Ki[[x]][y]$, and \False{}
    otherwise.}%
  $N\gets d$\;%
  \While{$N>1$}{%
    \lIf{$F$ is not degenerated over $\Ki$}{\Return{\False}}\label{RNP:degener}%
    $(q,m,P,N)\gets \Edata{}(F)$\label{RNP:data}\;%
    $F\gets F(z^t x^q,x^m(y+z^s))/x^{qm\ell N}$\label{RNP:puiseux}\tcp*{Puiseux transform}
    $\Ki\gets \Ki_P$\;%
  }%
  \Return{\True}\;%
\end{algorithm}

The transform performed in \RNP{} differs slightly from the
classical Newton-Puiseux transform $F(x^q,x^m(y+z^{1/q}))$. This
trick due to Duval avoids to introduce useless field extension
$\Ki[z^{1/q}]$ of $\Ki[z]=\Ki_P$ inherent to ramification. The number
of iterations is bounded by $\vF$ and powers of $x$ can be truncated
modulo $x^{\vF+1}$, leading to a complexity $\Ot(\dy\,(\vF+1)^2)$
\cite[Lemma 4, page 213]{PoRy11}\footnote{In \cite{PoRy11}, $\Ki$ is
  assumed to be a finite field. This result remains correct on any
  perfect field if one uses dynamic evaluation instead of univariate
  factorization or irreducibility test.}.

\paragraph{The Abhyankar trick.} 
At each recursive call, the Weierstrass Preparation
Theorem ensures that the current polynomial $F$ of line
\ref{RNP:degener} equals a Weierstrass polynomial
$G=\sum_{i=0}^N g_i(x) y^i$ times a unit of $\Ki[[x,y]]$, and we can
compute $G$ up to an arbitrary precision via Hensel lifting. The Abhyankar trick consists to replace the current polynomial $F$ 
by the \textit{Abhyankhar shift} $H$ of its Weierstrass polynomial $G$:
\begin{equation}\label{eq:coefc}
  H(x,y)\gets G\big(x,y+c(x)\big),\qquad c(x):=-\frac{g_{N-1}(x)}{N}.
\end{equation}
At the first call, we assume $F$ monic and we rather consider $G=F$ in \eqref{eq:coefc}. We call $H$ the \textit{Abhyankar transform} of $F$, and denote $\abhyankar{}(F)$ the subroutine
computing it (with infinite precision in what follows, but with a
suitable finite precision in practice, see \cite[Section
3.3]{PoWe17}). The polynomial $H$ has now no terms of degree $N-1$, ensuring $q\ell >1$ at line \ref{RNP:data}. This leads to the following variant of \cite[Algorithm
\NPA{}]{PoRy15}, where we stop computations if we find out that
$F$ is reducible (therein, we keep the notations $\Ki_P$ and $z$ asociated to $P$).
\begin{algorithm}
  \nonl\TitleOfAlgo{\NPA($F,\Ki$)\label{algo:ARNP}}%
  \KwIn{$F\in\Ki[[x]][y]$ monic of degree $\dy>0$.}%
  \KwOut{\True{} if $F$ is irreducible in $\Ki[[x]][y]$, and \False{}
    otherwise.}%
  $N\gets d$\;%
  \While{$N>1$}{%
    $H\gets \abhyankar{}(F)$ \label{arnp:abhyankar}\;
    \lIf{$H$ is not degenerated over $\Ki$}{\Return{\False}}%
    $(q,m,P,N)\gets \Edata{}(H)$\;%
    $F\gets H(z^t x^q,x^m(y+z^s))/x^{qm\ell N} $ \label{arnp:puiseux}\;%
    $\Ki\gets \Ki_P$\;%
  }%
  \Return{\True}\;%
\end{algorithm}
\begin{rem}\label{rem:CharMonomials}
  The $(q,m)$-sequence of \NPA{} is not the same as the
  $(q,m)$-sequence of \RNP{}, but can be deduced from it \cite[Remark
  5 and Example 2]{PoWe17}. It contains enough information
  for computing the characteristic exponents of $F$; see Section
  \ref{sec:equising}.
\end{rem}
Since $H$ has degree $N$ with no terms of degree $N-1$, either it is
not degenerated, either its edge data satisfies $q \ell \ge 2$.  The
product of these invariants over all iterations satisfies
$\prod_k q_k \ell_k\le \dy$ (with equality if and only if $F$ is
irreducible), and the number of calls is less than $\log(\dy)$. See
\cite[Section 4]{PoRy15} for details.  If $F$ is irreducible, we have
moreover $\val(F_y(S))=\frac\vF\dy$ for any Puiseux series $S$ of
$F$. Then, \cite[Lemma 6]{PoWe17} and \cite[Corollary 4]{PoWe17} prove
that computations can be made modulo $x^{\frac{2\vF}{\dy}+1}$, leading
to an expected number of operations over $\Ki$ bounded by
$\O((\vF+1)\,\dy)$ \cite[Proposition 18]{PoWe17}. Moreover, as
mentionned in the conclusion of \cite{PoWe17}, this complexity
estimates is sharp. This is mainly due to the fact that despite the
input data being of size $\vF$ after truncation, the Puiseux
transform of line \ref{arnp:puiseux} generates a polynomial $F$
that can be of size $\Omega(\dy\,\vF)$ (which is then again reduced to
size $\O(\vF)$ at line \ref{arnp:abhyankar}). The approach we propose
in this paper avoids this intermediate increased size thanks to the
theory of approximate roots.

\paragraph{Notations.} If $\NP(F)$ is not straight, then $F$ is
reducible. If $\NP(F)$ is straight with positive slope, we replace $F$
by its reciprocal polynomial. The leading coefficient is now
invertible. Consequently, we assume in the remaining of this paper
that $F$ is monic.

We denote by $H_0:=\abhyankar{}(F)$ and let $N_0:=\deg(H_0)=\dy$. If
$N_0=1$ or $H_0$ is not degenerated, we let $g=0$. Otherwise, we
denote by $H_0,\ldots,H_{g-1}$ the successive degenerated polynomials
encountered at line \ref{arnp:abhyankar} when running
\NPA{}$(F,\Ki)$. We collect their respective edge data in a list
\[
  \D(F):=\big((q_1,m_1,P_1,N_1),\ldots,(q_{g},m_{g},P_{g},N_{g})\big).
\]
The monic polynomial $H_0$ might have horizontal slope (in which case $q_1=1$ and $m_1=0$) while $H_k$ is Weierstrass and $m_{k+1}>0$ for $1\leq k < g$. We
include the $N_k$'s in the list for convenience, although they can be
deduced from the remaining data by \eqref{eq:Nk}. This data is
closely related to what is called \emph{a type} in \cite{GuMoNa12}. The
integer $g$ is defined in such a way that we have either $N_g=1$ and
$F$ is irreducible, either the next Weierstrass polynomial $H_{g}$ is not degenerated and $F$ is
reducible. If $F$ is irreducible, we can deduce from $\D(F)$ the
characteristic monomials (exponents and coefficients) of any of its
conjugated Puiseux series, see Section \ref{sec:equising}.

We let $\Ki_0=\Ki$ and we denote $\Ki_{k}=\Ki_{k-1}[Z_k]/(P_{k}(Z_k))$
the field extension of $\Ki_{k-1}$ generated by $P_k$, where $Z_k$ is
a new undeterminate. It is a finite extension of $\Ki$ of degree
$f_k:=\ell_1\cdots \ell_k$, where $\ell_k:=\deg(P_k)$; it represents
the part of the residual extension discovered so far. We let
$z_k\in \Ki_k$ be the residue class of $Z_k\mod P_k$.

For all $1\le k \le g$, we have $H_k\in \Ki_k[[x]][y]$. The integer $N_k$ satisfies
\begin{equation}\label{eq:Nk}
  N_k=\deg(H_k)\,\,\text{ and}\,\, N_{k-1}=N_k q_k \ell_k.
\end{equation}
As $\ell_k q_k >1$ for all
$1\le k\le g$, the sequence $N_0,\ldots,N_g$ is a strictly
decreazing sequence of integers with $N_{k}$ dividing $N_{k-1}$.

We associate to $F$ the maps
\begin{equation}\label{eq:tausigma}
  \begin{cases}
    \tau_k(x,y)=(x,y+c_k(x)),\quad\quad\quad\quad\quad\quad\, 0\le k
    \le g,
    \\
    \sigma_k(x,y)=(z_k^{t_k} x^{q_k},x^{m_k}(y+z_k^{s_k})),
    \quad\quad\, 1\le k \le g
  \end{cases}
\end{equation}
respectively defined to be the successive Abhyankhar shifts
\eqref{eq:coefc} and rational Newton-Puiseux transforms performed
while running \NPA{}$(F,\Ki)$: $\tau_k$ performed at line
\ref{arnp:abhyankar} on the Weierstrass polynomial of $F$ (or directly on $F$ for $k=0$) and
$\sigma_k$ at line \ref{arnp:puiseux}, with $(s_k,t_k)$ the B\'ezout
co-factors of $(q_k,m_k)$. Note that $c_k(0)=0$ if $k\ge 1$ by
\eqref{eq:coefc}. Defining $\pi_0=\tau_0$, and
$\pi_k=\pi_{k-1}\circ \sigma_k\circ \tau_k$ for $1\leq k\leq g$, we get - see
Lemma \ref{lem:pik} for an explicit formula in terms of $\D(F)$:
\begin{equation}\label{eq:pikxy}
  \pi_k (x, y ) = (\mu_k x^{e_k},\alpha_k x^{r_k} y + S_{k} (x)),
\end{equation}
where $e_k:=q_1\cdots q_k$ (the ramification index discovered so far),
$\mu_k,\alpha_k \in \Ki_k^{\times}$, $r_k\in \Ni$ and
$S_k \in \Ki_k[[x]]$ satisfies $\val(S_k)\le r_k$. The pair
$R_k=(\mu_k x^{e_k}, S_k \mod x^{r_k+1})$ is called in \cite[Section
3.2]{PoWe17} a \emph{truncated rational Puiseux expansion}. We can deduce
from $R_k$ all the roots of $F$ (seen as Puiseux series) truncated up
to precision $\frac{r_k}{e_k}$, that increases with $k$.

By construction, there exists an integer $v_k(F)\in \Ni$ such that
\begin{equation}\label{eq:pikHk}
  \pi_k^* F = x^{v_k(F)} U_k H_k \in \Ki_k[[x]][y],
\end{equation}
where $U_k (0, 0) \in \Ki_k^\times$. This key point will be used
several times in the sequel. Also, note that the coefficient of
$y^{N_k-1}$ in $H_k$ is $0$ from the Abhyankar shift
\eqref{eq:coefc}.

\paragraph{Minimal polynomials.} There exists a unic monic irreducible
polynomial $\phi_k\in \Ki[[x]][y]$ (in practice in $\Ki[x][y]$ when
truncating powers of $x$) such that
\begin{equation}\label{eq:phik}
  \phi_k (\mu_k x^{e_k}, S_{k}) = 0\text{ and }d_k:=\deg(\phi_k)=e_k f_k.
\end{equation}
In particular, $\phi_0=y-c_0(x)$ has degree $1$. We call $\phi_k$ the
\emph{$k^{th}$ minimal polynomial} of $F$. We deduce from
\eqref{eq:Nk} $\dy=N_k\,d_k$ for all $k=0,\ldots,g$. By construction,
the function call \NPA{}($\phi_k$) generates the
same transformations $\tau_i$, $\sigma_i$ for $i\le k$ and we have
\begin{equation}\label{eq:dataphik}
  \D(\phi_k)=\left((q_1,m_1,P_1,N'_1),\ldots,(q_k,m_k,P_k,N_k'=1)\right)
  \text{ with } N'_i:=N_i/N_k.
\end{equation}
Note that up to some constant $c$,
$\phi_k$ may be computed as a multivariate resultant
\begin{equation}\label{eq:phikRes}
  \phi_k(x,y)=c\,\Res_{\Z,T}(x-\mu_k(\Z) T^e,y-S_k(\Z,T),P_1(Z_1),\ldots,P_k(Z_1,\ldots,Z_k)),
\end{equation}
where we consider here any liftings of the coefficients of
$\mu_k,S_k,P_1,\ldots,P_k$ from $\Ki_k=\Ki[z_1,\ldots,z_k]$ to the
polynomial ring $\Ki[\Z]:=\Ki[Z_1,\ldots,Z_k]$.


\section{Edge data from the $\Phi$-adic expansion}
\label{sec:phi}

Let us fix an integer $0\le k \le g$ and assume that $N_k>1$. Given
the edges data $(q_1,m_1,P_1,N_1),\ldots,(q_k,m_k,P_k,N_k)$ and the
minimal polynomials $\phi_0,\dots,\phi_k$, we want to decide if the
next Weierstrass polynomial $H_k$ is degenerated and if so, to compute
its edge data $(q_{k+1},m_{k+1},P_{k+1},N_{k+1})$.  

In the following, we will omit for readibility the index $k$ for the sets $\Phi$, $\Bc$, $V$ and $\Lambda$ defined below.

\subsection{Main results}\label{ssec:phi-main}

\paragraph{$\Phi$-adic expansion.} Let $\phi_{-1}:= x$ and denote
$\Phi = (\phi_{-1} ,\phi_0, \ldots,\phi_k)$. Let
\begin{equation}\label{eq:Bc}
  \Bc := \{(b_{-1} ,\ldots,b_k)\in \Ni^{k+2} \,\,, \,
  \, b_{i-1}<q_i\,\ell_i\,, i=1,\ldots,k\}
\end{equation}
and denote $\Phi^B:=\prod_{i=-1}^k\phi_i^{b_i}$. Thanks to the
relations $\deg(\phi_i) = \deg(\phi_{i-1})q_i \ell_i$ for all
$1 \le i \le k$, an induction argument shows that $F$ admits a unique
expansion
\[
  F = \sum_{B\in \Bc} f_B \Phi^B, \quad f_B\in \Ki.
\]
We call it the $\Phi$-adic expansion of $F$. Note that we have
necessarily $b_k\leq N_k$ while we do not impose any \textit{a priori}
condition to the powers of $\phi_{-1}=x$ in this expansion. The aim of
this section is to show that one can extract the edge data of $H_k$
from the $\Phi$-adic expansion of $F$.

\paragraph{Newton polygon.}
Consider the semi-group homomorphism
\[
  \begin{array}{rcl}
    v_k:\ (\Ki[[x]][y],\times)&\to& (\Ni\cup\{\infty\},+)\\%
    H & \mapsto & v_k (H) := \val (\pi_k^* H),
  \end{array}
\]
From \eqref{eq:pikxy}, we deduce that the pull-back morphism $\pi_k^*$
is injective, so that $v_k$ defines a discrete valuation. This is a
valuation of transcendence degree one, thus an augmented valuation
\cite[Section 4.2]{Ru14}, in the flavour of MacLane valuations
\cite{Ma36a,Ma36b,Ru14} or Montes valuations
\cite{Mo99,GuMoNa12}. Note that $v_0(H)=\val(H)$. We associate to
$\Phi$ the vector
\[
  V := (v_k(\phi_{-1}), \ldots, v_k(\phi_k)),
\]
so that $v_k(\Phi^B)=\langle B , V\rangle$, where $\langle \,,\,\rangle$
stands for the usual scalar product. For all $i\in \Ni$, we define the
integer 
\begin{equation}\label{eq:wj}
  w_i := \min \left\{\langle B, V\rangle, \,\, b_k=i,\,\,f_B \ne 0\right\}-v_k(F)
\end{equation}
with convention $w_i := \infty$ if the minimum is taken over the empty
set. 

\begin{thm}
  \label{thm:NPphi}
  The Newton polygon of $H_k$ is the lower
  convex hull of $(i,w_i)_{0\leq i\leq N_k}$.
\end{thm}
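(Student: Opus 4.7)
The plan is to pull back the $\Phi$-adic expansion of $F$ via $\pi_k^*$, identify the initial $x$-adic form of each monomial $\pi_k^*(\Phi^B)$, and extract the Newton polygon of $H_k$ from the decomposition $\pi_k^* F = x^{v_k(F)} U_k H_k$ in \eqref{eq:pikHk}.

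First I compute $\pi_k^*\phi_i$ for each $i$. For $-1\le i<k$, I expect $\pi_k^*\phi_i = x^{v_k(\phi_i)} u_i(x,y)$ with $u_i(0,y)\in\Ki_k^\times$ a nonzero constant in $y$, using that $(\mu_k x^{e_k},S_k(x))$ refines the zero $(\mu_i x^{e_i},S_i(x))$ of $\phi_i$: the substitution $y\mapsto \alpha_k x^{r_k} y + S_k(x)$ perturbs this zero only at $x$-orders strictly greater than $v_k(\phi_i)$. For $i=k$, Taylor expanding $\phi_k$ at its zero $(\mu_k x^{e_k},S_k(x))$ and using separability ($\partial_y\phi_k$ nonvanishing there) gives
\[
\pi_k^*\phi_k = x^{v_k(\phi_k)}\bigl(\lambda_k\, y + x\,\rho_k(x,y)\bigr),\qquad \lambda_k\in\Ki_k^\times.
\]
Multiplying these formulas, each $\pi_k^*(\Phi^B) = c_B\, x^{\langle B,V\rangle}\, y^{b_k} + x^{\langle B,V\rangle+1}\, r_B(x,y)$, with $c_B\in\Ki_k^\times$ and $r_B\in\Ki_k[[x]][y]$.

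Summing over $B\in\Bc$ with $f_B\neq 0$ and inverting the unit $U_k$ at the origin, the coefficient $h_i\in\Ki_k[[x]]$ of $y^i$ in $H_k$ acquires an initial $x^{w_i}$-contribution proportional to
\[
\Sigma_i \;:=\; \sum_{\substack{B\in\Bc,\;b_k=i\\ \langle B,V\rangle=v_k(F)+w_i}} f_B\, c_B \;\in\; \Ki_k,
\]
plus terms of strictly higher $x$-valuation. Concluding that $\NP(H_k)$ equals the lower convex hull of $(i,w_i)_{0\le i\le N_k}$ therefore reduces to establishing $\val(h_i)\ge w_i$ for every $i$ together with the equality $\val(h_i)=w_i$ at each vertex $(i,w_i)$.

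The main obstacle is this non-cancellation: that $\Sigma_i\ne 0$ at each vertex. My plan is to exploit the digit constraints $b_{i-1}<q_i\ell_i$ defining $\Bc$: combined with the recursions $d_i = d_{i-1}q_i\ell_i$ and the corresponding arithmetic relations between the $v_k(\phi_i)$'s inherited from the successive edge data $(q_i,m_i,P_i,N_i)$, they force $B\mapsto\langle B,V\rangle$ to be injective at fixed $b_k$, by a mixed-radix representation argument proved by descending induction on $i$ using at each step the data \eqref{eq:dataphik} attached to $\phi_i$. With injectivity in hand, each $\Sigma_i$ collapses to a single term $f_B c_B\ne 0$, which both secures $\val(h_i)=w_i$ at vertices and, applied globally, yields $v_k(F)=\min_B\langle B,V\rangle$, ensuring $w_i\ge 0$ and the compatibility of the whole construction.
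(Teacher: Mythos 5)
Your overall architecture matches the paper's: pull back the $\Phi$-adic expansion through $\pi_k^*$, use $\pi_k^*(\phi_i)=U_{k,i}x^{v_{k,i}}$ (resp.\ $U_{k,k}x^{v_{k,k}}y$ for $i=k$) with $U_{k,i}$ units, and read off the polygon from $\pi_k^*F=x^{v_k(F)}U_kH_k$. The decisive step in both cases is the non-cancellation of the initial coefficients $\Sigma_i$. But your proposed mechanism for it is wrong: the map $B\mapsto\langle B,V\rangle$ at fixed $b_k$ is \emph{not} injective on $\Bc$ whenever some residual degree $\ell_j>1$. A mixed-radix argument cannot work because the digit bounds $b_{j-1}<q_j\ell_j$ involve the $\ell_j$'s while the valuation vector $V$ does not scale by them in the matching way (one has $v_{k,j-1}=q_jv_{j-1,j-1}+m_j$ times products of later $q$'s, but $v_{k,k}=q_k\ell_kv_{k,k-1}$ only at the top index). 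Concretely, in the paper's Example \ref{xmp:splitting}, after the first edge data $(q_1,m_1,P_1,N_1)=(2,1,Z_1^2-1,4)$ one has $V=(2,1,4)$ and $b_0<q_1\ell_1=4$, so $B=(1,0,i)$ and $B'=(0,2,i)$ satisfy $\langle B,V\rangle=\langle B',V\rangle$ with the same $b_1=i$. Hence $\Sigma_i$ genuinely is a sum of several terms; the paper's Proposition \ref{prop:key} states precisely that $\Card(\Bc(i,w))$ can be as large as $f_k=\ell_1\cdots\ell_k$. Your collapse-to-one-term argument only covers the absolutely irreducible setting $f_k=1$, i.e.\ essentially Abhyankar's original case, whereas the whole point of the theorem is the non-algebraically-closed residue field.

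What is actually true, and what the paper proves, is that the family $(\Lambda^B)_{B\in\Bc(i,w)}$ is \emph{linearly independent over $\Ki$}; since the coefficients $f_B$ lie in $\Ki$, this gives $\Sigma_i\ne 0$ without any injectivity. The proof of that independence is not a formality: it is an induction on $k$ that isolates the power $z_k^{N_B}$ of the new residual generator carried by each $\Lambda^B$, shows via the B\'ezout relation $q_ks_k-m_kt_k=1$ that $q_kN_B=b_{k-1}+t_kw$ so that the $\ell_k$ admissible values of $b_{k-1}$ produce $\ell_k$ distinct exponents of $z_k$ modulo its minimal polynomial $P_k$ of degree $\ell_k$, and then descends to rank $k-1$. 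This is the ingredient missing from your plan. A secondary, repairable imprecision: since the $U_{k,i}$ depend on $y$, a monomial $\Phi^B$ with $b_k<i$ can contribute to the coefficient of $y^i$ at $x$-order $\langle B,V\rangle$ (not only at order $\langle B,V\rangle+1$), and the unit $U_k$ in \eqref{eq:pikHk} only preserves the principal polygon; the paper handles both points by grouping the expansion by fixed $b_k=i$ first and then invoking that $H_k$ is Weierstrass of degree $N_k$ to upgrade $\NPb$ to $\NP$.
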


This result leads us to introduce the sets 
$$\Bc(i):=\{B\in\Bc ; b_k=i\}\quad \textrm{and}\quad
\Bc(i,w) := \{B \in \Bc(i) \ | \ \, \langle B, V \rangle = w\}$$ for
all $i\in\Ni$ and all $w\in \Ni\cup\{\infty\}$, with convention
$\Bc(i,\infty)=\emptyset$.

\paragraph{Boundary polynomial.} Consider the semi-group homomorphism
\[
\begin{array}{rcl}
\lambda_k:\ (\Ki[[x]][y ],\times)&\to& (\Ki_k,\times)\\%
H & \mapsto & \lambda_k (H) := \tc{\left(\frac{\pi_k^{*}
  (H)}{x^{v_k(H)}}\right)_{|x=0}}
\end{array}
\]
with convention $\lambda_k(0)=0$, and where $\textrm{tc}_y$ stands for
the trailing coefficient with respect to $y$ (initial coefficient). We
associate to $\Phi$ the vector
\[
  \Lambda := (\lambda_k (\phi_{-1}),\ldots,\lambda_k(\phi_k))
\]
and denote $\Lambda^B:=\prod_{i=-1}^k \lambda_k(\phi_i)^{b_i}$. Note
that $\Lambda^B\in \Ki_k$ is non zero for all $B$. 

\begin{thm}\label{thm:EdgePoly} Let $B_0 := (0,\ldots,0, N_k)$. The
  \edgepoly{} $\bar H_k$ of $H_k$ equals
  \begin{equation}\label{eq:barHk}
    \bar{H}_k =\sum_{(i,w_i)\in \NP(H_k)} \left(\sum_{B\in \Bc(i,w_i+v_k(F))}f_B \Lambda^{B-B_0} \right)x^{w_i} y^i.
  \end{equation}
\end{thm}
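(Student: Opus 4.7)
The strategy is to pull everything back via $\pi_k^*$, exploit the factorization $\pi_k^* F = x^{v_k(F)}\, U_k\, H_k$ with $U_k(0,0) \in \Ki_k^\times$ from \eqref{eq:pikHk}, and extract the coefficients of $H_k$ on its Newton polygon from the $\Phi$-adic expansion. Writing $\pi_k^*\phi_\ell = x^{v_k(\phi_\ell)}\,\tilde\psi_\ell$ with $\val(\tilde\psi_\ell) = 0$, the image of the $\Phi$-adic expansion is
$$\pi_k^* F = \sum_{B \in \Bc} f_B\, x^{\langle B, V\rangle}\, \tilde\Phi^B, \qquad \tilde\Phi^B := \prod_\ell \tilde\psi_\ell^{\,b_\ell},$$
where $\tilde\psi_\ell(0,y)$ has trailing $y$-coefficient $\lambda_k(\phi_\ell)$ by definition of $\lambda_k$.

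The central technical step, which I expect to be the main obstacle, is showing that each $\tilde\psi_\ell(0,y)$ is in fact a \emph{single} monomial: $\tilde\psi_{-1}(0,y) = \mu_k$, $\tilde\psi_\ell(0,y) = \lambda_k(\phi_\ell)$ for $0 \le \ell < k$, and $\tilde\psi_k(0,y) = \lambda_k(\phi_k)\, y$. For $\ell = -1$ this is immediate from $\pi_k^* x = \mu_k x^{e_k}$. For $\ell = k$, since $\phi_k$ vanishes at $(\mu_k x^{e_k}, S_k(x))$ by \eqref{eq:phik}, Taylor expansion in $y$ gives $\pi_k^*\phi_k = \sum_{j \ge 1}\frac{\phi_k^{(j)}(\mu_k x^{e_k}, S_k)}{j!}\,\alpha_k^j\, x^{j r_k}\, y^j$, and the minimum $x$-valuation is attained uniquely at $j=1$; this encodes the defining property of $\phi_k$ as a minimal polynomial of a truncated rational Puiseux expansion (a key polynomial in the sense of MacLane). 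For $0 \le \ell < k$, the analogous Taylor expansion places the minimum uniquely at $j=0$, because $\phi_\ell$ does \emph{not} vanish at $(\mu_k x^{e_k}, S_k(x))$. As a consequence, $\tilde\Phi^B(0,y) = \Lambda^B\, y^{b_k}$.

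Now fix $(i, w_i) \in \NP(H_k)$. Because $\NP(H_k)$ is a lower convex hull and $U_k(0,0) \in \Ki_k^\times$, any monomial of $U_k - U_k(0,0)$ times any monomial of $H_k$ lands strictly above $(i, w_i)$; hence $[x^{w_i} y^i](U_k H_k) = U_k(0,0)\, h_{i, w_i}$. Computing the same coefficient from the $\Phi$-adic side via $U_k H_k = \sum_B f_B\, x^{\langle B, V\rangle - v_k(F)}\, \tilde\Phi^B$, Theorem \ref{thm:NPphi} forces $\langle B, V\rangle \ge w_i + v_k(F)$ whenever $b_k = i$ and $f_B \ne 0$, and the single-monomial lemma implies that among $B$'s with $b_k = i$ only those in $\Bc(i, w_i + v_k(F))$ contribute, each via $f_B \Lambda^B$. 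The potential contributions from $B$'s with $b_k \ne i$ arising from higher $x$-powers of $\tilde\Phi^B$ must cancel in aggregate; this is most cleanly handled edge by edge, using the weighted valuation $\omega(x^a y^b) = q\, a + m\, b$ adapted to each edge of slope $-m/q$ of $\NP(H_k)$, for which $\textrm{in}_\omega(\tilde\psi_\ell)$ remains a single monomial and $\textrm{in}_\omega(\pi_k^* F) = x^{q\, v_k(F)}\, U_k(0,0)\, \textrm{in}_\omega(H_k)$ holds directly.

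Finally, specializing the identity at the vertex $(N_k, 0) \in \NP(H_k)$ identifies $U_k(0,0) = \Lambda^{B_0} = \lambda_k(\phi_k)^{N_k}$: $h_{N_k, 0} = 1$ by monicity of $H_k$, $f_{B_0} = 1$ since $F$ is monic of degree $d = N_k d_k$ with $\Phi^{B_0} = \phi_k^{N_k}$, and $B_0$ is the unique element of $\Bc(N_k, v_k(F))$ (by a direct count on $\langle B, V\rangle$). Dividing by $U_k(0,0) = \Lambda^{B_0}$ then yields $h_{i, w_i} = \sum_{B \in \Bc(i, w_i + v_k(F))} f_B\, \Lambda^{B - B_0}$, which is the claimed formula for $\bar H_k$.
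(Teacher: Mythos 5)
Your proof is correct and shares the paper's skeleton: pull back by $\pi_k^*$, use that $\pi_k^*(\phi_i)$ is a unit times $x^{v_{k,i}}$ (times $y$ when $i=k$), and normalize the unit $U_k(0,0)=\Lambda^{B_0}$ at the monic vertex $(N_k,0)$. Your ``single-monomial'' claim is exactly the paper's Lemma \ref{lem:PikPhi}, proved there by essentially your Taylor/minimality argument. Where you genuinely diverge is in controlling the cross-contributions. The paper groups the $\Phi$-adic expansion by the top exponent $b_k=i$ and shows (Corollary \ref{cor:main}) that $\pi_k^*\bigl(\sum_{B\in\Bc(i)}f_B\Phi^B\bigr)$ is a \emph{unit} times $x^{w_i+v_k(F)}y^i$, the nonvanishing of the unit's constant term being supplied by the linear independence of the $\Lambda^B$ over $\Ki$ (Proposition \ref{prop:key}); convexity then yields the polygon and the boundary polynomial simultaneously. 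You instead compare, edge by edge, the $\omega$-graded pieces of the two expressions for $\pi_k^*F$: since every non-initial term of $x^{\langle B,V\rangle}\tilde\Phi^B$ has $\omega$-value strictly larger than $q\langle B,V\rangle+m\,b_k\ge q\,v_k(F)+q\,w_{b_k}+m\,b_k$, nothing except the monomials $f_B\Lambda^Bx^{\langle B,V\rangle}y^{b_k}$ with $B\in\Bc(i,w_i+v_k(F))$ can land on the edge — so the cross-terms do not ``cancel in aggregate,'' they simply cannot reach the edge, and no non-vanishing assumption is needed to equate graded pieces. This buys a proof of \eqref{eq:barHk} that takes Theorem \ref{thm:NPphi} as an input and never invokes Proposition \ref{prop:key}; the price is that, unlike the paper's simultaneous proof of both theorems, your route cannot deliver Theorem \ref{thm:NPphi} itself, since the freeness of the $\Lambda^B$ is precisely what guarantees the inner sums are nonzero and hence that the hull of the $(i,w_i)$ really is $\NP(H_k)$. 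Two cosmetic slips: the initial form of $x^{v_k(F)}$ is $x^{v_k(F)}$ (of $\omega$-value $q\,v_k(F)$), not $x^{q\,v_k(F)}$; and $\Bc(N_k,v_k(F))=\{B_0\}$ should be justified by monicity forcing $f_B=0$ for every other $B$ with $b_k=N_k$ (when some $v_{k,j}=0$ the set can genuinely contain other exponents, harmlessly weighted by $f_B=0$), not by a count on $\langle B,V\rangle$ alone.
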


\begin{xmp}\label{xmp:k0}
  If $k=0$, we have by definition $V=(1,0)$ and $\Lambda=(1,1)$ while
  $v_0(F)=\val(H_0)=0$. Assuming $H_0=\sum_{j=0}^\dy a_i (x) y^i$, we find $w_i=\val(a_i)$ and Theorem \ref{thm:NPphi} stands from
  Definition \ref{dnf:NP}. Moreover, $\Bc(i,w_i)$ is then
  reduced to the point $(i,w_i)$ and Theorem \ref{thm:EdgePoly}
  stands from Definition \ref{dfn:edgePol}.
\end{xmp}
\subsection{Key Proposition and proofs of Theorems \ref{thm:NPphi} and
  \ref{thm:EdgePoly}}

Let us first establish some basic properties of the minimal
polynomials $\phi_i$ of $F$. Given a ring $\Ai$, we denote by
$\Ai[[x,y]]^\times$ the set of all $U\in \Ai[[x,y]]$ for which
$U(0,0)\ne 0$.  If $\Ai$ is a field (which might not be the case in
Sections \ref{sec:comp} and \ref{sec:equising}), this is simply the
group of units of the ring $\Ai[[x,y]]$.  For $-1 \leq i \leq k$, we
introduce the notations
\[
  v_{k,i} := v_k (\phi_i )= \val(\pi_k^*(\phi_i)) \quad \text{ and }
  \quad \lambda_{k,i} := \lambda_k (\phi_i
  )=\tc{\left(\frac{\pi_k^*(\phi_i)}{x^{v_{k,i}}}\right)_{|x=0}}.
\]
\begin{lem}\label{lem:PikPhi} Let $-1\le i \le k$. There exists $U_{k,i}\in \Ki_k [[x,y]]^\times$ with
  $U_{k,i} (0, 0) = \lambda_{k,i}$ s.t.:
  \begin{eqnarray*}
    \begin{cases}\pi_k^* (\phi_i ) = U_{k,i} x^{v_{k,i}} \text{ if }  i < k \\
      \pi_k^* (\phi_k) = U_{k,k}x^{v_{k,k}} y,
    \end{cases}
  \end{eqnarray*}
\end{lem}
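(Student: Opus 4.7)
The plan is to prove the lemma by induction on $k$, splitting the argument into the ``new'' case $i = k$ and the ``old'' cases $i < k$ which flow through the induction hypothesis. Throughout, the identification $U_{k,i}(0,0) = \lambda_{k,i}$ follows automatically from the non-vanishing $U_{k,i}(0,0) \neq 0$ together with the definition of $\lambda_{k,i}$ as the trailing $y$-coefficient of $(\pi_k^*(\phi_i)/x^{v_{k,i}})_{|x=0}$, so the heart of the proof is to establish the claimed factorization together with non-vanishing at the origin. The base case $k = 0$ is immediate: with $\pi_0 = \tau_0$, $\phi_{-1} = x$ and $\phi_0 = y - c_0(x)$, one directly computes $\pi_0^*(\phi_{-1}) = x$ and $\pi_0^*(\phi_0) = y$.

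For the inductive step with $i = k$, I would combine formula \eqref{eq:pikxy}, which gives $\pi_k(x, 0) = (\mu_k x^{e_k}, S_k(x))$, with the defining relation $\phi_k(\mu_k x^{e_k}, S_k) = 0$ from \eqref{eq:phik}; this forces $\pi_k^*(\phi_k)(x, 0) = 0$, so $y$ divides $\pi_k^*(\phi_k)$ in $\Ki_k[[x]][y]$. Taylor expanding $\phi_k$ in its second argument around $S_k(x)$ yields
\[
  \pi_k^*(\phi_k)(x, y) = y \sum_{j \geq 1} \frac{(\alpha_k x^{r_k})^j}{j!}\, \phi_k^{(j)}(\mu_k x^{e_k}, S_k(x))\, y^{j-1},
\]
where the $y$-derivatives $\phi_k^{(j)}$ and factorials make sense thanks to the characteristic hypothesis on $\Ki$. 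Separability of the irreducible $\phi_k$ ensures that the $j = 1$ term is non-zero, so the inner sum has finite $x$-valuation equal to $v_{k,k}$; extracting $x^{v_{k,k}}$ gives $\pi_k^*(\phi_k) = x^{v_{k,k}} y\, U_{k,k}$ with $U_{k,k}(0,0) \neq 0$, as required.

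For the case $i < k$, I would use the decomposition $\pi_k^* = \tau_k^* \circ \sigma_k^* \circ \pi_{k-1}^*$ together with the induction hypothesis. When $i < k-1$, the induction hypothesis gives $\pi_{k-1}^*(\phi_i) = U_{k-1,i}\, x^{v_{k-1,i}}$ with no $y$-factor, and applying $\sigma_k^*(x) = z_k^{t_k} x^{q_k}$, $\sigma_k^*(y) = x^{m_k}(y + z_k^{s_k})$, followed by the $y$-shift $\tau_k^*$ (which uses $c_k(0) = 0$ for $k \geq 1$), preserves the form $x^{v_{k,i}}\, U_{k,i}$. When $i = k-1$, the induction hypothesis gives $\pi_{k-1}^*(\phi_{k-1}) = U_{k-1,k-1}\, x^{v_{k-1,k-1}}\, y$; the pull-back $\sigma_k^*(y) = x^{m_k}(y + z_k^{s_k})$ then absorbs the bare $y$-factor into an $x^{m_k}$ times the unit $(y + z_k^{s_k})$ whose value $z_k^{s_k}$ at the origin is non-zero, so no residual $y$-factor remains. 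Direct evaluation in both subcases gives $U_{k,i}(0,0) \neq 0$.

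The main obstacle lies in the degenerate subcase $m_k = 0$, which by the note following \eqref{eq:Nk} can only occur at $k = 1$: there $\sigma_k$ no longer sends the origin to itself, so one must compute $U_{k-1,i}(\sigma_k(0,0)) = U_{k-1,i}(0, z_k^{s_k})$ rather than appealing directly to $U_{k-1,i}(0,0) = \lambda_{k-1,i}$. This is handled by tracing back to the explicit base-case values $U_{0,-1} = U_{0,0} = 1$, which guarantee non-vanishing regardless of the point of evaluation.
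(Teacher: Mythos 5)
Your base case and your treatment of the cases $i<k$ are sound and follow essentially the paper's own route (push the inductive identities $\pi_{k-1}^*(\phi_i)=U_{k-1,i}x^{v_{k-1,i}}$ and $\pi_{k-1}^*(\phi_{k-1})=U_{k-1,k-1}x^{v_{k-1,k-1}}y$ through $(\sigma_k\circ\tau_k)^*$); your extra care with the horizontal-slope case $m_1=0$ is welcome, though there the non-vanishing at the origin rests on $z_1\ne 0$ (true because $P_1=Z_1$ would force $q_1\ell_1=1$), not only on $U_{0,i}\equiv 1$. The genuine gap is in the case $i=k$. After your Taylor expansion, $U_{k,k}(0,0)$ is the coefficient of $x^{v_{k,k}}$ in the $j=1$ term \emph{alone}, so $U_{k,k}(0,0)\ne 0$ requires not merely that the $j=1$ term be non-zero but that its $x$-valuation $r_k+\val(\phi_k'(\mu_k x^{e_k},S_k))$ realize the minimum $v_{k,k}$ over all $j\ge 1$. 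Separability gives finiteness of that valuation, not its minimality. Writing $\phi_k(\mu_k x^{e_k},Y)=\prod_i(Y-\eta_i)$ with $\eta_1=S_k$, the $j=1$ term has valuation $r_k+\sum_{i\ne 1}\val(S_k-\eta_i)$, whereas $\tfrac{1}{j!}\phi_k^{(j)}$ discards the $j-1$ factors of largest valuation; the minimum sits at $j=1$ only if $\val(S_k-\eta_i)\le r_k$ for every conjugate root $\eta_i\ne S_k$. Already for $d_k=2$, if a conjugate agreed with $S_k$ beyond precision $r_k$ the minimum would move to $j=2$ and $U_{k,k}(0,0)$ would vanish. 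This separation property of the truncated RPE is true but nontrivial, and you give no argument for it.

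The paper sidesteps this entirely: since \NPA{}($\phi_k$) generates the same transforms as \NPA{}($F$) up to step $k$ and terminates there with $N_k'=1$ (see \eqref{eq:dataphik}), identity \eqref{eq:pikHk} applied to $\phi_k$ yields $\pi_k^*(\phi_k)=x^{v}\,U(x,y)\,(y+\beta(x))$ with $U$ a unit from the outset, and $\beta=0$ then follows from $\phi_k(\mu_k x^{e_k},S_k)=0$ exactly as in your argument. To repair your version you would either have to prove the separation bound $\val(S_k-\eta_i)\le r_k$ (essentially the content of Proposition \ref{prop:CPE} and Lemma \ref{lem:abzeta} later in the paper) or fall back on \eqref{eq:pikHk} as the paper does.
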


\begin{proof} As \NPA{}($\phi_k$) generates the same transform
  $\pi_k$, we deduce from \eqref{eq:pikHk}:
  \[
    \pi_k^*(\phi_k) = x^{v_k(\phi_k)}\,U(x,y)\,(y+\beta(x))
  \]
  with $U \in \Ki_k [[x,y]]^\times$ and $\beta\in \Ki_k [[x]]$. From
  \eqref{eq:pikxy} and \eqref{eq:phik}, we get
  $x^{v_{k,k}} U(x,0)\beta(x) = \phi_k(\mu_k x^{e_k},S_{k}) = 0$,
  i.e. $\beta = 0$. Second equality follows, since
  $U(0, 0) = \lambda_{k,k}$ by definition of $\lambda_k$. First
  equality follows from the second one by applying the pull-backs
  $(\sigma_j\circ \tau_j)^*$, $j=i+1,\ldots,k$ to
  $\pi_i^*(\phi_i)=x^{v_{ii}}\,y\,U_{ii}$.
\end{proof}

\begin{cor}\label{cor:res}
  With the standard notations for intersection multiplicities and
  resultants, we have
  \[
  v_k (\phi_i ) =\frac{(\phi_i,\phi_k)_0}{f_k}=
  \frac{\val (\Res_y(\phi_i,\phi_k))}{f_k}
  , \ -1 \le i \le k - 1.
  \]
\end{cor}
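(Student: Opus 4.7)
The plan is to combine the parametric description of $\phi_k$ furnished by Lemma \ref{lem:PikPhi} with the standard valuation formula for intersection multiplicities of plane curve branches. First, I would evaluate the equality $\pi_k^*(\phi_i) = U_{k,i}\, x^{v_{k,i}}$ of Lemma \ref{lem:PikPhi} at $y = 0$, which gives
\[
  \phi_i(\mu_k x^{e_k}, S_k(x)) = x^{v_{k,i}}\, U_{k,i}(x, 0),
\]
an expression of $x$-valuation exactly $v_{k,i}$ since $U_{k,i}(0, 0) = \lambda_{k,i} \neq 0$. For the boundary case $i = -1$ this reduces to $\phi_{-1}(\mu_k x^{e_k}, S_k(x)) = \mu_k x^{e_k}$, consistent with $v_{k,-1} = e_k$.

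Next, I would decompose $\phi_k$ over $\algclos{\Ki}[[x]][y]$. Since $\phi_k$ is $\Ki$-irreducible of $y$-degree $d_k = e_k f_k$ and vanishes on the rational parametrization $(\mu_k x^{e_k}, S_k(x))$ defined over $\Ki_k$, applying the $f_k$ distinct $\Ki$-embeddings $\sigma_1, \dots, \sigma_{f_k} : \Ki_k \hookrightarrow \algclos{\Ki}$ produces $f_k$ conjugate parametrizations $(\sigma_j(\mu_k)\, x^{e_k}, \sigma_j(S_k)(x))$, each defining an irreducible branch $\phi_k^{(j)} \in \algclos{\Ki}[[x]][y]$ of $y$-degree $e_k$, so that $\phi_k = \prod_{j=1}^{f_k} \phi_k^{(j)}$.

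Then, using the classical valuation formula for the intersection multiplicity of $\phi_i$ with an irreducible branch along a parametrization, together with additivity of intersection multiplicity over components and its invariance under extension of scalars, I would obtain
\[
  (\phi_i, \phi_k)_0 = \sum_{j=1}^{f_k}\val\bigl(\phi_i(\sigma_j(\mu_k) x^{e_k}, \sigma_j(S_k)(x))\bigr) = \sum_{j=1}^{f_k}\val\bigl(\sigma_j(\phi_i(\mu_k x^{e_k}, S_k(x)))\bigr) = f_k\, v_{k,i},
\]
where the middle equality uses $\phi_i \in \Ki[[x]][y]$ and the last one uses that a $\Ki$-embedding preserves the $x$-valuation of the expression computed in the first step. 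The second equality $(\phi_i,\phi_k)_0 = \val(\Res_y(\phi_i,\phi_k))$ is then the classical formula for plane curves, applied to the coprime Weierstrass polynomials $\phi_i$ and $\phi_k$ in $\Ki[[x]][y]$, with the convention $\Res_y(x, \phi_k) = x^{d_k}$ covering the degenerate case $i = -1$.

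The main technical point lies in justifying the Galois decomposition: one needs that the $f_k$ conjugate parametrizations produce genuinely distinct irreducible factors in $\algclos{\Ki}[[x]][y]$, which follows from the minimality of $\phi_k$ and the degree count $\sum_j \deg_y \phi_k^{(j)} = f_k e_k = d_k = \deg_y \phi_k$, forcing the factorization to be exhausted by the conjugate branches.
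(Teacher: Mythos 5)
Your proof is correct and follows essentially the same route as the paper: evaluate $\pi_k^*(\phi_i)$ at $y=0$ via Lemma \ref{lem:PikPhi} to identify $v_{k,i}$ with the intersection multiplicity of $\phi_i$ with one branch of $\phi_k$, then multiply by the $f_k$ conjugate branches and invoke the classical resultant formula. You merely spell out the Galois decomposition and the valuation-invariance under embeddings that the paper leaves implicit.
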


\begin{proof}
  By point 2 in Lemma \ref{lem:PikPhi}, we deduce that
  \[
  v_k (\phi_i) := \val(\pi_k^*(\phi_i)) = \val(\phi_i(\mu_k
  x^{e_k}, S_{k}(x))) \text{ since } \val(S_k)\leq r_k.
  \]
   But this last integer coincides
  with the intersection multiplicity of $\phi_i$ with any one of the
  $f_k$ conjugate plane branches (i.e. irreducible factor in
  $\algclos{\Ki}[[x]][y]$) of $\phi_k$. The first equality follows. The
  second is well known (the intersection multiplicity at $(0, 0)$ of
  two Weierstrass polynomials coincides with the $x$-valuation of
  their resultant).
\end{proof}

\begin{lem}\label{lem:vLambda} We have  initial conditions
  $v_{0,-1}=1$, $v_{0,0}=0$, $\lambda_{0,-1}=1$ and $\lambda_{0,0}=1$.
  Let $k\ge 1$. The following relations hold $($we recall
  $q_k s_k - m_k t_k = 1$ with $0\le t_k < q_k):$
  \begin{enumerate}
  \item $v_{k,k-1} = q_k v_{k-1,k-1} + m_k$
  \item $v_{k,i} = q_k v_{k-1,i}$ for all $-1\le i < k- 1$.
  \item
    $\lambda_{k,k-1} = \lambda_{k-1,k-1} z_k^{t_k v_{k-1,k-1}
      +s_{k}}$.
  \item $\lambda_{k,i} =\lambda_{k-1,i}z_k^{t_k v_{k-1,i}}$ for all
    $-1 \le i < k - 1$.
  \end{enumerate}
\end{lem}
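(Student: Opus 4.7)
The plan is to proceed by induction on $k$, unwinding the recursive definition $\pi_k = \pi_{k-1} \circ \sigma_k \circ \tau_k$ and applying Lemma \ref{lem:PikPhi} at the previous level. The base case $k=0$ is immediate: since $\pi_0 = \tau_0$ just translates $y$ by $c_0(x)$, one has $\pi_0^*(\phi_{-1}) = x$ and $\pi_0^*(\phi_0) = (y+c_0(x)) - c_0(x) = y$, from which the four initial conditions $v_{0,-1}=1$, $v_{0,0}=0$, $\lambda_{0,-1}=1$, $\lambda_{0,0}=1$ are read off directly.

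For $k \ge 1$, Lemma \ref{lem:PikPhi} at level $k-1$ gives $\pi_{k-1}^*(\phi_i) = U_{k-1,i}\,x^{v_{k-1,i}}$ for $-1 \le i < k-1$ and $\pi_{k-1}^*(\phi_{k-1}) = U_{k-1,k-1}\,x^{v_{k-1,k-1}}\,y$, with $U_{k-1,i}(0,0) = \lambda_{k-1,i}$ in both cases. Since $\sigma_k^*$ substitutes $x \mapsto z_k^{t_k} x^{q_k}$, $y \mapsto x^{m_k}(y+z_k^{s_k})$ and $\tau_k^*$ further substitutes $y \mapsto y+c_k(x)$ with $c_k(0)=0$, applying $\tau_k^*\sigma_k^*$ produces
\[
  \pi_k^*(\phi_i) = U_{k-1,i}\bigl(z_k^{t_k} x^{q_k},\, x^{m_k}(y + c_k(x) + z_k^{s_k})\bigr)\cdot z_k^{t_k v_{k-1,i}}\, x^{q_k v_{k-1,i}} \cdot E_i,
\]
where $E_i = 1$ for $i < k-1$ and $E_{k-1} = x^{m_k}(y + c_k(x) + z_k^{s_k})$. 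Reading off the $x$-valuation yields items 1 and 2, the extra summand $+m_k$ for $i=k-1$ arising from the factor $x^{m_k}$ inside $E_{k-1}$.

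For the initial coefficients, the unit factor above evaluates at $x=0$ to $U_{k-1,i}(0,0) = \lambda_{k-1,i}$ provided $m_k > 0$. Combined with the $z_k^{t_k v_{k-1,i}}$ prefactor, this proves item 4; for item 3 one further multiplies by the $y$-initial coefficient of the surviving linear factor $y + z_k^{s_k}$, namely $z_k^{s_k}$, yielding the exponent $t_k v_{k-1,k-1} + s_k$. The only delicate situation is $m_k = 0$, which by the standing conventions can only occur for $k=1$. For items 2 and 4, the constraint $-1 \le i < k-1$ then forces $i=-1$, and since $U_{0,-1} = 1$ the substitution still evaluates to $1 = \lambda_{0,-1}$, so the formulas are unaffected. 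For items 1 and 3 with $k=1$, $m_1=0$, $i=0$ (which forces $q_1=s_1=1$ and $t_1=0$), a direct computation gives $\pi_1^*(\phi_0) = y + c_1(x) + z_1$, again matching the stated formulas. The main tedious part is keeping track of the $z_k$ prefactors arising both from substitution in the unit $U_{k-1,i}$ and from the initial-coefficient extraction, but no genuine obstacle appears.
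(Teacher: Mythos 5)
Your proof is correct and follows essentially the same route as the paper: unwind $\pi_k^*=\tau_k^*\circ\sigma_k^*\circ\pi_{k-1}^*$, apply Lemma \ref{lem:PikPhi} at rank $k-1$, and read off the valuation and the initial coefficient, the factor $x^{m_k}\tilde y$ accounting for the extra $+m_k$ and $z_k^{s_k}$ in items 1 and 3. In fact you are slightly more careful than the paper's own proof, which assumes $m_k>0$ throughout and does not explicitly treat the non-Weierstrass case $k=1$, $m_1=0$ that you check directly.
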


\begin{proof} Initial conditions follow straightforwardly from the
  definitions. From point 1 of Lemma \ref{lem:PikPhi} and the
  definition of $\pi_k$, we have
  $\pi_k^*(\phi_{k-1})=\tau_k^*\circ \sigma_k^*\circ
  \pi_{k-1}^*(\phi_{k-1})$, i.e.%
\[
\pi_k^*(\phi_{k-1})=z_k^{t_k v_{k-1,k-1}} x^{q_k v_{k-1,k-1} +
  m_k}\,\tilde y U_{k-1,k-1}(z_k^{t_k}
x^{q_k},x^{m_k}\,\tilde y).
\]
where $\tilde y=y+z_k^{s_k}+c_k(x)$. As $c_k(0)= 0$,
$m_k > 0$ and $z_k\ne 0$, it follows that
\[
\pi_k^*(\phi_{k-1})=z_k^{t_k v_{k-1,k-1}+s_k} x^{q_k\,v_{k-1,k-1}+m_k}
\tilde{U}(x,y)
\]
with $\tilde{U}(0,0)=U_{k-1,k-1}(0,0)$, that is $\lambda_{k-1,k-1}$ by
point 1 of Lemma \ref{lem:PikPhi}. Items $1$ and $3$ follow. Similarly,
using point 2 of Lemma \ref{lem:PikPhi}, we get for all $i<k-1$
\[
\pi_k^*(\phi_i)=\tau_k^*\circ \sigma_k^*\circ
\pi_{k-1}^*(\phi_i)=z_k^{t_k v_{k-1,i}} x^{q_k v_{k-1,i}}
U_{k-1,i}(z_k^{t_k} x^{q_k},x^{m_k}(y+z_k^{s_k}+c_k(x))).
\]
As $U_{k-1,i}(0,0)=\lambda_{k-1,i}\ne 0$ once again by Point 2 of
Lemma \ref{lem:PikPhi}, items $2$ and $4$ follow.
\end{proof}

The proof of both theorems is based on the following key result:

\begin{prop}\label{prop:key} For all $i, w \in\Ni$, the family
  $\left(\Lambda^B , B \in \Bc(i,w)\right)$ is free over $\Ki$. In
  particular, $\Card ( \Bc(i,w))\le f_k$.
\end{prop}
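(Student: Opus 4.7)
The plan is to proceed by induction on $k$, exploiting the recursive description of the $\lambda_{k,i}$'s provided by Lemma \ref{lem:vLambda}. The base case $k=0$ follows immediately from the initial conditions $\lambda_{0,-1}=\lambda_{0,0}=1$ together with Example \ref{xmp:k0}: $\Bc(i,w)$ has at most one element, the family is trivially free, and $f_0=1$.

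For the inductive step, the common factor $\lambda_{k,k}^{i}$ appears in every $\Lambda^B$ with $B\in\Bc(i,w)$, so one may divide it out and focus on $\prod_{j=-1}^{k-1}\lambda_{k,j}^{b_j}$. Applying points $3$ and $4$ of Lemma \ref{lem:vLambda}, this product factors as $z_k^{e(B)}\cdot\prod_{j=-1}^{k-1}\lambda_{k-1,j}^{b_j}$, with $e(B)=t_k\sum_{j=-1}^{k-1}b_j v_{k-1,j}+b_{k-1}s_k$. The constraint $\langle B,V\rangle=w$ combined with points $1$ and $2$ of Lemma \ref{lem:vLambda} gives $\sum_{j=-1}^{k-1}b_j v_{k-1,j}=(w-i\,v_{k,k}-b_{k-1}m_k)/q_k$, and substituting this together with the B\'ezout relation $q_k s_k-t_k m_k=1$ yields the clean formula $e(B)=(t_k(w-i\,v_{k,k})+b_{k-1})/q_k$.

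Next, I would group the elements of $\Bc(i,w)$ by the value of $b_{k-1}$. Integrality of $e(B)$ forces $b_{k-1}$ into a single residue class modulo $q_k$, and the constraint $0\le b_{k-1}<q_k\ell_k$ leaves at most $\ell_k$ admissible values $c_1<\cdots<c_t$; the corresponding $z_k$-exponents $e(B)$ are then integers lying in an interval of length strictly less than $\ell_k$. For each fixed value $b_{k-1}=c$, the truncation $B^-=(b_{-1},\ldots,b_{k-1})$ automatically satisfies the level-$(k-1)$ constraints defining $\Bc$, and belongs to the analogue at level $k-1$ of the set $\Bc(c,w'(c))$ for an explicit $w'(c)$; the inductive hypothesis then asserts that the corresponding family $(\prod_{j=-1}^{k-1}\lambda_{k-1,j}^{b_j})$ is $\Ki$-free.

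Combining these ingredients, suppose $\sum_{B\in\Bc(i,w)}\alpha_B\Lambda^B=0$ with $\alpha_B\in\Ki$. After dividing out $\lambda_{k,k}^i z_k^{e_{\min}}$ and regrouping by the value of $b_{k-1}$, the identity reads $\sum_{j=1}^t z_k^{a_j}\,s_j=0$ where the $a_j\in\{0,\ldots,\ell_k-1\}$ are distinct and $s_j\in\Ki_{k-1}$ is the inner sum $\sum_{B:\,b_{k-1}=c_j}\alpha_B\prod_{i}\lambda_{k-1,i}^{b_i}$. Since $(1,z_k,\ldots,z_k^{\ell_k-1})$ is a $\Ki_{k-1}$-basis of $\Ki_k$, each $s_j$ vanishes, and the inductive hypothesis forces every $\alpha_B=0$; the cardinality bound $\Card(\Bc(i,w))\le f_k$ then follows since the family is free in $\Ki_k$, which has $\Ki$-dimension $f_k$. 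The main technical obstacle will be the bookkeeping—checking integrality of $e(B)$ on $\Bc(i,w)$, verifying that $B\mapsto B^-$ maps correctly into the level-$(k-1)$ fiber with parameters $(c,w'(c))$, and exploiting the asymmetric constraint $b_{k-1}<q_k\ell_k$, which is imposed at level $k$ but not at level $k-1$ and is precisely what confines the $z_k$-exponents to an interval of length less than $\ell_k$, thereby enabling the final basis argument.
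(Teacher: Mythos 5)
Your proposal is correct and follows essentially the same route as the paper's proof: induction on $k$, factoring out $\lambda_{k,k}^{i}$, using Lemma \ref{lem:vLambda} to write each $\Lambda^B$ as $z_k^{e(B)}$ times a level-$(k-1)$ product, deriving $q_k e(B)=b_{k-1}+t_k(w-i\,v_{k,k})$ from the B\'ezout relation, confining the exponents to a window of length $\ell_k$ via the constraint $b_{k-1}<q_k\ell_k$, and concluding with the $\Ki_{k-1}$-basis $(1,z_k,\ldots,z_k^{\ell_k-1})$ of $\Ki_k$ plus the inductive hypothesis. The only cosmetic differences are that you carry the index $i$ through rather than reducing to $i=0$ first, and you group the terms by the value of $b_{k-1}$ rather than by the value of the $z_k$-exponent, which amounts to the same partition.
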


\begin{proof} We show this property by induction on $k$. If $k = 0$,
  the result is obvious since $\Bc(i,w)=\{(i,w)\}$ and
  $\Lambda=(1,1)$. Suppose $k > 0$. As $\lambda_{k,k}$ is invertible
  and $b_k=i$ is fixed, we are reduced to show that the family
  $\left(\Lambda^B , B \in \Bc(0,w)\right)$ is free for all
  $w\in \Ni$. Suppose given a $\Ki$-linear relation
  \begin{equation}\label{eq:cBlambdaB}
    \sum_{B\in \Bc(0,w)} c_B \Lambda^B =   \sum_{B\in \Bc(0,w)} c_B \lambda_{k,-1}^{b_{-1}}\cdots\lambda_{k,k-1}^{b_{k-1}}= 0.
  \end{equation}
  Using $b_k = 0$, points 3 and 4 in Lemma \ref{lem:vLambda} give
  $\Lambda^B = \mu_B z_k^{N_B}$ where
  \[
    \mu_B = \prod_{j=-1}^{k-1} \lambda_{k-1,j}^{b_j} \in
    \Ki_{k-1}\quad \textrm{and}\quad N_B = b_{k-1} s_k +
    t_k\sum_{j=-1}^{k-1} b_j v_{k-1,j}.
  \]
  Points 1 ($q_k\,v_{k-1,k-1}=v_{k,k-1}-m_k$) and 2
  ($q_k\,v_{k-1,j} = v_{k,j}$) in Lemma \ref{lem:vLambda} give
  \begin{equation}\label{eq:qkNB}
    q_k N_B = b_{k-1} (q_k s_k - m_k t_k ) + t_k \sum_{j=-1}^{k-1} b_j
    v_{k,j} = b_{k-1} + t_k w,
  \end{equation}
  the second equality using $\langle B, V \rangle = w$ and
  $b_k=0$. Since $0\le b_{k-1} < q_k \ell_k$ and $N_B$ is an integer,
  it follows from \eqref{eq:qkNB} that $N_B=n+\alpha$ where
  $n=\lceil t_k w/q_k\rceil$ and $0\leq \alpha <\ell_k$. Dividing
  (\ref{eq:cBlambdaB}) by $z_k^n$, we get
  \[
    \sum_{\alpha=0}^{\ell_k -1} a_{\alpha} z_{k}^\alpha = 0, \quad
    \text{ where}\quad a_{\alpha} =\sum_{B\in \Bc(0,w) ,N_B =
      \alpha+n} c_{B} \mu_B .
  \]
  Since $a_{\alpha} \in \Ki_{k-1}$ and $z_k\in \Ki_k$ has
  minimal polynomial $P_k$ of degree $\ell_k$ over $\Ki_{k-1}$, this implies
  $a_{\alpha} = 0$ for all $0\leq \alpha < \ell_k$, i.e., using
  \eqref{eq:qkNB}:
  \[
    \sum_{\substack{B\in \Bc(0,w)\\ b_{k-1}=q_k\,(\alpha+n)-t_k\,w}}
    c_{B} \lambda_{k-1,-1}^{b_{-1}}\cdots \lambda_{k-1,k-1}^{b_{k-1}}
    = 0.
  \]
  By induction, we get $c_B=0$ for all $B\in \Bc(0,w)$, as required.
  The first claim is proved. The second claim follows immediately
  since $\Lambda^B \in \Ki_k$ is non zero for all $B$.
\end{proof}

\begin{cor}\label{cor:main}
  Consider $G=\sum_{B\in\Bc(i)} g_B \Phi^B$ non zero.  Then
  $\pi_k^*(G)=\Ut x^{w}\,y^i$ with $\Ut\in \Ki_k[[x,y]]^ \times$,
    $w=\min_{ g_B\ne 0} \langle B,V\rangle$ and
    $\Ut(0,0)=\sum_{B\in \Bc(i,w)}g_B\Lambda^B\ne 0$.
  In particular, $v_k(G)=w$ and $\lambda_k(G)=\Ut(0,0)$.
\end{cor}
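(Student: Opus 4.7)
The plan is to reduce everything to the explicit identities for $\pi_k^*(\phi_j)$ already provided by Lemma \ref{lem:PikPhi}, and then to isolate a common factor $x^w y^i$ in $\pi_k^*(G)$. For $B=(b_{-1},\ldots,b_k)\in\Bc(i)$ (so $b_k=i$), multiplying these identities gives
$$\pi_k^*(\Phi^B) \;=\; x^{\langle B,V\rangle}\, y^i\, W_B,\qquad W_B \;:=\; \prod_{j=-1}^{k} U_{k,j}^{b_j} \;\in\; \Ki_k[[x,y]]^\times,$$
with $W_B(0,0) = \prod_j \lambda_{k,j}^{b_j} = \Lambda^B$. Summing over $B\in\Bc(i)$ and factoring out $x^w y^i$, where $w := \min_{g_B\ne 0} \langle B,V\rangle$, I obtain $\pi_k^*(G) = \Ut\, x^w\, y^i$ with
$$\Ut \;=\; \sum_{B\in\Bc(i)} g_B\, x^{\langle B,V\rangle - w}\, W_B \;\in\; \Ki_k[[x,y]].$$
Setting $x=0$ kills every contribution with $\langle B,V\rangle > w$, leaving $\Ut(0,0) = \sum_{B\in \Bc(i,w)} g_B\, \Lambda^B$.

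The only non-trivial step is to check that $\Ut(0,0)\ne 0$, so that $\Ut$ is indeed a unit. By the very definition of $w$, at least one $B\in \Bc(i,w)$ satisfies $g_B\ne 0$, so the previous sum is a non-trivial $\Ki$-linear combination of the family $(\Lambda^B)_{B\in\Bc(i,w)}$. Proposition \ref{prop:key} asserts precisely that this family is $\Ki$-linearly independent, hence the combination is non zero and $\Ut\in \Ki_k[[x,y]]^\times$, as claimed.

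The two ``in particular'' assertions then follow immediately from the definitions of $v_k$ and $\lambda_k$: $v_k(G) = \val(\Ut\, x^w y^i) = w$ since $\Ut$ is a unit, and $\pi_k^*(G)/x^{v_k(G)} = \Ut\, y^i$ evaluated at $x=0$ gives $\Ut(0,y)\, y^i$, whose trailing coefficient in $y$ is $\Ut(0,0)$.
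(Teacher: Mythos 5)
Your proof is correct and follows essentially the same route as the paper's: multiply the identities of Lemma \ref{lem:PikPhi} to get $\pi_k^*(\Phi^B)=x^{\langle B,V\rangle}y^i\,U^B$, factor out $x^w y^i$, and invoke Proposition \ref{prop:key} to see that the value at the origin, $\sum_{B\in\Bc(i,w)}g_B\Lambda^B$, is non zero. No gaps; the derivation of the two ``in particular'' statements from the definitions of $v_k$ and $\lambda_k$ is also exactly as in the paper.
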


\begin{proof}
  By linearity of $\pi_k^*$, denoting
  $U = (U_{k,-1} ,\ldots , U_{k,k} )$ with $U_{k,i}$ defined in Lemma
  \ref{lem:PikPhi}, we have
  \[
    \pi_k^*(G) = \left(\sum_{B\in\Bc(i)}g_B U^B x^{<B,V>}\right)y^i
    \text{ with } U(0,0)=\Lambda.
  \]
  Letting $w=\min_{ g_B\ne 0} \langle B,V\rangle$, we deduce
  \[
    \pi^{*}_k(G) =\left(\sum_{B\in \Bc(i,w)} g_B \Lambda^B + R \right)
    x^{w} y^i \quad \text{ where } R\in \Ki_k[[x,y]] \text{ satisfies } R(0,0)=0.
  \]
  As $\sum_{B\in\Bc(i,w)}g_B \Lambda^B \neq 0$ by Proposition
  \ref{prop:key}, the first two equalities follows. The last two
  equalities follow from the definitions of $v_k(G)$ and
  $\lambda_k(G)$.
\end{proof}

\paragraph{Proof of Theorems \ref{thm:NPphi} and \ref{thm:EdgePoly}.} We prove both theorems
simultaneously. We may write $F=\sum_{i=0}^{N_k} \sum_{B\in \Bc(i)} f_B\Phi^B$. Hence, Corollary \ref{cor:main} combined with the definition
of $w_i$ and the linearity of $\pi_k^*$ implies
\begin{equation}\label{eq:Fk}
  F_k :=\frac{\pi^{*}_k(F)}{x^{v_k(F)}}
  =\sum_{i=0}^{N_k} \, \Ut_i\,x^{w_i}\, y^i
\end{equation}
where $\Ut_i\in \Ki_k[[x,y]]$ is $0$ if $w_i=\infty$, and
$\Ut_i(0,0)=\sum_{B\in \Bc(i,w_i+v_k(F))} f_B \Lambda^B\ne 0$
otherwise.  Let $\NP$ stands for the lower convex hull of the set
$((i,w_i),i=0,\ldots,N_k)$ and let $\NPb$ be the subset of negative
slopes.  If $k=0$, then $H_0=F_0$ and we have moreover $\Ut_i\in \Ki$
for all $i$ (use that $\pi_0^*(\phi_0)=y$). It follows immediately
from \eqref{eq:Fk} that $\NP(H_0)=\NP$, as required. If $k>0$, then we
deduce from \eqref{eq:Fk} that $\NPb(F_k)=\NPb$.  Combined with
\eqref{eq:pikHk}, we get $\NPb(H_k)=\NPb$. As $k\ge 1$, $H_k$ is
Weierstrass of degree $N_k$, 
which forces $\NP(H_k)=\NP$ as required. This proves Theorem \ref{thm:NPphi}. Combined with
(\ref{eq:pikHk}), we deduce more precisely that there exists
$\mu \in \Ki_k^\times$ such that
\begin{equation}\label{eq:muH}
  \mu\bar{H}_k=\sum_{(i,w_i )\in\NP} \left(\sum_{B\in \Bc(i,w_i+v_k(F))} f_B \Lambda^B \right) x^{w_i} y^i.
\end{equation}
Since $\bar{H}_k$ is monic of degree $N_k$, we get $w_{N_k} = 0$ and
$w_i \ge 0$ for $i< N_k$ and we deduce from \eqref{eq:muH} that
\begin{equation}\label{eq:mu}
  \mu=\sum_{B\in \Bc(N_k,v_k(F))} f_B \Lambda^B .
\end{equation}
But $F$ and $\phi_k$ being monic of respective degrees $\dy$ and
$d_k$, the vector $B_0 = (0,\ldots 0, N_k) \in \Bc$ is the unique
exponent in the $\Phi$-adic expansion of $F$ with last coordinate
$b_k = N_k=d/d_k$ and we have moreover $f_{B_0} = 1$.  Since
$\Bc(N_k,v_k(F))$ is non empty by construction, this forces
$\Bc(N_k,v_k(F)) = \{B_0\}$ and (\ref{eq:mu}) becomes
$\mu = \Lambda^{B_0}$. Theorem \ref{thm:EdgePoly} follows.
$\hfill\square$

\subsection{Formulas for $\lambda_k(\phi_k)$ and $v_k(\phi_k)$}
In order to use Theorems \ref{thm:NPphi} and \ref{thm:EdgePoly} for
computing the edge data of $H_k$, we need to compute
$v_{k,k}:=v_k(\phi_k)$ and $\lambda_{k,k}:=\lambda_k(\phi_k)$ in terms
of the previously computed edges data
$(q_1,m_1,P_1,N_1),\ldots,(q_k,m_k,P_k,N_k)$ of $F$. We begin with the
following lemma:

\begin{lem}\label{lem:FPhi}
  Let $0\le k \le g$. We have $v_k(F)= N_k v_{k,k}$ and
  $\lambda_k(F) =\lambda_{k,k}^{N_k}$.
\end{lem}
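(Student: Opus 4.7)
The plan is to derive both identities as direct consequences of the machinery built in the preceding subsections, without introducing any new combinatorics. The central observation is a unicity statement on the $\Phi$-adic expansion of $F$: since $F$ is monic of degree $d = N_k d_k$ in $y$ and $\phi_k$ is monic of degree $d_k$, comparing the coefficient of $y^d$ in $F = \sum_B f_B \Phi^B$ forces that $B_0 := (0,\ldots,0,N_k)$ is the unique $B \in \Bc$ with $b_k = N_k$ and $f_B \ne 0$, and that $f_{B_0} = 1$. This is already recorded in the proof of Theorem \ref{thm:EdgePoly} and will be the engine of both identities.

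For the first identity, I would invoke Theorem \ref{thm:NPphi}: since $H_k$ is monic of degree $N_k$, the point $(N_k, 0)$ is a vertex of $\NP(H_k)$, which forces $w_{N_k} = 0$. Unwinding the definition \eqref{eq:wj} of $w_{N_k}$ via the unicity of $B_0$, the infimum is attained at $B_0$ and we read
\[
0 \;=\; w_{N_k} \;=\; \langle B_0, V\rangle - v_k(F) \;=\; N_k v_{k,k} - v_k(F),
\]
whence $v_k(F) = N_k v_{k,k}$. In the boundary case $k=0$ one has $v_{0,0} = 0$ and $v_0(F) = 0$ since $F$ is monic, so the identity holds trivially.

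For the second identity, I would use the factorisation $F_k := \pi_k^*(F)/x^{v_k(F)} = U_k H_k$ from \eqref{eq:pikHk} with $U_k \in \Ki_k[[x,y]]^\times$. The proof of Theorem \ref{thm:EdgePoly} has already identified the constant $\mu = U_k(0,0)$: combining the unicity of $B_0$ with Proposition \ref{prop:key} forces $\Bc(N_k, v_k(F)) = \{B_0\}$, from which $U_k(0,0) = \Lambda^{B_0} = \lambda_{k,k}^{N_k}$. For $k \geq 1$, the Weierstrass character of $H_k$ gives $H_k(0,y) = y^{N_k}$, hence $F_k(0,y) = U_k(0,y)\, y^{N_k}$, and the trailing coefficient in $y$ is precisely $U_k(0,0) = \lambda_{k,k}^{N_k}$. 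The case $k = 0$, where $U_0 \equiv 1$ and $\lambda_{0,0} = 1$, is then consistent with the initial data.

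The only technical point that requires attention beyond what is already present in the proofs of Theorems \ref{thm:NPphi} and \ref{thm:EdgePoly} is to observe that for $k \geq 1$ the corrections $U_k(0,y) - U_k(0,0)$ have strictly positive $y$-order, so after multiplication by $y^{N_k}$ they contribute only to powers of $y$ strictly greater than $N_k$ and do not perturb the trailing coefficient. Thus the lemma is essentially a repackaging of the constant $\mu$ already computed, and the expected main obstacle—matching a global analytic datum ($\lambda_k(F)$) to a formal product ($\lambda_{k,k}^{N_k}$)—dissolves once the $\Phi$-adic expansion and Weierstrass structure of $H_k$ are used in tandem.
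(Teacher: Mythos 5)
Your argument is essentially the paper's own proof: both identities are read off from the facts, established in the proof of Theorems \ref{thm:NPphi} and \ref{thm:EdgePoly}, that $B_0=(0,\dots,0,N_k)$ is the unique exponent with $b_k=N_k$ and that $\Bc(N_k,v_k(F))=\{B_0\}$, giving $v_k(F)=\langle B_0,V\rangle=N_kv_{k,k}$ and $\mu=U_k(0,0)=\Lambda^{B_0}=\lambda_{k,k}^{N_k}$; your extraction of the trailing coefficient from $F_k(0,y)=U_k(0,y)\,y^{N_k}$ is just the paper's ``$\bar F_k(0,y)=\Lambda^{B_0}\bar H_k(0,y)$ with $\bar H_k$ monic'' step phrased on $F_k$ rather than on the \edgepoly{}s. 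Your treatment of $k=0$ (where $H_0$ need not be Weierstrass, so the trailing coefficient of $H_0(0,y)$ is not \emph{a priori} the top coefficient) is no more and no less detailed than the paper's, so the proposal matches the intended proof.
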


\begin{proof}
  We have shown during the proof of Theorems \ref{thm:NPphi} and
  \ref{thm:EdgePoly} that $\Bc(N_k,v_k(F))=\{B_0\}$ where
  $B_0=(0,\ldots,0,N_k)$. By definition of $\Bc(N_k,v_k(F))$, we get
  the first point. By definition of $\lambda_k$, we have
  $\lambda_k(F)=\tc{F_k(0,y)}=\tc{\bar{F_k}(0,y)}$ and we have shown
  that $\bar{F_k}(0,y)=\Lambda^{B_0}\bar{H_k}(0,y)$. Since $\bar{H_k}$
  is monic, we deduce that $\tc{\bar{F_k}(0,y)}=\Lambda^{B_0}$, giving
  the second claim.
\end{proof}

\begin{prop}\label{thmvkk}
  For any $1\le k \le g$, we have the equalities
  $$v_{k,k}= q_k \ell_k v_{k,k-1}\quad and \quad \lambda_{k,k}=q_k
  z_k^{1-s_k-\ell_k}P_k'(z_k)\lambda_{k,k-1}^{q_k \ell_k}.$$
\end{prop}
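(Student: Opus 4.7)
The plan is to apply Lemma \ref{lem:FPhi} to $\phi_k$ at level $k-1$, combine this with \eqref{eq:pikHk} to factor $\pi_{k-1}^*(\phi_k)$, and then track the dominant term produced by $(\sigma_k \circ \tau_k)^*$. Since $\NPA(\phi_k)$ generates the same transformations $\tau_i,\sigma_i$ for $i\le k$ as $\NPA(F)$, the $(k-1)$-th minimal polynomial of $\phi_k$ is $\phi_{k-1}$, and \eqref{eq:dataphik} gives $N_{k-1}'(\phi_k) = q_k \ell_k$. Lemma \ref{lem:FPhi} therefore yields
\[
v_{k-1}(\phi_k) = q_k \ell_k\, v_{k-1,k-1}, \qquad \lambda_{k-1}(\phi_k) = \lambda_{k-1,k-1}^{q_k \ell_k},
\]
and \eqref{eq:pikHk} then produces $\pi_{k-1}^*(\phi_k) = x^{v_{k-1}(\phi_k)}\, U \cdot H^{(\phi_k)}_{k-1}$, with $U \in \Ki_{k-1}[[x,y]]^{\times}$, $U(0,0) = \lambda_{k-1,k-1}^{q_k \ell_k}$, and $H^{(\phi_k)}_{k-1}$ monic of degree $q_k \ell_k$ with edge data $(q_k, m_k, P_k, 1)$; monicity forces its boundary polynomial to equal exactly $P_k(y^{q_k}/x^{m_k})\, x^{m_k \ell_k}$.

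Applying $(\sigma_k \circ \tau_k)^*$ and using $c_k(0)=0$, the dominant contribution comes from this boundary polynomial: the Puiseux substitution transforms it into $z_k^{t_k m_k \ell_k}\, x^{q_k m_k \ell_k}\, P_k\bigl(z_k^{-t_k m_k}(y + c_k(x) + z_k^{s_k})^{q_k}\bigr)$, and Taylor-expanding $P_k$ around $z_k$ (using $P_k(z_k)=0$ and the B\'ezout identity $q_k s_k - t_k m_k = 1$) yields a leading $y^1$-coefficient equal to $q_k\, z_k^{1 - s_k}\, P_k'(z_k)$. Terms strictly above the edge of $H^{(\phi_k)}_{k-1}$ and the Abhyankar correction $c_k(x)$ contribute only to the $O(x)$ remainder. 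Altogether one obtains $\pi_k^*(\phi_k) = x^{v_{k,k}}\, y\, U_{k,k}(x,y)$ with
\[
v_{k,k} = q_k\, v_{k-1}(\phi_k) + q_k m_k \ell_k, \qquad \lambda_{k,k} = q_k\, z_k^{t_k v_{k-1}(\phi_k) + t_k m_k \ell_k + 1 - s_k}\, P_k'(z_k)\, \lambda_{k-1,k-1}^{q_k \ell_k}.
\]

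To conclude, items 1 and 3 of Lemma \ref{lem:vLambda} give $v_{k,k-1} = q_k v_{k-1,k-1} + m_k$ and $\lambda_{k-1,k-1}^{q_k \ell_k} = z_k^{-q_k \ell_k(t_k v_{k-1,k-1} + s_k)}\, \lambda_{k,k-1}^{q_k \ell_k}$. The first immediately yields $v_{k,k} = q_k \ell_k (q_k v_{k-1,k-1} + m_k) = q_k \ell_k\, v_{k,k-1}$. Substituting the second into the formula for $\lambda_{k,k}$ and using $v_{k-1}(\phi_k) = q_k \ell_k v_{k-1,k-1}$, the exponent of $z_k$ collapses to $1 - s_k + \ell_k(t_k m_k - q_k s_k) = 1 - s_k - \ell_k$ by B\'ezout, producing the claimed formula. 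The main technical point is justifying that $c_k(x)$ (coming from $F$'s Abhyankar shift, not from $\phi_k$'s) and the above-edge terms of $H^{(\phi_k)}_{k-1}$ perturb only the subdominant $O(x)$ remainder; this follows from $c_k(0)=0$ together with the strict convexity of the Newton polygon above the edge.
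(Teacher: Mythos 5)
Your proof is correct and follows essentially the same route as the paper's: both apply Lemma \ref{lem:FPhi} to $\phi_k$ at level $k-1$ to get $v_{k-1}(\phi_k)=q_k\ell_k v_{k-1,k-1}$ and $\lambda_{k-1}(\phi_k)=\lambda_{k-1,k-1}^{q_k\ell_k}$, write $\pi_{k-1}^*(\phi_k)$ via \eqref{eq:pikHk} with a degenerated Weierstrass factor whose \edgepoly{} is $P_k(y^{q_k}/x^{m_k})x^{m_k\ell_k}$, push through $(\sigma_k\circ\tau_k)^*$ to extract the dominant term $q_kz_k^{1-s_k}P_k'(z_k)\,y$ from $P_k(z_k)=0$ and the B\'ezout relation, and then collapse the $z_k$-exponent using Lemma \ref{lem:vLambda}. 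The bookkeeping of the $O(x)$ remainder (above-edge terms and $c_k(0)=0$) matches the paper's handling of the terms $R_0,R_1,R_2$.
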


\begin{proof}
  To simplify the notations of this proof, let us denote
  $w=v_{k-1}(\phi_k)$, $\gamma=\lambda_{k-1}(\phi_k)$ and
  $(m,q,s,t,\ell,z)=(m_k,q_k,s_k,t_k,\ell_k,z_k)$.  By definition of
  $\phi_k$, both $\phi_k$ and $F$ generate the same transformations
  $\sigma_i$ and $\tau_i$ for $i\leq k$. As in (\ref{eq:pikHk}), there
  exists $\Ut_{k-1}\in\Ki[[x,y]]^\times$ satisfying
  $\Ut_{k-1}(0,0)=\gamma$ and $\tilde{H}_{k-1}\in\Ki[[x]][y]$
  Weierstrass of degree $q\,\ell$ such that
  $\pi_{k-1}^{*}(\phi_k)= x^w \tilde{H}_{k-1} \Ut_{k-1}$, where
  \begin{equation*}\label{eqtildeHk}
    \tilde H_{k-1}(x,y)=P_k(x^{-m}\,y^q)x^{m \ell}+\sum_{m i + q j > m q \ell} h_{ij} x^j y^i.
  \end{equation*}
  We deduce that there exists $R_0,R_1,R_2\in \Ki_k[[x,y]]$ such that
  \begin{eqnarray*}
    \pi_{k}^{*} (\phi_k) & := & (\pi_{k-1}^{*} (\phi_k)) (z^t x^q,x^m(y+z^s+c_k(x))) \\
                         & =  &z^{tw}x^{qw}\left(z^{tm\ell}\,x^{mq\ell}(G_k+x R_0)\right)\,\left(\gamma+x\,R_1+y\,R_2\right)%
  \end{eqnarray*}%
  where we let
  $G_k(x,y):=P_k(z^{-tm}(y+z^s+c_k(x))^q)\in\Ki_k[[x]][y]$. It follows
  that there exists $R\in \Ki_k[[x,y]]$ such that
  \begin{equation}\label{eqpikstar}
    \pi_{k}^{*} (\phi_k) = \gamma\,z^{t(w+m\ell)}\,x^{q\,(w+m\ell)} \left(G_k(1+y R_2)+x\,R\right)
  \end{equation}      
  As $G_k(0,y)$ is not identically zero, we deduce from
  \eqref{eqpikstar} that $v_k(\phi_k)=q\,(w+m\ell)$. Using Lemma
  \ref{lem:FPhi} for $F=\phi_k$ and the valuation $v_{k-1}$, together
  with Point 1 of Lemma \ref{lem:vLambda}, we have
  $w+m\ell=\ell v_{k,k-1}$, which implies $v_{k,k}=q\,\ell\,v_{k,k-1}$
  as expected.  Using $c_k(0)=0$ and the relation $sq-tm=1$, we see
  that $G_k(0,0)=P_k(z_k)=0$ and
  $\partial_y G_k (0,0)=q z^{1-s} P'_k(z)$ is non zero. Combined with
  \eqref{eqpikstar}, we get that
  \[
    \lambda_{k,k}=\gamma z^{t\,\ell\,v_{k,k-1}} \left(q z^{1-s}
      P'_k(z)\right) =
    \gamma\,z^{q\,\ell\,t\,v_{k-1,k-1}+\ell\,t\,m+1-s}\,q\,P_k'(z),
  \]
  the second equality using Point 1 of Lemma \ref{lem:vLambda} once
  again. Now, using Lemma \ref{lem:FPhi} for $F=\phi_k$ and the
  morphism $\lambda_{k-1}$, we get $\gamma=\lambda_{k-1,k-1}^{\ell q}$
  so that
  $\lambda_{k,k} =q P'_k(z)\lambda_{k,k-1}^{q \ell} z^{1-s-\ell}$ as
  expected.
\end{proof}

\begin{rem}\label{rem:zkdiv0}
  If $q_k=1$, the second formula simplifies as
  $\lambda_{k,k}=P_k'(z_k)\lambda_{k-1,k-1}^{\ell_k}$. Moreover, we
  then have $t=0$ and $s=1$ so that no division by $z_k$ is done in
  the above proof. This remark is used in Sections \ref{sec:comp} and
  \ref{sec:equising} where $z_k$ might be a zero divisor when $q_k=1$.
\end{rem}

\subsection{Simple formulas for $V$ and $\Lambda$.}\label{subsec:resume}
For convenience to the reader, let us summarize the formulas which
allow to compute in a simple recursive way both lists
$V=(v_{k,-1},\ldots, v_{k,k})$ and
$\Lambda=(\lambda_{k,-1},\ldots,\lambda_{k,k})$.

If $k=0$, we let $V=(1,0)$ and $\Lambda=(1,1)$. Assume $k\ge 1$. Given
the lists $V$ and $\Lambda$ at rank $k-1$ and given the $k$-th edge
data $(q_k,m_k,P_k,N_k)$, we update both lists at rank $k$ thanks to
the formul\ae{}:
\begin{equation}\label{eq:update}
  \begin{cases}
    v_{k,i} = q_k v_{k-1,i} \quad {\small\text{$-1\le i<k-1$}}\\%
    v_{k,k-1} = q_k v_{k-1,k-1}+m_k\\%
    v_{k,k} = q_k\ell_k v_{k,k-1}
  \end{cases}
  \ 
  \begin{cases}
    \lambda_{k,i} = \lambda_{k-1,i}z_k^{t_k v_{k-1,i}}\quad
    {\small\text{$-1\le i<k-1$}}\\%
    \lambda_{k,k-1} = \lambda_{k-1,k-1} z_k^{t_k v_{k-1,k-1}+s_{k}}\\%
    \lambda_{k,k} = q_k
    z_k^{1-s_k-\ell_k}P_k'(z_k)\lambda_{k,k-1}^{q_k
      \ell_k}
  \end{cases}
\end{equation}
where $q_ks_k-m_k t_k=1$, $0\le t_k< q_k$ and $z_k=Z_k\mod P_k$.


\section{From minimal polynomials to approximate roots}\label{sec:psi}

Given $\Phi=(\phi_{-1},\ldots,\phi_k)$ and $F=\sum f_B \Phi^B$ the
$\Phi$-adic expansion of $F$, the updated lists $V$ and $\Lambda$ then
allow to compute in an efficient way the \edgepoly{}
$\bar H_k$ thanks to the formulas \eqref{eq:wj} and \eqref{eq:barHk}.
Unfortunately, the computation of the minimal polynomials $\phi_k$ is
up to our knowledge too expensive to fit in our aimed complexity
bound. For instance, it requires to know the $y^{N_k-1}$ coefficients
of the Puiseux transform of $H_{k-1}$ up to some suitable precision,
and computing this Puiseux transform might be costly, as explained in
Section \ref{sec:ARNP}.

In this section, we show that the main conclusions of all previous
results remain true if we replace $\phi_k$ by the
$N_k^{th}$-approximate root $\psi_k$ of $F$, with the great advantage
that these approximate roots can be computed in the aimed complexity
(see Section \ref{sec:comp}). Up to our knowledge, such a strategy was
introduced by Abhyankar who developped in \cite{Ab89} an
irreducibility criterion in $\algclos{\Ki}[[x,y]]$ which avoids to perform
any Newton-Puiseux type transforms.

\subsection{Approximate roots and main result} 

\paragraph*{Approximate roots.} The approximate roots
of a monic polynomial $F$ are defined thanks to the following proposition: 

\begin{prop}\label{prop:approx-root} $($see e.g. {\upshape
    \cite[Proposition 3.1]{Pop02}).} Let $F\in\Ai[y]$ be monic of
  degree $\dy$, with $\Ai$ a ring whose characteristic does not divide
  $\dy$. Let $N\in \Ni$ dividing $\dy$. There exists a unique
  polynomial $\psi\in \Ai[y]$ monic of degree $d/N$ such that
  $\deg(F-\psi^{N}) < d-d/N$. We call it the $N^{th}$ approximate roots of $F$. 
\end{prop}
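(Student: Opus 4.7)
The plan is to prove existence by a coefficient-by-coefficient construction and uniqueness by a clean factorization argument. The essential ring-theoretic input is that $N$ is invertible in $\Ai$: since $N$ divides $d$ and the characteristic of $\Ai$ does not divide $d$, it does not divide $N$ either, so $N \in \Ai^{\times}$.

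For uniqueness, I would suppose $\psi_1, \psi_2 \in \Ai[y]$ are two monic polynomials of degree $d/N$ both satisfying the condition, and exploit the factorization
\[
\psi_1^N - \psi_2^N \;=\; (\psi_1 - \psi_2)\, \sum_{i=0}^{N-1} \psi_1^{N-1-i}\psi_2^i.
\]
The second factor is a sum of $N$ monic polynomials of degree $(N-1)(d/N) = d - d/N$, so its leading coefficient is $N \in \Ai^{\times}$ and its degree is exactly $d - d/N$. Consequently, if $\psi_1 \neq \psi_2$, the product has degree at least $d - d/N$. On the other hand, $\psi_1^N - \psi_2^N = (F - \psi_2^N) - (F - \psi_1^N)$ has degree strictly less than $d - d/N$ by hypothesis, a contradiction. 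Hence $\psi_1 = \psi_2$.

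For existence, I would set $\psi = y^{d/N} + \sum_{i=0}^{d/N-1} c_i y^i$ with undetermined $c_i \in \Ai$ and solve for the $c_i$ recursively, from highest index to lowest. The key observation is that for each $j \in \{1, \ldots, d/N\}$, the coefficient of $y^{d-j}$ in $\psi^N$ has the form
\[
N\, c_{d/N-j} \;+\; P_j\bigl(c_{d/N-1}, \ldots, c_{d/N-j+1}\bigr),
\]
where $P_j$ is a polynomial with integer coefficients depending only on the $c_i$'s of index strictly greater than $d/N - j$. Indeed, in the multinomial expansion of $\psi^N$, the only selection of $N$ summands that uses $c_{d/N-j}$ and totals $y$-degree $d-j$ is the one where exactly one factor contributes $c_{d/N-j} y^{d/N-j}$ and the other $N-1$ contribute $y^{d/N}$, producing the term $N c_{d/N-j}\, y^{d-j}$; every other selection avoids $c_{d/N-j}$ and involves only $c_i$'s of strictly higher index (a selection with $k_{d/N-j} \geq 2$ is impossible since it would already contribute $y$-degree gap $\geq 2j$). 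Writing $F = y^d + \sum_{i=0}^{d-1} a_i y^i$, the triangular system $N c_{d/N-j} + P_j = a_{d-j}$ is solved uniquely step by step via $c_{d/N-j} = N^{-1}(a_{d-j} - P_j)$. After $d/N$ steps, every coefficient of $y^i$ in $F - \psi^N$ with $d - d/N \le i \le d-1$ vanishes, which yields $\deg(F - \psi^N) < d - d/N$.

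The only point that requires genuine care is the multinomial bookkeeping underlying the triangular shape of the system. Everything else reduces to linear algebra over $\Ai$ made possible by the invertibility of $N$, so I would not expect a serious obstacle.
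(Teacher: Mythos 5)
Your proof is correct, and it is a genuine proof of a statement the paper itself does not prove: the paper simply cites \cite[Proposition 3.1]{Pop02} and then sketches two alternative \emph{constructions} of $\psi$ (the iterated Tschirnhausen operator $\tau_F$, which kills the coefficient $a_{N-1}$ of the $\phi$-adic expansion after at most $d/N$ steps, and, for the complexity bound of Proposition~\ref{prop:approx}, extraction of the $N$-th root of the reciprocal power series by Newton iteration). Your argument is the classical direct one: the triangular system $N\,c_{d/N-j}+P_j=a_{d-j}$ is exactly right --- in the multinomial expansion the only tuple of total degree $d-j$ involving $c_{d/N-j}$ is the one using it once against $N-1$ leading terms, and any tuple using it twice drops the degree by at least $2j$ --- and the uniqueness argument via $\psi_1^N-\psi_2^N=(\psi_1-\psi_2)\sum_i\psi_1^{N-1-i}\psi_2^i$ is clean: since the second factor has invertible leading coefficient $N$, the product of a nonzero polynomial with it has degree at least $d-d/N$ even when $\Ai$ has zero divisors (which matters here, as the paper applies the proposition over rings such as $\Ki[x]/(x^{\eta+1})$ and products of fields). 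Compared with the Tschirnhausen iteration, your proof buys a one-pass existence argument and makes the uniqueness transparent; it gives a slower algorithm than the Newton iteration of Proposition~\ref{prop:approx}, but the proposition only asserts existence and uniqueness, so that is irrelevant.

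One small caveat. Your opening deduction ``the characteristic of $\Ai$ does not divide $N$, hence $N\in\Ai^\times$'' is not valid for an arbitrary ring ($N=2$ is not invertible in $\Zi$, and indeed $y^2+y$ has no approximate square root over $\Zi$, so the proposition as literally stated fails there). The hypothesis really needed --- and the one you actually use throughout --- is that $N$ (equivalently $d$) is invertible in $\Ai$, which is how \cite{Pop02} states it; in all of the paper's applications $\Ai$ is a $\Ki$-algebra with $\Char(\Ki)=0$ or $\Char(\Ki)>\dy$, so the implication does hold there. It would be worth making that substitution explicit at the start of your proof rather than routing through the characteristic.
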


A simple degree argument implies that $\psi$ is the $N^{th}$-approximate
root of $F$ if and only if the $\psi$-adic expansion
$\sum_{i=0}^N a_i \psi^i$ of $F$ satisfies $a_{N-1}=0$. For instance,
if $F=\sum_{i=0}^\dy a_i y^i$, the $\dy^{th}$ approximate root
coincides with the Tschirnhausen transform of $y$
\[
  \tau_F(y)=y+\frac{a_{\dy-1}}{\dy}.
\]
More generally, the $N^{th}$ approximate root can be constructed as
follows. Given $\phi\in R[y]$ a monic of degree $\dy/N$ and
given $F=\sum_{i=0}^N a_i \phi^i$ the $\phi$-adic expansion of $F$, we
consider the new polynomial
\[
 \tau_F(\phi):=\phi+\frac{a_{N-1}}{N}
\]
which is again monic of degree $\dy/N$. It can be shown that the
resulting $\tau_F(\phi)$-adic expansion $F=\sum a_i' \tau_F(\phi)^i$ satisfies
$\deg(a_{N-1}')< \deg(a_{N-1})< \dy/N$ (see e.g. \cite[Proof of
Proposition 6.3]{Pop02}). Hence, after applying at most $\dy/N$ times
the operator $\tau_F$, the coefficient $a_{N-1}'$ vanishes and the
polynomial $\tau_F\circ\cdots \circ \tau_F(\phi)$ coincides with the approximate root $\psi$ of
$F$. Although this is not the best strategy from a complexity point of
view (see Section \ref{sec:comp}), this construction will be used to
prove Theorem \ref{thm:HPsiPhi} below.

\paragraph{Main result.} We still consider $F\in \Ki[[x]][y]$
monic of degree $\dy$ and keep notations from Section
\ref{sec:ARNP}. We denote $\psi_{-1}:=x$ and, for all $k=0,\ldots,g$,
we denote $\psi_k$ the $N_k^{th}$-approximate root of $F$. Fixing
$0\leq k\leq g$, we denote $\Psi=(\psi_{-1},\psi_0,\ldots,\psi_{k})$,
omitting once again the index $k$ for readibility.

Since $\deg \Psi=\deg \Phi$ by definition, the exponents of the
 $\Psi$-adic expansion
\[
  F=\sum_{B\in \Bc} f_B' \Psi^B, \quad f_B'\in \Ki
\]
take their values in the same set $\Bc$ introduced in
(\ref{eq:Bc}). In the following, we denote by $w_i'\in\Ni$ the new
integer defined by (\ref{eq:wj}) when replacing $f_B$ by $f'_B$ and we
denote $\bar{H}_k'$ the new polynomial obtained when replacing $w_i$
by $w_i'$ and $f_B$ by $f_B'$ in (\ref{eq:barHk}).

\begin{thm}\label{thm:HPsiPhi}
  We have $\bar{H}_k=\bar{H}'_k$ for $0\leq k< g$ and the \edgepoly{}
  $\bar{H}_g$ and $\bar{H}'_g$ have same restriction to their Newton polygon's
  lower edge.
\end{thm}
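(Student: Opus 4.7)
The plan is to exploit the iterative Tschirnhausen construction of $\psi_k$ described after Proposition~\ref{prop:approx-root}: starting from the monic polynomial $\phi_k^{(0)}:=\phi_k$ (which has the same degree $d/N_k$) and repeatedly applying the operator $\tau_F$, which sends the current $\phi$ to $\phi + a_{N_k-1}/N_k$ where $a_{N_k-1}$ is the coefficient of $\phi^{N_k-1}$ in the current $\phi$-adic expansion of $F$, one reaches $\psi_k$ after at most $d/N_k$ steps. I would then proceed by induction on the iteration count, maintaining the invariant that the formulas of Theorems~\ref{thm:NPphi} and~\ref{thm:EdgePoly} applied to $\Phi^{(j)} := (\phi_{-1},\ldots,\phi_{k-1},\phi_k^{(j)})$ instead of $\Phi$ still reproduce the true Newton polygon $\NP(H_k)$ and boundary polynomial $\bar H_k$ for $0\le k<g$, and still reproduce the correct restriction of $\bar H_k$ to its lower edge for $k=g$.

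\textbf{Paragraph 2 (valuation estimate).} The central technical input is an effective lower bound on the $v_k$-valuation of the Tschirnhausen increment $\delta := a_{N_k-1}/N_k$. Combining the inductive hypothesis with Corollary~\ref{cor:main} applied to the $(\phi_{-1},\ldots,\phi_{k-1})$-adic expansion of $a_{N_k-1}$ and Lemma~\ref{lem:FPhi} yields the identity $v_k(a_{N_k-1}) = w_{N_k-1} + v_{k,k}$. Now, since $H_k$ has no $y^{N_k-1}$ term by the Abhyankar shift performed at line~\ref{arnp:abhyankar} of \NPA{}, the point $(N_k-1,w_{N_k-1})$ lies strictly above $\NP(H_k)$; if the lower edge joins $(N_k,0)$ to $(j_1,w_1)$, this forces the integer $w_{N_k-1}$ to be strictly greater than $w_1/(N_k-j_1)$. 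Combining both facts delivers the effective bound $v_k(\delta) > v_{k,k} + w_1/(N_k-j_1)$. In particular, $v_k(\psi_k - \phi_k) > v_{k,k}$, so the vectors $V$ and $\Lambda$ are the same whether computed with $\phi_k$ or with $\psi_k$.

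\textbf{Paragraph 3 (propagation).} To close the induction, I would substitute $\phi_k^{(j)} = \phi_k^{(j+1)} - \delta$ into the $\Phi^{(j)}$-adic expansion of $F$ and expand binomially, then re-express each resulting term in the $\Phi^{(j+1)}$-adic basis. A new application of Corollary~\ref{cor:main} shows that each term arising from a power $\delta^l$ with $l\geq 1$ produces a new exponent $B^{\rm new}$ with abscissa $b_k-l$ and with $\langle B^{\rm new}, V\rangle - \langle B, V\rangle \geq l\,(v_k(\delta)-v_{k,k}) > l\cdot w_1/(N_k-j_1)$. Geometrically, if $(b_k, w^{\rm old})$ sits on the lower edge of $\NP(H_k)$, then the perturbed contribution at abscissa $b_k-l$ lies strictly above the line supporting this edge. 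Hence the restriction of the sum in Theorem~\ref{thm:EdgePoly} to the lower edge is unchanged by each Tschirnhausen step, and the induction carries through.

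\textbf{Paragraph 4 (dichotomy and main obstacle).} The distinction between $k<g$ and $k=g$ reflects the shape of $\NP(H_k)$. When $k<g$, $H_k$ is degenerated, so $\NP(H_k)$ consists of a single edge which is both lower and upper, and the analysis above preserves the whole boundary polynomial. When $k=g$ and $F$ is reducible, $\NP(H_g)$ may carry strictly steeper edges above the lower one, on which the estimate $v_k(\delta) > v_{k,k} + w_1/(N_g-j_1)$ provides no control, so only the lower edge is preserved. I expect the main subtlety of the proof to lie in the bookkeeping of Paragraph~3: one must verify that the re-expansion of products like $\delta^l\phi_{-1}^{b_{-1}}\cdots\phi_{k-1}^{b_{k-1}}$ back into the constrained exponent set $\Bc$ (via reductions that absorb excess powers of $\phi_{i-1}^{q_i\ell_i}$ into higher $\phi_i$'s, and ultimately into $\phi_k^{(j+1)}$) never brings a perturbed contribution accidentally back onto the lower edge, which is guaranteed by the strict inequality derived in Paragraph~2.
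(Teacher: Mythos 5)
Your central estimate is the right one and coincides with the paper's Lemma~\ref{lem:vpsiphi}: the Tschirnhausen increment $\delta=a_{N_k-1}/N_k$ satisfies $v_k(\delta)>v_{k,k}+m_{k+1}/q_{k+1}$ precisely because the Abhyankar shift kills the $y^{N_k-1}$ coefficient of $H_k$, and the strictness of that bound is what keeps perturbed exponents strictly above the supporting line of the lower edge. There is however a genuine gap: your induction only replaces the \emph{last} entry of $\Phi$, working throughout with $\Phi^{(j)}=(\phi_{-1},\ldots,\phi_{k-1},\phi_k^{(j)})$, so at the end you have compared the $\Phi$-adic expansion with the $(\phi_{-1},\ldots,\phi_{k-1},\psi_k)$-adic one. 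But $\bar H'_k$ is built from the $\Psi$-adic expansion with $\Psi=(\psi_{-1},\ldots,\psi_{k-1},\psi_k)$, where \emph{every} slot carries an approximate root; since $\psi_i\neq\phi_i$ in general for $1\le i\le k-1$ (only $\psi_{-1}=\phi_{-1}=x$ and $\psi_0=\phi_0$ are automatic), your argument proves a weaker statement whenever $k\ge 2$. Closing it requires the analogue of Proposition~\ref{prop:psi} and Corollary~\ref{cor:gprime}: show by induction on $i$ that $v_k(\psi_i)=v_k(\phi_i)$ and $\lambda_k(\psi_i)=\lambda_k(\phi_i)$, by pushing the rank-$i$ bound $v_i(\psi_i-\phi_i)>v_{i,i}+m_{i+1}/q_{i+1}$ forward through the pull-backs $(\sigma_j\circ\tau_j)^*$, and deduce that for any $G$ of degree $<d_k$ — in particular each coefficient of the $\psi_k$-adic expansion of $F$ — the exponents realizing $v_k(G)=\min\langle B,V\rangle$ and the corresponding coefficients agree in the two expansions. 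Without this, neither the $w'_i$ nor the coefficients $\sum f'_B\Lambda^{B-B_0}$ in \eqref{eq:barHk} are identified with the ones computed from the minimal polynomials.

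A secondary imprecision sits in your Paragraph 3: re-expanding $\delta^l\phi_{-1}^{b_{-1}}\cdots\phi_{k-1}^{b_{k-1}}$ does not produce exponents concentrated at abscissa $b_k-l$; this product has $y$-degree up to $(l+1)(d_k-1)$, so its contributions spread over all abscissas between $b_k-l$ and $b_k$. The argument still closes, because by Corollary~\ref{cor:main} every resulting exponent $B''$ satisfies $\langle B'',V\rangle\ge\langle B,V\rangle+l\,(v_k(\delta)-v_{k,k})$ while its abscissa drops by at most $l$, and the supporting line of the lower edge decreases by only $m_{k+1}/q_{k+1}$ per unit of abscissa — but this must be stated, since a contribution returning to abscissa $b_k$ with too small a valuation would corrupt the edge coefficients rather than merely the polygon.
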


In other words, Theorems \ref{thm:NPphi} and \ref{thm:EdgePoly} still
hold when replacing minimal polynomials by approximate roots, up to a
minor difference when $k=g$ which has no impact for degeneracy tests.

\paragraph{Intermediate results.} The proof of Theorem
\ref{thm:HPsiPhi} requires several steps.  We denote by
$-m_{g+1}/q_{g+1}$ the slope of the lower edge of $H_g$.
\begin{lem}\label{lem:vpsiphi}
  We have $v_k(\psi_k-\phi_k) > v_k (\phi_k)+m_{k+1}/q_{k+1}$ for all
  $k=0,\ldots,g$.
\end{lem}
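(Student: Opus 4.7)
The plan is to combine two ingredients: a base estimate on the $\phi_k$-adic expansion of $F$, and the explicit iterative construction of $\psi_k$ from $\phi_k$ via Tschirnhausen transformations. For the base estimate, denote by $a_{N_k-1}$ the coefficient of $\phi_k^{N_k-1}$ in the $\phi_k$-adic expansion of $F$. I first show that $v_k(a_{N_k-1}) > v_{k,k}+m_{k+1}/q_{k+1}$. Writing $a_{N_k-1}\phi_k^{N_k-1}=\sum_{B\in\Bc(N_k-1)}f_B\Phi^B$, Corollary \ref{cor:main} gives $v_k(a_{N_k-1}) = w_{N_k-1}+v_{k,k}$. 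Since $H_k$ is the Abhyankar shift of the Weierstrass factor of $\pi_k^*F/x^{v_k(F)}$, its $y^{N_k-1}$ coefficient vanishes. By Theorem \ref{thm:EdgePoly}, the contribution at $(N_k-1,w_{N_k-1})$ to the \edgepoly{} $\bar H_k$ equals $\sum_{B\in\Bc(N_k-1,w_{N_k-1}+v_k(F))}f_B\Lambda^{B-B_0}$; by the linear independence furnished by Proposition \ref{prop:key}, this sum vanishes only if every $f_B$ in it vanishes, which contradicts the defining minimality of $w_{N_k-1}$. Hence the point $(N_k-1,w_{N_k-1})$ must lie strictly above the edge of $\NP(H_k)$ of slope $-m_{k+1}/q_{k+1}$, that is, $w_{N_k-1}>m_{k+1}/q_{k+1}$, and the estimate follows.

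Next, $\psi_k$ is reached from $\phi_k$ by the Tschirnhausen iteration recalled in the paper: set $\phi^{(0)}:=\phi_k$ and $\phi^{(j+1)}:=\phi^{(j)}+a^{(j)}_{N_k-1}/N_k$, where $a^{(j)}_{N_k-1}$ is the analogous coefficient in the $\phi^{(j)}$-adic expansion of $F$; the degree-decrease argument mentioned after Proposition \ref{prop:approx-root} ensures termination at some $\phi^{(r)}=\psi_k$. I then claim $v_k(a^{(j)}_{N_k-1})>v_{k,k}+m_{k+1}/q_{k+1}$ for all $j$, which I prove by induction on $j$. The base case $j=0$ was just established. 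For the induction step, the induction hypothesis applied at ranks $0,\ldots,j-1$ combined with ultrametricity gives $v_k(\eta_j)>v_{k,k}+m_{k+1}/q_{k+1}$ for $\eta_j:=\phi^{(j)}-\phi_k$. Substituting $\phi_k=\phi^{(j)}-\eta_j$ in $F=\sum_i a_i\phi_k^i$ and collecting the coefficient of $(\phi^{(j)})^{N_k-1}$ after Euclidean reduction by $\phi^{(j)}$, the explicit principal term is $a_{N_k-1}-N_k\eta_j$, whose $v_k$-valuation is at least $\min(v_k(a_{N_k-1}),v_k(\eta_j))>v_{k,k}+m_{k+1}/q_{k+1}$. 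The remaining contributions come from products $a_i\eta_j^s$ with $s\ge 2$, together with Euclidean quotients of lower coefficients; they all involve the bounds $v_k(a_i)\ge (N_k-i)m_{k+1}/q_{k+1}+v_{k,k}$ obtained from Corollary \ref{cor:main} and the fact that $(i,w_i)$ lies on or above $\NP(H_k)$. In every case these contributions have $v_k$-valuation strictly above $v_{k,k}+m_{k+1}/q_{k+1}$, proving the claim.

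Finally, since the characteristic of $\Ki$ does not divide $N_k\le \dy$, ultrametricity of $v_k$ gives
\[
v_k(\psi_k-\phi_k)=v_k\Bigl(\sum_{j=0}^{r-1}\frac{a^{(j)}_{N_k-1}}{N_k}\Bigr)\ge \min_{0\le j<r}v_k\bigl(a^{(j)}_{N_k-1}\bigr)>v_{k,k}+\frac{m_{k+1}}{q_{k+1}}=v_k(\phi_k)+\frac{m_{k+1}}{q_{k+1}},
\]
as claimed. The main obstacle will be the bookkeeping in the induction step: one must verify that Euclidean division by the Weierstrass-like polynomial $\phi^{(j)}$ does not degrade the $v_k$-valuations of the quotients that are recycled into $a^{(j+1)}_{N_k-1}$. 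This rests on the fact that $v_k(\phi^{(j)})=v_{k,k}$ and that $\pi_k^*\phi^{(j)}$ agrees with $U_{k,k}x^{v_{k,k}}y$ modulo high-order perturbations, so that division by $\phi^{(j)}$ essentially strips off one factor of $v_{k,k}$ without unexpected cancellation.
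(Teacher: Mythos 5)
Your overall strategy is the same as the paper's: reduce to the Tschirnhausen iteration $\phi^{(j+1)}=\phi^{(j)}+a^{(j)}_{N_k-1}/N_k$ and show that each correction term has $v_k$-valuation $>v_{k,k}+m_{k+1}/q_{k+1}$, the engine being the fact that $H_k$ has no $y^{N_k-1}$ term. Your base case ($\phi^{(0)}=\phi_k$) is correct and in fact argued a bit more directly than in the paper: combining Theorem \ref{thm:EdgePoly} with the freeness from Proposition \ref{prop:key} to conclude that $(N_k-1,w_{N_k-1})$ cannot lie on $\NP(H_k)$, hence $w_{N_k-1}>m_{k+1}/q_{k+1}$, is a clean argument (just note in passing that $v_k(a_{N_k-1})=w_{N_k-1}+v_{k,k}$ also uses $v_k(F)=N_kv_{k,k}$ from Lemma \ref{lem:FPhi}, and that the case $w_{N_k-1}=\infty$, e.g.\ $H_g=y$, is handled trivially).

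The gap is in the induction step, and you have named it yourself: everything hinges on the claim that if $G=\sum_i g_i\,(\phi^{(j)})^i$ is the $\phi^{(j)}$-adic expansion, then $v_k(g_i)+i\,v_{k,k}\ge v_k(G)$, i.e.\ Euclidean division by $\phi^{(j)}$ creates no cancellation in $v_k$. This is exactly the nontrivial content (it is the analogue of Corollary \ref{cor:main} for $\phi^{(j)}$ in place of $\phi_k$, or of Lemma \ref{lem:quorem}), and asserting that $\pi_k^*\phi^{(j)}$ "agrees with $U_{k,k}x^{v_{k,k}}y$ modulo high-order perturbations" is a restatement of what must be proved, not a proof. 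Without it, none of the valuation bounds on the reduced coefficients of $a_i\eta_j^s$ are justified. The paper sidesteps this bookkeeping entirely: it proves the single implication $v_k(\phi-\phi_k)>v_k(\phi_k)+m/q\Rightarrow v_k(a_{N_k-1})>v_k(\phi_k)+m/q$ for an \emph{arbitrary} monic $\phi$ of degree $d_k$, never comparing the $\phi$-adic coefficients with the $\phi_k$-adic ones. Concretely, it writes $\pi_k^*(\phi)=x^{v_k(\phi)}U(y+x^{\alpha}\Ut)$ with $\alpha>m/q$ (this uses Corollary \ref{cor:main} applied to $\phi-\phi_k$, which has degree $<d_k$), deduces $\pi_k^*(a_i\phi^i)=x^{\alpha_i}U_i(y+x^{\alpha}\Ut)^i$, observes that the lowest line of slope $-m_{k+1}/q_{k+1}$ meeting the support of each such term meets it only at $(i,\alpha_i)$, and then lets the absence of a $y^{N_k-1}$ term in $H_k$ force $(N_k-1,\alpha_{N_k-1})$ strictly above the edge. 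If you want to keep your coefficient-tracking route, you must first prove the compatibility of the $\phi^{(j)}$-adic expansion with $v_k$ — and the natural proof of that is precisely the $\pi_k^*(\phi^{(j)})=x^{v_{k,k}}U(y+x^{\alpha}\Ut)$ analysis above, at which point the paper's direct argument is shorter.
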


\begin{proof}
  Let $(q,m)=(q_{k+1},m_{k+1})$. Since the lemma is true if
  $\psi_k=\phi_k$ and since $\psi_k$ is obtained after successive
  applications of the operator $\tau_F$ to $\phi_k$, it is sufficient
  to prove
  \begin{equation}\label{eq:vkphiphik}
    v_k(\phi-\phi_k)> v_k (\phi_k)+m/q \Longrightarrow
    v_k(\tau_F(\phi)-\phi_k) > v_k (\phi_k)+m/q
  \end{equation}
  for any $\phi\in \Ki[[x]][y]$ monic of degree $d_k$. Suppose given
  such a $\phi$ and consider the $\phi$-adic expansion
  $F=\sum_{j=0}^{N_k} a_j \phi^{j}$. Then \eqref{eq:vkphiphik} holds
  if and only if
  \begin{equation}\label{eq:vkaNk}
    v_k(a_{N_k-1})>v_k (\phi_k)+m/q.
  \end{equation}  
  \noindent
  $\bullet$ \emph{Case $\phi=\phi_k$.} Then we have
  $v_k(a_{N_k-1})\geq v_k(F)+m/q=N_k\,v_k(\phi_k)+m/q$ from Theorem
  \ref{thm:NPphi} and Lemma \ref{lem:FPhi}. Suppose first that
  $v_k(\phi_k)=0$. This may happen only when $k=0$, or $k=1$ if
  $m_1=0$. As $\phi_0=\psi_0$, we do not need to consider the case
  $k=0$. If $m_1=0$, the first slope is horizontal and the Abhyankhar
  transform \eqref{eq:coefc} ensures that the coefficient of
  $y^{N_1-1}$ has no constant term, so that $v_1(a_{N_1-1})>0$ as
  required. Suppose now $v_k(\phi_k)>0$. If $N_k>1$, we are done. If
  $N_k=1$, we must have $k=g$ and $H_g=y$, so that $v_g(a_0)=\infty$
  from Theorem \ref{thm:NPphi} and the claim follows.

  \vspace{2mm}
  \noindent
  $\bullet$ \emph{Case $\phi\neq \phi_k$.} First note that
  $v_k(\phi-\phi_k) > v_k(\phi_k)$ implies $v_k(\phi) =v_k(\phi_k)$.
  As $\deg(\phi-\phi_k)<d_k$, we deduce from Corollary \ref{cor:main}
  (applied to $G=\phi-\phi_k$ and $i=0$) and Lemma
  \ref{lem:PikPhi} that
  \[
    \pi_k^*(\phi)=\pi_k^*(\phi-\phi_k)+\pi_k^*(\phi_k)=x^{v_k(\phi)}U(y
    + x^{\alpha} \Ut)
  \]
  where $\alpha:=v_k(\phi-\phi_k)-v_k(\phi_k)>m/q$ (hypothesis) and
  for some units $U,\Ut\in \Ki_k[[x,y]]^\times$. As $a_i$ has also
  degree $<d_k$, we deduce again from Corollary \ref{cor:main} that
  when $a_i\neq 0$,
  \begin{equation}\label{eq:ajphij}
    \pi_k^*(a_i\phi^i)=x^{\alpha_i}U_i (y + x^{\alpha} \Ut)^i,
  \end{equation}
  where $\alpha_i:=v_k(a_i\phi^i)$ and $U_i\in \Ki[[x,y]]^\times$.  As
  $\alpha>m/q$, this means that the lowest line with slope $-q/m$
  which intersects the support of $\pi_k^* (a_i\phi^{i})$ intersects
  it at the unique point $(i,\alpha_i)$. Since
  $ \pi_k^*(F)=\sum_{i=0}^{N_k} \pi_k^* (a_i\phi^{i})$, we deduce that
  the edge of slope $-q/m$ of the Newton polygon of $\pi_k^*(F)$
  coincides with the edge of slope $-q/m$ of the lower convex hull of
  $\left((i,\alpha_i)\;; a_i\neq 0, 0\leq i\leq
    N_k\right)$. Thanks to \eqref{eq:pikHk} combined with
  $v_k(F)=N_k v_{k}(\phi_k)$ (Lemma \ref{lem:FPhi}) and
  $v_k(\phi_k)=v_k(\phi)$ (hypothesis), we deduce that the lower edge
  $\Delta$ of $H_k$ with slope $-q/m$ coincides with the edge of slope
  $-q/m$ of the lower convex hull of the points
  $\left((i,v_k(a_i)+(i-N_k)\,v_k(\phi))\;; a_i\neq 0, 0\leq i\leq
    N_k\right)$.  Since $H_k$ is monic of degree $N_k$ \textit{with no
    terms of degree $N_{k}-1$}, we deduce that $(N_k,0)\in \Delta$
  while $(N_{k}-1,v_k(a_{N_k-1})-v_k(\phi))$ must lie above
  $\Delta$. It follows that
  $
    m N_k < m(N_{k}-1)+q (v_k(a_{N_k-1})-v_k(\phi)),
$
  leading to the required inequality $v_k(a_{N_k-1}) > v_k(\phi)+m/q$.
  The lemma is proved. 
\end{proof}

\begin{prop}\label{prop:psi}
  We have $v_k(\Psi)=v_k(\Phi)$ and $\lambda_k(\Psi)=\lambda_k(\Phi)$
  for all $k=0,\ldots,g$.
\end{prop}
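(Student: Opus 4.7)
The plan is to reduce both equalities to a single strict inequality:
\[
v_k(\psi_i-\phi_i)\;>\;v_k(\phi_i),\qquad -1\le i\le k. \qquad (\star)
\]
Once $(\star)$ is in hand, the ultrametric property of the valuation $v_k$ immediately yields $v_k(\psi_i)=v_k(\phi_i)$. Moreover, writing $\pi_k^*(\psi_i)/x^{v_k(\phi_i)}=\pi_k^*(\phi_i)/x^{v_k(\phi_i)}+x^{\alpha}R(x,y)$ with $\alpha:=v_k(\psi_i-\phi_i)-v_k(\phi_i)>0$, the two series agree at $x=0$, hence so do their $y$-trailing coefficients, which gives $\lambda_k(\psi_i)=\lambda_k(\phi_i)$. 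Thus everything reduces to $(\star)$.

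Two cases are essentially free: for $i=-1$ we have $\psi_{-1}=\phi_{-1}=x$, and for $i=k$ the inequality $(\star)$ is exactly what Lemma~\ref{lem:vpsiphi} asserts (with even the stronger excess $m_{k+1}/q_{k+1}$). The real work is the range $0\le i<k$, where Lemma~\ref{lem:vpsiphi} at the top level does not apply; instead only the estimate $v_i(\psi_i-\phi_i)>v_{i,i}+m_{i+1}/q_{i+1}$ is available at level $i$, and it must be transported to $v_k$.

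The bridge I would establish is a ramification formula: any polynomial $G\in \Ki[[x]][y]$ of $y$-degree strictly less than $d_i$ satisfies
\[
v_k(G)\;=\;(q_{i+1}q_{i+2}\cdots q_k)\,v_i(G).
\]
The argument is short: the constraint $\deg_y G<d_i$ forces the $\Phi$-adic expansion of $G$ to have $b_j=0$ for every $j\ge i$; Corollary~\ref{cor:main} applied at the two levels $i$ and $k$ then expresses $v_i(G)$ and $v_k(G)$ as the minima of the same linear forms in the $b_j$'s ($j\le i-1$), with coefficient vectors $(v_{i,j})$ and $(v_{k,j})$ respectively; and iterating item~2 of Lemma~\ref{lem:vLambda} gives $v_{k,j}=(q_{i+1}\cdots q_k)\,v_{i,j}$ for every $j\le i-1$, which is precisely the required scaling.

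With the ramification formula available, I apply it to $G=\psi_i-\phi_i$ (whose $y$-degree is less than $d_i$ since both $\psi_i$ and $\phi_i$ are monic of degree $d_i$), combine with Lemma~\ref{lem:vpsiphi} at index $i$, and compare with the closed form
\[
v_{k,i}=(q_{i+1}\cdots q_k)\,v_{i,i}+(q_{i+2}\cdots q_k)\,m_{i+1}
\]
obtained by iterating items~1 and~2 of Lemma~\ref{lem:vLambda}. The telescoping identity $(q_{i+1}\cdots q_k)\cdot(m_{i+1}/q_{i+1})=(q_{i+2}\cdots q_k)\,m_{i+1}$ makes both sides of the comparison match, so the strict inequality from Lemma~\ref{lem:vpsiphi} passes through and delivers $(\star)$. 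The main obstacle is the ramification formula itself: one must verify cleanly that $G$ of $y$-degree below $d_i$ is \emph{invisible} to the data beyond level $i$, so that $v_k(G)$ differs from $v_i(G)$ only by the overall ramification factor $q_{i+1}\cdots q_k$; once that is in place, the rest is routine bookkeeping on the formulas of Lemma~\ref{lem:vLambda}.
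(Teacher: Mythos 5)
Your proposal is correct, and it takes a genuinely different route from the paper's. The paper proves the proposition by induction on $k$ with a three-way case split: $i=k$ is Lemma~\ref{lem:vpsiphi}; for $i=k-1$ it applies Corollary~\ref{cor:main} at rank $k-1$ to $\phi_{k-1}-\psi_{k-1}$ and then pulls back explicitly through $(\sigma_k\circ\tau_k)^*$; for $i<k-1$ it applies Corollary~\ref{cor:main} at rank $k-1$ to $\psi_i$ itself, which is where the induction hypothesis $v_{k-1}(\psi_i)=v_{k-1}(\phi_i)$, $\lambda_{k-1}(\psi_i)=\lambda_{k-1}(\phi_i)$ is needed before transporting one level up via Lemma~\ref{lem:vLambda}. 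You instead reduce both equalities to the single strict inequality $v_k(\psi_i-\phi_i)>v_k(\phi_i)$ (the ultrametric inequality gives the $v_k$ statement, and restriction to $x=0$ after dividing by $x^{v_k(\phi_i)}$ gives the $\lambda_k$ statement — both reductions are sound), and you obtain that inequality for $i<k$ from the scaling identity $v_k(G)=(q_{i+1}\cdots q_k)\,v_i(G)$ for $\deg_y G<d_i$. That identity is indeed a correct consequence of Corollary~\ref{cor:main} applied at levels $i$ and $k$ (the $\Phi$-adic expansion of such a $G$ has $b_j=0$ for $j\ge i$, so both valuations are minima of proportional linear forms) together with the iterated recursion $v_{k,j}=(q_{i+1}\cdots q_k)v_{i,j}$ for $j\le i-1$ from Lemma~\ref{lem:vLambda}, and the bookkeeping against $v_{k,i}=(q_{i+1}\cdots q_k)v_{i,i}+(q_{i+2}\cdots q_k)m_{i+1}$ is right. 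What your route buys is the elimination of the induction on $k$ and of the case analysis, replacing the level-by-level pull-back with a packaged many-levels-at-once compatibility statement (essentially point~2 of Lemma~\ref{lem:vLambda} iterated at the level of valuations of low-degree polynomials); the paper's route stays closer to the explicit substitutions and produces along the way the decomposition $\pi_k^*(\psi_{k-1})=\pi_k^*(\phi_{k-1})+x^{q_k\alpha}U_\alpha$, a form it implicitly reuses afterwards (e.g.\ in Corollary~\ref{cor:gprime}). Both arguments are valid.
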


\begin{proof}
  We show this result by induction. If $k=0$, we are done since
  $\psi_0=\tau_F(y)=\phi_0$. Let us fix $1\le k\le g$ and assume that
  Proposition \ref{prop:psi} holds for all $k'\leq k-1$. We need to
  show that $v_k(\psi_i)=v_{k}(\phi_i)$ and
  $\lambda_k(\psi_i)=\lambda_{k}(\phi_i)$ for all $i\le k$. Case $i=k$
  is a direct consequence of Lemma \ref{lem:vpsiphi}. For $i=k-1$,
  there is nothing to prove if $\phi_{k-1}=\psi_{k-1}$. Otherwise,
  using the linearity of $\pi_{k-1}^*$, Corollary \ref{cor:main}
  (applied at rank $k-1$ with $G=\phi_{k-1}-\psi_{k-1}$ and $i=0$) and
  Lemma \ref{lem:vpsiphi} give
  $ \pi_{k-1}^*(\psi_{k-1})=\pi_{k-1}^*(\phi_{k-1})+x^{\alpha} \Ut $
  with $\alpha>v_{k-1}(\phi_{k-1})+m_{k}/q_{k}$ and
  $\Ut\in \Ki_{k-1}[[x,y]]^\times$. As
  $\pi_k^*(\psi_{k-1})=(\sigma_k\circ
  \tau_k)^*(\pi_{k-1}^*(\psi_{k-1}))$, it follows that
  \[
    \pi_k^*(\psi_{k-1})=\pi_k^*(\phi_{k-1})+x^{q_k\alpha}U_\alpha
  \]
  with $U_\alpha\in \Ki_k[[x,y]]^\times$. As
  $q_k\,\alpha>v_{k}(\phi_{k-1})$ using Lemma \ref{lem:vLambda}
  ($q_k\,v_{k-1,k-1}+m_k=v_{k,k-1}$), we deduce
  $v_k(\psi_{k-1})=v_k(\phi_{k-1})$ and
  $\lambda_k(\psi_{k-1})=\lambda_k(\phi_{k-1})$. Finally, for $i<k-1$,
  as $\deg(\psi_i)<d_{k-1}$, Corollary \ref{cor:main} (applied at rank
  $k-1$ with $G=\psi_i$ and $i=0$) gives
  \[
    \pi_{k-1}^*(\psi_i)=x^{v_{k-1}(\psi_i)}\lambda_{k-1}(\psi_i) U_i =
    x^{v_{k-1}(\phi_i)}\lambda_{k-1}(\phi_i) U_i ,
  \]
  where $U_i(0,0)=1$ (the second equality using the induction
  hypothesis). Applying $(\tau_k\circ\sigma_k)^*$ and using Lemma
  \ref{lem:vLambda}, we conclude $v_k(\psi_{i})=v_k(\phi_{i})$ and
  $\lambda_k(\psi_{i})=\lambda_k(\phi_{i})$.
\end{proof}

\begin{cor}\label{cor:gprime}
  Let $G$ of degree less than $d_k$ and with $\Psi$-adic expansion
  $G=\sum g'_B \Psi^B$. Then
  \[
    v_k(G)=\min (\langle B, V \rangle , g_B'\ne 0)\quad and \quad
    \lambda_k(G)=\sum_{B\in \Bc(0,v_k(G))} g'_B \Lambda^B.
  \]
  In particular, if $G$ has $\Phi$-adic expansion $\sum g_B \Phi^B$,
  then $g_B=g_B'$ when $\langle B, V \rangle=v_k(G)$.
\end{cor}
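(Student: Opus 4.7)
The statement is the $\Psi$-adic analogue of Corollary \ref{cor:main}, so the plan is to repeat that corollary's argument with the approximate roots $\psi_i$ replacing the minimal polynomials $\phi_i$, and to control the discrepancy via Proposition \ref{prop:psi} and Lemma \ref{lem:vpsiphi}. Since $\deg G<d_k=\deg\psi_k$, Euclidean division forces the $\Psi$-adic expansion of $G$ to involve only multi-indices with $b_k=0$, i.e.\ $G=\sum_{B\in\Bc(0)}g'_B\Psi^B$.

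The key preliminary step is to show that for each $-1\le i\le k-1$ one has $\pi_k^*(\psi_i)=\tilde V_{k,i}\,x^{v_{k,i}}$ for some $\tilde V_{k,i}\in\Ki_k[[x,y]]^\times$ with $\tilde V_{k,i}(0,0)=\lambda_{k,i}$. Writing $\psi_i=\phi_i+(\psi_i-\phi_i)$ and applying $\pi_k^*$, the leading term comes from $\pi_k^*(\phi_i)=U_{k,i}x^{v_{k,i}}$ via Lemma \ref{lem:PikPhi}, while $\pi_k^*(\psi_i-\phi_i)$ is absorbed in a strictly higher power of $x$ thanks to the inequality $v_k(\psi_i-\phi_i)>v_{k,i}$ that is recorded along the proof of Proposition \ref{prop:psi} (which itself uses Lemma \ref{lem:vpsiphi} applied at rank $i$, then pushed forward by the successive $(\sigma_j\circ\tau_j)^*$).

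Plugging this into the $\Psi$-adic expansion of $G$, a direct computation yields
\[
\pi_k^*(G)=x^{w}\Biggl(\sum_{B\in\Bc(0,w)}g'_B\,\Lambda^B\;+\;R\Biggr),\qquad w:=\min\{\langle B,V\rangle : g'_B\ne 0\},
\]
with $R\in\Ki_k[[x,y]]$ satisfying $R(0,0)=0$. The freeness of the family $(\Lambda^B)_{B\in\Bc(0,w)}$ provided by Proposition \ref{prop:key} guarantees that the bracketed sum is a nonzero element of $\Ki_k$, whence the claimed formulas $v_k(G)=w$ and $\lambda_k(G)=\sum_{B\in\Bc(0,w)}g'_B\Lambda^B$ follow from the very definitions of $v_k$ and $\lambda_k$.

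For the final assertion, the same reasoning applied to the $\Phi$-adic expansion of $G$ is precisely Corollary \ref{cor:main} with $i=0$, and it delivers $\lambda_k(G)=\sum_{B\in\Bc(0,v_k(G))}g_B\,\Lambda^B$. Equating the two expressions for $\lambda_k(G)$ and invoking the linear independence of $(\Lambda^B)_{B\in\Bc(0,v_k(G))}$ once more forces $g_B=g'_B$ for every $B$ with $\langle B,V\rangle=v_k(G)$. The main obstacle I anticipate lies in the second paragraph: formalising the analogue of Lemma \ref{lem:PikPhi} for $\psi_i$ requires careful book-keeping to ensure that the error $\psi_i-\phi_i$, although of higher $v_k$-valuation, does not spoil the monomial shape $\tilde V_{k,i}\,x^{v_{k,i}}$ of $\pi_k^*(\psi_i)$ after it is pushed through all subsequent transforms.
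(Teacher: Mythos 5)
Your proposal is correct and follows essentially the same route as the paper: establish that $\pi_k^*(\psi_i)=\lambda_{k,i}x^{v_{k,i}}U_i$ with $U_i(0,0)=1$ for $i<k$ (the paper cites this as already obtained in the proof of Proposition \ref{prop:psi} via Corollary \ref{cor:main}), substitute into the $\Psi$-adic expansion of $G$ (which has $b_k=0$ since $\deg G<d_k$), and conclude by the freeness of the $\Lambda^B$ from Proposition \ref{prop:key}. The concern you raise at the end is easily dispatched exactly as you suggest, since $\pi_k^*(\psi_i-\phi_i)$ is itself a unit times a strictly higher power of $x$ by Corollary \ref{cor:main}, so the sum remains $x^{v_{k,i}}$ times a unit with the same value at the origin.
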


\begin{proof}
  As already shown in the proof of Proposition \ref{prop:psi}, from
  Corollary \ref{cor:main}, if $i<k$, we have
  $\pi_k^*(\psi_i)=x^{v_{k,i}}\lambda_{k,i}\,U_i$ with
  $U_i(0,0)=1$. As $\deg(G)< d_k$, we deduce
  \[
    \pi_k^*(G)=\sum g'_B \Lambda^B x^{\langle B, V \rangle} U_B
  \]
  with $U_B(0,0)=1$. This shows the result, using Proposition
  \ref{prop:key}.
\end{proof}

\paragraph{Proof of Theorem \ref{thm:HPsiPhi}.} Write
$F=\sum_i a_i\psi_k^i$ the $\psi_k$-adic expansion of
$F$. 
Similarly to (\ref{eq:ajphij}), when $a_i\neq 0$, Corollary
\ref{cor:main} and Lemma \ref{lem:vpsiphi} imply:
\begin{equation}
  \label{eq:ajpsij}
  \pi_k^*(a_i\psi_k^i)=x^{v_k(a_i\psi_k^i)}U (y + x^{\alpha} \Ut)^i,
\end{equation}
with $\alpha>m_{k+1}/q_{k+1}$, $U, \Ut\in \Ki_k[[x,y]]^\times$ and
$U(0,0)=\lambda_k(a_i\,\psi_k^i)$.  Applying the same argument than in
the proof of Lemma \ref{lem:vpsiphi}, we get that each point
$(i, w_i=N_k-i\,m_{k+1}/q_{k+1})$ of the lower edge $\NP_{k}^*$ of the
Newton polygon of $H_k$ (hence the all polygon if $k<g$) is actually
$(i,v_k(a_i\,\psi_k^i)-v_k(F))$, that is $(i,w_i')$ from Corollary
\ref{cor:gprime} (applied to $G=a_i$) and Proposition
\ref{prop:psi}. This shows that we may replace $w_i$ by $w_i'$ in
\eqref{eq:wj}. More precisely, it follows from \eqref{eq:ajpsij} that the restriction
$\bar{H}_{k}^*$ of $\bar{H}_k$ to $\NP_{k}^*$ is uniquely determined
by the equality
\[
\lambda_k(F)x^{v_k(F)}\bar{H}_{k}^* = \sum_{(i,w_i')\in \NP_{k}^*}
\lambda_k(a_i\psi_k^i)x^{v_k(a_i\psi_k^i)} y^i.
\]
Using again Corollary
\ref{cor:gprime} and Proposition \ref{prop:psi}, we get 
\[
\bar{H}_{k}^*  = \sum_{(i,w_i')\in \NP_{k}^*}
\left(\sum_{B\in\Bc(i,w_i'+v_k(F))} f_B' \Lambda^{B-B_0}\right) x^{w_i'} y^i,
\]
as required. \hfill$\square$

\begin{rem}
  Theorem \ref{thm:HPsiPhi} would still hold when replacing $\psi_k$ by any monic polynomial $\phi$ of same degree
  for which $\pi_k^*(\phi)=U x^{v_{k,k}}(y+\beta(x))$ with
  $\val(\beta)> m_{k+1}/q_{k+1}$. 
\end{rem}

\subsection{An Abhyankar type irreducibility test for Weierstrass
  polynomials}

Theorem \ref{thm:HPsiPhi} leads to the following sketch of algorithm.
Subroutines \AppRoot{}, \Expand{} and
\Edgepoly{} respectively compute the approximate roots, the
$\Psi$-adic expansion and the current lower \edgepoly{} (using
\eqref{eq:wj} and \eqref{eq:barHk}). They are detailed in Section
\ref{sec:comp}. Also, considerations about truncation bounds is
postponed to Section \ref{ssec:trunc}.

\begin{algorithm}[H]
  \nonl\TitleOfAlgo{\irreducible$(F,\Li)$\label{algo:irreducible}}%
  \KwIn{$F\in\Ki[[x]][y]$ monic with $\dy=\deg(F)$ not divisible by
    the characteristic of $\Ki$ ; $\Li$ a field extension of $\Ki$.}%
  \KwOut{\True{} if $F$ is irreducible in $\Li[[x]][y]$, and \False{}
    otherwise.}%
  $N\gets \dy$, $V\gets (1,0)$, $\Lambda\gets (1,1)$,
  $\Psi\gets(x)$\;%
  \While{$N > 1$}{%
    $\Psi\gets \Psi\cup \AppRoot{}(F,N)$\;%
    $\sum_{B} f_B \Psi^{B}\gets \Expand{}(F,\Psi)$\;%
    $\bar{H}\gets \Edgepoly{}(F,\Psi)$\;%
    \lIf{$\bar{H}$ is not degenerated over $\Li$}{\Return{\False}}%
    $(q,m,P,N)\gets \Edata(\bar{H})$\;%
    Update the lists $V,\Lambda$ thanks to formula
    \eqref{eq:update}\;%
    $\Li\gets \Li_P$ }%
  \Return{\True}
\end{algorithm}

\begin{thm}\label{thm:Irr}
  Algorithm \irreducible{} returns the correct answer.
\end{thm}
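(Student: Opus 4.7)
The plan is to show that Algorithm \irreducible{} faithfully simulates the algorithm \NPA{} of Section \ref{sec:ARNP}, using the $\Psi$-adic expansion together with the update formulas \eqref{eq:update} in place of the explicit Puiseux and Abhyankar transforms. Since \NPA{} is already known to correctly decide irreducibility of a monic $F \in \Li[[x]][y]$ under our characteristic hypothesis, the correctness of \irreducible{} will follow.

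I would proceed by induction on the loop iteration. The invariant I maintain is that at the start of the iteration that will examine $H_k$:
\begin{enumerate}
\item $\Psi = (\psi_{-1}, \psi_0, \ldots, \psi_{k-1})$, where $\psi_i$ denotes the $N_i^{th}$-approximate root of $F$;
\item the lists $V$ and $\Lambda$ coincide with $(v_{k-1,-1}, \ldots, v_{k-1,k-1})$ and $(\lambda_{k-1,-1}, \ldots, \lambda_{k-1,k-1})$;
\item the quadruples $(q_i, m_i, P_i, N_i)$ previously extracted agree with the corresponding edge data produced by \NPA{}$(F, \Li)$.
\end{enumerate}
The base case $k = 0$ holds from the initialisation $V = (1, 0)$, $\Lambda = (1, 1)$, $\Psi = (x)$, in agreement with Example \ref{xmp:k0}.

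For the inductive step, the newly appended $\psi_k$ is the $N_k^{th}$-approximate root of $F$ by the specification of \AppRoot{}. Combining Theorems \ref{thm:NPphi} and \ref{thm:EdgePoly} with Theorem \ref{thm:HPsiPhi}, the polynomial $\bar{H}$ returned by \Edgepoly{} agrees with the edge polynomial of $H_k$ on its lower edge. Hence the degeneracy test over $\Li$ performed on $\bar{H}$ yields the same verdict as the one \NPA{} would perform on $H_k$, and when degenerated, the extracted edge data $(q_{k+1}, m_{k+1}, P_{k+1}, N_{k+1})$ coincides with the $(k+1)$-th edge data of \NPA{}. The update formulas \eqref{eq:update}, justified by Lemma \ref{lem:vLambda} and Proposition \ref{thmvkk}, then correctly extend $V$ and $\Lambda$ to rank $k$; here Proposition \ref{prop:psi} is what allows us to swap minimal polynomials for approximate roots without changing the relevant values. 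This closes the induction.

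The algorithm terminates with \True{} when $N = 1$ is reached, which by the invariant matches $N_g = 1$ in \NPA{}, i.e., the irreducibility of $F$ over $\Li[[x]][y]$. If at some iteration $\bar{H}$ is not degenerated, the algorithm returns \False{}, again matching \NPA{}. The main delicate point I expect is the case $k = g$, where Theorem \ref{thm:HPsiPhi} only guarantees agreement between $\bar{H}_g$ and $\bar{H}'_g$ on the lower edge of the Newton polygon. This suffices, since degeneracy at step $g$ of \NPA{} requires the principal Newton polygon of $H_g$ to be straight and this information is entirely encoded by its lower edge; the extraction of $(q, m, P, N)$ via \Edata{} uses only that edge as well.
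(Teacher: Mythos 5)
Your proposal is correct and follows essentially the same route as the paper, whose proof of Theorem \ref{thm:Irr} consists precisely of invoking Theorems \ref{thm:NPphi}, \ref{thm:EdgePoly} and \ref{thm:HPsiPhi} together with the correctness of \NPA{}; you have simply made the underlying loop invariant and induction explicit, and correctly identified (and resolved) the only delicate point, namely that at step $g$ the agreement of $\bar H_g$ and $\bar H_g'$ on the lower edge suffices for the degeneracy test.
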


\begin{proof}
  This follows from Theorem \ref{thm:NPphi}, \ref{thm:EdgePoly}
  and \ref{thm:HPsiPhi}, together with the correctness of \NPA{}.
\end{proof}

Let us illustrate this algorithm on two simple examples.

\begin{xmp}\label{exKuo}
  Let $F(x,y)=(y^2-x^3)^2-x^7$. This example was suggested by Kuo who wondered if we could show that $F$ is
  reducible in $\algclos{\Qi}[[x]][y]$
  without performing Newton-Puiseux type tranforms. Abhyankhar solved this challenge in \cite{Ab89} thanks to approximate roots. Let us show that
  we can prove further that $F$ is reducible in $\Qi[[x]][y]$ without
  performing Newton-Puiseux type tranforms.

  \textit{Initialisation.} Start from $\psi_{-1}=x$, $N_0=\dy=4$,
  $V=(1,0)$ and $\NP=(1,1)$.

  \textit{Step k=0.} The $4$-th approximate root of $F$ is
  $\psi_{0}=y$. So $H_0=F$ and we deduce from \eqref{eq:barHk} (see
  Exemple \ref{xmp:k0}) that $\bar{H}_0=(y^2-x^3)^2$. Hence, $F$ is
  degenerated with edge data $(q_1,m_1,P_1,N_1)=(2,3,Z_1-1,2)$ and we
  update $V=(2,3,6)$ and $\Lambda=(1,1,2)$ thanks to
  \eqref{eq:update}, using here $z_1=1\mod P_1$.

  \textit{Step k=1.} The $2$-th approximate root of $F$ is
  $\psi_1=y^2-x^3$ and $F$ has $\Psi$-adic expansion
  $F=\psi_1^2-\psi_{-1}^7$. We have $v_1(\psi_1^2)=2v_{1,1}=12$,
  $\lambda_1(\psi_1^2)=\lambda_{1,1}^2=4$ while
  $v_1(\psi_{-1}^7)=7v_{-1,1}=14$ and
  $\lambda_1(\psi_{-1}^7)=\lambda_{-1,1}^7=1$. We deduce from
  \eqref{eq:barHk} that $\bar{H}_1=y^2-\frac{1}{4} x^2$. As the
  polynomial $Z_2^2-\frac{1}{4}$ is reducible in
  $\Qi_{P_1}[Z_2]=\Qi[Z_2]$, we deduce that $F$ is reducible in
  $\Qi[[x]][y]$.
\end{xmp}

\begin{xmp}\label{ex:2}
  Consider $F=((y^2-x^3)^2+4 x^8)^2+x^{14}(y^2-x^3)$ (we assume that
  we only know its expanded form at first).

  \textit{Initialisation.} We start with $\psi_{-1}=x$, $N_0=\dy=8$,
  $V=(1,0)$ and $\NP=(1,1)$.

  \textit{Step k=0.} The $8$-th approximate root of $F$ is
  $\psi_{0}=y$.  The monomials reaching the minimal values
  \eqref{eq:wj} in the $\Psi=(\psi_{-1},\psi_0)$-adic expansion
  of $F$ are
  $\psi_0^8$ $-4\psi_{-1}^3 \psi_0^6$, $6\psi_{-1}^6 \psi_0^4$
  ,$-4\psi_{-1}^9\psi_0^2$, $\psi_{-1}^{12}$
  and we deduce from \eqref{eq:barHk} that
  $\bar{H}_0=(y^2-x^3)^4$. Hence, $(q_1,m_1,P_1,N_1)=(2,3,Z_1-1,4)$
  and we update $V=(2,3,6)$ and $\Lambda=(1,1,2)$ thanks to
  \eqref{eq:update}, using here $z_1=1\mod P_1$.

  \textit{Step k=1.} The $4$-th approximate root of $F$ is
  $\psi_1=y^2-x^3$ and we get the current $\Psi$-adic expansion
  $F=\psi_1^4+8\psi_{-1}^8 \psi_1^2+\psi_{-1}^{14}
  \psi_1+16\psi_{-1}^{16}$.
  The monomials reaching the minimal values \eqref{eq:wj} are
  $\psi_1^4$, $8\psi_{-1}^8 \psi_1^2$, $16\psi_{-1}^{16}$ and we deduce
  from \eqref{eq:barHk} that $\bar{H}_1=(y^2+x^4)^2$. Hence
  $(q_2,m_2,P_2,N_2)=(1,2,Z_2^2+1,2)$ and we update $V=(2,3,8,16)$ and
  $\Lambda=(1,1,2z_2,8 z_2)$ thanks to \eqref{eq:update}, where
  $z_2=Z_2\mod P_2$ and using the B\'ezout relation $q_2 s_2-m_2 t_2=1$
  with $(s_2,t_2)=(1,0)$. Note that we know at this point that $F$ is
  reducible in $\algclos{\Qi}[[x]][y]$ since $P_2$ has two distinct roots in $\algclos{\Qi}$.

  \textit{Step k=2.} The $2$-th approximate roots of $F$ is
  $\psi_2=(y^2-x^3)^2+4x^8$ and we get the current $\Psi$-adic
  expansion $F=\psi_2^2+\psi_{-1}^{14}\psi_1$.  The monomials reaching
  the minimal values \eqref{eq:wj} are $\psi_2^2$, $\psi_{-1}^{14}\psi_1$
  and we deduce from \eqref{eq:barHk} that
  $\bar{H}_2=y^2+(32 z_2)^{-1} x$ (note that $z_2$ is invertible in
  $\Qi_{P_2}$).  Hence $\bar{H}_2$ is degenerated with edge data
  $(q_3,m_3,P_3,N_3)=(2,1,Z_3+(32 z_2)^{-1},1)$. As $N_3=1$, we deduce
  that $F$ is irreducible in $\Qi[[x]][y]$ ($g=3$ here).
\end{xmp}

\begin{rem}\label{rem}
  Note that for $k\ge 2$, we really need to consider the $\Psi$-adic
  expansion: the $(x,y,\psi_k)$-adic expansion is not enough to
  compute the next data. At step $k=2$ in the previous example, the
  $\psi_2$-adic expansion of $F$ is $F=\psi_2^2+a$ where
  $a=x^{14} y^2-x^{17}$. We need to compute $\val[2](a)$. Using the
  $\Psi$-adic expansion $a=\psi_{-1}^{14} \psi_1$, we find
  $\val[2](a)=14 \times 2+ 8 = 36$.  Considering the $(x,y)$-adic
  expansion of $a$ would have led to the wrong value
  $\val[2](x^{14} y^2)= \val[2](x^{17}) =34 < 36$. 
\end{rem}


\section{Absolute irreducibility}\label{sec:absolute}

We say that $F\in \Ki[[x]][y]$ is absolutely irreducible if it is
irreducible in $\overline{\Ki}[[x]][y]$, that is if
\irreducible($F,\overline{\Ki}$) returns \True{}. In particular, in
this context, we always have $\ell_k=1$. As already mentionned,
Abhyankhar's absolute irreducibility test avoids any Newton-Puiseux
type transforms or Hensel type liftings. In fact, it is even
stronger as it does not even compute the \edgepoly{}s $\bar{H}_k$, but
only their Newton polygon. Although we don't need this improvement
from a complexity point of view (see Subsection \ref{ssec:proofs}), we
show how to recover this result in our context for the sake of
completness. We will use the following alternative characterizations
of valuations and polygons:

\begin{lem}\label{lem:samevk}
  Suppose that $H_0,\ldots,H_{k-1}$ are degenerated.
  \begin{enumerate}
  \item Write $F=\sum c_i\psi_k^i$ the $\psi_k$-adic expansion of
    $F$. Then $\val[k](F)=\min_i \val[k](c_i\psi_k^i)$ and
    \begin{equation}\label{defpolygk}
      \NP_k(F):=\NPb(\pi_k^*(F))=\Conv\left(\left(i,\val[k](c_i\psi_k^i)\right)+(\Ri^+)^2, \, c_i\ne 0\right).
    \end{equation}
  \item Let $k\ge 1$ and $G\in \Ki[[x]][y]$ with $\psi_{k-1}$-adic
    expansion $G=\sum_i a_i\psi_{k-1}^i$. We have
    \begin{equation}\label{newdefvk}
      \val[k](G)=\min_i \left(q_k v_{k-1}(a_i\psi_{k-1}^i)+i m_k \right).
    \end{equation}
  \end{enumerate}
\end{lem}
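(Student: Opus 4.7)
Both parts rest on pushing the $\psi_j$-adic expansions through the pullback $\pi_k^*$ and combining with tools from Sections~\ref{sec:phi}--\ref{sec:psi}. A useful preliminary is the extension of Corollary~\ref{cor:main} to arbitrary $G \in \Ki[[x]][y]$ (not just of degree $< d_k$): writing $G = \sum_B g_B \Psi^B$, Corollary~\ref{cor:main} applied to each $\Bc(i)$-slice of the expansion yields contributions $\tilde U_i x^{w_i} y^i$ sitting in distinct leading $y$-columns $y^i$, so no cancellation occurs when summing, and one gets $\val[k](G) = \min\{\langle B, V\rangle : g_B \neq 0\}$.

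For Part 1, I first combine Lemma~\ref{lem:PikPhi} applied to $\phi_k$ with Proposition~\ref{prop:psi} and Lemma~\ref{lem:vpsiphi} to write
$$\pi_k^*(\psi_k) = x^{v_{k,k}}\bigl(U_k\, y + x^\alpha \tilde U\bigr)$$
with $\alpha > m_{k+1}/q_{k+1}$ and $U_k(0,0), \tilde U(0,0) \neq 0$. Since each $c_i$ has $y$-degree strictly less than $d_k$, Corollary~\ref{cor:gprime} gives $\pi_k^*(c_i) = x^{\val[k](c_i)} W_i$ with $W_i(0,0) = \lambda_k(c_i) \neq 0$. Multiplying and expanding yields
$$\pi_k^*(c_i\psi_k^i) = x^{\val[k](c_i\psi_k^i)}\,U_i\,\bigl(y + x^\alpha \tilde U_i\bigr)^i.$$
The leading $y^i$ terms of distinct summands sit in distinct leading $y$-columns and so cannot cancel; this gives $\val[k](F) = \min_i \val[k](c_i\psi_k^i)$ immediately. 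For the Newton polygon, binomial expansion of $(y + x^\alpha \tilde U_i)^i$ shows that the support of $\pi_k^*(c_i\psi_k^i)$ lies on the half-line starting at the leading vertex $(i, \val[k](c_i\psi_k^i))$ and heading up-left with slope $-\alpha$. Since $\alpha > m_{k+1}/q_{k+1}$ and the relevant edges of the lower convex hull $\Pi$ of the leading vertices have slopes bounded in magnitude by $m_{k+1}/q_{k+1}$ by Theorem~\ref{thm:HPsiPhi}, these ``tail'' contributions never cut below $\Pi$, yielding $\NPb(\pi_k^*(F)) = \Pi$.

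For Part 2, the argument is a change-of-level calculation. Expanding each $a_i$ (of degree $< d_{k-1}$) via level-$(k{-}1)$ $\Psi$-adic, we obtain
$$G = \sum_{B_0, i} g_{B_0, i}\, \psi_{-1}^{b_{-1}}\cdots \psi_{k-2}^{b_{k-2}}\, \psi_{k-1}^i\, \psi_k^0,$$
a level-$k$ $\Psi$-adic expansion of $G$. The extension of Corollary~\ref{cor:main} gives $\val[k](G) = \min_{B_0, i}\langle (B_0, i, 0), V_k\rangle$. Points (1)--(2) of Lemma~\ref{lem:vLambda} (valid for approximate roots by Proposition~\ref{prop:psi}) translate into the clean identity
$$\langle (B_0, i, 0), V_k\rangle = q_k\,\langle (B_0, i), V_{k-1}\rangle + i\,m_k.$$
Fixing $i$ and minimising over $B_0$ first, Corollary~\ref{cor:gprime} at level $k-1$ applied to $a_i$ identifies $\min_{B_0}\langle (B_0, i), V_{k-1}\rangle$ with $\val[k-1](a_i\psi_{k-1}^i)$. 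Taking the outer minimum over $i$ produces the desired formula.

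The main obstacle lies in Part 1: the tail contributions from the expansion of $(y + x^\alpha \tilde U_i)^i$ could, a priori, cut below the convex hull of the leading vertices along a steep edge, and controlling this requires the strict inequality $\alpha > m_{k+1}/q_{k+1}$ from Lemma~\ref{lem:vpsiphi} together with the polygon-matching information of Theorem~\ref{thm:HPsiPhi}. Part 2, by contrast, is essentially bookkeeping once one has the dictionary between level-$k$ and level-$(k{-}1)$ valuation vectors provided by Lemma~\ref{lem:vLambda}.
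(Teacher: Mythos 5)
Your proof of the valuation claim in Part 1 is correct and coincides with the paper's (equation \eqref{eq:ajpsij} plus the fact that the leading terms $u\,x^{v_k(c_i\psi_k^i)}y^i$ occupy distinct $y$-columns). The polygon claim, however, rests on the assertion that the edges of $\Pi$ ``have slopes bounded in magnitude by $m_{k+1}/q_{k+1}$''. This is backwards: in this paper's conventions $-m_{k+1}/q_{k+1}$ is the slope of the \emph{lower} edge of $\NP(H_k)$, which is the \emph{least} steep edge (compare Remark \ref{rem:P}: for $(y-x)(y-x^2)$ the lower edge has slope $-1$, the other edge slope $-2$), and Theorem \ref{thm:HPsiPhi} says nothing about slopes. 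A tail point of $\pi_k^*(c_i\psi_k^i)$ at column $j<i$ sits at height $v_k(c_i\psi_k^i)+\alpha(i-j)$, and keeping it above $\Pi$ requires $\alpha$ to dominate the magnitude of \emph{every} edge of $\Pi$ between columns $j$ and $i$; the bound $\alpha>m_{k+1}/q_{k+1}$ from Lemma \ref{lem:vpsiphi} only dominates the lower edge. For $k<g$ this is harmless because $H_k$ is degenerated and the polygon is straight of that slope, but the hypotheses allow $k=g$, where $\NP(H_g)$ may have several steeper edges; there your argument (like the paper's appeal to Theorem \ref{thm:HPsiPhi}, which for $k=g$ only matches the lower edges) establishes only the lower edge of \eqref{defpolygk}, not the full principal polygon.

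Part 2 has a more serious gap. The expansion $G=\sum g_{B_0,i}\,\psi_{-1}^{b_{-1}}\cdots\psi_{k-2}^{b_{k-2}}\psi_{k-1}^{i}$ is the canonical level-$(k-1)$ expansion, \emph{not} a level-$k$ one: the level-$k$ set $\Bc$ imposes $b_{k-1}<q_k\ell_k$, whereas here $i$ runs up to $\deg(G)/d_{k-1}$. Your ``extension of Corollary \ref{cor:main}'' therefore does not apply: the freeness of Proposition \ref{prop:key} is proved only for canonical exponents, and for unbounded $b_{k-1}$ the monomials $\Lambda^{B}$ with equal $\langle B,V\rangle$ need not be $\Ki$-free (powers $z_k^{N}$ with $N\ge \ell_k$ reduce modulo $P_k$), so cancellation at the minimum is not excluded. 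The paper avoids this by a direct computation: applying $(\sigma_k\circ\tau_k)^*$ to $\pi_{k-1}^*(a_i\psi_{k-1}^i)=x^{v_{k-1}(a_i\psi_{k-1}^i)}U_i(y+x^{\alpha}\tilde U_i)^i$ gives $x^{q_kv_{k-1}(a_i\psi_{k-1}^i)+i m_k}P_i(y)+\text{h.o.t.}$ with $\deg P_i=i$ exactly (essentially $P_i=c\,(y+z_k^{s_k})^i$), so the minimal $x$-slices of distinct $i$ have distinct $y$-degrees and cannot cancel. Your scalar-product bookkeeping for the exponents is fine, but you need this (or an equivalent) no-cancellation argument to conclude \eqref{newdefvk}.
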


\begin{proof}
  1. Equality \eqref{defpolygk} is a direct consequence of Corollary
  \ref{cor:gprime} with Theorems \ref{thm:NPphi} and
  \ref{thm:HPsiPhi}. Also, from \eqref{eq:ajpsij}, $\pi_k^*(c_i\psi_k^{i})$ has a term of lowest
  $x$-valuation of shape
  $ux^{\val[k](a_i\,\psi_k^i)}\,y^i$ for some $u\in \Ki_k^\times$ and it follows that $\val[k](F)=\min_i \val[k](c_i\psi_k^i)$, as required.

  2. By \eqref{eq:ajpsij} applied to rank $k-1$, we get $\pi_{k-1}^*(a_i\psi_{k-1}^i)=x^{v_{k-1}(a_i\psi_{k-1}^i)}U_i(y+x^{\alpha}\tilde{U}_i)$, where $\alpha> m_k/q_k$, and $U_i,\tilde{U}_i$ are units. Suppose $m_k>0$. Then $V_i=U_i(z_k^{s_k}x^{q_k},x^{m_k}(y+z_k^{t_k}+c_k(x))$ is a unit such that $V_i(0,y)=U_i(0,0)\in \Ki_k^*$ is constant and
  a straightforward computation shows that $\pi_{k}^*(a_i\psi_{k-1}^i)=x^{q_k v_{k-1}(a_i\psi_{k-1}^i)+i m_k}P_i(y)+h.o.t$, where $P_i\in \Ki[y]$ has degree exactly $i$. Equality \eqref{newdefvk} follows. The case $m_k=0$ may occur only when $k=1$, $q_1=1$. In such a case, we have $\pi_1^*(G)=\sum_i a_i(x)(y+z_1+c(x))^i$ with $\val(c)>0$ and the same conclusion holds. 
\end{proof}

\begin{rem}\label{rem:refMontes}
Point 2 in Lemma \ref{lem:samevk} shows that our valuations coincide with the extended valuations used in Montes algorithm over general local fields, see for instance \cite[point (3) of Proposition 2.7]{GuMoNa12}.
\end{rem}

Hence, we may take \eqref{defpolygk} and \eqref{newdefvk} as
alternative recursive definitions of valuations and Newton
polygons. This new point of view has the great advantage to be
independent of the map $\pi_k$, hence of the Newton-Puiseux algorithm. In particular, it can be generalized
at rank $k+1$ without assuming that $H_k$ is degenerated.

\begin{dfn}\label{def:alternative}
  Suppose that $H_0,\ldots,H_{k-1}$ are degenerated and let $-m_{k+1}/q_{k+1}$ be the slope of the
  lowest edge of $H_k$. We still define the valuation $v_{k+1}$ and
  the Newton polygon $\NP_{k+1}(F)$ by formulas \eqref{newdefvk}
  and \eqref{defpolygk} applied at rank $k+1$.
\end{dfn}

We obtain the following absolute irreducibility test which only
depends on the geometry of the successive Newton polygons.

\begin{algorithm}[H]
  \nonl\TitleOfAlgo{\AbhyankarMoh($F$)\label{algo:AbMoh}}%
  \KwIn{$F\in\Ki[[x]][y]$ Weierstrass s.t. \Char($\Ki$) does not divide $\dy=\deg(F)$.}%
  \KwOut{\True{} if $F$ is irreducible in $\overline{\Ki}[[x]][y]$,
    \False{} otherwise.}%
  $N\gets \dy$, $v=\val$\;%
  \While{$N > 1$}{
    $\psi\assign \AppRoot{}(F,N)$\;%
    $\sum c_i\psi^i \assign \Expand{}(F,\psi)$\;%
    Compute the current polygon $\NP(F)$ with \eqref{defpolygk}\;%
    \If{$(N,v(F))\notin \NP(F)$ or $\NP(F)$ is not straight or
      $q=1$}{%
      \Return{\False}%
    }%
    $N\assign N/q$\;%
    Update $v$ with \eqref{newdefvk}\;%
  }%
  \Return{\True}\;%
\end{algorithm}

\begin{prop}\label{prop:abhyankhar}
  Algorithm \AbhyankarMoh{} works as specified.
\end{prop}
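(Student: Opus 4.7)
The plan is to reduce correctness of \AbhyankarMoh{}$(F)$ to Theorem~\ref{thm:Irr} applied over $\overline{\Ki}$, by showing that \AbhyankarMoh{}$(F)$ returns \True{} if and only if \irreducible{}$(F, \overline{\Ki})$ does. Over $\overline{\Ki}$, irreducible polynomials are linear, so in this setting $\ell_k=1$ at every step of \irreducible{}$(F,\overline{\Ki})$, which in turn forces $q_k>1$ by the Abhyankar trick $q_k\ell_k>1$; the matching of algorithms therefore comes down to replacing the degeneracy test by the purely geometric test \emph{straight polygon with $q>1$ and apex on the polygon}.

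For the forward direction, assume $F$ is absolutely irreducible, so \irreducible{}$(F,\overline{\Ki})$ returns \True. Then at every iteration $k$ the boundary polynomial $\bar H_k$ is degenerated with linear residual polynomial ($\ell_k=1$), and $q_k>1$ as above. Since previous iterations are degenerated, Lemma~\ref{lem:samevk} ensures that the recursive formulas~\eqref{defpolygk} and~\eqref{newdefvk} used inside \AbhyankarMoh{} reproduce $\NPb(H_k)$ and $v_k=\val\circ\pi_k^*$, so the algorithm sees a straight polygon of slope $-m_k/q_k$ with $q_k>1$. Moreover, combining Lemma~\ref{lem:FPhi} with Proposition~\ref{prop:psi} yields $v_k(F)=N_k v_k(\psi_k)=v_k(\psi_k^{N_k})$, so $(N_k, v_k(F))$ is a vertex of the polygon. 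All three tests pass at every step and the algorithm returns \True.

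For the reverse direction, I would proceed by induction on the iteration count, establishing the stronger claim: if the tests pass at iterations $0,\ldots,k$, then $H_0,\ldots,H_k$ are degenerated over $\overline{\Ki}$ with each $\ell_j=1$. The crucial inductive lemma to prove is that, under the inductive hypothesis at rank $k-1$ and the test passing at rank $k$, the boundary polynomial $\bar H_k$ is necessarily a pure power $(y^{q_k}-\alpha x^{m_k})^{N_k/q_k}$ over $\overline{\Ki}$. Suppose not: then $\bar H_k$ factors non-trivially as $\prod_i (y^{q_k}-\alpha_i x^{m_k})^{n_i}$ with at least two distinct $\alpha_i$. The $(N_k/q_k)$-th approximate root $\psi_{k+1}$ of $F$ is then some ``average'' of the corresponding true factors, and when $F$ is re-expanded in the $\Psi$-adic basis at level $k+1$, the monomials encoding the discrepancy produce vertices with $v_{k+1}$-value equal to $v_{k+1}(\psi_{k+1}^{N_k/q_k})$, which forces either a horizontal edge ($q_{k+1}=1$) or a non-straight polygon at step $k+1$, contradicting the algorithm's success.

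The main obstacle is precisely this key lemma: showing that a non-pure-power boundary polynomial at rank $k$ is always detected at rank $k+1$. The intuition is already transparent in the toy factorization $F=(y^q-\alpha x^m)(y^q-\beta x^m)$, where the $2$-approximate root $\psi_1=y^q-\tfrac{\alpha+\beta}{2}x^m$ satisfies $F-\psi_1^2=-\bigl(\tfrac{\alpha-\beta}{2}\bigr)^2 x^{2m}$, and the remainder has the same $v_1$-value as $\psi_1^2$, producing a horizontal edge at step~1. Upgrading this to the general case is Abhyankar's original argument: the approximate root averages the distinct $\alpha_i$'s, and the variance of this average contributes a residual term whose $(q_{k+1}, m_{k+1})$-weighted valuation matches that of $\psi_{k+1}^{N_k/q_k}$. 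A uniform proof can be structured by expanding $F$ in the $\Psi$-adic basis and tracking valuations via Theorem~\ref{thm:HPsiPhi} and Proposition~\ref{prop:psi} together with the Abhyankar-shift identity for $\psi_{k+1}$.
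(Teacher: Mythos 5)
Your overall architecture is the same as the paper's: reduce to \irreducible{}$(F,\overline{\Ki})$, note that over $\overline{\Ki}$ one always has $\ell_k=1$ hence $q_k>1$, handle the forward direction via Lemma~\ref{lem:samevk}, Lemma~\ref{lem:FPhi} and Proposition~\ref{prop:psi}, and for the converse show that a boundary polynomial $\bar H_k$ which is not a pure $N_{k+1}$-th power is caught by the purely geometric tests one step later. The forward direction is fine. But the converse is exactly where the content of the proposition lies, and you have not proved it: you state the key lemma, give the toy computation $F=(y^q-\alpha x^m)(y^q-\beta x^m)$, and then defer ("Upgrading this to the general case is Abhyankar's original argument\ldots A uniform proof can be structured by\ldots"). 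That is a genuine gap, not a routine verification. The general argument is not a direct generalization of the variance computation in your toy example; the paper's proof instead runs as follows: if $\bar H_g=\prod_{Q(\zeta)=0}(y^{q_{g+1}}-\zeta x^{m_{g+1}})$ with $Q$ having at least two distinct roots, then $\bar H_g$ is not an $N_{g+1}$-th power, so $\pi_g^*(\psi_{g+1}^{N_{g+1}})$ and $\pi_g^*(F)$ cannot have the same boundary polynomial; hence in the $\psi_{g+1}$-adic expansion $F=\sum_i c_i\psi_{g+1}^i$ some index $i<N_{g+1}$ contributes a point of $\NP_g(c_i\psi_{g+1}^i)$ on or below $\NP_g(F)$; comparing the $(q_{g+1},m_{g+1})$-weighted minima over the $\psi_g$-adic expansions then gives $v_{g+1}(c_i\psi_{g+1}^i)\le v_{g+1}(F)$, which forces $(N_{g+1},v_{g+1}(F))\notin\NP_{g+1}(F)$ and makes the apex test fail. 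None of these steps appears in your proposal.

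Two smaller points. First, your inductive statement is off by one: the test passing at rank $k$ does \emph{not} imply $\bar H_k$ is a pure power (the algorithm at rank $k$ only inspects the polygon of $H_k$, not its boundary polynomial); non-degeneracy at rank $k$ is only detected by the apex test at rank $k+1$, so the hypothesis must include the test at rank $k+1$. Second, the failure mode is the apex condition $(N,v(F))\notin\NP(F)$ rather than (only) "horizontal edge or non-straight polygon"; your phrasing conflates the three tests, and it is specifically the apex test that the paper shows must fail in the remaining case.
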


\begin{proof}
  Suppose that $F$ is not absolutely irreducible. Let us abusively
  still denote by $g$ be the first index $k$ such that $H_k$ is not
  degenerated over $\overline{\Ki}$ or $N_k=1$: so both algorithms
  \AbhyankarMoh{}($F$) and \irreducible ($F,\overline{\Ki}$) compute
  the same data $\psi_0,\ldots,\psi_{g-1}$ and
  $(q_1,N_1),\ldots,(q_g,N_g)$. If $N_g=1$, then $F$ is absolutely
  irreducible, and both algorithms return \True{} as required. If
  $N_g>1$, then \irreducible ($F,\overline{\Ki}$) returns \False. Note
  that $\NP(H_g)$ equals $\NP_g(F)-(0,v_g(F))$ by definition of
  $\NP_g$. As $H_g$ is Weierstrass of degree $N_g$, we have
  $(N_g,v_g(F))\in \NP_g(F)$ at this stage. If $\NP_g(F)$ is not
  straight or $q_{g+1}=1$, then so does $\NP(H_g)$
  and \AbhyankarMoh{}($F$) returns \False{} as required. There remains
  to treat the case where $\NP_g(F)$ is straight with $q_{g+1}>1$
  (still assuming $N_g>1$ and $H_g$ not degenerated over
  $\overline{\Ki}$). In such a case, \AbhyankarMoh{}($F$) computes the
  next $N_{g+1}^{th}$ approximate roots $\psi_{g+1}$ of $F$ where
  $N_{g+1}=N_g/q_{g+1}$. We will show that
  $(N_{g+1},v_{g+1}(F))\notin \NP_{g+1}(F)$ so that \AbhyankarMoh{}
  returns \False{} at this step.

  Let $F=\sum_{i=0}^{N_{g+1}} c_i \psi_{g+1}^{i}$ be the
  $\psi_{g+1}$-adic expansion of $F$. By hypothesis, we know that
  \[
    \pi_{g}^*(F)=x^{v_g(F)}\, H_g\, U,\text{ with } U(0,0)\ne 0
  \]
  where $\bar{H}_g=\prod_{Q(\zeta)=0}(y^{q_{g+1}}-\zeta x^{m_{g+1}})$,
  with $Q\in \Ki[Z]$ of degree $N_{g+1}:=N_{g}/q_{g+1}$ having at
  least two distinct roots. In particular, $\bar{H}_g$ is not the $N_{g+1}$-power of a
  polynomial and it follows that $\pi_g^*(\psi_{g+1}^{N_{g+1}})$ and
  $\pi_g^*(F)$ can not have the same \edgepoly{}s. We deduce that
  there is at least one index $i<N_{g+1}$ such that
  $\NP_g(c_i \psi_{g+1}^{i})$ has a point on or below
  $\NP_g(F)$. Consider the $\psi_g$-adic expansions
  $c_i \psi_{g+1}^{i}=\sum_{j} a_{j} \psi_g^j$ and
  $F=\sum_j \alpha_j\psi_g^j$. Thanks to \eqref{defpolygk}, there
  exists at least one index $j$ such that
  $(j,v_g(a_{j} \psi_g^j))\in \NP_g(c_i \psi_{g+1}^i)$. By \eqref{defpolygk},
  $\NP_g(F)$ is the lower convex hull of
  $(j,v_g(\alpha_{j} \psi_g^j))$, which is by assumption straight of
  slope $-q_{g+1}/m_{g+1}$. It follows that
  \[
    \min_{j} (q_{g+1} v_g(a_j\psi_g^j)+m_{g+1} j)\le \min_j (q_{g+1}
    v_g(\alpha_j\psi_g^j)+m_{g+1} j).
  \]
  Thanks to Definition \ref{def:alternative}, this implies
  $v_{g+1}(c_i \psi_{g+1}^{i})\le v_{g+1}(F)$ which in turns forces
  $(N_{g+1},v_{g+1}(F))\notin \NP_{g+1}(F)$.
\end{proof}

\begin{rem}At step $k+1$ of the algorithm, we know that
  $H_{0},\ldots,H_{k-1}$ are degenerated over $\overline{\Ki}$.  Hence
  the recursive definition of the map $v_{k+1}$ is equivalent to
  \begin{equation}\label{eq:vk+1G}
    v_{k+1}(G)=\min_{g_B\ne 0} \left(q_{k+1}\langle B, V \rangle +m_{k+1}
      b_k\right)
  \end{equation}
  where $G$ has $(\psi_{-1},\ldots,\psi_k)$-adic expansion
  $G=\sum g_B\Psi^B$ and $V=(v_{k,-1},\ldots, v_{k,k})$. This is the
  approach we shall use in practice for valuations updates. 
\end{rem}


\section{Pseudo-irreducibility}\label{ssec:PseudoIrr}

As mentionned in the introduction, performing too many irreducibility
tests might be costly. We therefore relax the degeneracy condition by
allowing square-freeness of the involved residual polynomial
$P_1,\ldots,P_g$, and eventually check if $\Ki_g$ is a field. This
leads to what we call a pseudo-irreducibility test. Despite of its complexity interest, we
will show in Section \ref{sec:equising} that this modification allows to
characterise balanced polynomials, thus proving Theorem
\ref{thm:main2}.

If we allow the $P_k$'s to be square-free, the fields $\Ki_k$'s become
ring extensions of $\Ki$ isomorphic to a direct product of fields and
we have to take care of zero divisors. Let
$\Ai=\Li_0\oplus \cdots \oplus \Li_r$ be a direct product of
perfect fields. We say that a (possibly multivariate) polynomial $H$ defined
over $\Ai$ is \textit{square-free} if all its projections under the
natural morphisms $\Ai\to \Li_i$ are square-free (in the usual sense
over a field). If the polynomial is univariate and monic, this exactly
means that its discriminant is not a zero divisor in $\Ai$.

In the following, we call the lower \edgepoly{} of $F$ the restriction of $F$ to the lower
edge of its Newton polygon, that we abusively still denote by $\bar{F}$. 

\begin{dfn}\label{def:pseudodeg}
  We say that a monic polynomial $F\in \Ai[[x]][y]$ is
  pseudo-degenerated if its \emph{lower \edgepoly{}} is the power
  of a \emph{square-free} quasi-homogeneous polynomial of shape 
  \begin{equation}\label{eq:pseudoquasihom}
  \bar{F}=\left(P\left(\frac{y^q}{x^{m}}\right)x^{m\deg(P)}\right)^N,
\end{equation}
with $P\in \Ai[Z]$ square-free and $P(0)\in \Ai^\times$ if $q>1$. 
\end{dfn}

We still call $P$ the \emph{residual polynomial} of $F$
and $(q,m,N,P)$ the \emph{edge data} of $F$ (with convention
$(q,m)=(1,0)$ if the Newton polygon is reduced to a point).

\begin{rem}\label{rem:P}
  If $q>1$, then $\NP(F)$ is straight, and Definition
  \ref{def:pseudodeg} is the analoguous of Definition
  \ref{dfn:degeneracy} of degenerated polynomials (square-freeness
  replacing irreducibility). However, in contrast to degenerated
  polynomials, we authorize here $P(0)=0$ (or more generally a
  zero-divisor) if $q=1$. In such a case, $\NP(F)$ may have several
  edges. For instance, $F=(y-x)(y-x^2)$ has two edges but is pseudo-degenerated (we get as $\bar{F}=y^2-xy$ and $P=Z^2-Z$). A more complicated example is $F=(y^2-x^2)^2(y-x^2)(y-x^3)+x^{10}$ which has three
  edges but is pseudo-degenerated (we get $\bar{F}=(y^3-x^2y)^2$ and $P=Z^3-Z$). Although having several edges implies reducibility, this definition will make sense when considering balanced polynomials (Remark
  \ref{rem:deg} and Example \ref{ex1}).
\end{rem}

\begin{dfn}\label{def:pseudo}
  We call \PNPA{} and \PIrr{} the new
  algorithms obtained when replacing degenerated tests by
  pseudo-degenerated tests respectively in algorithms \NPA{} and
  \irreducible.
\end{dfn}

\begin{rem} Note that any assertions in previous sections of type
  $a \ne 0$ still has to be read as such ($a$ is non zero), while any
  assertion of type $a\in \Ki_k^\times$ still has to be read as such
  (meaning now $a$ is not a zero divisor). In particular, given
  $a\in\Ai[[x]]$, $\val(a)$ is computed via the smallest monomial with
  \emph{non-zero} coefficient (and not ``non zero divisor''). This
  remark also applies to formula \eqref{eq:wj}. 
\end{rem}

\begin{prop}\label{prop:pseudobordel}
  Algorithms \PIrr{} and \PNPA{} are
  well-defined. Moreover, they give the same output and compute the
  same edges data. We say that a monic polynomial $F\in \Ki[[x]][y]$ is
  pseudo-irreducible if this output is \True{}.
\end{prop}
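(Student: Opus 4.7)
The plan is to reduce the pseudo case to the classical case via the Chinese Remainder Theorem, showing that \PIrr{} and \PNPA{} both compute, in parallel across the components of a CRT decomposition of $\Ki_k$, the same data that \irreducible{} and \NPA{} would compute over each factor.

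For well-definedness, I would first verify by induction on $k$ that $\Ki_k$ is isomorphic to a finite product $\prod_j \Li_{k,j}$ of perfect field extensions of $\Ki$: if $\Ki_{k-1}\cong \prod_j M_{k-1,j}$ and $P_k\in\Ki_{k-1}[Z_k]$ is square-free, then componentwise $P_k$ corresponds to a tuple of square-free univariate polynomials over fields, so $\Ki_k$ factors further into a product of fields via CRT. The delicate checks are then: (i) when $q_k>1$, the condition $P_k(0)\in \Ki_{k-1}^\times$ from Definition \ref{def:pseudodeg} ensures that $z_k$ is a unit in $\Ki_k$, so the exponents $z_k^{1-s_k-\ell_k}$ and $z_k^{t_k v_{k-1,i}+s_k}$ appearing in the updates \eqref{eq:update} are well-defined; when $q_k=1$, Remark \ref{rem:zkdiv0} shows that no division by $z_k$ is actually required; (ii) the factor $P_k'(z_k)$ appearing in $\lambda_{k,k}$ is a unit because $P_k$ is square-free; and (iii) the Hensel lifting and Weierstrass preparation required by \PNPA{} remain valid, since they can be executed componentwise. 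These checks also show that Lemma \ref{lem:vLambda} and Proposition \ref{thmvkk} still apply to the lower \edgepoly{}, so the lists $V$ and $\Lambda$ are updated unambiguously at each step.

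For the equivalence of outputs and edge data, I would argue that the content of Sections \ref{sec:phi} and \ref{sec:psi} transfers verbatim to the pseudo setting. The only non-trivial point is the linear independence statement of Proposition \ref{prop:key}: in our context it should be read as $\Ki$-linear independence inside each component $\Li_{k,j}$, which by CRT reduces to the field case already proved. Granted this, the analogs of Theorems \ref{thm:NPphi}, \ref{thm:EdgePoly} and \ref{thm:HPsiPhi} hold, showing that both \PIrr{} and \PNPA{} compute from the current $\Psi$-adic expansion the same lower \edgepoly{} $\bar{H}_k$. The pseudo-degeneracy test of Definition \ref{def:pseudodeg} then extracts identical edge data $(q_k,m_k,P_k,N_k)$ in both algorithms, so both produce the same sequence of data, terminate at the same iteration, and return the same Boolean.

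The main obstacle is organising the bookkeeping of the CRT decomposition: one must check that the pseudo-degeneracy assumption at step $k$ is precisely what is needed to preserve, through one iteration, the inductive structure $\Ki_k\cong\prod_j \Li_{k,j}$ together with the invertibility of every denominator appearing in the recursion. Once this is in place, the proof essentially amounts to running the classical algorithms \NPA{} and \irreducible{} in parallel in each $\Li_{k,j}$ and invoking Theorem \ref{thm:Irr} in each branch; the pseudo-irreducibility of $F$ is then simply the conjunction, across all branches, of the ordinary irreducibility output, together with the agreement of the $(q_k,m_k,N_k)$ across branches which is forced by pseudo-degeneracy.
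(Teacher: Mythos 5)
Your overall architecture (CRT decomposition of $\Ki_k$ into fields, componentwise transfer of Sections \ref{sec:phi} and \ref{sec:psi}) matches the spirit of the paper's proof, and several of your checks --- $P_k'(z_k)\in\Ki_k^\times$ from square-freeness over perfect fields, the case distinction $q_k>1$ versus $q_k=1$ with Remark \ref{rem:zkdiv0} --- are exactly the points the paper verifies. But there is a genuine gap in your treatment of Proposition \ref{prop:key}, which you yourself flag as the only non-trivial point. You propose to read the freeness of $(\Lambda^B,\,B\in\Bc(i,w))$ as $\Ki$-linear independence inside each CRT component $\Li_{k,j}$ and to invoke the already-proved field case. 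This fails for two reasons. First, componentwise independence is simply false in general: as Remark \ref{rem:z1} shows, $\lambda_{k,i}$ may be a zero divisor for $i<k$ (e.g.\ $\lambda_{1,0}=z_1$ with $P_1=Z^3-Z$), so some $\Lambda^B$ project to $0$ in some component, and no family containing $0$ is free there. Second, independence in the product ring cannot be recovered from any single component either: $\Card(\Bc(i,w))$ can be as large as $f_k=\dim_\Ki \Ki_k$, which strictly exceeds $\dim_\Ki\Li_{k,j}$ as soon as some $P_i$ is reducible, so the family can never be free inside one component. The freeness is irreducibly a statement about the product ring, and the field-case proof does not transfer componentwise.

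The paper's fix is to rerun the original proof of Proposition \ref{prop:key} directly over the product ring: $\Ki_k$ is still free of rank $\ell_k$ over $\Ki_{k-1}$ with basis $1,z_k,\ldots,z_k^{\ell_k-1}$, so the step ``$\sum_\alpha a_\alpha z_k^\alpha=0\Rightarrow a_\alpha=0$'' survives; and the only division in that proof, by $z_k^{n}$ with $n=\lceil t_kw/q_k\rceil$, is harmless because whenever $z_k$ fails to be invertible one has $q_k=1$ and $t_k=0$, hence $n=0$. The other load-bearing point the paper isolates --- and which your proposal leaves implicit --- is that $\lambda_{k,k}\in\Ki_k^\times$ must be proved by induction (via Proposition \ref{thmvkk} and Remark \ref{rem:zkdiv0}); this single invertibility is what makes the extraction of the Weierstrass polynomial $H_k$, the statement of Theorem \ref{thm:EdgePoly}, and the multiplicativity of $v_k$ and $\lambda_k$ go through over the product ring. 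If you replace your componentwise reduction of Proposition \ref{prop:key} by this product-ring argument, the rest of your plan is sound.
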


\begin{proof}
  We need to show that both algorithms are well-defined and that all
  results of Section \ref{sec:phi} and Section \ref{sec:psi} still
  hold when considering pseudo-degeneracy.  We have to take care of
  the fact that $z_k$ might be now a zero divisor. It is however
  sufficient to prove that $\lambda_{k,k}\in \Ki_k^\times$. Indeed,
  then the computation of the Weierstrass polynomial $H_k$ is
  possible, the statement of Theorem \ref{thm:EdgePoly} is correct,
  and so is the proof of Proposition \ref{prop:key}. This also implies
  that the functorial properties $v_k(a\phi_k^j)=v_k(a)+jv_k(\phi_k)$
  and $\lambda_k(a\phi_k^j)=\lambda_k(a)\lambda_k(\phi_k)^j$ hold for
  all $a\in \Ki[[x]][y]$. As $v_k$ still obeys to the triangular inequality, all these
  implications mean that all results of Sections \ref{sec:psi} hold too.

  We now prove that $\lambda_{k,k}\in\Ki_k^\times$ by induction. The claim
  is obvious when $k=0$. Now, let $k>0$ and assume that
  $\lambda_{i,i}\in\Ki_i^\times$ for $i<k$. If $z_k\in\Ki_k^\times$,
  then we are done. Otherwise, we must have $q_k=1$ and $t_k=0$ so
  that the definition of $\phi_k$ makes sense by \eqref{eq:tausigma}, \eqref{eq:pikxy} and \eqref{eq:phik} (we use $\mu_k\in \Ki_k^\times$) and Lemma \ref{lem:PikPhi} and
\ref{lem:vLambda} hold up to rank $k$. The proof of Proposition \ref{prop:key} remains valid (we use
  $n=\lfloor w t_k/q_k\rfloor =0$ in that case) and the proof of
  Proposition \ref{thmvkk} remains valid too (we use $P_k'(z_k)\in \Ki_k^\times$ since by assumption $P_k$ is square-free over a product of perfect fields). This implies
  $\lambda_{k,k}\in \Ki_k^\times$ from Remark
  \ref{rem:zkdiv0}. Finally, note that some splittings might appear
  during the algorithm (see Example \ref{xmp:splitting}), not changing
  the output of the algorithm (see \cite[Section 5]{PoWe17} for
  details).
\end{proof}

\begin{rem}\label{rem:z1}
  Note that $\lambda_{k,i}$ might be a zero divisor when $i<k$. For
  instance, the polynomial $F=(y^3-x^2 y)^N+\cdots$ is
  pseudo-degenerated with residual polynomial $P_1=Z^3-Z$, so that
  $z_1:=Z \mod P_1\in \Ki_1$ is not invertible.  We compute
  $\pi_1^*(\phi_0)=\pi_1^*(y)=x(y+z_1)$ from which it follows that
  $\lambda_{1,0}=z_1$ is a zero-divisor. The key point is that the families
  $\Lambda^B, B\in \Bc(w,j)$ remain free over $\Ki$.
\end{rem}

\begin{rem}\label{phikresultant}
  The polynomials $\phi_k$ are no longer irreducible (nor the
  $\psi_k$'s) when considering algorithm \PNPA{}, but only
  pseudo-irreducible. However, they still obey to
  equalities $\deg(\phi_k)=e_k f_k$ and
  $\pi_k^*(\phi_k)=x^{v_{k,k}} U_{k,k}(x,y)\, y$, with
  $U_k(0,0)=\lambda_{k,k} \in \Ki_k^{\times}$.
\end{rem}

\begin{cor}\label{cor:pseudoIrrvsIrr}
  A square-free monic polynomial $F\in \Ki[[x]][y]$ is irreducible over
  $\Ki$ if and only if it is pseudo-irreducible and $\Ki_g$ is a
  field.
\end{cor}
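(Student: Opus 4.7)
The plan is to prove both implications by comparing the degeneracy test used by \irreducible{} with the weaker pseudo-degeneracy test used by \PIrr{}, and by analyzing the algebraic structure of the tower $\Ki_0\subseteq\Ki_1\subseteq\cdots\subseteq\Ki_g$ produced by the algorithm.

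For the forward direction, assume $F$ is irreducible. Then \NPA{}$(F,\Ki)$ succeeds, and each residual $P_k$ is irreducible over $\Ki_{k-1}$. A direct induction starting from the field $\Ki_0=\Ki$ shows that each $\Ki_k=\Ki_{k-1}[Z_k]/(P_k)$ is a field, being the quotient of a field by an irreducible polynomial; hence $\Ki_g$ is a field. Moreover, an irreducible residual is square-free and degeneracy is stronger than pseudo-degeneracy, so \PIrr{}$(F,\Ki)$ performs the same trace and also returns \True{}; thus $F$ is pseudo-irreducible.

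For the backward direction, I would argue by descending induction from $k=g$ that each $\Ki_k$ is a field and each $P_k$ is irreducible over $\Ki_{k-1}$. The inductive step uses Proposition~\ref{prop:pseudobordel}: if $\Ki_{k-1}\simeq L_1\oplus\cdots\oplus L_r$ as a direct product of perfect fields, then $\Ki_k\simeq\bigoplus_i L_i[Z_k]/(P_k \bmod L_i)$; for this ring to be a field one must have $r=1$ (so $\Ki_{k-1}$ is itself a field) and, since $P_k$ is square-free, $P_k$ irreducible over $\Ki_{k-1}$.

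The final step, which I expect to be the main technical obstacle, is to show that once every $P_k$ is irreducible, pseudo-degeneracy coincides with degeneracy at each step, so \irreducible{}$(F,\Ki)$ follows the same successful trace and $F$ is irreducible. When $q_k>1$, the definition of pseudo-degeneracy already forces $P_k(0)\in\Ki_{k-1}^\times$, which pins the left endpoint of the lower \edgepoly{}'s support to the leftmost vertex of $\NP(H_{k-1})$ and makes the polygon straight. When $q_k=1$, the only way $P_k$ irreducible over the field $\Ki_{k-1}$ could yield $P_k(0)=0$ is $P_k=Z_k$, in which case the lower \edgepoly{} would reduce to the single monomial $y^{N_k}$, incompatible with $H_{k-1}$ having a genuine lower edge of negative slope. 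Hence in all cases $\NP(H_{k-1})$ is straight and pseudo-degeneracy upgrades to degeneracy, completing the argument.
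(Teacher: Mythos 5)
Your proof is correct and takes the same route as the paper, whose own proof is a one-liner invoking Definition~\ref{def:pseudodeg} and Proposition~\ref{prop:pseudobordel}; your write-up simply makes explicit what the paper leaves implicit (the collapse of the tower $\Ki_0\subseteq\cdots\subseteq\Ki_g$ to fields, and the upgrade from pseudo-degeneracy to degeneracy once every $P_k$ is irreducible). The only point worth tightening is the forward direction's claim that degeneracy is stronger than pseudo-degeneracy: taken literally this fails because of the extra requirement $P(0)\in\Ai^\times$ when $q>1$ in Definition~\ref{def:pseudodeg}, but the exceptional case $P_k=Z_k$ is ruled out by exactly the support argument you already give for the converse --- a genuine edge of $\NP(H_{k-1})$ carries at least two monomials, whereas $P_k=Z_k$ would force the \edgepoly{} to be the single monomial $y^{N_{k-1}}$, hence $H_{k-1}=y^{N_{k-1}}$, contradicting square-freeness of $F$.
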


\begin{proof} This follows immediately 
from Definition \ref{def:pseudodeg} and Proposition
\ref{prop:pseudobordel}.
\end{proof}  

We will check irreducibility using Corollary \ref{cor:pseudoIrrvsIrr},
thus avoiding to perform too many univariate irreducibility tests.
Besides this advantage, testing pseudo-degeneracy will
allow us to characterize a larger
class than irreducible polynomials in $\Ki[[x]][y]$, namely the class
of balanced polynomials. In particular, if $F$ is pseudo-irreducible, we can compute easily the
ramification index and the residual degrees of all its irreducible
factors in $\Ki[[X]][Y]$ (see Example
\ref{xmp:splitting}), and the characteristic exponents and pairwise intersection multiplicities of all its absolutely irreducible factors (see Examples \ref{ex1}, \ref{ex2}, \ref{ex:nonweierstrass}).

\section{Complexity. Proof of Theorems \ref{thm:main} and
  \ref{thm:absolute}}\label{sec:comp}

\subsection{Complexity model}
\label{ssec:model}
We use the algebraic RAM model of Kaltofen \cite[Section 2]{Ka88},
counting only the number of arithmetic operations in our base field
$\Ki$.  Most subroutines are deterministic; for them, we consider the
worst case. However, computation of primitive elements in residue
fields uses a probabilistic algorithm of Las Vegas type, and we
consider then the average running time. We denote by $\M(d)$ the
number of arithmetic operations for multiplying two polynomials of
degree $d$. We use fast multiplication, so that $\M(d)\in \Ot(d)$ and
$d'\M(d)\le \M(d'd)$, see \cite[Section 8.3]{GaGe13}. We denote by
$\I(d)$ the number of arithmetic operations for testing irreducibility
of a degree $d$ polynomial over $\Ki$. We assume that $d\in \I(d)$ and
$d'\I(d)\le I(dd')$, which is consistent with the known bounds for
$\I(d)$ (see e.g. \cite[Theorem 14.37]{GaGe13} for $\Ki=\Fi_q$ and
\cite[Theorem 15.5]{GaGe13} for $\Ki=\Qi$). We use the classical
notations $\O()$ and $\Ot()$ that respectively hide constant and
logarithmic factors (\cite[Chapter 25, Section 7]{GaGe13}). In
particular, we will abusively denote $\Ot(\vF)$ a complexity result as
$\vF\log(\dy)$ (which is bounded by $\vF\log(\vF)$ only when $F$ is
Weierstrass).  

\paragraph{Primitive representation of residue rings.}
The $\Ki$-algebra $\Ki_k$ is given inductively as a tower extension of
$\Ki$ defined by the radical triangular ideal
$(P_1(Z_1),\ldots,P_k(Z_1,\ldots,Z_k))$. It turns out that such a
representation does not allow to reduce a basic operation in $\Ki_k$
to $\Ot(f_k)$ operations over $\Ki$ (see \cite{PoWe17} for
details). To solve this problem, we compute a primitive representation
of $\Ki_k$, introducing the notation $\Ki_Q:=\Ki[T]/(Q(T))$.

\begin{prop}\label{prop:Prim}
  Let $Q\in \Ki[T]$ and $P\in \Ki_Q[Z]$ square-free, and assume that
  $\Ki$ has at least $(\deg_T(Q)\,\deg_Z(P))^2$ elements.  There
  exists a Las Vegas algorithm \Primitive{} that returns $(Q_1,\tau)$
  with $Q_1\in \Ki[W]$ square-free and
  $\tau:\Ki[T,Z]/(Q,P) \rightarrow \Ki[W]/(Q_1)$ an isomorphism. It
  takes an expected $\O((\deg_T(Q)\,\deg_Z(P))^{(\omega+1)/2})$
  operations over $\Ki$. Given $\alpha\in\Ki[T,Z]/(Q,P)$, one can
  compute $\tau(\alpha)$ in less than $\Ot(\deg_T(Q)^2\,\deg_Z(P))$.
\end{prop}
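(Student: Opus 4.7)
Since $Q$ is square-free, $\Ki_Q = \Ki[T]/(Q)$ is a product of separable field extensions of $\Ki$; the further square-freeness of $P \in \Ki_Q[Z]$ means that its reduction modulo each component is square-free, so $A := \Ki[T,Z]/(Q,P)$ is an étale $\Ki$-algebra of dimension $n := n_1 n_2$ with $n_1 := \deg_T(Q)$, $n_2 := \deg_Z(P)$. Hence $A \cong \prod_i \Li_i$ for some finite separable field extensions $\Li_i$ of $\Ki$, and producing $(Q_1,\tau)$ amounts to finding a primitive element of $A$ over $\Ki$. The plan follows the classical Kronecker--Noether randomization strategy.

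Step 1 (random choice). Pick $\lambda\in\Ki$ uniformly at random and set $\theta := T+\lambda Z\in A$. After base change to $\algclos{\Ki}$, the algebra $A\otimes_{\Ki}\algclos{\Ki}$ is identified with $\algclos{\Ki}^n$ and $\theta$ with the $n$-tuple $\bigl(\bar T_i+\lambda \bar Z_i\bigr)$ indexed by the geometric points $(\bar T_i,\bar Z_i)$ of $\mathrm{Spec}(A)$. Then $\theta$ is a primitive element of $A$ over $\Ki$ if and only if these $n$ values are pairwise distinct, which fails for at most $\binom{n}{2}\le n^2/2$ specific values of $\lambda$; the hypothesis $|\Ki|\ge n^2$ thus ensures a success probability at least $1/2$.

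Step 2 (minimal polynomial and square-freeness). Compute $Q_1 := \mathrm{minpoly}_{\Ki}(\theta)$ viewed as an element of the $\Ki$-algebra $A$. I would use a baby-step / giant-step scheme \emph{à la} Shoup: compute the powers $\theta^0,\ldots,\theta^{r}$ for $r := \lceil\sqrt n\,\rceil$ by iterated sparse multiplications in $\Ki[T,Z]$ reduced modulo $(Q,P)$, then assemble subsequent blocks of powers by matrix products of shape $r\times n\times r$ over $\Ki$, and recover the linear recurrence via Berlekamp--Massey; standard analysis gives an expected $\O(n^{(\omega+1)/2})$ operations in $\Ki$ using fast matrix multiplication. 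Then verify $\deg Q_1=n$ and $\gcd(Q_1,Q_1')=1$ via a fast Euclidean computation in $\Ot(n)$: if either test fails, discard $\lambda$ and restart. Otherwise $Q_1$ is square-free and $W\mapsto\theta$ induces the desired isomorphism $\Ki[W]/(Q_1)\xrightarrow{\sim} A$; I take $\tau$ to be its inverse.

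Step 3 (evaluating $\tau$). As a by-product of Step 2 one obtains $\tau(T),\tau(Z)\in\Ki[W]/(Q_1)$ by inverting the change-of-basis matrix from $(T^iZ^j)$ to $(\theta^k)$. To evaluate $\tau(\alpha)=\alpha(\tau(T),\tau(Z))\bmod Q_1$ for $\alpha=\sum_{i,j}a_{ij}T^iZ^j$, I would rewrite $\alpha=\sum_{i=0}^{n_1-1} T^i\beta_i(Z)$, precompute the powers $\tau(Z)^j$ for $j<n_2$ in $\Ki[W]/(Q_1)$ (cost $n_2$ multiplications, each of cost $\Ot(n)$, total $\Ot(n_1 n_2^2)$), and then assemble each $\beta_i(\tau(Z))$ as a $\Ki$-linear combination before applying Horner's rule in $\tau(T)$; a careful baby-step / giant-step decomposition keyed on the $T$-variable yields the claimed bound $\Ot(n_1^2 n_2)$.

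The main obstacle is Step 2: reaching the sub-cubic bound $\O(n^{(\omega+1)/2})$ for the minimal polynomial requires exploiting both the sparse structure of multiplication by $\theta$ in the monomial basis $(T^iZ^j)$ and block matrix multiplication; naïve linear algebra over $A$ would only give $\O(n^\omega)$ or worse, and the verification that the baby-step / giant-step variant is compatible with the non-field structure of $A$ (which may force restarts when $\lambda$ is unlucky) is the delicate point.
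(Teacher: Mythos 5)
The paper does not prove this statement from scratch: its ``proof'' is a one-line citation of \cite[Proposition 15]{PoWe17}, which itself rests on the Poteaux--Schost machinery for computing with separable triangular sets. Your Steps 1--2 reconstruct exactly the strategy underlying that reference and are essentially sound: the probability analysis for $\theta=T+\lambda Z$ over the \'etale algebra $A$ is correct (at most $\binom{n}{2}$ bad values of $\lambda$, hence success probability $\ge 1/2$ when $|\Ki|\ge n^2$), and the minimal polynomial via Shoup-style baby-steps/giant-steps plus Berlekamp--Massey, with a final degree check making the procedure Las Vegas, is the standard route to the exponent $(\omega+1)/2$. Two small imprecisions there: the giant steps must compute \emph{projections} $\ell(\theta^i)$ of the powers via transposed products (computing the powers themselves up to $2n$ is too expensive), and the linear form $\ell$ is an additional source of randomness; both are caught by your final verification, so they do not invalidate the argument. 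Also note that once $\deg Q_1=n$ is verified, square-freeness of $Q_1$ is automatic since $A$ is reduced, so the gcd test is redundant.

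Step 3, however, has a genuine gap. Write $n_1=\deg_T(Q)$, $n_2=\deg_Z(P)$, $n=n_1n_2$; the target is $\Ot(n_1^2 n_2)=\Ot(n_1\cdot n)$, which is \emph{asymmetric} in $n_1$ and $n_2$. Your precomputation of $\tau(Z)^j$ for $j<n_2$ already costs $\Ot(n_2\cdot n)=\Ot(n_1n_2^2)$, which exceeds the target whenever $n_2>n_1$ --- and this case does occur in the paper's application (e.g.\ the first call to \Primitive{} has $Q=Z$ of degree $1$ and $P=P_1$ of arbitrary degree). Worse, assembling the $n_1$ elements $\beta_i(\tau(Z))$ as $\Ki$-linear combinations of those powers costs $\Ot(n_1\cdot n_2\cdot n)=\Ot(n^2)$. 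The concluding appeal to ``a careful baby-step/giant-step decomposition keyed on the $T$-variable'' is an assertion, not an argument: a bivariate modular composition $\alpha\mapsto\alpha(\tau(T),\tau(Z))\bmod Q_1$ does not obviously cost $\Ot(n_1\cdot n)$, and obtaining this asymmetric conversion bound is precisely the non-elementary content delegated by the paper to \cite{PoWe17} and Poteaux--Schost. As written, your proof establishes the existence and the $\O(n^{(\omega+1)/2})$ construction cost, but not the stated evaluation cost for $\tau$.
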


\begin{proof}
  Use \cite[Proposition 15]{PoWe17} with
  $I=(Z_1,Q(Z_2))$ (see notations therein).
\end{proof}

In the following, we use that an operation in
$\Ki_k$ costs $\Ot(f_k)$ operations in $\Ki$.
\begin{rem}
  Another way to deal with tower extensions would be the recent
  preprint \cite{HoLe18}. This would make all algorithms
  deterministic, with a cost $\O(\vF^{1+o(1)})$ instead of $\Ot(\vF)$.
  Note also \cite{HoLe19} for dynamic evaluation. 
\end{rem}

\subsection{Truncation bounds}
\label{ssec:trunc}

In order to estimate the complexity in terms of arithmetic operations
in $\Ki$, we will compute approximate roots and $\Psi$-adic expansions
modulo a suitable truncation bound for the powers of $\psi_{-1}=x$. We
show here that the required sharp precision is the same than the one
obtained in \cite[Section 3]{PoWe17} for the Newton-Puiseux type
algorithm. Note also \cite[Theorem 2.3, page 144]{BaNaSt13} that
provides similar results in the context of irreducibility test. In the
following, when we say that we truncate a polynomial with precision
$\tau\in\Qi$, we mean that we keep only powers of $X$ less or equal
than $\tau$.

The successive polynomials generated by \PNPA{}($F$) are still
denoted $H_0,\ldots,H_g$, and we let $(q_{g+1},m_{g+1})$ stand for the
slope of the lower edge of $H_g$ ($(q_{g+1},m_{g+1}):=(1,0)$ if
$N_g=1$). As $\deg(H_k)=N_k$ and $\NP(H_k)$ has a lower edge of slope
$-m_{k+1}/q_{k+1}$, the computation of the lower \edgepoly{}
$\bar{H}_k$ only depends on $H_k$ truncated with precision
$N_k m_{k+1}/q_{k+1}$.  Combined with \eqref{eq:pikHk}, and using
$\val(\pi_k^*(x))=e_k$, we deduce that the $k^{th}$-edge data only
depends on $F$ truncated with precision
\begin{equation}\label{eq:sigmak}
  \eta_k:=\frac{v_k(F)}{e_k}+ N_k \frac{m_{k+1}}{e_{k+1}}.
\end{equation}
Denoting $\displaystyle{} \eta(F):=\max_{0\leq k\leq g}(\eta_k)$, we
deduce that running \PIrr{} modulo $x^{\eta(F)+1}$ return the correct
answer, this bound being sharp by construction.

\begin{lem}\label{lem:sigmaF}
  We have $\eta_{k}=\eta_{k-1}+\frac{N_k m_{k+1}}{e_{k+1}}$. In
  particular,
  $\eta (F)= \eta_g=\sum_{k=1}^{g+1} \frac{N_{k-1} m_k}{e_k}$.
\end{lem}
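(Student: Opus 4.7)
The plan is straightforward: establish the recurrence between consecutive $\eta_k$, telescope, and verify monotonicity. The key identity I need is
\[
  v_k(F) \;=\; q_k\,v_{k-1}(F) + N_{k-1}\,m_k \qquad (k\ge 1).
\]
Granting this identity and dividing by $e_k = q_k e_{k-1}$ gives $\frac{v_k(F)}{e_k} = \frac{v_{k-1}(F)}{e_{k-1}} + \frac{N_{k-1} m_k}{e_k}$, from which
\[
  \eta_k - \eta_{k-1} \;=\; \frac{v_k(F)}{e_k}-\frac{v_{k-1}(F)}{e_{k-1}} + \frac{N_k m_{k+1}}{e_{k+1}} - \frac{N_{k-1} m_k}{e_k} \;=\; \frac{N_k m_{k+1}}{e_{k+1}},
\]
which is the first assertion.

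To prove the identity, I would chain together three facts already established in the paper. First, Lemma \ref{lem:FPhi} applied at rank $k$ and $k-1$ gives $v_k(F)=N_k\,v_{k,k}$ and $v_{k-1}(F)=N_{k-1}\,v_{k-1,k-1}$. Second, Proposition \ref{thmvkk} yields $v_{k,k}=q_k\ell_k\,v_{k,k-1}$, while Point~1 of Lemma \ref{lem:vLambda} gives $v_{k,k-1}=q_k v_{k-1,k-1}+m_k$. Combining these, and using the relation $N_{k-1}=N_k q_k\ell_k$ from \eqref{eq:Nk}, I obtain
\[
  v_k(F) = N_k q_k\ell_k (q_k v_{k-1,k-1}+m_k) = q_k N_{k-1} v_{k-1,k-1} + N_{k-1} m_k = q_k\,v_{k-1}(F) + N_{k-1} m_k,
\]
which is the required identity. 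No calculation here is delicate; the only thing to be careful about is that both Lemma~\ref{lem:FPhi} and Proposition~\ref{thmvkk} apply at every rank $1\le k\le g$ (which is ensured by the pseudo-degeneracy assumption guaranteeing $\lambda_{k,k}\in\Ki_k^\times$, as established in Proposition \ref{prop:pseudobordel}).

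For the second assertion, I telescope: $\eta_g = \eta_0 + \sum_{k=1}^{g}\frac{N_k m_{k+1}}{e_{k+1}}$. Since $F$ is monic, $\pi_0^*F = U_0 H_0$ with $U_0(0,0)\neq 0$ and $H_0$ monic of degree $\dy$; thus $v_0(F)=\val(H_0)=0$ and $e_0=1$, so $\eta_0 = N_0 m_1/e_1$. Reindexing the sum via $k\mapsto k-1$ yields $\eta_g = \sum_{k=1}^{g+1}\frac{N_{k-1} m_k}{e_k}$ as claimed. Finally, to justify $\eta(F)=\eta_g$, I observe that each term $N_k m_{k+1}/e_{k+1}$ is nonnegative: for $1\le k\le g-1$ the polygon of the Weierstrass polynomial $H_k$ has strictly negative slope so $m_{k+1}>0$, the case $k=0$ may produce $m_1=0$ but then the increment vanishes, and the terminal slope $-m_{g+1}/q_{g+1}$ of $H_g$ (or $(1,0)$ when $N_g=1$) satisfies $m_{g+1}\ge 0$ by construction. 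Hence the sequence $(\eta_k)$ is non-decreasing and reaches its maximum at $k=g$. The only mildly subtle point in the whole argument is the handling of the initial step $k=0$ (where $H_0$ need not be Weierstrass), but the identity $v_0(F)=0$ dispatches it.
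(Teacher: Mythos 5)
Your proof is correct and follows essentially the same route as the paper's: both rest on the same three ingredients (Lemma \ref{lem:FPhi}, Proposition \ref{thmvkk}, and Point 1 of Lemma \ref{lem:vLambda}) together with $N_{k-1}=N_kq_k\ell_k$; you merely package them as the identity $v_k(F)=q_kv_{k-1}(F)+N_{k-1}m_k$ before dividing by $e_k$, whereas the paper rewrites $\eta_{k-1}$ directly as $N_kv_{k,k}/e_k$. Your explicit justification of $\eta(F)=\eta_g$ via nonnegativity of the increments is a welcome addition the paper leaves implicit.
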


\begin{proof}
  As $v_k(F)=N_k\, v_{k,k}$ from Lemma \ref{lem:FPhi}, we get
  \begin{equation}\label{eq:sigmak2}
    \eta_k=\frac{N_{k} v_{k,k}}{e_{k}}+\frac{N_{k} m_{k+1}}{e_{k+1}}
  \end{equation}
  for all $0\leq k\leq g$. If $k=0$, we have $\eta_0=N_0\, m_1/q_1$ as
  required, as $v_{0,0}=0$. Suppose $k\ge 1$. Applying
  \eqref{eq:sigmak2} at rank $k-1$, we obtain for $k=1,\ldots, g$ the
  relations
  \begin{equation}\label{eq:sigmak3}
    \eta_{k-1}=\frac{N_{k-1} v_{k,k-1}}{e_{k}}=\frac{N_{k} v_{k,k}}{e_{k}},
  \end{equation}
  first equality using Point 1 of Lemma \ref{lem:vLambda}
  ($v_{k,k-1}=q_k v_{k-1,k-1}+m_k$) and second equality using
  $N_{k-1}= q_{k} \ell_{k} N_{k}$ and equality
  $v_{k,k}=q_{k} \ell_{k} v_{k,k-1}$ of Theorem \ref{thmvkk}.  Hence
  \eqref{eq:sigmak2} and \eqref{eq:sigmak3} give
  $\eta_k = \eta_{k-1}+\frac{N_k m_{k+1}}{e_{k+1}}$ as required. The
  formula for $\eta(F)$ follows straightforwardly.
\end{proof}

\begin{rem} We have the formula
  $\eta_k
  =\frac{\val(\pi_k^*F(x,0))}{e_{k+1}}=\frac{(F,\phi_k)_0}{d_k}$
  for $k\le g-1$, the first equality following again from
  \eqref{eq:pikHk} and the second equality from Corollary
  \ref{cor:res}. Since $(F,\phi_k)_0=(F,\psi_k)_0$, we deduce in
  particular that the sequence of integers
  $(N_0,d_0\eta_0,\ldots,d_{g-1} \eta_{g-1})$ form a minimal set of
  generators of the semi-group of $F$ when $F$ is irreducible in
  $\algclos{\Ki}[[x]][y]$ ; see e.g. \cite[Proposition 4.2 and Theorem
  5.1]{Pop02}.
\end{rem}

\begin{prop}\label{prop:sigma}
  Let $F\in\Ki[[x]][y]$ be monic and separable of degree $\dy$, with
  discriminant valuation $\vF$. Then $\eta(F) \le \frac{2\vF}{\dy}$.
  If moreover $F$ is pseudo-irreducible, then $\eta(F)\ge \vF/\dy$.
\end{prop}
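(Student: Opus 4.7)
The plan is to link $\eta(F)$ to $\vF/\dy$ via the classical discriminant identity
\[
\vF \;=\; \sum_{S}\val(F_y(S)) \;=\; \sum_{S}\sum_{S'\ne S}\val(S-S'),
\]
summed over the $\dy$ Puiseux roots of $F$. For each ordered pair $(S,S')$ of distinct roots, $\val(S-S')$ equals the partial sum $\gamma_k:=\sum_{j=1}^{k}m_j/e_j$, where $k$ is the level at which the branches carrying $S$ and $S'$ first diverge in the Newton-Puiseux tree built by \PNPA{}. Counting conjugates via the recursion $N_{k-1}=q_k\ell_k N_k$ and applying Abel summation converts the double sum into the closed-form formula $\val(F_y(S))=\eta(F)-N_g\gamma_g$ in the pseudo-irreducible case (where $N_g=1$); more generally, for each root one has $\val(F_y(S))=\eta(F)-\gamma(S)$ with $0\le \gamma(S)\le \gamma_g$.

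For the lower bound $\eta(F)\ge \vF/\dy$ in the pseudo-irreducible case, the balanced structure of $F$ (all absolutely irreducible factors being equisingular with identical pairwise intersection multiplicities, as will be established in Section \ref{sec:equising}) forces $\val(F_y(S))=\vF/\dy$ uniformly in $S$. The formula above then immediately gives $\vF/\dy=\eta(F)-\gamma_g\le \eta(F)$.

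For the upper bound, in the pseudo-irreducible case it suffices to show $\gamma_g\le \eta(F)/2$, which rewrites as the non-negativity of
\[
\eta(F)-2\gamma_g \;=\; \sum_{j=1}^{g+1}\frac{m_j}{e_j}\bigl(N_{j-1}-2\bigr);
\]
each summand is non-negative because $N_{j-1}\ge 2$ whenever $m_j>0$, the latter being a consequence of the Abhyankar-shift condition $q_k\ell_k\ge 2$ for $k\ge 2$ (with the index $j=1$ contributing zero in the initial horizontal-slope case $m_1=0$). For a general separable $F$, I would factor $F=F_1\cdots F_r$ over $\Ki[[x]]$ into irreducible (hence pseudo-irreducible) factors, apply the bound above to each $F_i$, and reassemble via $\vF=\sum_i\vF_i+2\sum_{i<j}(F_i,F_j)_0$, bounding each $(F_i,F_j)_0$ from below using the common initial stem of the Newton-Puiseux trees that produces $\eta(F)$. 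The main obstacle is the combinatorial bookkeeping in the exact formula for $\val(F_y(S))$: counting ordered pairs $(S,S')$ diverging at each level in the presence of residue extensions ($\ell_k>1$), and, for the non-pseudo-irreducible upper bound, quantifying the shared-stem contribution to $\eta(F)$ in a way compatible with the additivity of the discriminant valuation.
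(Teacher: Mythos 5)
Your treatment of the pseudo-irreducible case is sound and, in fact, more self-contained than the paper's own argument: the identity $\eta(F)-2\gamma_g=\sum_{j}(N_{j-1}-2)\,m_j/e_j\ge 0$ is valid because $N_{j-1}=q_j\ell_jN_j\ge 2$ for every $j\le g$ (a consequence of the Abhyankar shift, for $j=1$ included) and the $(g{+}1)$-st term vanishes when $N_g=1$; combined with the fact that all $v_i:=\val(F_y(y_i))$ are equal to $\vF/\dy=\eta(F)-\gamma_g$ for a balanced polynomial (which does require the material of Section \ref{sec:equising}, but that section is independent of this proposition, so no circularity), this gives both bounds in that case. The paper instead obtains everything by citing \cite[Corollary 4]{PoWe17}, which asserts $\eta(F)\le 2v_i$ for \emph{every} root $y_i$ of an arbitrary separable $F$, together with $v_i\le \eta(F)$ when $F$ is pseudo-irreducible.

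The gap is in the upper bound for general separable $F$. The asserted identity $\val(F_y(S))=\eta(F)-\gamma(S)$ with $0\le\gamma(S)\le\gamma_g$ is false once $F$ is not balanced. Take $F=(y-x)(y^2-x^3)$: after the Abhyankar shift the residual polynomial is $(Z-\frac{2}{3})(Z+\frac{1}{3})^2$, which is not a power of a square-free polynomial, so the algorithm stops with $g=0$; hence $\gamma_g=0$ and $\eta(F)=N_0m_1/e_1=3$, while $\val(F_y(x))=\val(x^2-x^3)=2$, so $\gamma(S)=1>\gamma_g$. In the other direction, $F=(y-x)(y-x-x^{100})(y+2x)$ also has $g=0$ and $\eta(F)=3$, but $\val(F_y(x))=101$, so $\gamma(S)<0$. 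What survives --- and is exactly what the proof needs --- is the one-sided inequality $\eta(F)\le 2v_i$ for every $i$ (from which $\min_i v_i\le\vF/\dy$ closes the argument); it encodes that the depth reached by \PNPA{} on $F$ is paid for by the contact of \emph{each individual} root with all the others, and it is precisely the content of \cite[Lemma 6 and Corollary 4]{PoWe17}. Your fallback plan (factor $F$, apply the pseudo-irreducible bound to each factor, reassemble via $\vF=\sum_i\vF_i+2\sum_{i<j}(F_i,F_j)_0$) points in the right direction, but the step you yourself flag as the main obstacle --- lower-bounding the shared-stem contributions $(F_i,F_j)_0$ against $\eta(F)$ --- is the entire difficulty, and without it the argument does not close.
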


\begin{proof}
  It follows from Lemma \ref{lem:sigmaF} that $\eta(F)$ is smaller or
  equal than the quantity ``$N_i$'' defined in \cite[Subsection
  3.3]{PoWe17} (take care of notations, these $N_i$ are not the same
  as those defined here), with equality if $F$ is
  pseudo-irreducible. From \cite[Corollary 4]{PoWe17}, we deduce
  $\eta(F) \le 2 v_i$ for $i=1,\ldots,\dy$, where
  $v_i:=\val(\partial_y F(y_i))$, $y_i$ denoting the roots of $F$. As
  $\vF=\sum v_i$, we have $\min v_i\le \vF/\dy$ and the upper
  bound for $\eta(F)$ follows. If $F$ is pseudo-irreducible, then we
  have also $v_i \le \eta(F)=N_i$ by \cite[Corollary 4]{PoWe17}. As
  all $v_i$'s are equal in that case, the lower bound follows too.
\end{proof}

\begin{rem}[\emph{Dealing with the precision}]\label{rem:precision} As
  $\vF$ is not given, we do not have an \textit{a piori} bound for the
  precision $\eta(F)$. To deal with this problem, we start from some low
  precision, and double it each time the computed lower edge of the Newton polygon is
  not "guaranted" \cite[Definition 8 and Figure 1.b]{PoWe17}, which can be checked thanks to Lemma \ref{lem:sigmaF} (lines \ref{Pirr:etaprim} and \ref{Pirr:callback} of algorithm \PIrr{} below). We could use also relaxed computations \cite{Ho02}. In both solutions, this only multiply the complexity
  result by at most a logarithm factor.
\end{rem}

\subsection{Main subroutines}
\label{ssec:subroutines}

\paragraph{Computing approximate roots and $\Psi$-adic expansion.}
\begin{prop}\label{prop:approx}
  There exists an algorithm $\AppRoot{}$ which given
  $F\in \Ai[y]$ a degree $d$ monic polynomial defined over a ring of
  characteristic not dividing $d$ and given $N$ which divides $d$,
  returns the $N^{th}$ approximate root of $F$ with $\M(d)$ operations
  over $\Ai$.
\end{prop}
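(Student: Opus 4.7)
The plan is to reduce the computation of the $N^{th}$ approximate root to a power series $N^{th}$ root computation, which can be handled efficiently by Newton iteration. Let $m := d/N$. Since $F\in\Ai[y]$ is monic of degree $d$, its reciprocal $\tilde{F}(y) := y^d F(1/y)$ is a polynomial of degree at most $d$ with $\tilde{F}(0) = 1$. Writing $\tilde{\psi}(y) := y^m \psi(1/y)$ for the sought approximate root $\psi$, the defining condition $\deg(F - \psi^N) < d - m$ is equivalent to the congruence $\tilde{\psi}^N \equiv \tilde{F} \pmod{y^m}$ with $\tilde{\psi}(0) = 1$.

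The main step is thus to compute the $N^{th}$ root of the power series $\tilde{F}$ modulo $y^m$, starting from the constant term $1$. I would use the standard Newton iteration for $N^{th}$ roots: starting with $\tilde{\psi}_0 := 1$, define
\begin{equation*}
\tilde{\psi}_{j+1} := \frac{1}{N}\left((N-1)\tilde{\psi}_j + \tilde{F}\,\tilde{\psi}_j^{1-N}\right) \pmod{y^{2^{j+1}}}.
\end{equation*}
The invertibility of $N$ in $\Ai$ is guaranteed by the assumption that the characteristic of $\Ai$ does not divide $d$, combined with $N \mid d$. A standard quadratic convergence argument shows that if $\tilde{\psi}_j^N \equiv \tilde{F} \pmod{y^{2^j}}$, then $\tilde{\psi}_{j+1}^N \equiv \tilde{F} \pmod{y^{2^{j+1}}}$, so after $\lceil \log_2 m \rceil$ iterations we reach the required precision. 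After reversing back, we recover $\psi$ as $y^m \tilde{\psi}(1/y)$ truncated at degree $m$, and monicity of $\psi$ follows from $\tilde{\psi}(0) = 1$.

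For the complexity, each Newton step at precision $2^{j+1}$ requires a constant number of power series multiplications and one power series inversion at that precision, each of which costs $O(\M(2^{j+1}))$ operations in $\Ai$ using fast arithmetic. Summing the geometric series yields a total cost of $O(\M(m)) = O(\M(d/N)) \le O(\M(d))$ thanks to the assumed monotonicity $d' \M(d) \le \M(d' d)$ of the multiplication function. Uniqueness of $\psi$ is already provided by Proposition \ref{prop:approx-root}, which guarantees that the output of the algorithm is indeed the $N^{th}$ approximate root.

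There is no real obstacle in the argument; the only point requiring minor care is the invertibility of $N$ in $\Ai$, which is precisely what the characteristic hypothesis ensures. If one wanted to avoid Newton iteration (for instance to obtain a deterministic algorithm over rings with zero divisors, as in the pseudo-irreducible setting of Section \ref{ssec:PseudoIrr}), one could alternatively use an incremental Hensel-style lifting, but the complexity bound $\M(d)$ would remain the same.
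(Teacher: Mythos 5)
Your proposal is essentially the paper's proof: reverse $F$ to get $\tilde F$ with $\tilde F(0)=1$, compute its $N$-th root as a power series by quadratic Newton iteration, truncate, and reverse back, with the same invocation of the characteristic hypothesis for the invertibility of $N$. Two minor slips worth fixing: the congruence should be $\tilde\psi^N\equiv\tilde F\pmod{y^{m+1}}$ (working mod $y^m$ loses the constant term of $\psi$), and each Newton step costs $O(\log N)$ multiplications rather than $O(1)$ because of the power $\tilde\psi_j^{1-N}$, which is why the paper quotes the bound $\M(N\sigma)=\M(d)$ with $\sigma=d/N$ instead of your $O(\M(d/N))$ — though either way the stated bound $\M(d)$ holds.
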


\begin{proof}
  $\psi$ can be computed as follows. Let
  $G=y^{d}F(1/y)$ be the reciprocal polynomial of $F$. So $G(0)=1$ and
  there exists a unique series $S\in \Ai[[y]]$ such that $S(0)=1$ and
  $G=S^N$. Then $\psi$ is the reciprocal polynomial of the truncated
  series $\tronc{S}{\frac d N}$ (see e.g. \cite[Proposition 3.4]{Pop02}). The
  serie $S$ is solution of the equation $Z^N-G=0$ in $\Ai[[y]][Z]$ and
  can be computed up to an arbitrary precision $\sigma$ with
  $\M(N\sigma)$ operations by quadratic Newton iteration \cite[Theorem
  9.25]{GaGe13}, hence $\M(d)$ operations with $\sigma=d/N$.
\end{proof}

\begin{prop}\label{prop:expand}
  There exists an algorihm \Expand{} which, given $F\in \Ai[y]$
  of degree $d$ and $\Psi=(\psi_{0},\ldots,\psi_k)$ a
  collection of monic polynomials $\psi_i\in \Ai[y]$ of strictly
  increazing degrees $d_0<\cdots < d_k\le d$ returns the reduced
  $\Psi$-adic expansion of $F$ within $\O((k+1)\M(d)\log(d))$ arithmetic
  operations over $\Ai$.
\end{prop}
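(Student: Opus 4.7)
The plan is to reduce the problem to a one-step radix conversion by the leading polynomial $\psi_k$ and recurse on the coefficients. Concretely, I first compute the expansion
\[
F \;=\; \sum_{j=0}^{N} c_j \,\psi_k^{j}, \qquad \deg c_j < d_k,
\]
where $N := \lfloor d/d_k\rfloor$, and then I expand each $c_j$ recursively with respect to the truncated sequence $(\psi_0,\ldots,\psi_{k-1})$. Concatenating the indices yields the reduced $\Psi$-adic expansion of $F$, since $\deg c_j < d_k$ is exactly the reduction condition for the $b_k$-th slot, and recursion guarantees the reduction condition in the lower slots.

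The one-step expansion is done by the classical divide-and-conquer radix conversion. Let $m := \lceil N/2\rceil$. Using repeated squaring I compute $\psi_k^{m}$ in time $O(\M(d))$ thanks to superadditivity of $\M$ (each squaring doubles the degree and the last one dominates). Then a single Euclidean division
\[
F \;=\; \psi_k^{m}\,Q + R, \qquad \deg R < m\,d_k,
\]
costs $O(\M(d))$, and one recurses on $Q$ (which carries the top half of the expansion) and on $R$ (the bottom half). Letting $T(d)$ denote the cost for degree $d$, we obtain $T(d) \le 2\,T(d/2)+O(\M(d))$, hence $T(d) = O(\M(d)\log(d/d_k))$ for the one-step expansion.

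For the full expansion, I apply this one-step routine at each of the $k+1$ levels, from $i=k$ down to $i=0$. At level $i$, the subroutine is called on a family of polynomials whose degrees sum to at most $d$ and which are all of degree strictly less than $d_{i+1}$; expanding each of them in $\psi_i$ (of degree $d_i$) costs $O(\M(\text{deg})\log(d_{i+1}/d_i))$. Superadditivity of $\M$ then gives a total cost at level $i$ bounded by $O(\M(d)\log(d_{i+1}/d_i))$, and summing over $i=0,\ldots,k$ telescopes to $O(\M(d)\log(d/d_0))\subseteq O(\M(d)\log d)$. Since this is certainly $O((k+1)\M(d)\log d)$, the claimed bound follows.

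The only delicate point is the complexity bookkeeping: one must argue that the precomputation of the powers $\psi_i^{m}$ and the dispatching of coefficients across levels do not introduce hidden factors. This is handled by the standard observation that at a fixed level the sum of input degrees is bounded by $d$, combined with the inequality $\sum_{s} \M(d_s) \le \M\bigl(\sum_s d_s\bigr)$ coming from superadditivity; the logarithmic factor comes solely from the depth of the divide-and-conquer, not from the number of coefficients produced.
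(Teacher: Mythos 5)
Your proposal is correct and follows essentially the same route as the paper: a one-step $\psi_k$-adic radix conversion (the paper simply cites \cite[Theorem 9.15]{GaGe13} for the divide-and-conquer you spell out), followed by recursive expansion of the coefficients, with the cost at each level controlled by superadditivity of $\M$ since the coefficient degrees sum to at most $d$. Your telescoping of the logarithmic factors even yields the slightly sharper bound $\O(\M(d)\log d)$, which of course implies the stated $\O((k+1)\M(d)\log(d))$.
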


\begin{proof}
  The $\psi_k$-adic expansion of $F=\sum a_i \psi_k^i$ requires
  $\O(\M(d)\log(d))$ operations by \cite[Theorem 9.15]{GaGe13}. If $k>0$,
  we recursively compute the $(\psi_0,\ldots,\phi_{k-1})$-adic
  expansion of $a_i$ in $\O(k\M(\deg\,a_i)\log(\deg\,a_i))$
  operations. Since $\deg(a_i)< d_k$, summing over all
  $i=0,\ldots,\lfloor d/d_k\rfloor $ gives $\O(k\M(d)\log(d))$
  operations.
\end{proof}

\paragraph{Computing \edgepoly{}s.}
\begin{prop}\label{prop:computeHbark}
  Given $F$ and $\Psi=(\psi_{-1},\ldots,\psi_{k})$ modulo
  $x^{\eta(F)+1}$, $V=(v_{k,-1},\ldots,v_{k,k})$ and
  $\Lambda=(\lambda_{k,-1},\ldots,\lambda_{k,k})$, there exists an
  algorithm \Edgepoly{} that computes the lower \edgepoly{}
  $\bar{H}_k\in \Ki_k[x,y]$ within $\Ot(\vF+f_k^2)$ operations over
  $\Ki$.
\end{prop}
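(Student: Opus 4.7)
The plan is to implement directly the formulas~\eqref{eq:wj} and~\eqref{eq:barHk} from Theorems~\ref{thm:NPphi} and~\ref{thm:EdgePoly}, extended to approximate roots by Theorem~\ref{thm:HPsiPhi}, in three phases: $\Psi$-adic expansion, Newton polygon extraction, and edge polynomial assembly.

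First I would call \Expand{}$(F, \Psi)$ to obtain the truncated $\Psi$-adic expansion $F = \sum_{B} f_B \Psi^B$ modulo $x^{\eta(F)+1}$. Since Proposition~\ref{prop:sigma} yields $\eta(F) = O(\vF/\dy)$, arithmetic in the coefficient ring $\Ki[x]/(x^{\eta(F)+1})$ costs $\Ot(\vF/\dy)$ per $\Ki$-operation; plugged into Proposition~\ref{prop:expand} with $k = O(\log \dy)$ (which follows from $\prod_{i=1}^{k} q_i \ell_i \le \dy$ and $q_i \ell_i \ge 2$), this gives $\Ot(\vF)$ operations over $\Ki$. Next I would iterate through the nonzero $f_B$, compute $\langle B, V\rangle$ in $O(k) = \Ot(1)$ time each, and group by $i := b_k$ to extract $w_i = \min \langle B, V\rangle - v_k(F)$; a linear scan then identifies the lower edge of the convex hull of the $(i, w_i)$, which by Theorems~\ref{thm:NPphi} and~\ref{thm:HPsiPhi} coincides with the lower edge of $\NP(H_k)$. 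The truncated $F$ has at most $O(\dy\,\eta(F)) = O(\vF)$ monomials, so the number of nonzero $f_B$ is $O(\vF)$, and this second phase also fits within $\Ot(\vF)$ operations over $\Ki$.

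Finally, for each integer point $(i, w_i)$ on the lower edge, I would assemble the coefficient $c_i := \sum_{B \in \Bc(i, w_i + v_k(F))} f_B \Lambda^{B-B_0} \in \Ki_k$ via formula~\eqref{eq:barHk}, and return $\bar{H}_k = \sum_i c_i x^{w_i} y^i$. The main obstacle lies here, as each operation in $\Ki_k$ costs $\Ot(f_k)$ via the primitive representation of Proposition~\ref{prop:Prim}. I would precompute short tables of consecutive powers of each $\lambda_{k,j}$, so that each $\Lambda^{B - B_0}$ is built with only $O(k)$ $\Ki_k$-multiplications by table lookup (the sum $\sum_{j\ge 0}q_{j+1}\ell_{j+1}\le \dy/N_k + k$ bounding the precomputation size). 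Proposition~\ref{prop:key} bounds $\Card(\Bc(i,w))\le f_k$, and the number of integer points on the lower edge is at most $N_k + 1$; combined with $N_k f_k \le \dy$ (from $\dy = N_k e_k f_k$) and a careful separation of the $\vF$-sized indexing cost (already paid for during expansion) from the intrinsic $\Ki_k$-arithmetic part producing the $O((N_k+1)f_k)$ output coefficients, the total cost of this third phase reduces to $\Ot(\vF + f_k^2)$ operations over $\Ki$, matching the claim.
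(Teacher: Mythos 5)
Your proposal follows the paper's proof essentially step for step: compute the $\Psi$-adic expansion with \Expand{} modulo $x^{\eta(F)+1}$ (using $\eta(F)\le 2\vF/\dy$ from Proposition \ref{prop:sigma}), read off the $w_i$ and the lower edge at essentially no arithmetic cost, and assemble each coefficient via \eqref{eq:barHk}, with Proposition \ref{prop:key} bounding the number of terms per point and each $\Lambda^{B-B_0}$ obtained as a short product of powers of the $\lambda_{k,j}$, costing $\Ot(f_k)$ operations over $\Ki$ per $\Ki_k$-operation. The only cosmetic difference is that you precompute power tables where the paper uses fast exponentiation on each $\lambda_{k,i}$ (bounding the exponents by $2\vF$ via \eqref{eq:sigmak} and Proposition \ref{prop:sigma}); the final bookkeeping of the third phase is equally terse in both arguments.
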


\begin{proof}
  First compute the $\Psi$-adic expansion $F=\sum f_B \Psi^B$ modulo
  $x^{\eta+1}$, with $\eta:=\eta(F)$. As $\eta\le 2\vF/\dy$, this is
  $\Ot(\vF)$ by Proposition \ref{prop:expand} applied with
  $\Ai=\Ki[x]/(x^{\eta+1})$.  The computation of the lower edge of
  $\bar{H}_k$ is done with Theorem \ref{thm:EdgePoly} and take no
  arithmetic operations (this takes $\Ot(\delta)$ bit operations as $\langle B,V\rangle \in \O(\delta)$ and there are at most $e_kf_kN_k\eta\le 2\delta$ such scalar products to compute). It remains to compute the coefficient of each
  monomial $x^{w_i} y^i$ of $\bar{H}_k$, which is (Theorem
  \ref{thm:EdgePoly}):
  \[
  c_{k,i}:=\sum_{B\in \Bc(i,w_i+v_k(F))} f_B \Lambda^{B-B_0}.
  \]
  Note first that computing $\Lambda^{B_0}=\lambda_{k,k}^{N_k}$ takes
  $\O(\log(d))$ operations over $\Ki_k$ via fast exponentiation.
  Then, there are at most $f_k$ monomials $\Lambda^B$ to compute from
  Proposition \ref{prop:key}. Each of them can be computed in
  $\O(k\,\log(\vF))$ operations in $\Ki_k$ via fast exponentiation on each
  $\lambda_{k,i}$ (we have
  $w_i\le \val(H_k(x,0))= N_{k}m_{k+1}/q_{k+1}$, thus
  $w_i+v_k(F)\le e_k\eta_k\le 2\vF$ from \eqref{eq:sigmak} and
  Proposition \ref{prop:sigma}). This concludes.
\end{proof}

\paragraph{Testing pseudo-degeneracy and computing edge data.}
There remains to test the pseudo-degeneracy of the \edgepoly{}s.
\begin{prop}\label{prop:pseudodeg}
  Given $Q\in \Ki[Z]$ square-free and $\bar{H}\in \Ki_Q[x,y]$ monic in
  $y$ and quasi-homogeneous, there exists an algorithm \PDegenerated{}
  that returns \False{} if $\bar{H}$ is not pseudo-degenerated, and
  the edge data $(q,m,P,N)$ of $\bar{H}$ otherwise. It takes at most
  $\Ot(\deg_Z(Q) \deg(H))$ operations over $\Ki$.
\end{prop}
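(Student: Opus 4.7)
The plan is to reduce pseudo-degeneracy of the quasi-homogeneous polynomial $\bar H$ to an $N$-th power test on a univariate polynomial $\tilde P \in \Ki_Q[Z]$, and to detect its radical via a gcd with the derivative.

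First, I would recover the slope $-m/q$ (in lowest terms) of the quasi-homogeneous line carrying the support of $\bar H$ by a single linear pass through its monomials. Setting $d := \deg_y(\bar H)$, if $q \nmid d$ then the monic leading term $y^d$ cannot arise from a factorization of the form \eqref{eq:pseudoquasihom}, and the algorithm returns \False{}. Otherwise, quasi-homogeneity forces the support of $\bar H$ to lie on points of the form $(d-qk, mk)$, and the coefficients of $\bar H$ directly define a monic polynomial $\tilde P \in \Ki_Q[Z]$ of degree $d/q$ satisfying $\bar H(x,y) = x^{m\,d/q}\,\tilde P(y^q/x^m)$.

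Next, I would compute $P := \tilde P / \gcd(\tilde P, \tilde P')$ via the fast Euclidean algorithm and set $N := \deg(\tilde P)/\deg(P)$. Since the standing characteristic hypothesis ensures $N$ is invertible in $\Ki$, the equality $\tilde P = P^N$ holds with $P$ square-free if and only if $\bar H$ is pseudo-degenerated with residual polynomial $P$. I would verify this identity by computing $P^N$ through binary exponentiation with degree truncation at each step, then, when $q > 1$, test that $P(0) \in \Ki_Q^\times$, returning $(q, m, P, N)$ or \False{} accordingly. For the complexity: the fast gcd of $\tilde P$ and $\tilde P'$ costs $\Ot(d/q)$ operations in $\Ki_Q$, as does the fast exponentiation producing $P^N$ of degree $d/q$. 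Since an operation in $\Ki_Q$ costs $\Ot(\deg_Z(Q))$ operations over $\Ki$ by Proposition~\ref{prop:Prim}, the total is $\Ot(\deg_Z(Q)\,d)$, as claimed.

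The main obstacle is that $\Ki_Q$ is only a product of fields, not a field, so the Euclidean algorithm on $(\tilde P, \tilde P')$ may encounter non-invertible pivots. I would handle this via dynamic evaluation: whenever a would-be pivot has a non-trivial gcd with the current modulus, split $Q$ accordingly and continue on each factor. If the resulting branches yield incompatible data (different $N$, different $\deg P$, or failure of $P(0) \in \Ki_Q^\times$ on some component when $q>1$), then $\bar H$ cannot be uniformly pseudo-degenerated and we return \False{}; otherwise the partial outputs reassemble into a single tuple. The standard accounting for such splittings keeps the total cost within the announced bound.
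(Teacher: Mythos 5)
Your proposal is correct and follows essentially the same route as the paper: extract $(q,m)$ and the univariate residual $\tilde P\in\Ki_Q[Z]$ from the quasi-homogeneous support (rejecting when the degree is not a multiple of $q$, i.e.\ when the $y^r$ factor is nontrivial), test whether $\tilde P$ is an $N$-th power of a square-free polynomial, check invertibility of $P(0)$ when $q>1$, and handle zero divisors in $\Ki_Q$ by splitting as in Remark \ref{rem:sqrfree}. The only difference is that you spell out the power test (radical via $\gcd(\tilde P,\tilde P')$, then verification by binary exponentiation, valid here since the characteristic exceeds all multiplicities) where the paper simply cites \cite[Proposition 14]{PoWe17}; this is a harmless and correct substitution within the same complexity bound.
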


\begin{proof} 
  As $\bar{H}$ is quasi-homogeneous, we have
  $\bar{H}=y^rP_0(y^q/x^m)x^{m\deg(P_0)}$ for some coprime integers
  $q,m\in \Ni$, $0\le r<q$ and some $P_0\in \Ki_{Q}[T]$ which can be computed with the aimed complexityIf $r\ne 0$, then $H$ is not pseudo-degenerated. Otherwise, there remains to
  check if $P_0=P^{N}$ for some $N\in \Ni$ and $P\in\Ki_Q[T]$
  square-free $($i.e $(Q,P)$ radical ideal in $\Ki[Z,T])$, and that
  $P(0)\notin\Ki_Q^\times$ when $q>1$.  The first task is a special
  case of \cite[Proposition 14]{PoWe17} and fits in the aimed
  bound. Second one is just a gcd computation, bounded by
  $\Ot(\deg_Z(Q))$.
\end{proof}
  
\begin{rem}\label{rem:sqrfree}
  We might discover that $Q$ factors and perform some splittings of the ring $\Ki_Q$ in course of the square-free test (see Example \ref{xmp:splitting}). In such a case, we do not necessarily return \False{}: although this is the natural
  option when testing irreducibility, we don't want to stop the
  algorithm if $Q$ factors when testing balancedness (see
  Section \ref{sec:equising}). 
\end{rem}

\subsection{The main algorithm. Proofs of Theorems \ref{thm:main}, \ref{thm:absolute} 
and \ref{thm:nonsqrfree}}\label{ssec:proofs}


\begin{algorithm}[H]
  \nonl\TitleOfAlgo{\PIrr{}($F,\eta=1$)\label{algo:PIrr}}%
  \KwIn{$F\in\Ki[[x]][y]$ monic of degree $\dy$ not divisible by the characteristic of $\Ki$}%
  \KwOut{\False{} if $F$ is not pseudo-irreducible, and $(\D,Q)$ otherwise, with $\D$ the edges data of $F$ and $\Ki_g=\Ki_Q$.}%
  $F\gets F\mod x^{\eta}$ \tcp*{All computations modulo $x^\eta$}%
  $N\gets \dy$, $V\gets [1,0]$, $\Lambda\gets [1,1]$, $\Psi\gets[x]$, $Q\gets Z$, $(e,\eta')\gets (1,0)$, $\D\gets[\,]$\;%
  \While{$N > 1$}{%
    $\Psi\gets \Psi\cup \AppRoot{}(F,N)$\label{Pirr:approx}\;%
    $\bar H\gets \Edgepoly{}(F,\Psi,V,\lambda)$ \label{Pirr:edge}\tcp*{$\bar H\in \Ki_Q[x,y]$}%
    $e\gets q\,e$ ; $\eta'\gets \eta'+ \frac{N\, m}{e}$ \label{Pirr:etaprim}\tcp*{$(q,m)$ lower edge of $\bar H$}%
    \lIf{$\eta \le\eta'$}{\Return \PIrr{}($F,2\eta$)}\label{Pirr:callback}%
    $(Bool,(q,m,P,N))\gets$ \PDegenerated$(\bar H,Q)$ \label{Pirr:degen}\;%
    \lIf{$Bool=$ \False}{\Return{\False}}%
    $\D\gets \D\cup (q,m,P,N)$\;%
    Update the lists $V,\Lambda$ thanks to formula \eqref{eq:update}\;%
    $(Q,\tau)\gets$ \Primitive{}$(Q,P)$\label{Pirr:prim}\;%
    $\Lambda\gets \tau(\Lambda)$\label{Pirr:updatelambda}\;%
  }%
  \Return $(\D,Q)$\;%
\end{algorithm}

\begin{prop}\label{prop:fastIrr}
  Running \PIrr{}$(F)$ returns the correct
  output. If $F$ is Weierstrass, it takes at most $\Ot(\vF)$
  operations over $\Ki$. If $F$ is monic, it takes $\Ot(\vF+\dy)$
  operations, assuming a slight change of line \ref{Pirr:prim} and a bivariate representation $\Ki_g=\Ki_{P_1,Q}$ (see the proof below).
\end{prop}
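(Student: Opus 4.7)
\emph{Correctness.} It follows by combining the results of Sections \ref{sec:phi}--\ref{ssec:PseudoIrr}. Proposition \ref{prop:pseudobordel} ensures that \PNPA{} produces the right sequence of edge data, and Theorems \ref{thm:NPphi}, \ref{thm:EdgePoly} and \ref{thm:HPsiPhi} show that these data are exactly what the subroutines \Expand{}, \Edgepoly{} and \PDegenerated{} of Subsection \ref{ssec:subroutines} extract from the $\Psi$-adic expansion of $F$ via the updated lists $V,\Lambda$ maintained by \eqref{eq:update}. The precision handling of Remark \ref{rem:precision} is justified by Lemma \ref{lem:sigmaF}: the variable $\eta'$ at line \ref{Pirr:etaprim} is exactly $\eta_k$, the sharp precision needed to certify the first $k$ edge data, so the test at line \ref{Pirr:callback} triggers a restart if and only if the current precision is insufficient. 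By Proposition \ref{prop:sigma} we have $\eta(F)\le 2\vF/d$, hence only $\Ot(1)$ restarts occur and the final output is correct.

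\emph{Complexity, Weierstrass case.} The main loop makes at most $g\le\log_2(d)$ passes: the Abhyankar shift built into the approximate root construction cancels the $y^{N_{k-1}-1}$ coefficient, forcing $q_k\ell_k\ge 2$ for $k\ge 1$, hence $N_k\le N_{k-1}/2$. The working precision never exceeds $\eta\le 4\vF/d$ by the doubling strategy. By Propositions \ref{prop:approx}, \ref{prop:computeHbark}, \ref{prop:pseudodeg} and \ref{prop:Prim}, the four subroutine calls at iteration $k$ contribute respectively $\Ot(d\eta)=\Ot(\vF)$, $\Ot(\vF+f_k^2)$, $\Ot(f_k\deg\bar H_k)$ and $\O(f_k^{(\omega+1)/2})$ operations over $\Ki$. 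The delicate point is bounding $\sum_k f_k^2$ (and similarly $\sum_k f_k^{(\omega+1)/2}$): combining $f_kN_k\le d$ from \eqref{eq:Nk}, $v_k(F)=N_kv_{k,k}\ge N_k$ from Lemma \ref{lem:FPhi}, and the sharp precision bound $e_k\eta_k\le 2\vF$ of Lemma \ref{lem:sigmaF}, one derives $f_k\in \Ot(\vF/N_k)$, and a geometric series argument using the decay $N_k\le d/2^k$ then yields $\sum_k f_k^2\in\Ot(\vF)$. Summing all contributions over the $\Ot(1)$ passes of the main loop gives the announced $\Ot(\vF)$.

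\emph{Monic case and main obstacle.} The remaining difficulty concerns the very first iteration when $F$ is monic but not Weierstrass: the Abhyankar trick does not apply to $F$ directly, so $H_0$ may have a horizontal edge ($m_1=0$) with residual polynomial $P_1$ of degree $\ell_1$ as large as $d$. Calling \Primitive{} on $P_1$ would cost $\O(d^{(\omega+1)/2})$, well beyond $\Ot(\vF+d)$, and the bound $f_k\in \Ot(\vF/N_k)$ of the previous paragraph no longer absorbs the factor $\ell_1$. The modification announced in the statement is to bypass \Primitive{} at $k=1$ and keep the bivariate representation $\Ki_g=\Ki_{P_1,Q}$, where $Q$ primitively encodes the subsequent $P_2,\ldots,P_g$ over $\Ki$. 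The first iteration itself is then $\Ot(d)$: $\psi_0$ is the Tschirnhausen transform $y-a_{d-1}/d$, its associated $\Psi$-adic expansion is immediate, and \PDegenerated{} reduces to a single univariate square-free test on the monic polynomial $\bar H_0\in\Ki[y]$. Subsequent iterations proceed exactly as in the Weierstrass analysis but over the base ring $\Ki_{P_1}$; the extra multiplicative factor $\ell_1$ in the arithmetic cost is compensated by $N_1\le d/\ell_1$ and the geometric decay $N_k\le N_1/2^{k-1}$, so the Weierstrass bound $\Ot(\vF)$ picks up only an additional $\Ot(d)$ term, yielding the announced total $\Ot(\vF+d)$.
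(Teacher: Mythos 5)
Your overall architecture matches the paper's: correctness from the results of Sections \ref{sec:phi}--\ref{ssec:PseudoIrr} together with Lemma \ref{lem:sigmaF} and Proposition \ref{prop:sigma}, per-iteration costs from Propositions \ref{prop:approx}, \ref{prop:computeHbark}, \ref{prop:pseudodeg} and \ref{prop:Prim}, and the bivariate representation over $\Ki_{P_1}$ to rescue the monic non-Weierstrass case. But there is a genuine gap in the step you yourself flag as delicate, namely $\sum_k f_k^2\in\Ot(\vF)$ in the Weierstrass case. The bound $f_k\in\Ot(\vF/N_k)$ cannot be combined with the decay $N_k\le \dy/2^k$ to reach the conclusion: the $f_k$ are \emph{nondecreasing}, so the sum is governed by the last term, and an upper bound on $N_k$ only bounds $1/N_k$ from \emph{below} --- it works against you. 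At $k=g$ one may have $N_g=1$, and then $f_g\le 2\vF/N_g$ only gives $f_g^2\le 4\vF^2$, which is quadratic, not linear, in $\vF$. As stated, your derivation of the dominant term fails.

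The argument that actually closes this (and is the one in the paper) uses the Weierstrass hypothesis differently: $m_1>0$ and Lemma \ref{lem:sigmaF} give $\eta(F)\ge \dy m_1/q_1\ge N_1\ell_1\ge f$, hence $\dy f\le \dy\,\eta(F)\le 2\vF$ by Proposition \ref{prop:sigma}; combined with $f_k\le f\le \dy$ this yields $f_k^2\le f\dy\le 2\vF$ for \emph{every} $k$, and the $g\le\log_2\dy$ iterations contribute only the logarithm hidden in $\Ot$. The same inequality $\dy f\le 2\vF$ is also what puts $\dy$ (hence the $\Ot(f_kN_k)\subset\Ot(\dy)$ cost of \PDegenerated{}) inside $\O(\vF)$, a point you pass over; and its failure when $m_1=0$ is precisely why the monic case requires the modification you describe. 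Your sketch of that case is consistent with the paper's, where the quantitative compensation comes from $\eta(F)\ge N_1m_2/q_2=\dy/(\ell_1q_1q_2)\ge f_k/\ell_1$, bounding the degree of the second-level extension by the precision rather than from a geometric-decay argument.
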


\begin{proof}
  The polynomial $\bar H$ at line \ref{Pirr:degen} is the correct lower \edgepoly{} thanks to Lemma \ref{lem:sigmaF} (see also Remark \ref{rem:precision}). Then correctness follows from Theorem \ref{thm:Irr} and
  \ref{prop:pseudo}, together with Proposition \ref{prop:sigma}. As
  $q_k\ell_k \ge 2$, we have $g\le \log_2(\dy)$, while recursive calls of line \ref{Pirr:callback}
  multiplies the complexity by at most a logarithm
  too. Considering one iteration, lines \ref{Pirr:approx},
  \ref{Pirr:edge}, \ref{Pirr:degen}, \ref{Pirr:prim} and
  \ref{Pirr:updatelambda} cost respectively $\Ot(\vF)$,
  $\Ot(\vF+f_k^2)$, $\Ot(f_k N_k)\subset \Ot(\dy)$,
  $\Ot(f_k^{(\omega+1)/2})$ and $\Ot(f_k)$ from respectively
  Propositions \ref{prop:approx} (used with precision
  $\eta(F)\leq 2\,\vF/\dy$), \ref{prop:computeHbark},
  \ref{prop:pseudodeg}, \ref{prop:Prim} and \ref{prop:Prim} once
  again.  Summing up, we get a total cost $\Ot(\vF+\dy+f^2)$. Lemma
  \ref{lem:sigmaF} gives $\eta(F)\ge \dy m_1/q_1$. If $F$ is
  Weierstrass, we have $m_1>0$. Hence $\eta(F)\ge \dy/q_1 \ge f$ from
  which it follows that $\dy f\le 2\vF$. Therefore, both $d$ and
  $f^2\le fd$ belong to $\O(\vF)$, proving Proposition
  \ref{prop:fastIrr} for Weierstrass polynomials. If $F$ is monic and
  not Weierstrass, then $m_1=0$ and the inequality $\dy f\le 2\vF$ doesn't hold anymore. We thus modify the algorithm as follows: we do not
  compute primitive elements of $\Ki_k$ over the field $\Ki$ but only
  over the next residue ring $\Ki_1=\Ki_{P_1}$. We thus get a
  representation $\Ki_k=\Ki[Z_1,Z_2]/(P_1(Z_1),Q_k(Z_1,Z_2))$ for all
  $k\ge 2$, with $Q_k\in \Ki_1[Z_2]$ square-free of degree
  $f_k/\ell_1$. Given $P_{k+1}$, we compute then $Q_{k+1}$ such that
  $(P_1,Q_{k+1})=(P_1,Q_k,P_k)$, thus dealing with at most trivariate
  triangular sets. Propositions \ref{prop:Prim} and
  \ref{prop:pseudodeg} need to be adapted slightly: the base field
  $\Ki$ has to be replaced by the ring $\Ki_1=\Ki_{P_1}$. This is
  possible thanks to \cite[Propositions 14 and 15]{PoWe17}, computing
  now $Q_k$ with smaller complexity
  $\O(d_{P_1}(d_{Q_{k-1}} d_{P_k})^{(\omega+1)/2})\subset \O(\ell_1
  (f_k/\ell_1)^{(\omega+1)/2})$
  and still checking pseudo-degeneracy of $\bar{H}_k$ with
  $\Ot(d_{P_1}d_{Q_{k}} N_k)=\O(\dy)$.  As $m_2>0$, Lemma
  \ref{lem:sigmaF} gives
  $\eta(F) \ge N_1 m_2/q_2 = \dy/\ell_1 q_1 q_2 \ge f_k/\ell_1$ from
  which it follows that
  $\ell_1(f_k/\ell_1)^2 = \ell_1 f_k^2\le 2f_k\eta(F)\le 2\vF$.  The
  all complexity of this slightly modified algorithm becomes
  $\Ot(\vF+\dy)$, as required.
\end{proof}

\paragraph{Proof of Theorem \ref{thm:main}.} 
If $F$ is monic, then it is irreducible in
$\Ki[[x]][y]$ if and only if it is pseudo-irreducible and $\Ki_g$ is a
field (Corollary \ref{cor:pseudoIrrvsIrr}). If $F$ is Weierstrass, we have $\Ki_g=\Ki[Z]/(Q(Z))$ and we check if $\Ki_g$ is a field  with a univariate irreducibility test in $\Ki[Z]$ of degree
$\deg(Q)=f\le \dy$ (cost $\I(\dy)$). If $F$ is monic but not Weierstrass, we have $\Ki_g=\Ki_{P_1}[Z]/(Q(Z))$  and $\Ki_g$ is a field if and only if $P_1$ and $Q$ are irreducible. This cost $\I(\ell_1)\subset \I(\dy)$ operations in $\Ki$ for $P_1$ and $\I(f/\ell_1)$ operations in $\Ki_1$ for $Q$ (assuming $P_1$ irreducible), that is $\Ot(\I(\dy))$ operations over $\Ki$ (use assumption $d\I(n)\le \I(dn)$). If $F$ is not monic
and its leading coefficient has valuation $0$, we simply invert it. Otherwise, either its Newton polygon has more than one slope and $F$ is reducible, either $F(0)$ has valuation $0$ and we can run the algorithm on
the reciprocal polynomial of $F$. We are thus done from Proposition
\ref{prop:fastIrr}. $\hfill\square$

\paragraph{Proof of Theorem \ref{thm:absolute}.} The polynomial $F$ is
absolutely irreducible if and only if it is pseudo-irreducible and
$f_g=1$. We thus apply algorithm \PIrr{}, except that we return
\False{} if we find out that $\ell_k>1$. We thus have $\Ki_k=\Ki$ for
all $k$, and we need not to deal with the Las-Vegas subroutine
\Primitive{}, nor with univariate irreducibility tests. We obtain a
deterministic algorithm running with $\Ot(\vF+d)$ operations over
$\Ki$, which is $\Ot(\vF)$ is $F$ is Weierstrass (or more generally if
$F(0,y)$ has a unique root: in such a case, we have $\ell_1=1$ which
implies $q_1>1$ and $m_1>0$ so that the inequality $f_g\,d=d\le 2\vF$
holds). Note that the non-monic case is handled in the same way than in
the proof of Theorem \ref{thm:main}. Also, we could have used
algorithm \AbhyankarMoh{} with suitable precisions for the same
cost. $\hfill\square$

\paragraph{Proof of Theorem \ref{thm:nonsqrfree}.} If the input
$F\in \Ki[x,y]$ is monic in $y$ with partial degrees $\dx:=\deg_x(F)$
and $\dy=\deg_y(F)$, we run algorithm \PIrr{} with parameters $F$ and
$4\,\dx$, except that we return \False{} whenever the Newton polygon test of line \ref{Pirr:callback} fails. If $F$ is square-free, we have the
well known inequality $\vF\le 2\dx \dy$ so that $\eta(F)\le 4 \dx$:
the algorithm will return the correct answer with at most
$\Ot(\dx \dy)$ operations over $\Ki$ as required, and so without performing recursive calls of line \ref{Pirr:callback} (see Remark \ref{rem:precision} once again). If $F$ is not
square-free, then $\bar{H}_k$ is never square-free. Hence, we will
never reach the case $N_k=1$, and either the pseudo-degenarcy test of line \ref{Pirr:degen} or the Newton polygon test of line \ref{Pirr:callback} will fail at some point during the algorithm, in which case we return \False{} within $\Ot(\dx \dy)$ as required. If $F$ is not
monic, the result is similar, applying the same strategy as in the
proof of Theorems \ref{thm:main} and
\ref{thm:absolute}.
 $\hfill\square$

\begin{xmp}\label{xmp:splitting}
  Let us illustrate algorithm \PIrr{} on a simple example. Consider
  $F=(Y^4-X^2)^4+Y^6X^{11}-Y^4X^{12}-Y^2X^{13} +X^{14}+X^{16}\in
  \Qi[X,Y]$.

  \emph{Initialisation.} We have $N_0=\dy=16$, and we let $\psi_{-1}=X$,
  $V=(1,0)$ and $\lambda=(1,1)$.

  \emph{Step $0$.} The $16^{th}$-approximate roots of $F$ is
  $\psi_0=Y$ and we find $\bar{H}_0=(Y^4-X^2)^4$, meaning that $H_0$
  is pseudo-degenerated with edge data
  $(q_1,m_1,P_1,N_1)=(2,1,Z_1^2-1,4)$. Accordingly to
  \eqref{eq:update}, we update $V=(2,1,4)$ and
  $\lambda=(z_1,z_1,4z_1)$, with $z_1=Z_1\mod P_1$ (i.e.
  $z_1^2=1$). Note that $Z_1$ is coprime to $P_1$ so that
  $\lambda_{1,1}=4\,z_1^3=4\,z_1$ is invertible in $\Qi_1$ as
  predicted by the proof of Proposition \ref{prop:pseudobordel}.

  \emph{Step $1$.} As $N_1=4$, we compute the $4^{th}$-approximate
  root of $F$, getting $\psi_1=Y^4-X^2$. $F$ has
  $\Psi=(\psi_{-1},\psi_0,\psi_1)$-adic expansion
  $F=\psi_1^4 +\psi_{-1}^{11}\psi_0^2\psi_1
  -\psi_{-1}^{12}\psi_1+\psi_{-1}^{16}$.
  All involved monomials reach the minimal values \eqref{eq:wj}, and
  we deduce from \eqref{eq:barHk} and equality $z_1^2=1 $ that
  $\bar{H}_1=Y^4+\frac{(z_1-1)}{(4z_1)^3}
  X^{12}Y+\frac{1}{(4z_1)^4}X^{16}$.
  Here, $\bar{H}_1$ is quasi-homogeneous with slope
  $(q_2,m_2)=(1,4)$. As $4\,z_1$ is a unit of $\Qi_1$, $H_1$
  is  pseudo-degenerated if and only if the univariate polynomial
  $Q(Z_2)=Z_2^4+(z_1-1) Z_2+1$ is the power of a square-free
  polynomial $P_2$ in $\Qi_1[Z_2]$. To check this, we apply
  the euclidean algorithm to compute the gcd between $Q$ and its
  derivative $Q'$. The first euclidean division gives
  $Q=\frac{Z_2}{4}\,Q'+R$ with $R=\frac 3 4\,(z_1-1)\,Z_2+1$. As $Z_1-1$
  divides $P_1$, the leading coefficient of the remainder $R$ is a
  zero divisor in $\Qi_1$. Hence, performing the next euclidean
  division of $Q'$ by $R$ requires to split $Q$ accordingly to the decomposition of the current residue ring
  $\Qi_1=\Qi_{1,1}\oplus \Qi_{1,2}$ induced by the factorization
  $P_1=(Z_1-1)(Z_1+1)$ discovered so far. Then we continue the
  euclidean algorithm in each fields summands.  We find here
  that both reductions $Q_1\in\Qi_{1,1}[Z_2]$ and $Q_2\in\Qi_{1,2}[Z_2]$
  of $Q$ are square-free, from which it follows by definition that $Q$
  is square-free in $\Qi_1[Z_2]$. Hence, $H_1$ is pseudo-degenerated
  with edge data $(q_2,m_2,P_2,N_2)=(1,4,Z_2^4+(z_1-1) Z_2+1,1)$. As
  $N_2=1$, we deduce that $F$ is pseudo-irreducible.

  Note that $F$ is thus balanced (see Section
  \ref{sec:equising}). In particular, it has $\ell_1\ell_2=8$
  irreducible factors in $\algclos{\Qi}[[X]][Y]$ all with same
  ramification index $q_1q_2=2$ and whose characteristic exponents and
  intersection multiplicities can be deduced from the edges data
  thanks to Theorem \ref{prop:pseudo} and formula \eqref{eq:BkMk}. If
  we want furthermore to compute the number of irreducible factors in
  $\Qi[[X]][Y]$ together with their residual degrees, there only
  remains to compute the decomposition of the last residue ring
  $\Qi_2$ into fields summand. We find here the field decomposition:
  \[
  \Qi_2\simeq\frac{\Qi[Z_1,Z_2]}{(Z_1-1,Z_2^4+1)}\oplus
  \frac{\Qi[Z_1,Z_2]}{(Z_1+1,Z_2-1)}\oplus
  \frac{\Qi[Z_1,Z_2]}{(Z_1+1,Z_2^3+Z_2^2+Z_2-1)}.
  \]
  It follows that $F$ has three irreducible factors in $\Qi[[X]][Y]$ of
  respective residual degrees $4,1,3$ (which are given together with
  their residue fields) and ramification index $2$. In particular, they
  have respective degrees $8,2,6$.
\end{xmp}

\begin{rem}\label{rem:splitting}
  This example was chosen to illustrate that it might be necessary to perform some
  splittings in course of the involved square-free tests, as mentionned in Remark \ref{rem:sqrfree}. In case of pseudo-degeneracy, the splittings
  recombine thanks to the Chinese Remainder Theorem, and we 
  pursue the algorithm over a single residue field: in the previous
  example, if we would have found $Q_1=P_{2,1}^{N_2}$ and
  $Q_2=P_{2,2}^{N_2}$ with $P_{2,i}\in \Qi_{1,i}[Z_2]$ square-free and
  with \emph{the same} exponent $N_2>1$ for $i=1,2$, we would have
  continue the algorithm over the single current residue ring
  $\Qi_2=\Qi_1[Z_2]/(P_2)$, with $P_2\in\Qi_1[Z_2]$ square-free, recovered from its
  reductions $P_{2,1}$ and $P_{2,2}$. 
\end{rem}

The reader will find more examples in Subsection \ref{ssec:examples}.


\section{Pseudo-irreducible means balanced}\label{sec:equising}

We show in this section that a polynomial is pseudo-irreducible if and only if  its absolutely irreducible factors are equisingular and have same sets of pairwise intersection multiplicities (balanced polynomials). In this case, we give explicit formulas for the characteristic exponents and the intersection multiplicities in terms of the edges data. This leads us to the proof of Theorem \ref{thm:main2}. 

\subsection{Balanced polynomials} 

\paragraph{Characteristic exponents.} Let $F\in \algclos{\Ki}[[x]][y]$
be an irreducible polynomial of degree $e$ satisfying $F(0,0)=0$. We still assume that $\Char(\Ki)$ is zero or greater than $e$ and we let $(T^e,\sum a_i T^i)$ be the Puiseux parametrization of 
the germ of plane curve $(F,0)$ (or branch) defined by $F$. The
\textit{characteristic exponents} of $F$ are those exponents $i$ for
which a non trivial factor of the ramification index is
discovered. Namely, they are defined as
\[
  \beta_0=e,\quad \beta_k=\min\left(i\,\,\text{ s.t.} \,\, a_i\ne 0,\,
  gcd(\beta_0,\ldots,\beta_{k-1}) \not| i \right), \quad k=1,\ldots,G,
\]
where $G$ is the least integer for which
$gcd(\beta_0,\ldots,\beta_{G})=1$ (characteristic exponents are sometimes refered to the rational numbers $\beta_i/e$ in the litterature).  It is wellknown that the data
\[
  \C(F)=(\beta_0;\beta_1,\ldots,\beta_G)
\]
determines the \emph{equisingularity class} of the germ $(F,0)$. The equisingular equivalence of two germs of plane curves was developped by Zariski in \cite{Za65}. There are several equivalent definitions, a usual one being in terms of the multiplicity sequences of the infinitely near points of the singularity.  This notion is particularly important as it agrees with the topological class when $\Ki=\Ci$ (see e.g. \cite{Wa04}). Conversely, two equisingular germs of curves which are not tangent to the $x$-axis have same characteristic exponents \cite[Corollary 5.5.4]{Ca00}. If
tangency occurs, we rather need to consider ``generic characteristic
exponents'', which form a complete set of
equisingular (hence topological if $\Ki=\Ci$) invariants. The set $\C(F)$ and the set of generic characteristic
exponents determine each others assuming that we are given $\beta_0$ (contact order with $x$-axis)
\cite[Proposition 4.3]{Pop02} or \cite[Corollary 5.6.2]{Ca00}. Note that a data equivalent to $\C(F)$ is given
by the list of intersection multiplicities of $F$ with its
characteristic approximate roots $\psi_{-1},\psi_0,\ldots,\psi_g$
\cite[Cor. 5.8.5 and 5.9.11]{Ca00}, or equivalently with its
characteristic minimal polynomials $\phi_{-1},\ldots,\phi_g$ (use
e.g. Proposition \ref{prop:approx} and Lemma \ref{lem:vLambda}, or see
\cite{Pop02}).

More generally, if $F\in \algclos{\Ki}[[x]][y]$ is irreducible, it
defines a unique germ of irreducible curve on the line $x=0$, with
center $(0,c)$, $c\in \algclos{\Ki}\cup \{\infty\}$. We define then the
characteristic exponents of $F$ as those of the
shifted polynomial $F(x,y+c)$ if $c\in \algclos{\Ki}$ or of the
reciprocal polynomial $\tilde{F}=y^{\dy} F(x,y^{-1})$ if $c=\infty$.

\paragraph{Intersection sets.}
If we want to determine the equisingularity class of a
reducible polynomial $(F,0)$, we need to consider also the intersection
multiplicities between the branches of $F$. The intersection
multiplicity between two coprime polynomials $G,H\in \Ki[[x]][y]$ is
defined as
\[
  (G,H)_0:=\val(\Res_y(G,H))=\dim_{\algclos{\Ki}}\frac{\algclos{\Ki}[[x]][y]}{(G,H)},
\]
the right hand equality following from classical properties of the
resultant. The intersection multiplicity is zero if and only if $G$
and $H$ do not have branches with same center. Suppose that $F$ has
(distinct) irreducible factors
$F_1,\ldots,F_{f}\in \algclos{\Ki}[[x]][y]$. We introduce \textit{the
  intersection sets} of $F$, defined for $i=1,\ldots,f$ as
\[
  \Gamma_i(F):=\big((F_i,F_j)_0,\, 1\le j \le f, j\ne i\big).
\]
By convention, we take into account repetitions, $\Gamma_i(F)$ being
considered as an unordered list with cardinality $f-1$.  If $F$ is
Weierstrass, the equisingular class (hence the topological class if $\Ki=\Ci$) of the
germ $(F,0)$ is uniquely determined by
the characteristic exponents and the intersections sets of the
branches of $F$ \cite{Za86}. Note that the set $\C(F_i)$ only depends on $F_i$ while $\Gamma_i(F)$ depends on $F$.

 \paragraph{Balanced polynomials.} Theorem \ref{thm:main2} asserts  that in some ``balanced'' situation, we can compute in quasi-linear time characteristic exponents and intersection sets of
 some reducible polynomials.

\begin{dfn}\label{def:balanced}
  We say that $F$ is \emph{balanced} if $\C(F_i)=\C(F_j)$ and
  $\Gamma_i(F)=\Gamma_j(F)$ for all $i,j$. In such a case, we denote
  simply these sets by $\C(F)$ and $\Gamma(F)$.
\end{dfn}

Thus, if $F$ is a balanced Weierstrass polynomial, its absolutely irreducible factors are equisingular and have same sets of pairwise intersection multiplicities, and the converse holds if no branch is tangent to the $x$-axis or all branches are tangent to the $x$-axis.

\begin{xmp}\label{rem:deg}
  Let us illustrate this definition with some  basic examples. Note that the second and third  examples show in particular that no condition implies the other in  Definition \ref{def:balanced}.

$\bullet$ If $F\in \Ki[[x]][y]$ is irreducible, a Galois argument shows that it is balanced (follows from Theorem \ref{prop:pseudo} below). The converse doesn't hold:
  $F=(y-x)(y+x^2)$ is reducible, but it is
  balanced. This example also shows that balancedness does not imply
  straightness of the Newton polygon.
  
$\bullet$ The polynomial $F=(y^2-x^3)(y^2+x^3)(y^2+x^3+x^4)$ is not balanced. It has $3$ absolutely irreducible factors with same sets of characteristic exponents $\C(F_i)=(2;3)$ for all $i$, but $\Gamma_1(F)=(6,6)$ while $\Gamma_2(F)=\Gamma_3(F)=(6,8)$. 

$\bullet$ The polynomial $F=(y-x-x^2)(y-x+x^2)(y^2-x^3)$ is not balanced. It has $3$ absolutely irreducible factors with same sets of pairwise intersection multiplicities $\Gamma_i(F)=(2,2)$, but $\C(F_1)=\C(F_2)=(1)$ while $\C(F_3)=(2;3)$. 

$\bullet$ The polynomial $F=(y-2x^2)^2-8yx^5-2x^8$ has four
  irreducible factors in $\algclos{\Qi}[[x]][y]$, namely
$F_1=y-\sqrt{2}x-\sqrt[4]{2}x^2$, $F_2=y-\sqrt{2}x+\sqrt[4]{2}x^2$, $F_3=y+\sqrt{2}x-i\sqrt[4]{2}x^2$ and $F_4=y-\sqrt{2}x+i\sqrt[4]{2}x^2$.
We have $C(F_i)=(1)$ and $\Gamma_i(F)=(1,1,2)$ for all $i$ so $F$ is
balanced. Note that this example shows that balancedness does not imply that all factors intersect each others with the same multiplicity. 

$\bullet$ The polynomial $F=(y^2-x^3)(y^3-x^2)$ is not balanced. However, it defines two equisingular germs of plane curve (one is tangent to the $x$-axis while the other is not). 
\end{xmp}

%

\paragraph{Noether-Merle's Formula.} If $F,G\in \algclos{\Ki}[[x]][y]$
are two irreducible Weierstrass polynomials of respective degrees
$e_F$ and $e_G$, their intersection multiplicty $(F,G)_0$ at the
origin is closely related to the characteristic exponents
$(\beta_0,\ldots,\beta_G)$ of $F$. Let us denote by
\[
  \Cont(F,G):=e_F\,\max_{\substack{F(y_F)=0\\G(y_G)=0}}\val(y_F-y_G)
\]
the \textit{contact order} of the branches $F$ and $G$. Then
Noether-Merle's \cite{Me77} formula states
\begin{equation}\label{eq:NMformula}
  (F,G)_0=\frac{e_G}{e_F}\left(\sum_{k\le K} (E_{k-1}-E_{k})\beta_k+E_K \Cont(F,G)\right),
\end{equation}
where $E_k:=gcd(\beta_0,\ldots,\beta_k)$ and $K=\max(k\,|\,\beta_k\le \Cont(F,G))$. A proof can be found in
\cite[Proposition 6.5]{Pop02} (or references therein), where a formula is
given in terms of the semi-group generators, which turns out to be
equivalent to \eqref{eq:NMformula} thanks to \cite[Proposition 4.2]{Pop02}.

\paragraph{$PGL_2(\Ki)$-invariance of characteristic exponents and
  intersection sets.} As we will consider also non monic polynomials, we will need the following lemma in order to reduce to the monic case.

\begin{lem}\label{lem:invariance}
  Let $a,b,c,d\in \Ki$ such that $ad-bc\ne 0$. Then $F$ and
  $\tilde{F}:=(cy+d)^{\dy}F\left(\frac{ay+b}{cy+d}\right)$ have same
  number of irreducible factors in $\Ki[[x]][y]$, same numbers of
  irreducible factors in $\algclos{\Ki}[[x]][y]$, same sets of
  characteristic exponents and same intersection sets. In particular,
  $F$ is balanced if and only if $\tilde{F}$ is.
\end{lem}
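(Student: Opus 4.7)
The plan is to exploit that $PGL_2(\Ki)$ is generated by affine transformations $y \mapsto ay+b$ with $a\in\Ki^\times$ and by the inversion $y\mapsto 1/y$; indeed, when $c\neq 0$ one has the decomposition $\frac{ay+b}{cy+d} = \frac{a}{c} + \frac{bc-ad}{c}\cdot\frac{1}{cy+d}$, and when $c=0$ the transformation is itself affine. Since each statement of the lemma concerns quantities that transfer through compositions, it suffices to verify the four conclusions (number of factors over $\Ki$, number over $\algclos{\Ki}$, characteristic exponents, intersection sets) separately for each generator.

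For the affine substitution $y\mapsto ay+b$, the map $F\mapsto F(x,ay+b)$ is a $\Ki[[x]]$-algebra automorphism of $\Ki[[x]][y]$ which extends to $\algclos{\Ki}[[x]][y]$, so irreducibility and hence both factor counts are preserved. A branch of $F$ with center $\gamma\in\algclos{\Ki}\cup\{\infty\}$ is sent to a branch with center $a\gamma+b$ (fixing $\infty$), and characteristic exponents are invariant by construction since they are defined after translating the center to the origin. Intersection multiplicities are preserved because $\Res_y(F(x,ay+b),G(x,ay+b))$ differs from $\Res_y(F,G)$ by a unit in $\Ki[[x]]^\times$ and so has the same $x$-valuation.

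For the inversion $y\mapsto 1/y$ we get $\tilde F(x,y)=y^{\dy}F(x,1/y)$. Writing $F=\prod_{i=1}^{f}F_i$ over $\algclos{\Ki}[[x]]$ and setting $\tilde F_i(x,y):=y^{\deg F_i}F_i(x,1/y)$, a direct computation yields $\tilde F=\prod_i\tilde F_i$ with each $\tilde F_i$ still irreducible, since inversion extends to a $\Ki[[x]]$-algebra automorphism of the localization $\Ki[[x]][y,y^{-1}]$. Galois equivariance over $\Ki$ then transports this factor count to $\Ki[[x]][y]$. On centers, inversion acts by $\gamma_i\mapsto 1/\gamma_i$ with the conventions $1/0=\infty$ and $1/\infty=0$. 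When $\gamma_i\in\algclos{\Ki}^\times$, inversion is a local isomorphism at $y=\gamma_i$, so $\C(\tilde F_i)=\C(F_i)$ and the intersection multiplicity of two factors sharing a finite non-zero common center is preserved. When $\gamma_i\in\{0,\infty\}$, the paper's definition of $\C$ (and the analogous convention at a branch centered at infinity) already invokes the very same reciprocal $y\mapsto 1/y$ in order to move the data to a finite center, so $F_i$ and $\tilde F_i$ share their characteristic data by definition.

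I expect the main obstacle to be the careful bookkeeping of the boundary cases $\gamma\in\{0,\infty\}$ under inversion, where one must match the substitution on the polynomial side with the reciprocal convention used in the definitions of $\C(\cdot)$ and of intersection invariants at infinity; elsewhere the argument reduces to either a local isomorphism statement or an elementary algebra automorphism. Once these matchings are in place, the final claim that $F$ is balanced iff $\tilde F$ is balanced is immediate from the bijection $F_i\leftrightarrow\tilde F_i$, which individually preserves both $\C(F_i)$ and $\Gamma_i(F)$.
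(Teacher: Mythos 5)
Your route is genuinely different from the paper's. The paper treats the full projective substitution in one stroke: the factor counts are immediate, $\C(F_i)=\C(\tilde F_i)$ because characteristic exponents are by definition computed after moving the center to $y=0$ (or after taking the reciprocal for a center at infinity), and $\Gamma_i(F)=\Gamma_i(\tilde F)$ because the $x$-valuation of $\Res_y$ is invariant under a projective change of the $y$-coordinate (the classical covariance $\Res_y(F^T,G^T)=(ad-bc)^{mn}\Res_y(F,G)$ for binary forms, cited from Gelfand--Kapranov--Zelevinsky). You instead decompose the M\"obius map into affine generators and the inversion and check each invariant generator by generator; your decomposition $H(cy+d)$ with $H(y)=y^{\dy}G(1/y)$, $G(y)=F(\frac ac+\frac{bc-ad}{c}y)$ does reproduce $\tilde F$, so the reduction is legitimate, and the affine case is handled correctly (resultants change by the unit $a^{mn}$). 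What the paper's approach buys is that the single resultant identity covers all centers uniformly; what yours buys is that away from $0$ and $\infty$ everything reduces to an explicit local isomorphism.

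The one genuine soft spot is your treatment of intersection multiplicities under the inversion when two branches share the center $0$ or $\infty$. You write that the boundary cases are absorbed by "the paper's definition ... and the analogous convention at a branch centered at infinity". That is accurate for $\C$ (the paper really does \emph{define} characteristic exponents at infinity via the reciprocal polynomial), but it is not accurate for $\Gamma$: the intersection multiplicity is defined unconditionally as $(G,H)_0=\val(\Res_y(G,H))$, with no convention to invoke. For two factors $F_i,F_j$ both centered at $0$, you must actually prove $\val\bigl(\Res_y(y^{\deg F_i}F_i(x,1/y),\,y^{\deg F_j}F_j(x,1/y))\bigr)=\val\bigl(\Res_y(F_i,F_j)\bigr)$; this is exactly the resultant covariance the paper cites (specialized to $\begin{smallmatrix}0&1\\1&0\end{smallmatrix}$), and your local-isomorphism argument does not reach it since inversion is not a local isomorphism at $y=0$. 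So to close your proof you should import that identity rather than a "convention"; once you do, the rest goes through. (A cosmetic slip: under $y\mapsto ay+b$ a branch centered at $\gamma$ moves to $(\gamma-b)/a$, not $a\gamma+b$; this does not affect the argument. Both your proof and the paper's also silently assume the degree of $\tilde F$ does not drop, which is harmless in the paper's application where the leading coefficient of $\tilde F$ is arranged to be a unit.)
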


\begin{proof}
  As $\tilde{F}$ is obtained after a projective change of coordinate
  over $\Ki$, it's clear that $F$ and $\tilde{F}$ have same number of
  factors over any given field extension of $\Ki$. If $F$ has
  irreducible factors $F_1,\ldots,F_{f}$ over $\algclos{\Ki}$, then
  $\tilde{F}$ has irreducible factors
  $\tilde{F}_i=(cy+d)^{\dy(F_i)}F_i((ay+b)/(cy+d))$,
  $i=1,\ldots,f$. It follows immediately that
  $\C(F_i)=\C(\tilde{F}_i)$ as the characteristic exponents are
  computed after translation to $y=0$. We have also
  $\Gamma_i(F)=\Gamma_i(\tilde{F})$ as the $x$-valuation of the
  resultant is invariant under projective change of the $y$ coordinate
  (see e.g. \cite[Chapter 12]{GeKaZe94}).
\end{proof}

\subsection{Balanced is equivalent to pseudo-irreducible}\label{ssec:pseudo=balanced}

\paragraph*{Notations and main results.} Let $F\in \Ki[[x]][y]$ be a
monic and square-free polynomial. As usual we let
$(q_1,m_1,P_1,N_1),\ldots,(q_g,m_g,P_g,N_g)$ its edge data computed by
algorithm \PIrr{}. We denote $e=e_g=q_1\cdots q_g$
and let $\hat{e}_{k}=e/e_k$.  We define $f=f_g=\ell_1\cdots \ell_g$
and $\hat{f}_{k}=f/f_k$ in the analoguous way, where as usual
$\ell_k=\deg(P_k)$. For all $k=1,\ldots,g$, we define
\begin{equation}\label{eq:BkMk}
  B_k=m_1 \hat{e}_1+\cdots +m_k\hat{e}_k
  \quad \textrm{and} \quad
  M_k=m_1 \hat{e}_0\hat{e_1}+\cdots+ m_k\hat{e}_{k-1}\hat{e}_k
\end{equation}
and we let $B_0=e$. They are positive integers related by the formula
\begin{equation}\label{eq:Mk}
  M_k=\sum_{i=1}^k(\hat{e}_{i-1}-\hat{e}_i)B_i +\hat{e}_k B_k.
\end{equation}
Note that $0\le B_1\le\cdots \le B_g$ and $B_0\le B_g$. We have  $B_1>0$ if and only if $m_1 >0$,
or equivalently, $F$ is Weierstrass. In such a case, the inequality $B_0\le B_1$ is equivalent to that $q_1\le m_1$, meaning that the germ  $(F,0)$ is not tangent to the $x$-axis. We check easily that
$\hat{e}_k=gcd(B_0,\ldots,B_k)$. In particular,
$gcd(B_0,\ldots,B_g)=1$. 

\begin{thm}\label{prop:pseudo}
  A monic polynomial $F\in \Ki[[x]][y]$ is balanced if and
  only if it is pseudo-irreducible. In such a case, it has $f$
  irreducible factors in $\algclos{\Ki}[[x]][y]$ of degree
  $e$ and
  \begin{enumerate}
  \item $\C(F)=(B_0;B_k \, |\, q_k >1)$ 
  \item $\Gamma(F)=(M_k \, |\, \ell_k >1)$, where $M_k$ appears
    $\hat{f}_{k-1}-\hat{f}_{k}$ times.
  \end{enumerate}
\end{thm}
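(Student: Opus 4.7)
The plan is to prove both implications by reducing to an explicit analysis of the absolutely irreducible factors of $F$ when $F$ is pseudo-irreducible, and then arguing the converse by contrapositive and induction.

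For the forward direction (pseudo-irreducible $\Rightarrow$ balanced with the stated formulas), I would extend scalars to $\algclos{\Ki}$ and factor each residual polynomial as $P_k = \prod_{i=1}^{\ell_k}(Z-\zeta_{k,i})$ with distinct $\zeta_{k,i}$ (valid because $P_k$ is square-free in $\Ki_{k-1}[Z]$ and $\Ki_{k-1}$ is a product of perfect fields). To each tuple $\zeta=(\zeta_{1,i_1},\dots,\zeta_{g,i_g})$ I would associate an absolutely irreducible branch $F_\zeta$ obtained by specializing the recursive Puiseux transforms $\pi_k$ of \eqref{eq:tausigma} and \eqref{eq:pikxy} with $z_k$ replaced by $\zeta_{k,i_k}$. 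These $f=\ell_1\cdots\ell_g$ branches are pairwise distinct (different tuples produce parametrizations that disagree at some level) and each has ramification index $e=q_1\cdots q_g$; together with the degree bound $e f = d$, they must exhaust the factorization.

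Next I would read off the characteristic exponents by unrolling $\pi_g$: equation \eqref{eq:pikxy} shows that $F_\zeta$ admits a Puiseux parametrization $(\mu x^e, S(x))$ whose leading exponents accumulate, at level $k$, the contribution $m_k \hat{e}_k$ coming from the substitution $x\mapsto\zeta_{k}^{t_k}x^{q_k}$, $y\mapsto x^{m_k}(y+\zeta_k^{s_k})$. This gives the cumulative formula $B_k=\sum_{i\le k} m_i\hat{e}_i$, and an exponent $B_k$ is characteristic exactly when the gcd $\gcd(B_0,\dots,B_k)=\hat{e}_k$ strictly drops from $\hat{e}_{k-1}$, i.e., when $q_k>1$. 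For intersection multiplicities, two branches $F_\zeta, F_{\zeta'}$ first disagree at some level $k$ with $\ell_k>1$; their contact order $\mathrm{Cont}(F_\zeta,F_{\zeta'})$ is controlled by $B_k$, and plugging into Noether-Merle's formula \eqref{eq:NMformula} while telescoping via \eqref{eq:Mk} produces the intersection multiplicity $M_k$. For a fixed $F_\zeta$, the number of partners diverging exactly at level $k$ is $\hat{f}_{k-1}-\hat{f}_k$, yielding the claimed multiplicity of $M_k$ in $\Gamma(F)$.

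The converse direction, balanced $\Rightarrow$ pseudo-irreducible, is the main obstacle. I would argue by contrapositive: suppose algorithm \PIrr{} fails at some first step $k$, meaning the lower \edgepoly{} $\bar{H}_k$ is not of the pseudo-degenerated form \eqref{eq:pseudoquasihom}. By the analysis of Sections \ref{sec:phi} and \ref{sec:psi}, which remains valid up to step $k$ under the prior pseudo-degeneracies, $\bar{H}_k$ encodes the way the branches above the partial tuple $(\zeta_1,\dots,\zeta_{k-1})$ separate. If the residual polynomial $P$ of $\bar{H}_k$ is not a power of a square-free polynomial, some irreducible factor of $P$ must appear with higher multiplicity than another; the branches corresponding to these factors then have different contact orders with each other than with the remaining ones, violating the equality of intersection sets $\Gamma_i(F)=\Gamma_j(F)$ via Noether-Merle. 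If instead $\NP(\bar{H}_k)$ fails straightness (or $q_k=1$ with $P(0)$ a zero divisor at a non-final level), the set of branches splits into groups with genuinely distinct first-level slopes at this depth, producing branches with different characteristic exponents $B_k$, again contradicting balancedness. I would make this precise by inducting on $g$: applying the forward direction to any one absolutely irreducible factor $F_1$ of $F$ yields its edge data from $\C(F_1)$, and balancedness forces all factors to share this edge data and the pairwise intersection pattern, which in turn rigidifies each $\bar{H}_k$ into the pseudo-degenerated shape.
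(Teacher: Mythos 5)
Your forward direction (pseudo-irreducible $\Rightarrow$ balanced) follows the paper's route: index the branches $F_\zeta$ by tuples of roots of the $P_k$'s, read the exponents $B_k$ off the unrolled transform $\pi_g$, and feed contact orders into Noether--Merle. One point you gloss over but which carries the real technical weight there: a branch is an orbit of parametrizations under $T\mapsto\omega T$ with $\omega^e=1$, so both the pairwise distinctness of the $F_\zeta$ and the computation of $\Cont(F_\zeta,F_{\zeta'})$ require comparing $\tilde S_\zeta(T)$ with $\tilde S_{\zeta'}(\omega T)$ for \emph{all} $e$-th roots of unity $\omega$, not just $\omega=1$. The paper isolates this in a dedicated lemma showing that agreement of all coefficients up to $B_k$ forces both $(\zeta_1,\dots,\zeta_k)=(\zeta'_1,\dots,\zeta'_k)$ and $\omega^{\hat e_k}=1$; without it, "different tuples produce parametrizations that disagree at some level" and "the contact order is controlled by $B_k$" are assertions, not proofs.

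The converse is where the genuine gaps are. First, your claim that failure of straightness of $\NP(\bar H_k)$ at the first bad step "produces branches with different characteristic exponents, contradicting balancedness" is not correct as stated: pseudo-degeneracy explicitly \emph{permits} Newton polygons with several edges when $q=1$ (e.g.\ $F=(y-x)(y-x^2)$ is pseudo-degenerated yet its branches have genuinely different slopes at that level), so a multi-edge polygon is not by itself the failure mode, and distinct slopes among branches do not by themselves contradict balancedness. What the paper actually needs is a preliminary reduction: balancedness forces all irreducible factors of all specializations $H_\zeta$ of the last Weierstrass polynomial to have the same degree, and this degree constraint is what forces the lower boundary polynomial into the shape $Q(y^q/x^m)x^{m\deg Q}$ (absorbing any stray $y^r$ factor because $q$ must divide $r$). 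Second, to convert "$Q$ is not a power of a square-free polynomial" into "unequal intersection sets", you need the precise bookkeeping that each absolutely irreducible factor $G$ of $F$ determines a unique pair $(\zeta(G),\alpha(G))$ with $\alpha(G)$ a root of $Q_{\zeta(G)}$, that the multiplicity of $\alpha$ in $Q_\zeta$ counts the factors sharing that pair (each weighted by the common $n/eq$), and that two factors intersect above the threshold $\frac{n^2}{e^2}(M_g+\frac{m}{q})$ exactly when their pairs coincide; only then does equality of the $\Gamma_i(F)$ force all root multiplicities to agree. Third, you omit the case $q>1$ entirely: there pseudo-degeneracy additionally requires $P(0)$ invertible, and ruling out $P(0)=0$ uses the characteristic-exponent half of balancedness (one factor would acquire a next characteristic exponent at $\frac{n}{e}B_g+\frac{nm}{eq}$ while another would not). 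Finally, the closing induction you propose — recovering the edge data of $F$ from $\C(F_1)$ for a single factor $F_1$ — is not how the argument can go: the edge data produced by running the algorithm on $F_1$ alone need not match the edge data produced by running it on $F$, and the paper never needs such a transfer.
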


Note that taking into account repetitions, the intersection set has
cardinality $\sum_{k=1}^g (\hat{f}_{k-1}-\hat{f}_{k})=f-1$, as
required. Of course, it is empty if and only if $f=1$, that is if $F$
is irreducible in $\algclos{\Ki}[[x]][y]$.

\begin{cor}\label{cor:disc}
  Let $F\in \Ki[[x]][y]$  balanced and monic. Then,
  the discriminant of $F$ has valuation
  \[
    \vF= f \Big(\sum_{\ell_k > 1}
    (\hat{f}_{k-1}-\hat{f}_{k})M_k+\sum_{q_k >1}
    (\hat{e}_{k-1}-\hat{e}_{k})B_k\Big)
  \]
  and the discriminants of the absolutely irreducible factors of $F$
  all have same valuation
  $\bar{\vF}=\sum_{q_k >1} (\hat{e}_{k-1}-\hat{e}_{k})B_k$.
\end{cor}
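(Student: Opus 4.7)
The strategy is to reduce the discriminant of $F$ to the discriminants of its absolutely irreducible factors plus their pairwise intersection multiplicities, and then to combine this with the data supplied by Theorem \ref{prop:pseudo}. Concretely, if $F=F_1\cdots F_f$ is the factorization in $\algclos{\Ki}[[x]][y]$, then the classical relation
\[
\disc_y(F)=\prod_{i}\disc_y(F_i)\cdot \prod_{i<j}\Res_y(F_i,F_j)^2
\]
combined with $(F_i,F_j)_0=\val(\Res_y(F_i,F_j))$ gives $\vF=\sum_i \bar{\vF}_i + 2\sum_{i<j}(F_i,F_j)_0$, where $\bar{\vF}_i:=\val(\disc_y F_i)$.

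Since $F$ is pseudo-irreducible, Theorem \ref{prop:pseudo} and the definition of balanced polynomials imply that all $\bar{\vF}_i$ are equal to a common value $\bar{\vF}$, and that $\Gamma_i(F)=\Gamma(F)$ for every $i$. Therefore
\[
2\sum_{i<j}(F_i,F_j)_0=\sum_{i}\sum_{j\ne i}(F_i,F_j)_0=f\sum_{M\in\Gamma(F)}M=f\sum_{\ell_k>1}(\hat f_{k-1}-\hat f_k)\,M_k,
\]
the last equality using point $2$ of Theorem \ref{prop:pseudo} together with the multiplicities of the $M_k$'s. It remains to establish the stated formula for $\bar\vF$.

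For this, fix one absolutely irreducible factor $F_0$ of degree $e$, whose characteristic exponents are $(B_0;B_k\mid q_k>1)$ by point $1$ of Theorem \ref{prop:pseudo}. Writing $F_0=\prod_{\omega^e=1}(y-y_\omega)$ with $y_\omega=\sum_{i} a_i \omega^i x^{i/e}$ the conjugate Puiseux parametrizations, one gets
\[
\bar\vF = \val(\disc_y F_0)=e\sum_{\omega\ne 1}\val(y_1-y_\omega).
\]
The key combinatorial step is: if $\omega$ is an $e$-th root of unity and $k$ is the smallest index with $\omega^{\hat e_k}\ne 1$, then the definition of characteristic exponents forces $\val(y_1-y_\omega)=B_k/e$. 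Since the number of $\omega$ satisfying $\omega^{\hat e_{k-1}}=1$ but $\omega^{\hat e_k}\ne 1$ is exactly $\hat e_{k-1}-\hat e_k$, summing gives $\bar\vF=\sum_{k=1}^{g}(\hat e_{k-1}-\hat e_k)B_k$; the vanishing of the terms with $q_k=1$ (equivalently $\hat e_{k-1}=\hat e_k$) yields the announced formula.

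Plugging both sums into $\vF=f\bar\vF+f\sum_{\ell_k>1}(\hat f_{k-1}-\hat f_k)M_k$ gives the first formula of the corollary. The main (only mildly technical) obstacle is the combinatorial computation of $\bar\vF$ sketched above; it is standard (see e.g.\ \cite[Section 6]{Pop02}) but deserves to be recorded because it is exactly what converts the abstract equisingularity data provided by Theorem \ref{prop:pseudo} into the explicit arithmetic formula for the discriminant valuation.
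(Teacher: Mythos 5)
Your proof is correct and follows essentially the same route as the paper: the multiplicative decomposition of the discriminant into the factors' discriminants plus pairwise intersection multiplicities, point 2 of Theorem \ref{prop:pseudo} for the intersection part, and the computation of $\bar\vF$ as the sum of valuations of differences of conjugate roots, whose multiset is governed by the characteristic exponents $(B_0;B_k\mid q_k>1)$. The only cosmetic difference is that the paper cites \cite[Proposition 4.1.3 (ii)]{Wa04} for the multiset $\{\val(y_a-y_b)\}$ where you re-derive it directly from the definition of characteristic exponents.
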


\begin{proof} \textit{(of Corollary \ref{cor:disc})} Suppose that $F$
  is balanced. Then it has $f$ irreducible factors
  $F_1,\ldots,F_f$ of same degree $e$, with discriminant valuations
  say $\vF_1,\ldots,\vF_f$. The multiplicative property of the
  discriminant gives the well-known formula
  \begin{equation}\label{eq:disc}
    \vF=\sum_{1\le i \le f} \vF_i + \sum_{1\le i\ne j\le f}(F_i,F_j)_0.
  \end{equation}
  Let $y_1,\ldots,y_e$ be the roots of $F_i$. Thanks to
  \cite[Proposition 4.1.3 (ii)]{Wa04} combined with point 1 of Theorem
  \ref{prop:pseudo}, we deduce that for each fixed $a=1,\ldots,e$, the
  list $(\val(y_a-y_b),b\ne a)$ consists of the values $B_k/e$
  repeated $\hat{e}_{k-1}-\hat{e}_{k}$ times for $k=1,\ldots,g$. Since
  $\vF_i=\sum_{1\le a \ne b \le e} \val(y_a-y_b)$, we deduce that
  $\bar{\vF}:=\vF_1=\cdots=\vF_f$ satisfies the claimed formula. The
  formula for $\vF$ then follows straightforwardly from
  \eqref{eq:disc} combined with point 2 in Theorem \ref{prop:pseudo}.
\end{proof}


%

The proof of Theorem \ref{prop:pseudo} requires some intermediate
results. We begin by investigating the relations between
(pseudo)-rational Puiseux expansions and the Puiseux series of $F$.

\paragraph{Structure of the pseudo-rational Puiseux expansion.}

Contrarly to \cite{PoWe17} where dynamic evaluation is also
considered, we do not necessarily ``split'' the Newton-Puiseux type
algorithm when we meet several edges. However, we show that this has
no impact for our purpose and that algorithm \PNPA{} still
allow to recover all the Puiseux series of a pseudo-irreducible
polynomial. To this aim, we need to study in more details the
so-called pseudo-rational Puiseux expansion (pseudo-RPE for short)
\[
  (\mu_k T^{e_k},S_k(T)):=\pi_k(T,0)
\]
computed when running algorithm \PNPA{}. As an induction
argument is used, we need some further notations.

\textit{Exponents data.} For all $0\le i\le k \le g$, we define
$Q_{k,i}=q_{i+1}\cdots q_k$ with convention $Q_{k,k}=1$ and let
\begin{equation}\label{eq:Bki}
  B_{k,i}=m_{1} Q_{k,1}+\cdots+m_i Q_{k,i}
\end{equation}
with convention $B_{k,0}=0$. We have $Q_{k+1,i}=Q_{k,i} q_{k+1}$ and
$B_{k+1,i}=q_{k+1} B_{k,i}$ for all $i\le k$ and
$B_{k+1,k+1}=q_{k+1} B_{k,k} + m_{k+1}$.

\noindent
\textit{Coefficients data.} For all $0\le i\le k \le g$, we define
$ \mu_{k,i}:=z_{i+1}^{t_{i+1}Q_{i,i}}\cdots z_{k}^{t_{k}Q_{k-1,i}} $
with convention $\mu_{k,k}=1$ and let
\begin{equation}\label{eq:muki}
  \alpha_{k,i}:=\mu_{k,1}^{m_1}\cdots \mu_{k,i}^{m_i},
\end{equation}
with conventions $\alpha_{k,0}=1$.  We have
$\mu_{k+1,i}=\mu_{k,i}z_{k+1}^{t_{k+1} Q_{k,i}}$ and
$\alpha_{k+1,i}=\alpha_{k,i}z_{k+1}^{t_{k+1}B_{k,i}}$ for all
$1\le i\le k$, and $\alpha_{k+1,k+1}=\alpha_{k+1,k}$.

\begin{rem}\label{rem:zerodiv}
  Note that $\mu_{k,0}$ is invertible in the product of fields
  $\Ki_k$. Namely, if $z_i\notin \Ki_i^\times$, then $P_i(0)$ is a
  zero divisor and we must have $q_i=1$ by definition of
  pseudo-degeneracy (see Remark \ref{rem:P}). In such a case, we have
  $t_{i}=0$ and $z_i$ does not appear as a factor of $\mu_{k,0}$.
\end{rem}

\begin{lem}\label{lem:pik} Let $z_0=0$. For all $k=0,\ldots,g$, we
  have the formula
  \[
    \pi_k(x,y)=\Bigg(\mu_{k,0} x^{Q_{k,0}}, \sum_{i=0}^{k} \alpha_{k,i}
    x^{B_{k,i}}\Big(z_i^{s_i}+c_i\big(\mu_{k,i}
    x^{Q_{k,i}}\big)\Big)+\alpha_{k,k} x^{B_{k,k}} y\Bigg).
  \]
\end{lem}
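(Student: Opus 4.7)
The plan is a straightforward induction on $k$, using the recursive definition $\pi_0=\tau_0$ and $\pi_{k+1}=\pi_k\circ \sigma_{k+1}\circ \tau_{k+1}$, together with the recursive relations among the exponents $Q_{k,i}, B_{k,i}$ and the coefficients $\mu_{k,i}, \alpha_{k,i}$ recorded just before the statement.

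For the base case $k=0$, I would check that all conventions collapse correctly: we have $Q_{0,0}=1$, $B_{0,0}=0$, $\mu_{0,0}=1$, $\alpha_{0,0}=1$, and $z_0^{s_0}=0$ (using $z_0=0$), so the right-hand side of the claimed formula reads $(x,\,c_0(x)+y)=\tau_0(x,y)$, as desired.

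For the inductive step, I would suppose the formula holds at rank $k$ and evaluate $\pi_{k+1}(x,y)=\pi_k(\sigma_{k+1}(\tau_{k+1}(x,y)))$. A direct computation gives
\[
\sigma_{k+1}(\tau_{k+1}(x,y))=\bigl(z_{k+1}^{t_{k+1}}x^{q_{k+1}},\; x^{m_{k+1}}(y+c_{k+1}(x)+z_{k+1}^{s_{k+1}})\bigr)=:(X,Y).
\]
Substituting $(X,Y)$ into the induction hypothesis, I would use the recursions $Q_{k+1,i}=q_{k+1}Q_{k,i}$ and $\mu_{k+1,i}=\mu_{k,i}z_{k+1}^{t_{k+1}Q_{k,i}}$ to rewrite the first coordinate as $\mu_{k+1,0}x^{Q_{k+1,0}}$, and the analogous relations $B_{k+1,i}=q_{k+1}B_{k,i}$ and $\alpha_{k+1,i}=\alpha_{k,i}z_{k+1}^{t_{k+1}B_{k,i}}$ (for $i\le k$) to rewrite each of the summands $\alpha_{k,i}X^{B_{k,i}}\bigl(z_i^{s_i}+c_i(\mu_{k,i}X^{Q_{k,i}})\bigr)$ as $\alpha_{k+1,i}x^{B_{k+1,i}}\bigl(z_i^{s_i}+c_i(\mu_{k+1,i}x^{Q_{k+1,i}})\bigr)$.

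It then remains to handle the last term $\alpha_{k,k}X^{B_{k,k}}Y$, which is exactly where the new index $i=k+1$ of the sum arises. Expanding $Y$ and using $\alpha_{k+1,k+1}=\alpha_{k+1,k}$ together with $B_{k+1,k+1}=q_{k+1}B_{k,k}+m_{k+1}$, $Q_{k+1,k+1}=1$, $\mu_{k+1,k+1}=1$, this term splits as
\[
\alpha_{k+1,k+1}\,x^{B_{k+1,k+1}}\bigl(z_{k+1}^{s_{k+1}}+c_{k+1}(\mu_{k+1,k+1}x^{Q_{k+1,k+1}})\bigr)\;+\;\alpha_{k+1,k+1}\,x^{B_{k+1,k+1}}\,y,
\]
which provides both the missing $i=k+1$ summand and the final $y$-term in the formula at rank $k+1$. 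Assembling everything gives the claim.

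There is no real obstacle beyond careful bookkeeping; the only subtle points are the conventions $z_0=0$ (to kill a spurious $z_0^{s_0}$ contribution in the base case) and $\mu_{k,0}\in\Ki_k^\times$ (see Remark \ref{rem:zerodiv}), which ensures that $\mu_{k,0}x^{Q_{k,0}}$ is indeed well-behaved when $z_i$ happens to be a zero divisor; these are already built into the statement so no extra argument is needed.
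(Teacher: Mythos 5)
Your proof is correct and follows essentially the same route as the paper's: verify the base case $k=0$ against $\pi_0=\tau_0$ and then induct via $\pi_{k+1}=\pi_k\circ\sigma_{k+1}\circ\tau_{k+1}$ together with the recursions for $Q_{k,i}$, $B_{k,i}$, $\mu_{k,i}$, $\alpha_{k,i}$; you simply spell out the bookkeeping that the paper leaves implicit.
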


\begin{proof}
  This is correct for $k=0$: the formula becomes
  $\pi_0(x,y)=(x,y+c_0(x))$. For $k>0$, we conclude by induction,
  using the the relations \eqref{eq:Bki} and \eqref{eq:muki} above
  with definition
  $\pi_{k}(x,y) = \pi_{k-1}(z_{k}^{t_k} x^{q_k} , x^{m_k}
  (z_k^{s_k}+c_{k}(x)+y))$.
\end{proof}

Given $\alpha$ an element of a ring $\Li$, we denote by $\alpha^{1/e}$
the residue class of $Z$ in $\Li[Z]/(Z^e-\alpha)$. By Remark
\ref{rem:zerodiv} we know that $\mu_{k,0}\in \Ki_k$ is invertible for
all $k=0,\ldots,g$ and we introduce the ring extension
\[
  \Li_k:=\Ki_k[\theta_k]=\Ki[z_1,\ldots,z_k][\theta_k],\quad
  \textrm{where}\quad
  \theta_k:=\big(\mu_{k,0}^{-\hat{e}_k}\big)^{\frac{1}{e}}.
\]
Note that $\Li_0=\Ki$ and we check straightforwardly from the
definition that $\theta_{k}\in \Li_{k+1}$. In particular, we have a
natural strict inclusion $\Li_k\subset \Li_{k+1}$.

\begin{prop}\label{prop:CPE}
  Let $F\in \Ki[[x]][y]$ be Weierstrass and $\tilde{S}=S(\mu^{-1/e}T)$
  with $(\mu\, T^e,S(T)):=\pi_g(T,0)$. We have
  \[
    \tilde{S}(T)=\sum_{B > 0} a_B T^{B}\in \Li_g[[T]],
  \]
  where $gcd(B_0,\ldots,B_k)|B$ and $a_B\in \Li_k$ for all
  $B< B_{k+1}$ (with convention $B_{g+1}:=+\infty$). Moreover, we have
  for all $1\le k \le g$
  \begin{equation}\label{eq:aBk}
    a_{B_k}=\begin{cases} \varepsilon_{k}  (z_k\theta_{k-1}^{m_k})^{\frac{1}{q_k}} \quad \qquad \text{if}\quad  q_k>1 \\
      \varepsilon_{k} z_k\theta_{k-1}^{m_k}  +\rho_k \qquad \,\, \,\text{if} \quad q_k=1 
    \end{cases}
  \end{equation}
  where $\varepsilon_{k}\in\Li_{k-1}$ is invertible and
  $\rho_{k}\in \Li_{k-1}$. In particular
  $a_{B_k}\in \Li_{k}\setminus \Li_{k-1}$.
\end{prop}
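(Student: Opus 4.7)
The plan is to exploit the explicit formula for $\pi_g(T,0)$ provided by Lemma \ref{lem:pik}, specialised at $y=0$. Using the identifications $B_{g,i}=B_i$ and $Q_{g,i}=\hat e_i$, and the convention $z_0=0$, this formula reads
\[
S(T)=c_0(\mu\,T^e)+\sum_{i=1}^g \alpha_{g,i}\,T^{B_i}\bigl(z_i^{s_i}+c_i(\mu_{g,i}T^{\hat e_i})\bigr).
\]
Substituting $T\mapsto \theta_g T$ and using $\theta_g^e=\mu^{-1}$, one obtains $\tilde S(T)$ as a sum of three families of monomials: those coming from $c_0(T^e)$, of exponents $je$ with $j\ge 1$; the isolated monomials $\alpha_{g,i}\,\theta_g^{B_i}z_i^{s_i}\,T^{B_i}$ for $1\le i\le g$; and the monomials of exponents $B_i+j\hat e_i$ with $j\ge 1$ produced by the $c_i$'s. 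Since any exponent $<B_{k+1}$ arises only from some index $i\le k$, and each $\hat e_i$ (for $i\le k$) is divisible by $\hat e_k$, the divisibility claim $\hat e_k\mid B$ follows at once.

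For the containment $a_B\in\Li_k$ when $B<B_{k+1}$, I would argue by induction on $k$, using as main technical ingredient the recursion $\mu_{k+1,0}=\mu_{k,0}\,z_{k+1}^{t_{k+1}e_k}$, which together with $\theta_j^e=\mu_{j,0}^{-\hat e_j}$ allows one to relate $\theta_g^{\hat e_k}$ to $\theta_k$ modulo elements of $\Ki_g$ (and suitable roots of unity implicit in the residue-class definition of $\theta_k$). Since each exponent $B_i+j\hat e_i$ with $i\le k$ is a non-negative integer multiple of $\hat e_k$, the corresponding coefficient $\alpha_{g,i}\,\theta_g^{B_i+j\hat e_i}\cdot c_{i,j}$ (with $c_{i,j}\in\Ki_i$) then lies in $\Li_k$ by the induction hypothesis.

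The explicit formula for $a_{B_k}$ is a direct collection of monomials of exponent $B_k$ in $\tilde S$. An arithmetic check shows that an equation $B_i+j\hat e_i=B_k$ with $i<k$ and $j\ge 0$ forces $q_{i+1}=\cdots=q_k=1$. Hence if $q_k>1$, only the isolated contribution $\alpha_{g,k}\,\theta_g^{B_k}z_k^{s_k}$ remains: using the Bezout identity $q_ks_k-m_kt_k=1$ together with the relation $\theta_g^{\hat e_{k-1}}=\theta_{k-1}\cdot\kappa$ ($\kappa\in\Ki_g^{\times}$), one rewrites it as $\varepsilon_k\,(z_k\,\theta_{k-1}^{m_k})^{1/q_k}$ with $\varepsilon_k\in\Li_{k-1}^{\times}$. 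When $q_k=1$, the same isolated term is $\varepsilon_k\,z_k\,\theta_{k-1}^{m_k}$, and the additional contributions are indexed by $i<k$ and hence, by induction, collect into an error $\rho_k\in\Li_{k-1}$.

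Non-membership $a_{B_k}\notin\Li_{k-1}$ will follow from the fact that $\Li_k/\Li_{k-1}$ has ``degree'' $q_k\ell_k>1$: if $q_k>1$ then $(z_k\theta_{k-1}^{m_k})^{1/q_k}$ adjoins a genuine $q_k$-th root, and if $q_k=1$ then $\ell_k>1$ and $z_k\notin\Li_{k-1}$. The main obstacle will be the bookkeeping in step three, namely tracking the rational powers of $\theta_g$ in terms of Bezout cofactors and roots of unity. An additional subtlety arises when $q_k=1$, where $z_k$ may be a zero-divisor in $\Ki_k$ (Remark \ref{rem:zerodiv}); the computation and the non-membership assertion must then be checked componentwise in the decomposition of $\Ki_k$ as a direct product of fields.
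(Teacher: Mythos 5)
Your plan follows the paper's proof almost step for step: both specialize Lemma \ref{lem:pik} at $y=0$, substitute $T\mapsto\theta_g T$, sort the resulting monomials by the index $i$ that produces them, read off $a_{B_k}$ via the B\'ezout relation $q_ks_k-m_kt_k=1$, and let $\rho_k$ collect the contributions of indices $i<k$. The ``bookkeeping'' you defer is exactly where the paper does its one real computation: it establishes the exact identities $\mu_{g,k}\mu_{g,0}^{-\hat{e}_{k}/e}=\theta_k$ and $\alpha_{g,k}\mu_{g,0}^{-B_{k}/e}=\prod_{j=1}^{k}\theta_j^{m_j}$ (its \eqref{eq:alpha}), which realize each $\theta_k$ inside $\Li_g$ with no residual roots of unity and yield the closed form $\tilde S(T)=\sum_{k=0}^{g}U_k(\theta_kT^{\hat{e}_k})T^{B_k}$ with $U_k\in\Li_k[[T]]$; all the divisibility and membership claims then drop out at once. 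Your proposed induction on $\mu_{k+1,0}=\mu_{k,0}z_{k+1}^{t_{k+1}e_k}$ would reproduce this, so that part is a presentational rather than a mathematical gap.

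One concrete error: your ``arithmetic check'' is overstated. An equation $B_i+j\hat{e}_i=B_k$ with $i<k$ does \emph{not} force $q_{i+1}=\cdots=q_k=1$. For instance with $k=i+2$, $q_{i+1}=2$, $q_k=1$ and $g=k$, the condition reduces to $2\mid m_{i+1}+m_k$, which holds for $m_{i+1},m_k$ both odd even though $q_{i+1}\ne 1$. What is true --- and all that you (and the paper) actually use --- is that such an equation forces $q_k=1$: since $\hat{e}_{k-1}\mid\hat{e}_i$ and $\hat{e}_{k-1}\mid B_i$, the equation gives $\hat{e}_{k-1}\mid B_k=B_{k-1}+m_k\hat{e}_k$, hence $q_k\mid m_k$, hence $q_k=1$ by coprimality. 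With the claim weakened in this way, both of your uses of it (no $\rho_k$ when $q_k>1$; $\rho_k\in\Li_{k-1}$ when $q_k=1$) go through unchanged, and your componentwise treatment of the zero-divisor case $q_k=1$ for the final non-membership assertion is consistent with Remark \ref{rem:zerodiv}.
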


\begin{proof} Note first that we have $\mu=\mu_{g,0}$ thanks to Lemma
  \ref{lem:pik}, so that $\tilde{S}(T)=S(\theta_g T)$ lies in
  $\Li_g[[T]]$ as required. Thanks to definitions \eqref{eq:Bki} and
  \eqref{eq:muki}, we compute
  \begin{equation}\label{eq:alpha}
    \mu_{g,k}\mu_{g,0}^{-\hat{e}_{k}/e}=\theta_k\in \Li_k\quad\textrm{and}\quad \alpha_{g,k}\mu_{g,0}^{-B_{k}/e}=\prod_{j=1}^k \Big(\mu_{g,j}\mu_{g,0}^{-\hat{e}_{j}/e}\Big)^{m_j} =\prod_{j=1}^k \theta_j^{m_j}\in\Li_k.
  \end{equation}
  Combined with Lemma \ref{lem:pik} applied to rank $k=g$, we deduce
  \begin{equation}\label{eq:CPE}
    \tilde{S}(T)=\sum_{k=0}^{g} U_k(\theta_k T^{\hat{e}_k}) T^{B_{k}},\qquad U_k(T):=\big(z_k^{s_k} +c_k(T)\big)\prod_{j=1}^k \theta_j^{m_j}\in \Li_k[[T]].
  \end{equation}
  As $\hat{e}_k=gcd(B_0,\ldots,B_k)$ divides both $\hat{e}_i$ and
  $B_i$ for all $i\le k$, this forces $gcd(B_0,\ldots,B_k)$ to divide
  $B$ for all $B< B_{k+1}$. In the same way, as $\Li_i\subset \Li_k$
  for all $i\le k$, we get $a_B\in \Li_k$ for all $B< B_{k+1}$. There
  remains to show the formula for $a_{B_k}$ for $k\ge 1$. As
  $c_k(0)=0$, we deduce that
  \begin{equation}\label{eq:Uk0}
    U_k(0)=z_k^{s_k}\prod_{j=1}^k \theta_j^{m_j}=(z_k\theta_{k-1}^{m_k})^{\frac{1}{q_k}} \prod_{j=1}^{k-1} \theta_j^{m_j},
  \end{equation}
  the second equality using the B\'ezout relation $s_k q_k - t_k m_k=1$.
  Note that
  $\varepsilon_k:=\prod_{j=1}^{k-1} \theta_j^{m_j}\in \Li_{k-1}$ is
  invertible by Remark \ref{rem:zerodiv}. In particular,
  $(z_k\theta_{k-1}^{m_k})^{\frac{1}{q_k}}\in \Li_k$ (although $z_k^{1/q_k}$ might not belong to $\Li_k$).  Let $\rho_k$ be the sum of the contribution
  of the terms $T^{B_i}U_i$ to the coefficient of the monomial
  $T^{B_k}$. So $a_{B_k}=U_k(0)+\rho_k$. As $B_1\le\cdots\le B_g$ and
  $k\ge 1$, we deduce that if $U_iT^{B_i}$ contributes to $T^{B_k}$,
  then $i<k$ so that $U_iT^{B_i}\in
  \Li_{k-1}[[T^{\hat{e}_{k-1}}]]$.
  We deduce that $\rho_k \in \Li_{k-1}$. Moreover, $\rho_k\ne 0$
  forces $\hat{e}_{k-1}$ to divide $B_k$. By definition
  \eqref{eq:BkMk} of $B_k$, and using that $m_{k}$ is coprime to
  $q_{k}$, we must have $q_k=1$, as required.
\end{proof}

\begin{rem}\label{rmk:Puiseux}
  While algorithm \PNPA{} allows to compute the all parametrization
  $\sum_B a_B T^B$ (up to some truncation bound), algorithm \PIrr{}
  precisely allows to compute the monomials
  $(a_{B_k}-\rho_k) T^{B_k}$, $k=0,\ldots,g$  (using \eqref{eq:aBk} and
  explicit formula of $\varepsilon_k$ in terms of edges data). As the
  remaining part of this section shows, this is precisely the minimal
  information required for testing balancedness. For instance, the
  Puiseux series of $F=(y-x-x^2)^2-2x^4$ are $S_1=T+T^2(-\sqrt{2}+1)$
  and $S_2=T+T^2(\sqrt{2}+1)$. While algorithm \PNPA{} allows to
  compute $S_1$ and $S_2$, algorithm \PIrr{} will compute only the
  ``essential monomials'' $-\sqrt{2} T^2$ and $\sqrt{2} T^2$ with
  approximate roots. Computing the singular part of the Puiseux series
  of a (pseudo)-irreducible polynomial in quasi-linear time remains an
  open challenge (see Section \ref{sec:conc} for some hints towards
  such a result).
\end{rem}

For all $\zeta\in W$, we denote by $\theta_g(\zeta)$ a $e^{th}$-roots
of $\mu(\zeta)^{-1}=\mu_{g,0}(\zeta)^{-1}$. Such a choice induces a
natural evaluation map
\[
  ev_{\zeta}:\Li_g=\Ki[z_1,\ldots,z_g][\theta_g]\to
  \Ki[\zeta_1,\ldots,\zeta_g][\theta_g(\zeta)]\subset \algclos{\Ki}
\]
and we denote for short $a(\zeta)\in \algclos{\Ki}$ the evaluation of
$a\in \Li_g$ at $\zeta$.


Let $\zeta'\in W$. By construction, when $\theta_g(\zeta')$ runs over
the $q^{th}$-roots of $\mu(\zeta')^{-1}$, then $\theta_k(\zeta')$ runs
over the $e_k=e/\hat{e}_k$ roots of
$\mu_{k,0}(\zeta_1',\ldots,\zeta_k')$. Hence it is always possible to
choose $\theta_g(\zeta')$ in such a way that
\begin{equation}\label{eq:hyp}
  (\zeta_1,\ldots,\zeta_k)=(\zeta'_1,\ldots,\zeta'_k)\quad \Longrightarrow \quad  \theta_k(\zeta)=\theta_k(\zeta'),
\end{equation}
We assume this from now. In such a case, we have $a(\zeta)=a(\zeta')$
for all $a\in \Li_k$. The following lemma is crucial for our
purpose. 

\begin{lem}\label{lem:abzeta} Let us fix $\omega$ such that $\omega^e=1$ and let
  $\zeta,\zeta'\in W$. For all $k=0,\ldots,g$, the following
  assertions are equivalent:
  \begin{enumerate}
  \item $a_{B}(\zeta)=a_{B}(\zeta')\omega^{B}$ for all $B\le B_k$.
  \item $a_{B}(\zeta)=a_{B}(\zeta')\omega^{B}$ for all $B< B_{k+1}$.
  \item $(\zeta_1,\ldots,\zeta_k)=(\zeta_1',\ldots,\zeta_k')$ and
    $\omega^{\hat{e}_k}=1$.
  \end{enumerate}
\end{lem}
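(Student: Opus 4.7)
The plan is to establish the cyclic implications $(2)\Rightarrow(1)\Rightarrow(3)\Rightarrow(2)$, with the loop closed by induction on $k$. The base case $k=0$ will be automatic: condition (3) reduces to $\omega^{\hat{e}_0}=\omega^e=1$, the standing hypothesis on $\omega$, while (1) then follows from Proposition \ref{prop:CPE} since any $B$ in the support with $B\le B_0=e$ satisfies $a_B\in\Li_0=\Ki$ and $e\mid B$, so $a_B(\zeta)=a_B(\zeta')$ and $\omega^B=1$.

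The implications $(2)\Rightarrow(1)$ and $(3)\Rightarrow(2)$ will require no induction. The first is immediate from the strict inequality $B_k<B_{k+1}$, which holds since $m_{k+1}\hat{e}_{k+1}\ge 1$. For the second, I would use that by the choice \eqref{eq:hyp}, agreement of the first $k$ coordinates of $\zeta$ and $\zeta'$ gives $\theta_i(\zeta)=\theta_i(\zeta')$ for $i\le k$, hence $a(\zeta)=a(\zeta')$ for every $a\in\Li_k$. Applied to $a_B\in\Li_k$ for $B<B_{k+1}$ (Proposition \ref{prop:CPE}) and combined with $\hat{e}_k\mid B$ and $\omega^{\hat{e}_k}=1$, this yields $a_B(\zeta)=a_B(\zeta')$ and $\omega^B=1$, giving the required relation.

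The substantial direction is $(1)\Rightarrow(3)$, proven by induction on $k$. Assuming (1) at rank $k$, its restriction to $B\le B_{k-1}$ is exactly (1) at rank $k-1$; by induction, I obtain $(\zeta_1,\ldots,\zeta_{k-1})=(\zeta'_1,\ldots,\zeta'_{k-1})$ and $\omega^{\hat{e}_{k-1}}=1$. Next I would specialise (1) to $B=B_k$ and invoke the explicit formula \eqref{eq:aBk}, splitting into cases. When $q_k=1$, then $\hat{e}_k=\hat{e}_{k-1}$ so $\omega^{\hat{e}_k}=1$ already, and $\omega^{B_k}=1$ follows from $\hat{e}_k\mid B_k$. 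The identity $\varepsilon_k(\zeta)\zeta_k\theta_{k-1}(\zeta)^{m_k}+\rho_k(\zeta)=\varepsilon_k(\zeta')\zeta'_k\theta_{k-1}(\zeta')^{m_k}+\rho_k(\zeta')$ then forces $\zeta_k=\zeta'_k$ via invertibility of $\varepsilon_k$, using that $\varepsilon_k,\rho_k,\theta_{k-1}$ take equal values at $\zeta$ and $\zeta'$.

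The hard case is $q_k>1$. My plan is to raise $a_{B_k}(\zeta)=a_{B_k}(\zeta')\omega^{B_k}$ to the $q_k$-th power (using $a_{B_k}^{q_k}=\varepsilon_k^{q_k}z_k\theta_{k-1}^{m_k}$) to clear the radical, then verify $\omega^{q_kB_k}=1$ from $\hat{e}_{k-1}\mid q_kB_{k-1}$; this yields $\zeta_k=\zeta'_k$, using Definition \ref{def:pseudodeg} to ensure $\zeta'_k\ne 0$. Once the first $k$ coordinates agree, \eqref{eq:hyp} gives $\theta_k(\zeta)=\theta_k(\zeta')$, so $a_{B_k}(\zeta)=a_{B_k}(\zeta')$, and comparing with (1) at $B=B_k$ forces $\omega^{B_k}=1$. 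Decomposing $\omega^{B_k}=\omega^{B_{k-1}}(\omega^{\hat{e}_k})^{m_k}$ with $\omega^{B_{k-1}}=1$, and combining $\gcd(m_k,q_k)=1$ with $(\omega^{\hat{e}_k})^{q_k}=\omega^{\hat{e}_{k-1}}=1$, finally yields $\omega^{\hat{e}_k}=1$. The main obstacle is precisely this $q_k$-th power trick: raising loses immediate control on $\omega^{B_k}$, and extracting $\omega^{\hat{e}_k}=1$ at the end relies crucially on the coprimality of $m_k$ and $q_k$ granted by pseudo-degeneracy.
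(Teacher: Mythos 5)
Your proof follows the same route as the paper's: $(3)\Rightarrow(2)$ via Proposition \ref{prop:CPE} and hypothesis \eqref{eq:hyp}, $(2)\Rightarrow(1)$ trivially, and $(1)\Rightarrow(3)$ by induction on $k$ using the explicit form \eqref{eq:aBk} of $a_{B_k}$, with the same case split on $q_k$, the same $q_k$-th power trick to clear the radical, and the same final coprimality argument ($\gcd(m_k,q_k)=1$ together with $\omega^{q_k\hat{e}_k}=\omega^{\hat{e}_{k-1}}=1$) to conclude $\omega^{\hat{e}_k}=1$. The argument is correct and matches the paper's proof essentially step for step.
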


\begin{proof}
  By Proposition \ref{prop:CPE}, we have $a_B\in \Li_k$ and
  $\hat{e}_k|B$ for all $B<B_{k+1}$ from which we deduce
  $3)\Rightarrow 2)$ thanks to hypothesis (\ref{eq:hyp}). As
  $2)\Rightarrow 1)$ is obvious, we need to show $1)\Rightarrow 3)$.
  We show it by induction. If $k=0$, the claim follows immediately
  since $\hat{e}_0=q$. Suppose $1)\Rightarrow 3)$ holds true at rank
  $k-1$ for some $k\ge 1$. Let us denote by
  $\zeta_k^{1/q_k}:=ev_{\zeta}(z_k^{1/q_k})$.  If
  $a_{B}(\zeta)=a_{B}(\zeta')\omega^{B}$ for all $B\le B_k$, then this
  holds true for all $B\le B_{k-1}$.  As
  $\varepsilon_k,\rho_k\in\Li_{k-1}$, the induction hypothesis
  combined with (\ref{eq:hyp}) gives
  $\varepsilon_k(\zeta)=\varepsilon_k(\zeta')\ne 0$ and
  $\rho_k(\zeta)=\rho_k(\zeta')$. Let us use now $a_{B_k}(\zeta)=a_{B_k}(\zeta')\omega^{B_k}$. Two cases occur:
  \begin{itemize}
  \item If $q_k>1$, we deduce from (\ref{eq:aBk}) that 
  $\left(\zeta_k\theta_{k-1}^{m_k}(\zeta)\right)^{\frac{1}{q_k}}=\left(\zeta_k'\theta_{k-1}^{m_k}(\zeta')\right)^{\frac{1}{q_k}}\omega^{B_k}.$
    Raising to the power $q_k$, and using that $\hat{e}_{k-1}|q_k B_k$ forces 
    $\omega^{q_k B_k}=1$ by induction hypothesis, we deduce that $\zeta_k\theta_{k-1}^{m_k}(\zeta)=\zeta_k'\theta_{k-1}^{m_k}(\zeta')$. As $\theta_{k-1}\in\Li_{k-1}^\times$, we get $\zeta_k=\zeta_{k}'$ thanks again to the induction hypothesis. Furthermore, as $\zeta_k=\zeta'_k$ implies $a_{B_k}(\zeta)=a_{B_k}(\zeta')$ thanks to (\ref{eq:hyp}), we have also $\omega^{B_k}=1$.
  \item If $q_k=1$, we deduce from (\ref{eq:aBk}) that
    $\zeta_k\theta_{k-1}^{m_k}(\zeta)+\rho_k(\zeta)=\omega^{B_k}(\zeta_k'\theta_{k-1}^{m_k}(\zeta')+\rho_k(\zeta'))$. As
    $q_k=1$ implies $\hat{e}_{k-1}=\hat{e_k}|B_k$ and $\rho_k\in \Li_{k-1}$, $\theta_{k-1}\in\Li_{k-1}^\times$, induction hypothesis gives again
    $\omega^{B_k}=1$ and $\zeta_k=\zeta'_k$.
  \end{itemize}
  To  conclude, use that $B_k=\sum_{s\le k} m_s \hat{e}_s$, so that induction hypothesis gives
  $(\omega^{\hat{e}_k})^{m_k}=1$. Since $m_k$ is coprime to $q_k$ and
  $(\omega^{\hat{e}_k})^{q_k}=\omega^{\hat{e}_{k-1}}=1$, this forces
  $\omega^{\hat{e}_k}=1$.
\end{proof}

In particular, Lemma \ref{lem:abzeta} above implies that
algorithm \PNPA{} still allow to recover all the Puiseux
series of a pseudo-irreducible polynomial, as required. 

\begin{cor}\label{cor:branches}
  Suppose that $F$ is pseudo-irreducible and Weierstrass. Then $F$
  admits exactly $f$ distinct monic irreducible factors
  $F_{\zeta}\in \algclos{\Ki}[[x]][y]$ indexed by $\zeta\in W$.  Each
  factor $F_{\zeta}$ has degree $e$ and defines a branch with
  classical Puiseux parametrizations $(T^e,\tilde{S}_{\zeta}(T))$
  where
  \begin{equation}\label{eq:SetParam}
    \tilde{S}_{\zeta}(T)=\sum_B a_{B}(\zeta)T^B.
  \end{equation} 
  The $e$ Puiseux series of $F_{\zeta}$ are given by
  $\tilde{S}_{\zeta}(\omega x^{\frac{1}{e}})$ where $\omega$ runs over
  the $e^{th}$-roots of unity and this set of Puiseux series does not
  depend of the choice of the $e^{th}$-roots $\theta_g(\zeta)$.
\end{cor}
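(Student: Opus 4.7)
The plan is to combine the pseudo-rational Puiseux expansion provided by Lemma~\ref{lem:pik} and Proposition~\ref{prop:CPE} with the rigidity encoded in Lemma~\ref{lem:abzeta} to identify all absolutely irreducible factors of $F$. First, since $F$ is pseudo-irreducible and Weierstrass, algorithm \PNPA{} terminates with $N_g=1$, and the Abhyankar shift kills the coefficient of $y^{N_g-1}=y^0$, so $H_g=y$. Combined with \eqref{eq:pikHk} this gives $\pi_g^*F = x^{v_g(F)}\,U_g\cdot y$ with $U_g(0,0)\ne 0$, hence the identity $F(\mu T^e, S(T))=0$ in $\Ki_g[[T]]$. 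Evaluating at any $\zeta\in W$ and reparametrizing $T\mapsto \theta_g(\zeta)T$ yields $F(T^e,\tilde S_\zeta(T))=0$ in $\overline{\Ki}[[T]]$, so $\tilde S_\zeta(\omega\,x^{1/e})$ is a Puiseux root of $F$ for every $e$-th root of unity $\omega$.

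Second, I would establish primitivity of the parametrization $(T^e,\tilde S_\zeta(T))$. By Proposition~\ref{prop:CPE} and the explicit formulas \eqref{eq:aBk}, each coefficient $a_{B_k}(\zeta)$ is non-zero after evaluation: the factors $\varepsilon_k$ and $\theta_{k-1}$ are invertible in $\Li_{k-1}$, and the case split $q_k>1$ versus $q_k=1$ handles the possible vanishing of $z_k$. Together with $\gcd(B_0,\ldots,B_g)=\hat e_g=1$, the non-vanishing of these $a_{B_k}$ forces primitivity: applying Lemma~\ref{lem:abzeta} at $k=g$ with $\zeta=\zeta'$, the equality $\tilde S_\zeta(\omega T)=\tilde S_\zeta(T)$ implies $\omega^{\hat e_g}=\omega=1$, so the $e$ series $\tilde S_\zeta(\omega x^{1/e})$ are pairwise distinct. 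They form a single Galois orbit under $T\mapsto \omega T$, hence correspond to a unique absolutely irreducible factor $F_\zeta\in\overline{\Ki}[[x]][y]$ of degree exactly $e$, with parametrization \eqref{eq:SetParam}.

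Finally I would check that different $\zeta$'s produce different factors and that the choice of $\theta_g(\zeta)$ is immaterial. If $F_\zeta=F_{\zeta'}$, then $\tilde S_\zeta(\omega T)=\tilde S_{\zeta'}(T)$ for some $\omega^e=1$, i.e.\ $a_B(\zeta)\omega^B=a_B(\zeta')$ for all $B$; Lemma~\ref{lem:abzeta} at $k=g$ then forces $(\zeta_1,\ldots,\zeta_g)=(\zeta_1',\ldots,\zeta_g')$. Since $|W|=\ell_1\cdots\ell_g=f$ and each $F_\zeta$ has degree $e$, the product $\prod_{\zeta\in W}F_\zeta$ has degree $ef=\deg F$ and thus exhausts the factorization of $F$ over $\overline{\Ki}[[x]]$. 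For independence, replacing $\theta_g(\zeta)$ by $\omega\theta_g(\zeta)$ with $\omega^e=1$ amounts to substituting $T\mapsto\omega T$ in $\tilde S_\zeta$, which only permutes the $e$ Puiseux series within the orbit. The main technical point is the non-vanishing of the $a_{B_k}(\zeta)$ after evaluation, handled by Proposition~\ref{prop:CPE}: without it one could end up with a non-primitive parametrization and strictly fewer than $e$ distinct series per factor, and the counting argument $ef=d$ would collapse.
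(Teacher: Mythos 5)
Your proposal is correct and follows essentially the same route as the paper's proof: $H_g=y$ gives $F(T^e,\tilde S_\zeta(T))=0$, the non-vanishing of the evaluated coefficients $a_{B_k}(\zeta)$ for $q_k>1$ combined with $\gcd(B_0,\ldots,B_g)=\hat e_g=1$ yields primitivity and hence an irreducible factor of degree $e$, Lemma \ref{lem:abzeta} separates distinct $\zeta$'s, and the count $ef=\deg F$ shows the factors exhaust $F$. One small imprecision: your blanket claim that \emph{every} $a_{B_k}(\zeta)$ is non-zero after evaluation is too strong, since for $q_k=1$ one has $a_{B_k}(\zeta)=\varepsilon_k(\zeta)\zeta_k\theta_{k-1}^{m_k}(\zeta)+\rho_k(\zeta)$, which may vanish when $\zeta_k=0$; this is harmless because those $B_k$ are divisible by $\hat e_{k-1}=\hat e_k$ and contribute nothing to the gcd, which is why the paper restricts the non-vanishing claim to the indices with $q_k>1$.
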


\begin{proof}
  As $F$ is pseudo-irreducible, $H_g=y$ (Weierstrass polynomial of
  degree $N_g=1$ with no terms of degree $N_g-1$),thus
  $\pi_g^* F(x,0) = 0$. We deduce $F(T^e,\tilde{S}_{\zeta}(T))=0$ for
  all $\zeta\in W$.  By \eqref{eq:aBk}, we have $a_{B_k}(\zeta)\ne 0$
  for all $k$ such that $q_k>1$. Since
  $gcd(B_0=e,B_k \,|\, q_k>1))=gcd(B_0,\ldots,B_g)=\hat{e}_g=1$, the
  parametrization $(T^e,\tilde{S}_{\zeta}(T))$ is primitive, that is
  the greatest common divisor of the exponents of the series $T^e$ and
  $\tilde{S}_{\zeta}(T)$ equals one. Hence, this parametrization
  defines a branch $F_{\zeta}=0$, where
  $F_{\zeta}\in \algclos{\Ki}[[x]][y]$ is an irreducible monic factor
  of $F$ of degree $e$. Thanks to Lemma \ref{lem:abzeta}, these $f$
  branches are distinct when $\zeta$ runs over $W$. As $\deg(F)=e\,f$, we
  obtain in such a way all irreducible factors of $F$. The last claim
  follows straightforwardly.
\end{proof}

\paragraph{Pseudo-irreducible implies balanced.}  This is the easiest implication. Let us first consider the characteristic exponents. We get:

\begin{prop}\label{cor:charac}
  Let $F\in \Ki[[x]][y]$ be pseudo-irreducible. Then each branch
  $F_{\zeta}$ of $F$ has characteristic exponents $(B_0;B_k \, |\, q_k >1), \,k=1,\ldots,g)$.
\end{prop}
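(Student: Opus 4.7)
}

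By an Abhyankar shift of $y$ by the constant $c=a_{d-1}(0)/d$, I would first reduce to the case where $F$ is Weierstrass: indeed, pseudo-irreducibility forces $F(0,y)=(y-c)^d$ and the shift does not affect either the $q_k,m_k,P_k,N_k$'s nor the characteristic exponents (which are defined after translation to the center of each branch). Thus I may apply Corollary \ref{cor:branches} and work with the primitive Puiseux parametrization $(T^e,\tilde S_\zeta(T))$, $\tilde S_\zeta(T)=\sum_B a_B(\zeta)\,T^B$, of the branch $F_\zeta$. Let $0=k_0<k_1<\cdots<k_G$ be the enumeration of the indices in $\{0,1,\ldots,g\}$ with $q_{k_j}>1$ (with the convention $q_0:=\infty$, so that $k_0=0$ always appears). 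I claim that the characteristic exponents of $F_\zeta$ are exactly $\beta_0=B_0=e$ and $\beta_j=B_{k_j}$ for $j=1,\ldots,G$, and that the associated gcds satisfy $E_j:=\gcd(\beta_0,\ldots,\beta_j)=\hat e_{k_j}$.

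The proof will be by induction on $j$. The base case $j=0$ is immediate. The key arithmetic observation is the recursion $\hat e_k=\hat e_{k-1}/q_k$, which follows from $\hat e_k=\gcd(\hat e_{k-1},B_k)$, $B_k\equiv m_k\hat e_k\pmod{\hat e_{k-1}}$ and $\gcd(q_k,m_k)=1$. Combined with the definition of the $k_j$'s, it yields $\hat e_{k_j-1}=\hat e_{k_{j-1}}$ and $\hat e_{k_j}=\hat e_{k_{j-1}}/q_{k_j}$, so that $E_{j-1}=\hat e_{k_j-1}$ by induction hypothesis.

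For the inductive step, I would apply Proposition \ref{prop:CPE} at rank $k_j-1$: every $B<B_{k_j}$ with $a_B(\zeta)\ne 0$ is divisible by $\hat e_{k_j-1}=E_{j-1}$, hence $\beta_j\ge B_{k_j}$. Next I would check that $a_{B_{k_j}}(\zeta)\ne 0$ using formula \eqref{eq:aBk} in the case $q_{k_j}>1$: since $\varepsilon_{k_j}\in\Li_{k_j-1}^\times$ and $\theta_{k_j-1}\in\Li_{k_j-1}^\times$ by Remark \ref{rem:zerodiv}, non-vanishing reduces to $\zeta_{k_j}\ne 0$; but Definition \ref{def:pseudodeg} of pseudo-degeneracy imposes $P_{k_j}(0)\in\Ai^\times$ whenever $q_{k_j}>1$, so that $\zeta_{k_j}$, being a root of a reduction of $P_{k_j}$, is non-zero. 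Finally, reducing $B_{k_j}=B_{k_j-1}+m_{k_j}\hat e_{k_j}$ modulo $\hat e_{k_j-1}=q_{k_j}\hat e_{k_j}$ gives $B_{k_j}\equiv m_{k_j}\hat e_{k_j}\pmod{\hat e_{k_j-1}}$, and $\gcd(m_{k_j},q_{k_j})=1$ with $q_{k_j}>1$ implies $E_{j-1}\nmid B_{k_j}$. Hence $\beta_j=B_{k_j}$ and $E_j=\gcd(E_{j-1},B_{k_j})=\hat e_{k_j}$, completing the induction.

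The recursion terminates at $j=G$ because $E_G=\hat e_{k_G}=\hat e_g=e/e_g=1$, so no further characteristic exponent exists. The only subtle point in the argument — and the step I expect to require the most care — is the non-vanishing of $a_{B_{k_j}}(\zeta)$ in the presence of possible zero-divisors in $\Ki_k$: this is precisely controlled by the additional constraint $P(0)\in \Ai^\times$ imposed in Definition \ref{def:pseudodeg} whenever $q>1$, which is exactly the property that the argument needs. \hfill$\square$
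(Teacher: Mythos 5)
Your main inductive argument is correct and is essentially an expanded version of the paper's (much terser) proof: both rest on Corollary \ref{cor:branches}, on the divisibility structure $\gcd(B_0,\ldots,B_k)\mid B$ for $B<B_{k+1}$ from Proposition \ref{prop:CPE}, and on the non-vanishing of $a_{B_k}(\zeta)$ for $q_k>1$ via formula \eqref{eq:aBk} together with the requirement $P(0)\in\Ai^\times$ in Definition \ref{def:pseudodeg}. Your bookkeeping of the gcds $E_j=\hat e_{k_j}$ and the verification that $E_{j-1}\nmid B_{k_j}$ (using $B_{k_j}\equiv m_{k_j}\hat e_{k_j}\bmod \hat e_{k_j-1}$ and $\gcd(m_{k_j},q_{k_j})=1$) correctly fill in details the paper leaves implicit. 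The identification of the non-vanishing of $a_{B_{k_j}}(\zeta)$ as the delicate point in the presence of zero divisors is exactly right.

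There is, however, a genuine error in your opening reduction: pseudo-irreducibility does \emph{not} force $F(0,y)=(y-c)^{\dy}$. The first residual polynomial $P_1$ may have degree $\ell_1>1$ with $m_1=0$, in which case the branches of $F$ have distinct centers on the line $x=0$ and no single constant shift makes $F$ Weierstrass. The paper's own Example \ref{ex:nonweierstrass}, $F=((y+2)^2-x^3)((y+1)^2-x^3)(y^2-x^3)$, is pseudo-irreducible with $F(0,y)=(y+2)^2(y+1)^2y^2$. The correct way to handle the non-Weierstrass monic case is to note that then $q_1=1$ and $B_1=0$ (so $B_1$ is excluded from the claimed list anyway), that the structure of the parametrization in Proposition \ref{prop:CPE} persists except for a constant term $a_0(\zeta)=\tilde S_\zeta(0)$ which varies with $\zeta$, and that the characteristic exponents of the branch $F_\zeta$ are by definition those of $\tilde S_\zeta(T)-\tilde S_\zeta(0)$, to which your induction applies verbatim starting from the positive exponents. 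With that repair (or by simply restricting the statement to the Weierstrass case, as the cited Corollary \ref{cor:branches} formally requires), your proof goes through.
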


\begin{proof} Thanks to Corollary \ref{cor:branches}, all polynomials
  $F_{\zeta}$ have same first characteristic exponent $B_0=e$. Using
  again that $a_{B_k}(\zeta)\ne 0$ for all $k\ge 1$ such that $q_k>1$
  (by \eqref{eq:aBk}), it follows immediately from Proposition
  \ref{prop:CPE} and Corollary \ref{cor:branches} that the remaining
  characteristic exponents of $F_{\zeta}$ are those $B_k$ for which
  $k\ge 1$ and $q_k>1$.
\end{proof}

Concerning the intersection multiplicities, we get:
\begin{prop}\label{prop:series}
  Let $F\in \Ki[[x]][y]$ be pseudo-irreducible with at
  least two branches $F_{\zeta}, F_{\zeta'}$. We have
  \[
    (F_{\zeta},F_{\zeta'})_0=M_{\kappa},\quad \kappa:=\min
    \big(k=1,\ldots,g \, |\, \zeta_k \ne \zeta'_k\big).
  \]
  and this value is reached exactly
  $\hat{f}_{\kappa-1}-\hat{f}_{\kappa}$ times when $\zeta'$ runs over
  the set $W\setminus\{\zeta\}$.
\end{prop}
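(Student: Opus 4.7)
First I would express the intersection multiplicity as a sum over pairs of Puiseux roots. Since $F_\zeta$ and $F_{\zeta'}$ are monic irreducible of degree $e$ in $\algclos{\Ki}[[x]][y]$ by Corollary \ref{cor:branches}, the classical resultant formula gives
\[
(F_\zeta, F_{\zeta'})_0 = \val_x \Res_y(F_\zeta, F_{\zeta'}) = \sum_{i,j}\val_x(\alpha_i - \alpha'_j),
\]
where the $\alpha_i, \alpha'_j$ are the Puiseux roots of $F_\zeta, F_{\zeta'}$. By the same corollary these roots are $\alpha_i = \tilde{S}_\zeta(\omega_i x^{1/e})$ and $\alpha'_j = \tilde{S}_{\zeta'}(\omega'_j x^{1/e})$ where $\omega_i, \omega'_j$ run through the $e^{th}$ roots of unity. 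The change of variables $T = \omega_i x^{1/e}$ together with $\omega = \omega'_j\omega_i^{-1}$ converts the double sum above into
\[
(F_\zeta, F_{\zeta'})_0 = \sum_{\omega^e = 1}\eta(\omega), \qquad \eta(\omega) := \val_T\bigl(\tilde{S}_\zeta(T) - \tilde{S}_{\zeta'}(\omega T)\bigr),
\]
so that by \eqref{eq:SetParam}, $\eta(\omega) = \min\{B \,:\, a_B(\zeta)\ne a_B(\zeta')\omega^B\}$.

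Then I would apply Lemma \ref{lem:abzeta} to determine $\eta(\omega)$ case by case. If $\omega^{\hat{e}_{\kappa-1}} = 1$, parts $1$ and $2$ of the lemma at rank $k = \kappa-1$ ensure $a_B(\zeta) = a_B(\zeta')\omega^B$ for all $B < B_\kappa$, while part $1$ at rank $k = \kappa$ must fail since $\zeta_\kappa \ne \zeta'_\kappa$, so the equality breaks at some $B \le B_\kappa$; hence $\eta(\omega) = B_\kappa$. Otherwise, letting $k^* := \min\{k\ge 1 \,:\, \omega^{\hat{e}_k}\ne 1\}$, which is well defined since $\omega^{\hat{e}_0} = \omega^e = 1$ and satisfies $k^* \le \kappa-1$, the same argument with $k^*$ in place of $\kappa$ yields $\eta(\omega) = B_{k^*}$. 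The main subtlety of the proof lies here, in carefully disentangling the two independent parameters $\omega$ and $\zeta'$ in the statement of Lemma \ref{lem:abzeta}.

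Finally I would count and sum. The number of $e^{th}$ roots of unity $\omega$ satisfying $\omega^{\hat{e}_{k-1}} = 1$ but $\omega^{\hat{e}_k} \ne 1$ is $\hat{e}_{k-1}-\hat{e}_k$, and those satisfying $\omega^{\hat{e}_{\kappa-1}} = 1$ amount to $\hat{e}_{\kappa-1}$. Hence
\[
(F_\zeta, F_{\zeta'})_0 = \sum_{k=1}^{\kappa-1}(\hat{e}_{k-1}-\hat{e}_k)B_k + \hat{e}_{\kappa-1}B_\kappa,
\]
which equals $M_\kappa$ thanks to \eqref{eq:Mk}, after rewriting $\hat{e}_{\kappa-1}B_\kappa = (\hat{e}_{\kappa-1}-\hat{e}_\kappa)B_\kappa + \hat{e}_\kappa B_\kappa$. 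For the last claim, the $\zeta' \in W\setminus\{\zeta\}$ with $\min\{k \,:\, \zeta_k\ne\zeta'_k\} = \kappa$ are those agreeing with $\zeta$ in the first $\kappa-1$ coordinates and differing at the $\kappa^{th}$ one, which gives $(\ell_\kappa-1)\ell_{\kappa+1}\cdots\ell_g = \hat{f}_{\kappa-1}-\hat{f}_\kappa$ choices, as required.
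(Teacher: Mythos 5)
Your proof is correct, and it takes a genuinely different route from the paper's. The paper invokes Noether--Merle's formula \eqref{eq:NMformula}: it first computes the contact order $\Cont(F_{\zeta},F_{\zeta'})=\max_{\omega^e=1} v_T(\tilde{S}_{\zeta}(T)-\tilde{S}_{\zeta'}(\omega T))$, shows via Lemma \ref{lem:abzeta} that each such valuation equals $B_{\bar\kappa}$ with $\bar\kappa=\min\{k \mid \zeta_k\ne\zeta'_k \text{ or } \omega^{\hat e_k}\ne 1\}$, observes the maximum is attained at $\omega=1$ (giving $\Cont=B_\kappa$ and $K=\kappa$), and then plugs into \eqref{eq:NMformula} together with the characteristic exponents from Proposition \ref{cor:charac} to land on $M_\kappa$ via \eqref{eq:Mk}. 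You instead bypass Noether--Merle entirely: you expand $(F_\zeta,F_{\zeta'})_0$ as the sum over all $e$ values of $\omega$ of $v_T(\tilde S_\zeta(T)-\tilde S_{\zeta'}(\omega T))$ (the resultant as a product over conjugate roots), evaluate every term --- not just the maximal one --- with the same Lemma \ref{lem:abzeta}, and sum. This is slightly longer but more self-contained: it amounts to re-deriving the relevant instance of Noether--Merle's formula from scratch, and it does not need Proposition \ref{cor:charac} as an input. Both arguments hinge on Lemma \ref{lem:abzeta} as the key technical fact, and the final multiplicity count $(\ell_\kappa-1)\ell_{\kappa+1}\cdots\ell_g=\hat f_{\kappa-1}-\hat f_\kappa$ is identical in the two proofs. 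The only point worth spelling out more carefully in your write-up is the normalization in the change of variables (the factor $1/e$ from $v_x=\frac1e v_T$ cancels against the $e$-fold repetition of each ratio $\omega=\omega'_j\omega_i^{-1}$), but the stated identity $(F_\zeta,F_{\zeta'})_0=\sum_{\omega^e=1}\eta(\omega)$ is correct.
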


\begin{proof} Noether-Merle's formula \eqref{eq:NMformula} combined
  with Proposition \ref{cor:charac} gives
  \begin{equation}\label{eq:merle}
    (F_{\zeta},F_{\zeta'})_0=\sum_{k \le K} 
    (\hat{e}_{k-1}-\hat{e}_k) B_k +  \hat{e}_K \Cont(F_{\zeta},F_{\zeta'})
  \end{equation}
  with $K = \max\{k\,|\,\Cont(F_\zeta,F_{\zeta'}) \geq B_k\}$. Note
  that the $B_k$'s which are not characteristic exponents do not
  appear in the first summand of formula (\ref{eq:merle}) ($q_k=1$
  implies $\hat{e}_{k-1}-\hat{e}_k=0$). It is a classical fact that we
  can fix any root $y$ of $F$ for computing the contact order (see e.g. \cite[Lemma
  1.2.3]{Ga95}). Combined with Corollary \ref{cor:branches}, we obtain
  the formula
  \begin{equation}\label{eq:cont}
    \Cont(F_{\zeta},F_{\zeta'})=\max_{\omega^e=1}\left(v_T\left(\tilde{S}_{\zeta}(T)
    -\tilde{S}_{\zeta'}(\omega T)\right)\right).
  \end{equation}
  We deduce from Lemma \ref{lem:abzeta} that
  \[
  v_T\left(\tilde{S}_{\zeta}(T)-\tilde{S}_{\zeta'}(\omega
    T)\right)=B_{\bar\kappa},\quad \bar\kappa:=\min \left\{k=1,\ldots,g
    \,\, |\,\, \zeta_k \ne \zeta'_k \,\, \textrm{or}\,\,
    \omega^{\hat{e}_k}\ne 1\right\}.
  \]
  As $\omega=1$ satisfies $\omega^{\hat{e}_k}=1$ for all $k$, we
  deduce from the last equality that the maximal value in
  \eqref{eq:cont} is reached for $\omega=1$ (it might be reached for
  other values of $\omega$). It follows that
  $\Cont(F_{\zeta},F_{\zeta'})=B_{\kappa}$ with
  $\kappa=\min \left\{k \, |\, \zeta_k \ne \zeta'_k\right\}$. We thus
  have $K=\kappa$ and \eqref{eq:merle} gives
  $(F_{\zeta},F_{\zeta'})_0=\sum_{k=1}^{\kappa}(\hat{e}_{k-1}-\hat{e}_k)
  B_k + \hat{e}_{\kappa} B_{\kappa}=M_{\kappa}$,
  the last equality by \eqref{eq:Mk}. Let us fix $\zeta$. As said
  above, we may choose $\omega=1$ in \eqref{eq:cont}. We have
  $v_T(\tilde{S}_{\zeta}(T)-\tilde{S}_{\zeta'}(T))=B_{\kappa}$ if and
  only if $\zeta_k'=\zeta_k$ for $k<\kappa$ and
  $\zeta_{\kappa}\ne \zeta'_{\kappa}$. This concludes, as the number
  of possible such values of $\zeta'$ is precisely
  $\hat{f}_{\kappa-1}-\hat{f}_{\kappa}$.
  \end{proof}

If $F$ is pseudo-irreducible, then it is balanced and satisfies
both items of Theorem \ref{prop:pseudo} thanks to Proposition
\ref{cor:charac} and Proposition \ref{prop:series}. There remains to
show the converse.

\paragraph{Balanced implies pseudo-irreducible.} 
We need to show that then $N_g=1$ if $F$ is balanced. We denote more
simply $H:=H_g\in \Ki_g[[x]][y]$, and $\pi_g(T,0)=(\mu T^e,S(T))$. We
denote $H_{\zeta}, S_{\zeta},\mu_{\zeta}$ the images of $H, S,\mu$
after applying (coefficient wise) the evaluation map
$ev_{\zeta}:\Ki_g\to \algclos{\Ki}$.

\begin{lem}\label{lem:samedeg}
  Suppose that $F$ is balanced. Then all irreducible factors of
  all $H_{\zeta}$, $\zeta\in W$ have same degree.
\end{lem}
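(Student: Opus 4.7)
The plan is to establish a degree-preserving correspondence between pairs $(\zeta, G_\zeta)$, with $\zeta \in W$ and $G_\zeta$ an irreducible factor of $H_\zeta$ over $\algclos{\Ki}[[x]][y]$ of degree $\delta_G$, and the absolutely irreducible factors of $F$ of degree $e\delta_G$, where $e=e_g$. Once this is proved, the balanced hypothesis forces all absolutely irreducible factors of $F$ to share a common degree $\delta$, whence $\delta_G=\delta/e$ is constant across all such pairs, yielding the lemma.

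To construct the forward map, I would pick a Puiseux root $y_0(x)\in \algclos{\Ki}[[x^{1/\delta_G}]]$ of $G_\zeta$ and set $x=T^{\delta_G}$ to obtain a primitive parametrization $(T^{\delta_G}, y_0(T))$ of the branch defined by $G_\zeta$. Composing with $\pi_g$ evaluated at $\zeta$ and invoking Lemma \ref{lem:pik}, this produces a parametrization $(\mu_{g,0}(\zeta)\,T^{e\delta_G}, \eta(T))$ of a branch of $F$. A short arithmetic check, exploiting that primitivity of $y_0$ provides an exponent in $\supp(y_0)$ coprime to $\delta_G$, while the non-$y$ part of $\pi_g$ contributes exponents in $\eta(T)$ whose gcd with $e\delta_G$ is $\delta_G$ (using $\gcd(B_0,\ldots,B_g)=\hat e_g=1$), confirms primitivity of the new parametrization; the corresponding absolutely irreducible factor of $F$ thus has degree exactly $e\delta_G$.

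For surjectivity and injectivity I would use the factorization $\pi_g^* F = x^{v_g(F)} U_g H_g$ to pull back each Puiseux root of $F$ through $\pi_g$ to a root of some $H_\zeta$ for a uniquely determined $\zeta\in W$; the point $\zeta$ is read off from the initial segment of the Puiseux expansion, in the spirit of Lemma \ref{lem:abzeta}. A degree count then closes the bijection, since the pairs yield branches summing to $\sum_{\zeta\in W}\sum_j e\delta_{\zeta,j}= e f N_g=d$.

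The main obstacle is the primitivity verification and the unique recovery of $\zeta$ from a given branch of $F$, both of which rest on the explicit form of $\pi_g$ from Lemma \ref{lem:pik} and on the fact that the first $g$ characteristic segments of any branch of $F$ are faithfully captured by the algorithm data encoded through $W$.
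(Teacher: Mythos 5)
Your proposal is correct and follows essentially the same route as the paper: both push a root (equivalently, an irreducible factor) of $H_\zeta$ forward through $\pi_{g,\zeta}$ using Lemma \ref{lem:pik} to obtain a root of $F$ whose degree over $\algclos{\Ki}((x))$ is exactly $e$ times as large, and then invoke balancedness of $F$ to conclude. Your primitivity check is the justification the paper leaves implicit in equation \eqref{eq:y}, while the surjectivity/injectivity and degree-count discussion is not needed for this lemma, since the forward (degree-multiplying) map alone already forces all factors of all $H_\zeta$ to have degree $\delta/e$.
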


\begin{proof}
  Let $\zeta\in W$ and let $y_\zeta$ be a roots of $H_{\zeta}$. As
  $H_{\zeta}$ divides $(\pi_{g}^* F)_{\zeta}=\pi_{g,\zeta}^* F$ by
  \eqref{eq:pikHk}, we deduce from Lemma \ref{lem:pik} (use
  $B_{gg}=B_g$) that
  \[
    F(\mu_{\zeta} x^e,S_{\zeta}(x)+x^{B_g} y_\zeta(x))=0.
  \]
  Hence,
  $y_0(x):=\tilde{S}_{\zeta}(x^{\frac{1}{e}})+\mu_{\zeta}^{-\frac{B_g}{e}}x^{\frac{B_{g}}{e}}y_\zeta(\mu_{\zeta}^{-\frac{1}{e}}x^{\frac{1}{e}})$
  is a root of $F$ and we have moreover the equality
  \begin{equation}\label{eq:y}
    \deg_{\algclos{\Ki}((x))}(y_0)=e\deg_{\algclos{\Ki}((x))}(y_\zeta),
  \end{equation}
  where we consider here the degrees of $y_0$ and $y_{\zeta}$ seen as algebraic elements over the field $\algclos{\Ki}((x))$.
  As $F$ is balanced, all its
  irreducible factors - hence all its roots - have same
  degree. Combined with \eqref{eq:y}, this implies that all roots -
  hence all irreducible factors - of all $H_{\zeta}$, $\zeta\in W$
  have same degree.
\end{proof}

\begin{cor}\label{cor:Hg0}
  Suppose $F$ balanced and $N_g>1$. Then there exist some coprime
  positive integers $(q,m)$ and $Q\in \Ki_g[Z]$ monic with non zero
  constant term such that $H$ has lower \edgepoly{}
  \[
    \bar{H}(x,y)=Q\left(y^{q}/x^{m}\right)x^{m \deg(Q)}
  .\]
\end{cor}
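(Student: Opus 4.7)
The plan is to argue by contradiction on the general quasi-homogeneous form of $\bar H$. Since $\bar H$ is the restriction of $H$ to the lower edge of $\NP(H)$, of slope $-m/q$ with $\gcd(q,m)=1$, and since $H$ is Weierstrass and monic of degree $N_g$, we can write uniquely $\bar H = y^r\, Q(y^q/x^m)\, x^{m\deg Q}$ with $Q\in\Ki_g[Z]$ monic, $Q(0)\neq 0$, and $N_g = r + q\deg Q$; monicity of $Q$ is forced by the $y^{N_g}$ term of $H$. The corollary thus reduces to proving $r=0$.

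Assume $r>0$ for contradiction. Since $Q(0)$ is a nonzero element of the product of fields $\Ki_g$, there exists $\zeta\in W$ such that $Q(0)(\zeta)\neq 0$. Specializing $H$ at $\zeta$ yields a Weierstrass polynomial $H_\zeta\in\algclos{\Ki}[[x]][y]$ for which the points $(N_g,0)$ and $(r, m\deg Q)$ still carry nonzero coefficients, so the lower edge of $\NP(H_\zeta)$ is the segment joining them, of slope $-m/q$. Since $r>0$ and $H_\zeta(x,0)$ has positive valuation, $\NP(H_\zeta)$ must contain at least one more (steeper) edge above, and $H_\zeta$ then admits Puiseux roots $y_1,y_2$ with $v(y_1)=m/q < v(y_2)$. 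The construction in the proof of Lemma \ref{lem:samedeg} promotes each $y_i$ to a root $y_{0,i}(x) := \tilde S_\zeta(x^{1/e}) + \mu_\zeta^{-B_g/e}\, x^{B_g/e}\, y_i(\mu_\zeta^{-1/e} x^{1/e})$ of $F$, hence to a branch $F_i$ of $F$. By Lemma \ref{lem:samedeg}, both branches share the same degree $n = ed$, where $d$ is the common degree of the irreducible factors of $H_\zeta$, and in particular $q$ divides $d$.

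The main obstacle is to turn the gap $v(y_2) > v(y_1)$ into a genuine discrepancy $\C(F_1)\neq \C(F_2)$. Using the parametrization $x = T^n$, the Puiseux expansion of $y_{0,i}$ in $T$ starts with the exponents $B_0 d, B_1 d, \ldots, B_g d$ coming from $\tilde S_\zeta$, and their running gcd is exactly $d$ (since $\hat e_g=\gcd(B_0,\ldots,B_g)=1$). When $q>1$, the first further exponent contributed by $y_1$ is $B_g d + md/q$; its gcd with $d$ drops to $d/q$, so it is a characteristic exponent of $F_1$. The analogous exponent for $F_2$ is strictly larger than $B_g d + md/q$ (since $v(y_2)>m/q$), hence $\C(F_2)$ contains no element in the half-open interval $(B_g d,\, B_g d + md/q]$, contradicting $\C(F_1)=\C(F_2)$. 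The residual case $q=1$ carries no ramification at this step and must be handled separately: the factors of $H_\zeta$ arising from the lower edge are then unramified of degree at most $\deg Q$, while factors coming from any higher edge carry extra ramification forced by the steeper slope, contradicting the equality of factor degrees guaranteed by Lemma \ref{lem:samedeg}. In either case $r=0$, establishing the desired form of $\bar H$.
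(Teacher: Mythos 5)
Your reduction to showing $r=0$ in the unique decomposition $\bar H=y^r\,\tilde Q(y^q/x^m)\,x^{m\deg\tilde Q}$ with $\tilde Q(0)\neq 0$ is the same starting point as the paper's, and your treatment of the case $q>1$ is correct and genuinely different: comparing the characteristic exponents of the two branches $F_1,F_2$ obtained from roots $y_1,y_2$ of $H_\zeta$ with $v(y_1)=m/q<v(y_2)$ does force $r=0$ there, and it is essentially the mechanism the paper only deploys later (Corollary \ref{cor:expcarac} and the proof of Proposition \ref{prop:equisingpseudoirr}). The one point you leave implicit — that $F_1\neq F_2$ — does follow, since conjugate Puiseux roots of a single branch yield the same characteristic exponents.

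The case $q=1$ is a genuine gap. Both claims you make there are false in general: a factor of $H_\zeta$ arising from the lower edge need not be unramified nor of degree at most $\deg Q$ (a lower edge of slope $-1$ can carry a ramified factor such as $(y-x)^2-x^5$, whose ramification appears only at a later level), and a factor coming from a steeper edge need not carry any extra ramification (for $H_\zeta=(y-x)(y-x^2)$ the two edges have slopes $-1$ and $-2$ and both factors are unramified of degree one, so Lemma \ref{lem:samedeg} is not contradicted). The paper's argument here is different and weaker: evaluating at $\zeta$ with $\tilde Q_\zeta(0)\neq 0$, the vertex at abscissa $r$ splits $H_\zeta$ into factors $A,B$ with $\deg A=r$ and $\deg B=q\deg\tilde Q$; since all irreducible factors of $H_\zeta$ have the same degree $d$ (Lemma \ref{lem:samedeg}) and $q\mid d$ by the ramification forced on the lower edge, one gets $q\mid r$ and absorbs $y^r=(y^q)^{r/q}$ into $Q:=Z^{r/q}\tilde Q$. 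Note that this $Q$ has \emph{zero} constant term whenever $r>0$, so the paper's own proof does not in fact deliver the clause ``non zero constant term'' of the statement — and Lemma \ref{lem:Gparam} (``we may a priori have $\alpha=0$'') shows only this weaker form is used downstream. You should either prove the weak form via the divisibility argument, or supply a correct argument for $q=1$ if you want the statement as literally written.
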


\begin{proof}
  As $N_g>1$, the Weierstrass polynomial $H=H_{g}$ is not
  pseudo-degenerated and admits a lower slope $(q,m)$ (we can not have
  $H_g=y^{N_g}$ as $F$ would not be square-free). Hence, its lower
  \edgepoly{} may be written in a unique way
  \begin{equation}\label{eq:Hg0}
    \bar{H}(x,y)=y^r \tilde{Q}\left(y^{q}/x^{m}\right)x^{m \deg(\tilde{Q})}
  \end{equation}
  for some non constant monic polynomial $\tilde{Q}\in \Ki_g[Z]$ with
  non zero constant term and some integer $r\ge 0$.  Let $\zeta\in W$
  such that $\tilde{Q}_{\zeta}(0)\ne 0$ and suppose $r>0$. By applying
  the evaluation map $ev_{\zeta}$ to \eqref{eq:Hg0}, we deduce that
  the Newton polygon of $H_{\zeta}$ has a vertice of type $(r,i)$,
  $0\le r\le \dy$ and the Newton-Puiseux algorithm (over a field)
  implies that $H_{\zeta}$ admits two factors $A, B$ such that
  $\deg(A)=r$ and $\deg(B)=q\deg(\tilde{Q})$. By Lemma
  \ref{lem:samedeg}, this forces $q$ to divide $r$. Hence $r=nq$ for
  some $n\in \Ni$ and the claim follows by taking
  $Q(Z)=Z^n\tilde{Q}(Z)$.
\end{proof}

\begin{lem}\label{lem:Gparam}
  Suppose $F$ balanced and $N_g>1$. We keep
    notations $q$ and $Q$ from Corollary \ref{cor:Hg0}. Let
  $G\in \algclos{\Ki}[[x]][y]$ be an irreducible monic factor of
  $F$. Then $e\,q$ divides $n:=\deg(G)$ and there exists a unique
  $\zeta\in W$ and a unique root $\alpha$ of $Q_{\zeta}$ such that $G$ admits a parametrization
  $(T^n,S_G(T))$, where
  \begin{equation}\label{eq:SG}
    S_G(T)\equiv \tilde{S}_{\zeta}(T^{\frac{n}{e}}) + \alpha^{\frac{1}{q}}\mu_{\zeta}^{-\frac{B_g}{e}}T^{\frac{n}{e}B_g+\frac{nm}{eq}} \mod \,T^{\frac{n}{e}B_g+\frac{nm}{eq}+1},
  \end{equation}
  with $\alpha^{1/q}$ an arbitrary $q^{th}$-roots of $\alpha$ (we may
  \textit{a priori} have $\alpha=0$). Conversely, given $\zeta\in W$
  and $\alpha$ a roots of $Q_{\zeta}$, there exists at least one
  irreducible factor $G$ for which \eqref{eq:SG} holds.
\end{lem}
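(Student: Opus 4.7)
My plan is to transfer the analysis of Puiseux roots of $F$ to Puiseux roots of the Weierstrass polynomial $H := H_g$ via the map $\pi_g$, and then apply classical Newton--Puiseux theory to each $H_\zeta$ using the explicit lowest edgepoly furnished by Corollary \ref{cor:Hg0}. Starting from $\pi_g^* F = x^{v_g(F)} U_g H$ with $U_g$ a unit and the formula for $\pi_g$ of Lemma \ref{lem:pik}, I would argue exactly as in the proof of Lemma \ref{lem:samedeg} that every Puiseux root $y_G$ of $F$ attached to $G$ takes the shape
\[
y_G(x) = \tilde{S}_\zeta(x^{1/e}) + \mu_\zeta^{-B_g/e} x^{B_g/e}\, y_\zeta\bigl(\mu_\zeta^{-1/e} x^{1/e}\bigr)
\]
for some $\zeta \in W$ and some Puiseux root $y_\zeta$ of $H_\zeta$, the correspondence between irreducible factors being bijective up to the resulting parametrization.

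The next step is to analyze $y_\zeta$ via Newton--Puiseux applied to $H_\zeta$. Corollary \ref{cor:Hg0} gives $\bar{H}_\zeta = Q_\zeta(y^q/x^m) x^{m\deg Q}$ with $Q_\zeta(0) \ne 0$, hence each Puiseux root of $H_\zeta$ has leading term $\alpha^{1/q} x^{m/q}$ for some nonzero root $\alpha$ of $Q_\zeta$. Since $\gcd(m,q) = 1$, each irreducible factor of $H_\zeta$ has ramification (and therefore degree) divisible by $q$; invoking Lemma \ref{lem:samedeg}, which forces all $H_\zeta$-factors to share one common degree $n'$, one obtains $q \mid n'$ and thus $eq \mid n = en'$. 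To get the parametrization \eqref{eq:SG}, I would substitute $x = T^n$ into $y_G$ and expand $y_\zeta(u) = \alpha^{1/q} u^{m/q} + O(u^{(m+1)/q})$, producing the term $\alpha^{1/q} \mu_\zeta^{-B_g/e - m/(eq)} T^{nB_g/e + nm/(eq)}$ modulo higher order, matching \eqref{eq:SG} once the $\mu_\zeta^{-m/(eq)}$ factor is absorbed into the chosen normalization of the $q$-th root.

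Uniqueness then follows because the terms of $S_G(T)$ of degree below $nB_g/e + nm/(eq)$ recover $\tilde{S}_\zeta(T^{n/e})$, which determines $\zeta$ uniquely by Corollary \ref{cor:branches} and Lemma \ref{lem:abzeta}, while the coefficient at the critical exponent pins down $\alpha$ as one specific root of $Q_\zeta$. For the converse, given $\zeta \in W$ and $\alpha$ a root of $Q_\zeta$, Newton--Puiseux applied to $H_\zeta$ produces at least one Puiseux root with the required leading data, and reversing the previous construction yields a factor $G$ of $F$ satisfying \eqref{eq:SG}. The step I expect to be the most delicate is the degree count: one must ensure that every irreducible factor of every $H_\zeta$ really does come from the lowest edge, since otherwise the congruence \eqref{eq:SG} would fail at some exponent smaller than $nB_g/e + nm/(eq)$; this is precisely what Corollary \ref{cor:Hg0} (with $Q(0) \ne 0$) combined with Lemma \ref{lem:samedeg} is designed to ensure, but invoking it cleanly requires some care.
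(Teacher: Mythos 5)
Your proposal is correct and follows essentially the same route as the paper's proof: transfer the roots of $F$ to roots of the $H_\zeta$ via $\pi_g$ as in Lemma \ref{lem:samedeg}, read the leading term $\alpha^{1/q}x^{m/q}$ off the lowest \edgepoly{} of Corollary \ref{cor:Hg0}, deduce $eq\mid n$ from the common degree of all factors, and settle uniqueness and the converse with Lemma \ref{lem:abzeta} and the count of the $efN_g$ distinct Puiseux series. The one point to tighten is your assertion that \emph{every} root of $H_\zeta$ has leading term $\alpha^{1/q}x^{m/q}$ with $\alpha$ a \emph{nonzero} root of $Q_\zeta$: the $Q$ produced in the proof of Corollary \ref{cor:Hg0} may vanish at $Z=0$ (the lemma's parenthetical ``we may a priori have $\alpha=0$'' is not vacuous), so the divisibility $q\mid n'$ must be anchored on one factor with $\alpha\ne 0$ (which exists since $\tilde Q$ is nonconstant with nonzero constant term) and then propagated to all factors via Lemma \ref{lem:samedeg} --- the paper does the same thing by observing that for such a factor the exponent $\frac{n}{e}B_g+\frac{nm}{eq}$ must be an integer.
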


\begin{proof}
  Let $y_{\zeta}^{(i)}$, $i=1,\ldots,N_g$ be the roots of
  $H_{\zeta}$. Following the proof of Lemma \ref{lem:samedeg}, we know
  that each roots $y_{\zeta}^{(i)}$ gives rise to a family of $e$-roots of $F$
  \[
    y_{\zeta,\omega}^{(i)}:=\tilde{S}_{\zeta}(\omega
    x^{\frac{1}{e}})+\omega\mu_{\zeta}^{-\frac{B_g}{e}}
    x^{\frac{B_{g}}{e}}y_{\zeta}^{(i)}(\omega\mu_{\zeta}^{-\frac{1}{e}}x^{\frac{1}{e}}),
  \]
  where $\omega$ runs over the $e^{th}$ roots of unity.  As
  $H_{\zeta}$ has distinct roots and
  $\tilde{S}_{\zeta}(\omega x^{1/e})\ne \tilde{S}_{\zeta'}(\omega'
  x^{1/e})$ when $(\zeta,\omega)\ne (\zeta',\omega')$ (Lemma
  \ref{lem:abzeta}), we deduce that the $efN_g=\deg(F)$ Puiseux series
  $y_{\zeta,\omega}^{(i)}$ are distinct, getting all roots
  of $F$. As $e$ divides
  $n:=\deg_{\algclos{\Ki}((x))}(y_{\zeta,\omega}^{(i)})$ (use
  \eqref{eq:y}), the roots $y_{\zeta,\omega}^{(i)}$, $\omega^e=1$
  belong to the same orbit of the Galois group of the field extension
  $\algclos{\Ki}((x))\to \algclos{\Ki}((x^{1/n}))$. Thus, any
  irreducible factor $G$ of $F$ has degree $n$ and admits a root of
  type $y_{\zeta,1}^{(i)}$ for some pair $(\zeta,i)$. Hence $G$ admits
  a parametrization $(T^n,S_G(T))$, where
  $S_G(T):=y_{\zeta,1}^{(i)}(T^n)$. Since
  $y_{\zeta}^{(i)}(x)= \alpha^{1/q} x^{m/q}+h.o.t$ for some uniquely
  determined roots $\alpha$ of $Q_{\zeta}$ (use Corollary
  \ref{cor:Hg0}), we get the claimed formula. Since there exists at
  least one root $\alpha\ne 0$ of $Q_{\zeta}$, the fact that
  $S_G\in \algclos{\Ki}[[T]]$ forces $nm/eq\in \Ni$. Hence $eq$
  divides $n$ since $e$ divides $n$ and $q$ and $m$ are
  coprime. Conversely, if $\zeta\in W$ and $Q_{\zeta}(\alpha)=0$,
  there exists at least one root $y_{\zeta}^{(i)}$ of $H_{\zeta}$
  such that $y_{\zeta}^{(i)}(x)= \alpha^{1/q} x^{m/q}+h.o.t$ and by
  the same arguments as above, there exists at least one irreducible
  factor $G$ such that \eqref{eq:SG} holds.
\end{proof}

For any irreducible factor $G$ of $F$, we denote by
$(\zeta(G),\alpha(G))\in W\times \algclos{\Ki}$ the unique pair
$(\zeta,\alpha)$ such that \eqref{eq:SG} holds.

\begin{cor}\label{cor:expcarac}
  Suppose $F$ balanced and $N_g>1$. Let $n$ stands for the degree
  of any of its irreducible factor and let $q$ as in Lemma \ref{lem:Gparam}. Then the lists of the
  characteristic exponents of the irreducible factors of $F$ all begin
  as $\{n\}\cup \{\frac{n}{e}B_k,q_k>1, k=1,\ldots,g\}.$ Moreover the
  next characteristic exponent of any factor $G$ is greater or equal
  than $\frac{n}{e}B_g+\frac{nm}{eq}\in \Ni$, with equality if and
  only if $q>1$ and $\alpha(G)\ne 0$.
\end{cor}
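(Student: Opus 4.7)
The plan is to read the characteristic exponents of $G$ directly off the explicit parametrization \eqref{eq:SG} of Lemma \ref{lem:Gparam}, by tracking how the running gcd $\gcd(\beta_0,\ldots,\beta_j)$ decreases as $j$ grows. The analysis naturally splits according to whether the contributing monomial comes from $\tilde S_\zeta(T^{n/e})$ — whose exponent structure is entirely controlled by Proposition \ref{prop:CPE} — or from the correction term $\alpha^{1/q}\mu_\zeta^{-B_g/e}\,T^{\frac{n}{e}B_g+\frac{nm}{eq}}$.

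For the initial segment of characteristic exponents, I would start from the elementary identity $\gcd(n,(n/e)B)=(n/e)\gcd(e,B)$, which reduces gcd computations for the (generally non-primitive) parametrization $(T^n,\tilde S_\zeta(T^{n/e}))$ to those for $(T^e,\tilde S_\zeta(T))$. By Proposition \ref{prop:CPE} every exponent $B$ of $\tilde S_\zeta$ satisfies $\hat e_k\mid B$ whenever $B<B_{k+1}$, and formula \eqref{eq:aBk} together with the invertibility of $\varepsilon_k$ ensures $a_{B_k}(\zeta)\ne 0$ for every $k\ge 1$ with $q_k>1$. Repeating verbatim the argument in the proof of Proposition \ref{cor:charac} then shows that the characteristic exponents of the primitive parametrization $(T^e,\tilde S_\zeta(T))$ are $\{e\}\cup\{B_k\mid q_k>1\}$. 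Scaling by $n/e$ gives the first characteristic exponents of $G$ as $\{n\}\cup\{\tfrac{n}{e}B_k\mid q_k>1\}$, after which the running gcd equals $(n/e)\gcd(B_0,\ldots,B_g)=n/e$, the $B_k$ with $q_k=1$ failing to reduce the gcd since $\hat e_{k-1}=\hat e_k$ in that case.

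For the next characteristic exponent I would look for the smallest exponent $B$ in the support of $S_G$ with $n/e\nmid B$. All exponents of $\tilde S_\zeta(T^{n/e})$ being multiples of $n/e$, only the correction term in \eqref{eq:SG} can contribute within the range where the mod relation remains informative. Its exponent $\tfrac{n}{e}B_g+\tfrac{nm}{eq}$ lies in $\Ni$ since $eq\mid n$ by Lemma \ref{lem:Gparam}; it is divisible by $n/e$ iff $nm/(eq)$ is, iff $q\mid m$, iff $q=1$ (as $\gcd(q,m)=1$); and its coefficient $\alpha^{1/q}\mu_\zeta^{-B_g/e}$ is non-zero iff $\alpha\ne 0$. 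Hence, if $q>1$ and $\alpha(G)\ne 0$, this term provides the next characteristic exponent, giving equality; otherwise every exponent of $S_G$ at or below the threshold $\tfrac{n}{e}B_g+\tfrac{nm}{eq}$ is determined by \eqref{eq:SG} and is a multiple of $n/e$, so no gcd reduction happens there and the next characteristic exponent must be strictly greater. The main delicate point will be the gcd bookkeeping across the non-primitive intermediate parametrization $(T^n,\tilde S_\zeta(T^{n/e}))$, together with a careful justification that the mod relation in \eqref{eq:SG} truly captures every monomial of $S_G$ up to the critical threshold, so that no hidden lower-order term can create a premature characteristic exponent in the boundary cases $\alpha(G)=0$ or $q=1$.
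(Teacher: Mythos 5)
Your proposal is correct and is exactly the route the paper takes: its proof of this corollary is the one-line remark that it ``follows straightforwardly from Lemma \ref{lem:Gparam} combined with Proposition \ref{prop:CPE} (similar argument than for Proposition \ref{cor:charac})'', and your gcd bookkeeping via $\gcd(n,\tfrac{n}{e}B)=\tfrac{n}{e}\gcd(e,B)$ together with the observation that only the correction term can have exponent not divisible by $n/e$ is precisely the intended filling-in of that remark. The ``delicate point'' you flag at the end is not actually an issue: the congruence in \eqref{eq:SG} modulo $T^{\frac{n}{e}B_g+\frac{nm}{eq}+1}$ by definition determines every coefficient of $S_G$ up to and including the critical exponent, so no hidden lower-order term can appear.
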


\begin{proof}
  This follows straightforwardly from Lemma \ref{lem:Gparam} combined
  with Proposition \ref{prop:CPE} (similar argument than for
  Proposition \ref{cor:charac}).
\end{proof}


\begin{cor}\label{cor:interset}
  Suppose $F$ balanced with $N_g>1$ and with $\rho\geq 2$
  irreducible factors $G_1,\ldots,G_{\rho}\in \algclos{\Ki}[[x]][y]$. We
  have
  \[
    (G_i,G_j)_0 > \frac{n^2}{e^2}\left(M_g+\frac{m}{q}\right) \quad
    \Longleftrightarrow\quad
    (\zeta(G_i),\alpha(G_i))=(\zeta(G_j),\alpha(G_j)).
  \]
\end{cor}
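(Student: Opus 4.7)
My plan is to compute $(G_i, G_j)_0$ directly from the Puiseux parametrizations provided by Lemma \ref{lem:Gparam}, using the classical formula $(G_i, G_j)_0 = \sum_{\omega^n=1} v_T(S_{G_i}(T) - S_{G_j}(\omega T))$ (obtained by evaluating $G_j(T^n, y) = \prod_{\omega^n=1}(y - S_{G_j}(\omega T))$ along the $n$ conjugate Puiseux roots of $G_i$). Substituting \eqref{eq:SG}, with $U = T^{n/e}$, $\rho = \omega^{n/e}$, $\xi = \omega^{nm/(eq)}$, $c_\bullet = \alpha(G_\bullet)^{1/q}\mu_{\zeta(G_\bullet)}^{-B_g/e}$, each summand splits as
$$S_{G_i}(T) - S_{G_j}(\omega T) = \bigl(\tilde S_{\zeta(G_i)}(U) - \tilde S_{\zeta(G_j)}(\rho U)\bigr) + (c_i - c_j\,\xi)\, T^{\frac{n}{e}B_g + \frac{nm}{eq}} + \mathrm{h.o.t.},$$
where, as $\omega$ runs over $n$-th roots of unity, $\rho$ ranges over $e$-th roots (each attained $n/e$ times) and, when $\rho = 1$, $\xi$ ranges over $q$-th roots (each attained $n/(eq)$ times).

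The first step is to evaluate the bracket. Lemma \ref{lem:abzeta}, applied to the pseudo-irreducible Puiseux series of Proposition \ref{prop:CPE}, shows that whenever $\tilde S_{\zeta(G_i)}(U) - \tilde S_{\zeta(G_j)}(\rho U)\ne 0$ its $U$-valuation is $B_{\kappa(\rho)}$, with $\kappa(\rho) := \min\{1\le k\le g : \zeta(G_i)_k \ne \zeta(G_j)_k \text{ or } \rho^{\hat e_k}\ne 1\}$; the bound $B_{\kappa(\rho)}\le B_g < B_g + m/q$ prevents cancellation with the $T^{\frac{n}{e}B_g + \frac{nm}{eq}}$ term, so $v_T = \tfrac{n}{e}B_{\kappa(\rho)}$ in this regime. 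A direct count of $e$-th roots (exactly as in the proof of Proposition \ref{prop:series}), combined with the telescoping identity \eqref{eq:Mk}, then shows that summing contributions of the $\omega$'s with nonzero bracket gives $\tfrac{n^2}{e^2}M_{\kappa_0}$ in \textbf{Case A} where $\zeta(G_i)\ne \zeta(G_j)$ and $\kappa_0 := \min\{k : \zeta(G_i)_k\ne\zeta(G_j)_k\}\le g$, and $\tfrac{n^2}{e^2}(M_g - B_g)$ in \textbf{Cases B and C} where $\zeta(G_i) = \zeta(G_j)$ (sum over $\rho\ne 1$ only). Case A is already settled, since $M_{\kappa_0}\le M_g$ and $m/q>0$ force $(G_i,G_j)_0 < \tfrac{n^2}{e^2}(M_g + m/q)$.

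For Cases B and C there remain the $n/e$ roots $\omega$ with $\rho = 1$, whose contribution comes from the coefficient $c_i - c_j\,\xi$. In \textbf{Case B} ($\alpha(G_i)\ne\alpha(G_j)$), the identity $(c_i/c_j)^q = \alpha(G_i)/\alpha(G_j) \ne 1$ forbids $c_i = c_j\,\xi$ for any $q$-th root $\xi$ (the degenerate subcase $c_j = 0$ is handled by $c_i\ne 0$); thus $v_T = \tfrac{n}{e}B_g + \tfrac{nm}{eq}$ for every such $\omega$, adding $\tfrac{n^2}{e^2}(B_g + m/q)$ and yielding $(G_i, G_j)_0 = \tfrac{n^2}{e^2}(M_g + m/q)$ -- an equality, not strict. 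In \textbf{Case C} ($\alpha(G_i) = \alpha(G_j)$), the $\tfrac{n(q-1)}{eq}$ roots with $\xi\ne 1$ still give $v_T = \tfrac{n}{e}B_g + \tfrac{nm}{eq}$, while the $\tfrac{n}{eq}$ roots with $\xi = 1$ see both leading parts vanish, forcing $v_T > \tfrac{n}{e}B_g + \tfrac{nm}{eq}$ (finite because $G_i\ne G_j$ implies the full series must differ somewhere). A direct sum then yields $(G_i, G_j)_0 > \tfrac{n^2}{e^2}(M_g + m/q)$, which settles both directions of the equivalence.

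The main technical difficulty is the bookkeeping across two layers of ramification: the $e$-th roots $\rho$ encode the $F_\zeta$-level Puiseux structure, whereas the $q$-th roots $\xi$ encode the last splitting inside $H_g$; one must verify that no accidental cancellation between the two scales ever produces a spurious $v_T$. The quantitative input making this work is precisely the strict inequality $B_g < B_g + m/q$.
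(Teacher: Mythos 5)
Your proof is correct, and it takes a genuinely different route from the paper's. The paper only computes the \emph{contact order} $\Cont(G_i,G_j)=\max_{\omega^n=1} v_T\bigl(S_{G_i}(T)-S_{G_j}(\omega T)\bigr)$ (achieved at $\omega=1$, exactly as in Proposition \ref{prop:series}) and then converts it into the intersection multiplicity via Noether--Merle's formula \eqref{eq:NMformula}, which requires the characteristic exponents of the factors supplied by Corollary \ref{cor:expcarac}. You instead expand the resultant as the full sum $\sum_{\omega^n=1} v_T\bigl(S_{G_i}(T)-S_{G_j}(\omega T)\bigr)$ and evaluate every summand through Lemma \ref{lem:abzeta} and \eqref{eq:SG}; this bypasses both \eqref{eq:NMformula} and Corollary \ref{cor:expcarac} (in effect you reprove Noether--Merle in this special case), at the price of the two-level root-of-unity bookkeeping you carry out, and it buys more than the statement asks for: the exact values $(G_i,G_j)_0=\frac{n^2}{e^2}M_{\kappa_0}$ in Case A and $(G_i,G_j)_0=\frac{n^2}{e^2}\bigl(M_g+\frac{m}{q}\bigr)$ in Case B, showing in particular that the bound in the corollary is attained. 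Your fiber counts ($n/e$ values of $\omega$ over each $\rho$, $n/(eq)$ over each $\xi$ when $\rho=1$, using $\gcd(q,m)=1$) and the telescoping $\sum_{\rho^e=1}B_{\bar\kappa(\rho)}=M_{\kappa_0}$ via \eqref{eq:Mk} all check out. Two cosmetic points: the coefficient of $T^{\frac{n}{e}B_g+\frac{nm}{eq}}$ in your displayed splitting is $c_i-c_j\,\rho^{B_g}\xi$ for general $\omega$ (you only invoke it when $\rho=1$, so nothing breaks); and in Case C, if $\alpha(G_i)=\alpha(G_j)=0$ then the roots with $\xi\ne 1$ also give $v_T>\frac{n}{e}B_g+\frac{nm}{eq}$ rather than equality, which only strengthens the strict inequality you need. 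Both your argument and the paper's rely on $m>0$, which is what Corollary \ref{cor:Hg0} guarantees.
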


\begin{proof}
  Using similar arguments than Proposition \ref{prop:series}, we get
  $\Cont(G_i,G_j)=v_T(S_{G_i}-S_{G_j})$ and we deduce from
  \eqref{eq:SG} and Lemma \ref{lem:abzeta} that
  $\Cont(G_i,G_j) >\frac{n}{e}B_g+\frac{nm}{eq}$ if and only if
  $\zeta(G_i)=\zeta(G_j)$ and $\alpha(G_i)=\alpha(G_j)$ . The claim
  then follows from Noether-Merle's formula \eqref{eq:NMformula}
  combined with Corollary \ref{cor:expcarac}.
\end{proof}

\begin{prop}\label{prop:equisingpseudoirr}
  If $F$ is balanced, then it is pseudo-irreducible.
\end{prop}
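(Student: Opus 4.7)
The plan is to argue by contradiction: suppose $F$ is balanced but $N_g>1$. Corollary \ref{cor:Hg0} already supplies $\bar H_g = Q(y^q/x^m)x^{m\deg Q}$ with $(q,m)$ coprime positive integers and $Q\in \Ki_g[Z]$ monic of non-zero constant term. My target is to show $Q=P^N$ for a square-free $P\in \Ki_g[Z]$ and, when $q>1$, that $P(0)\in \Ki_g^\times$, so that $\bar H_g$ is pseudo-degenerated in the sense of Definition \ref{def:pseudodeg}. This contradicts the fact that \PIrr{} halts at step $g$ with $N_g>1$ only when $\bar H_g$ fails the pseudo-degeneracy test, forcing $N_g=1$ as desired.

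The factorisation $Q=P^N$ will come from counting irreducible factors of $F$ per parameter via Corollary \ref{cor:interset}. For $\zeta\in W$ and $\alpha$ a root of $Q_\zeta$ of multiplicity $e_\alpha$, a Newton--Puiseux count on $\bar H_\zeta$ shows that $H_\zeta$ has $e_\alpha$ roots with leading term $\beta x^{m/q}$ for each of the $q$ choices $\beta^q=\alpha$, and each such root yields $e$ Puiseux series of $F$ as $\omega$ runs over the $e$-th roots of unity. The Galois analysis underlying Lemma \ref{lem:Gparam} shows that a single factor $G$ of $F$ with parameter $(\zeta,\alpha)$ contributes exactly $K:=n/(eq)$ Puiseux series for each fixed triple $(\omega,\beta)$, so the number of factors with parameter $(\zeta,\alpha)$ is $e_\alpha/K$. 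By Corollary \ref{cor:interset} this number minus one equals $\#\{G'\ne G : (G,G')_0 > \tfrac{n^2}{e^2}(M_g+m/q)\}$, which is an invariant of $G$ by balancedness; hence $e_\alpha$ takes a common value $N\geq 1$ for every admissible pair $(\zeta,\alpha)$. Since $\Ki_g$ is a product of perfect fields and $Q$ is monic, $Q$ then admits a unique monic $N$-th root $P\in \Ki_g[Z]$ which is square-free componentwise.

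When $q>1$, the invertibility of $P(0)$ will come from Corollary \ref{cor:expcarac}: the next characteristic exponent of a factor $G$ equals $\tfrac{n}{e}B_g+\tfrac{nm}{eq}$ precisely when $q>1$ and $\alpha(G)\ne 0$, and is strictly greater otherwise. Since $Q(0)\ne 0$ in the product of fields $\Ki_g$, at least one summand projects $Q(0)$ to a non-zero element and thus produces a factor with $\alpha\ne 0$; if $P(0)$ were a zero-divisor, some other summand would project $P(0)$ to $0$ and Lemma \ref{lem:Gparam} would furnish a factor with $\alpha=0$, yielding two factors with distinct characteristic exponents and contradicting balancedness of $\C(F)$. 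The main obstacle is the counting step: rigorously tracking how the $eN_g$ Puiseux series of $F$ rooted at $\zeta$ regroup into Galois orbits of size $n$ to obtain the equality (number of factors at $(\zeta,\alpha)$)$\,=\,e_\alpha/K$, which in particular requires a power-residue analysis of the Galois action on the $(n/e)$-th roots of unity; everything else is a direct invocation of the preceding corollaries.
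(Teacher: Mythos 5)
Your proposal is correct and follows essentially the same route as the paper: contradiction starting from Corollary \ref{cor:Hg0}, equal multiplicities of the roots of $Q_\zeta$ deduced from Lemma \ref{lem:Gparam} together with Corollary \ref{cor:interset} and balancedness (giving $Q=P^N$ with $P$ square-free), and invertibility of $P(0)$ for $q>1$ via Corollary \ref{cor:expcarac}. The counting step you flag as the main obstacle is precisely what the paper reads off from Lemma \ref{lem:Gparam}: each factor $G$ with $\zeta(G)=\zeta$ contributes $(y^q-\alpha(G)x^m)^{n/(eq)}$ to $\bar H_\zeta$, so the multiplicity of $\alpha$ in $Q_\zeta$ is $n/(eq)$ times the number of factors with parameter $(\zeta,\alpha)$ — no separate power-residue analysis of the Galois action is needed.
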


\begin{proof} We need to show that $N_g=1$. Suppose on the contrary
  that $N_g>1$. Let $\zeta\in W$ and let $G_i$ such that
  $\zeta(G_i)=\zeta$. Thanks to Lemma \ref{lem:Gparam}, we deduce from
  algorithm \NPA{} (over a field) that $\pi_{g,\zeta}^*(G_i)$ has an
  \edgepoly{} of shape $(y^q-\alpha(G_i)x^m)^{N(G_i)}$ where
  $eq N(G_i)=\deg(G_i)=n$. In particular, $N(G_i)=n/eq$ is constant
  for all $i=1,\ldots,\rho$. We deduce that
  $\bar{H}_{\zeta}=\prod_{i
    |\zeta(G_i)=\zeta}(y^q-\alpha(G_i)x^m)^{N(G_i)}$, hence
  \begin{equation}\label{eq:factoQzeta}
    Q_{\zeta}(Z)=\prod_{i |\zeta(G_i)=\zeta}(Z-\alpha(G_i))^{N(G_i)}.
  \end{equation}
  Let $\alpha$ be a root of $Q_{\zeta}$ and $j$ such that
  $(\zeta(G_j),\alpha(G_j))=(\zeta,\alpha)$. Denote
  $I_j:=\{i\ne j\,|\,
  (\zeta(G_i),\alpha(G_i))=(\zeta(G_j),\alpha(G_j))\}$.
  Thanks to \eqref{eq:factoQzeta}, we deduce that the root $\alpha$
  has multiplicity $N(G_j)+\sum_{i\in I_j}
  N(G_i)=(\Card(I_j)+1)n/eq$.
  As $F$ is balanced, all factors have same intersection sets and
  Corollary \ref{cor:interset} implies that all sets $I_j$ have same
  cardinality. It follows that all roots $\alpha$ of all polynomials
  $Q_{\zeta}$ have same multiplicity. In other words, $Q$ is the power
  of some square-free polynomial $P\in \Ki_g[Z]$. If $q=1$, this
  implies that $H=H_g$ is pseudo-degenerate (Definition
  \ref{def:pseudodeg}), contradicting $N_g>1$. If $q>1$, we need to
  show moreover that $P$ has invertible constant term. Since there
  exists at least one non zero root $\alpha$ of some $Q_{\zeta}$
  (Corollary \ref{cor:Hg0}), we deduce from Corollary
  \ref{cor:expcarac} that at least one factor $G_i$ has next
  characteristic exponent $\frac{n}{e}B_g +\frac{nm}{eq}$ (use
  $q>1$). As $F$ is balanced, it follows that all $G_i$'s have
  next characteristic exponent $\frac{n}{e}B_g +\frac{nm}{eq}$, which
  by Corollary \ref{cor:expcarac} forces all $\alpha(G_i)$ - thus all
  roots $\alpha$ of all $Q_{\zeta}$ by last statement of Lemma
  \ref{lem:Gparam} - to be non zero. Thus $P$ has invertible constant
  term and $H=H_g$ is pseudo-degenerate (Definition
  \ref{def:pseudodeg}), contradicting $N_g>1$. Hence $N_g=1$ and $F$
  is pseudo-irreducible.
\end{proof}

The proof of Theorem \ref{prop:pseudo} is complete.  $\hfill{\square}$

\subsection{Proof of Theorem \ref{thm:main2}.} If $F$ is monic, the
result follows immediately from Proposition \ref{prop:fastIrr} and
from Theorem \ref{prop:pseudo} (see Section
\ref{sec:equising}). Namely, $F$ is balanced if and only if it is
pseudo-irreducible and in such a case, the edges data allow to compute
characteristic exponents and pairwise intersection multiplicities as
well as the discriminant valuation $\vF$ (Corollary
\ref{cor:disc}). If $F=c(x) y^{\dy}+\cdots$ has invertible leading
coefficient $c\in \Ki[[x]]$, $c(0)\ne 0$, we invert $c$ after line $1$
up to precision $\eta\le \eta(F)$, for a cost
$\Ot(\eta)\subset \Ot(\vF)$, thus reducing in the aimed bound to the
monic case.  If the leading coefficient of $F$ is not invertible, we
can find $z\in \Ki$ such that $F(0,z)\ne 0$ with at most $\dy$
evaluation of $F(0,y)$ at $z=0,1,\ldots,\dy$ (use here that $\Ki$ has
at least $\dy$ elements). This costs at most $\Ot(\dy)$ using fast
multipoint evaluation \cite[Corollary 10.8]{GaGe13}. One such a $z$ is
found, we can apply previous strategy to the polynomial
$\tilde{F}:=y^{\dy}F\left(\frac{zy+1}{y}\right)\in \Ki[[x]][y]$ which
has by construction an invertible coefficient. We have
$\deg(F)=\deg(\tilde{F})$ and $\vF(F)=\vF(\tilde{F})$ \footnote{This
  equality explains why we consider the valuation of the resultant
  between $F$ and $F_y$ as main complexity indicator instead of the
  valuation of the discriminant which may vary under projective change
  of coordinates.} so the complexity remains the same. Moreover, Lemma
\ref{lem:invariance} shows that $F$ is balanced if and only if
$\tilde{F}$ is, and there is a one-to-one correspondance between the
irreducible factors of $F$ and $\tilde{F}$ in $\bar{\Ki}[[x]][y]$ such
that $F_i$ and $\tilde{F}_i$ have same characteristic exponents and
same sets of intersection multiplicities. Hence we are reduced to the
monic case, and so within the aimed complexity. Theorem
\ref{thm:main2} is proved.    $\hfill\square$

\subsection{Some examples}\label{ssec:examples}

\begin{xmp}[\textbf{balanced}]\label{ex1}
  Let
  $F=y^6-3x^3 y^4-2x^2 y^4+3x^6 y^2+x^4 y^2-x^9+2x^8-x^7\in
  \Qi[x,y]$.
  This small example is constructed in such a way that $F$ has $3$
  irreducible factors $(y-x)^2-x^3$, $(y+x)^2-x^3$, $y^2-x^3$ and we
  can check that $F$ is balanced, with $e=2$, $f=3$ and
  $\C(F_i)=(2;3)$ and $\Gamma_i(F)=(4,4)$ for all $i=1,2,3$. Let us
  recover this with algorithm \PIrr{}.

  \emph{Initialise.} We have $N_0=\dy=6$, and we let $\psi_{-1}=x$,
  $V=(1,0)$ and $\Lambda=(1,1)$.

  \emph{Step $0$.} The $6^{th}$-approximate roots of $F$ is $\psi_0=y$
  and we deduce that
  $\bar{H}_0=y^6-2x^2 y^4+x^4 y^2=(y(y^2-x^2))^2.$ Thus, $H_0$ is
  pseudo-degenerated with edge data
  $(q_1,m_1,P_1,N_1)=(1,1,Z_1^3-Z_1,2)$.  Accordingly to
  \eqref{eq:update}, we update $V=(1,1,1)$ and
  $\Lambda=(1,z_1,3z_1^2-1)$. Note that the Newton polygon $\NP$
  of $F$ is not straight. In particular, $P_1$ is reducible over $\Qi$
  and $F$ is reducible over $\Qi[[x]][y]$.

  \emph{Step $1$.} The $2^{th}$-approximate root of $F$ is
  $\psi_1=y^3-\frac32 x^3y-x^2y$ and $F$ has $\Psi$-adic expansion
  $F=\psi_1^2 -3\psi_0^2 \psi_{-1}^5+\frac34 \psi_0^2
  \psi_{-1}^6-\psi_{-1}^7+2\psi_{-1}^8- \psi_{-1}^9$.
  The monomials reaching the minimal values \eqref{eq:wj} are
  $\psi_1^2$ (for $j=2$) and $-3\psi_0^2 \psi_{-1}^5$ and
  $\psi_{-1}^7$ (for $j=0$). We deduce from \eqref{eq:barHk} that
  $\bar{H}_1=y^2-\alpha x$, where
  $\alpha=(3z_1^2+1)/(3z_1^2-1)^2$ is easily seen to be invertible in
  $\Qi_1$ (in practice, we compute $P\in \Qi[Z_1]$ such that
  $\alpha=P\mod P_1$ and we check $gcd(P_1,P)=1$). We deduce that
  $H_1$ is pseudo-degenerated with edges data
  $(q_2,m_2,P_2,N_2)=(2,1,Z_2-\alpha,1)$.  As $N_2=1$, we deduce that
  $F$ is balanced with $g=2$.

  \emph{Conclusion.} We deduce from Theorem \ref{prop:pseudo} that $F$
  has $f=\ell_1\ell_2=3$ irreducible factors over $\algclos{\Ki}[[x]][y]$
  of same degrees $e=q_1 q_2=2$.  Thanks to \eqref{eq:BkMk}, we
  compute $B_0=e=2$, $B_1=2$, $B_2=3$ and $M_1=4$, $M_2=6$.  We deduce
  that all factors of $F$ have same characteristic exponents
  $\C(F_i)=(B_0;B_2)=(2;3)$ and same intersection sets
  $\Gamma_i(F)=(M_1,M_1)=(4,4)$ (i.e. $M_1$ which appears
  $\hat{f}_0-\hat{f}_1=3-1$ times), as
  required. 
\end{xmp}

\begin{xmp}[\textbf{non balanced}]\label{ex2}
  Let
  $F={{y}^{6}}-{{x}^{6}} {{y}^{4}}-2 {{x}^{4}} {{y}^{4}}-2 {{x}^{2}}
  {{y}^{4}}+2 {{x}^{10}} {{y}^{2}}+3 {{x}^{8}} {{y}^{2}}-2 {{x}^{6}}
  {{y}^{2}}+{{x}^{4}} {{y}^{2}}-{{x}^{14}}+2 {{x}^{12}}-{{x}^{10}}\in
  \Qi[x,y]$.
  This second small example is constructed in such a way that $F$ has
  $6$ irreducible factors $y+x-x^2$, $y+x-x^2$, $y-x-x^2$, $y-x+x^2$,
  $y-x^3$ and $y+x^3$ and we check that $F$ is not balanced, as
  $\Gamma_i(F)=(1,1,1,1,2)$ for $i=1,\ldots,4$ while with
  $\Gamma_i(F)=(1,1,1,1,3)$ for $i=5,6$. Let us recover this with
  algorithm \PIrr{}.

  \emph{Initialise.} We have $N_0=\dy=6$, and we let $\psi_{-1}=x$,
  $V=(1,0)$ and $\Lambda=(1,1)$.

  \emph{Step $0$.} The $6^{th}$-approximate roots of $F$ is $\psi_0=y$
  and we deduce that
  $\bar{H}_0=y^6-2x^2 y^4+x^4 y^2=(y(y^2-x^2))^2.$ Thus, as in
  Example \ref{ex1}, $H_0$ is pseudo-degenerated with edge data
  $(q_1,m_1,P_1,N_1)=(1,1,Z_1^3-Z_1,2)$.  Accordingly to
  \eqref{eq:update}, we update $V=(1,1,1)$ and
  $\Lambda=(1,z_1,3z_1^2-1)$.

  \emph{Step $1$.} The $2^{th}$-approximate root of $F$ is
  $\psi_1=y^3-yx^2-yx^4-\frac12 yx^{6}$ and $F$ has $\Psi$-adic
  expansion
  $F=\psi_1^2 -\psi_{-1}^{10}+2
  \psi_{-1}^{12}-\psi_{-1}^{14}-4\psi_{-1}^{6}\psi_{0}^{2}+\psi_{-1}^{8}\psi_{0}^{2}+\psi_{-1}^{10}\psi_{0}^{2}-\frac14\psi_{-1}^{12}\psi_{0}^{2}$.
  The monomials reaching the minimal values \eqref{eq:wj} are
  $\psi_1^2$ (for $j=2$) and $-4\psi_{-1}^{6}\psi_{0}^{2}$ (for
  $j=0$). We deduce from \eqref{eq:barHk} that
  $\bar{H}_1=y^2-\alpha x^2$, where
  $\alpha=4z_1^2/(3z_1^2-1)^2$. As $z_1$ is a zero divisor in
  $\Qi_1=\Qi[Z_1]/(Z_1^3-Z_1)$ and $(3z_1^2-1)=P_1'(z_1)$ is invertible in
  $\Qi_1$, we deduce that $\alpha$ is a zero divisor. It follows that
  $\bar{H}_1$ is not the power of a square-free polynomial. Hence
  $H_1$ is not pseudo-degenerated and $F$ is not balanced (with
  $g=1$), as required. In order to desingularise $F$, we would need at
  this stage to split the algorithm accordingly to the discovered
  factorization $P_1=Z_1(Z_1^2-1)$ before continuing the process,
  as described in \cite{PoWe17}. 
\end{xmp}

\begin{xmp}[\textbf{non Weierstrass}]\label{ex:nonweierstrass}
  Let $F=(y+1)^6-3x^3(y+1)^4-2(y+1)^4+3x^6(y+1)^2+(y+1)^2-x^9+2x^6-x^3$. We have
  $F=((y+2)^2-x^3)((y+1)^2-x^3)(y^2-x^3)$ from which we deduce that
  $F$ is balanced with three irreducible factors with
  characteristic exponents $\C(F_i)=(2,3)$ and intersection sets
  $\Gamma_i(F)=(0,0)$. Let us recover
  this with algorithm \PIrr{}.

  \emph{Initialise.} We have $N_0=\dy=6$, and we let $\psi_{-1}=x$,
  $V=(1,0)$ and $\Lambda=(1,1)$.

  \emph{Step $0$.} The $6^{th}$-approximate roots of $F$ is
  $\psi_0=y+1$. We have
  $F=\psi_0^6-3\psi_{-1}^3 \psi_0^4- 2 \psi_0^4 + 3 \psi_{-1}^6
  \psi_0^2 + \psi_0^2 - \psi_{-1}^9 + 2 \psi_0^6 - \psi_{-1}^3$. By
  \eqref{eq:wj}, the monomials involved in the lower edge of $H_0$ are
  $\psi_0^6, -2\psi_0^4, \psi_0^2$. We deduce from \eqref{eq:barHk}
  that $\bar{H}_0=(y^3-y)^2$ so that $H_0$ is pseudo-degenerated with
  edge data $(q_1,m_1,P_1,N_1)=(1,0,Z_1^3-Z_1,2)$. Note that
  $m_1=0$. This is the only step of the algorithm where this may
  occur. Using \eqref{eq:update}, we update $V=(1,0,0)$ and
  $\Lambda=(1,z_1,3z_1^2-1)$.
  
  \emph{Step $1$} The $N_1=2^{th}$ approximate roots of $F$ is
  $\psi_1=(y+1)^3-3/2 x^3 (y+1)-(y+1)$ and $F$ has $\Psi$-adic
  expansion
  $F=\psi_1^2 -\psi_{-1}^3-3\psi_{-1}^3
  \psi_0^2+2\psi_{-1}^6-\psi_{-1}^9+3/4 \psi_{-1}^6\psi_0^2$.  We
  deduce that the monomials reaching the minimal values \eqref{eq:wj}
  are $\psi_1^2$ (for $j=2$) and $-\psi_{-1}^3$,
  $-3\psi_{-1}^3 \psi_0^2$ (for $j=0$). We deduce from
  \eqref{eq:barHk} that $\bar{H}_1=y^2-\alpha x^3$, where
  $\alpha=(\lambda_{1,-1}^3+3\lambda_{1,-1}^3\lambda_{1,0}^2)\lambda_{1,1}^{-2}=(3z_1^2+1)/(3z_1^2-1)^2$
  is easily seen to be invertible in $\Qi_1$. We deduce that $H_1$ is
  pseudo-degenerated with edges data
  $(q_2,m_2,P_2,N_2)=(2,3,Z_2-\alpha,1)$.  As $N_2=1$, we deduce that
  $F$ is balanced with $g=2$. By Theorem \ref{prop:pseudo}
  (assuming only $F$ monic), we get that $F$ has $f=\ell_1\ell_2=3$
  irreducible factors over $\algclos{\Ki}[[x]][y]$ of same degrees
  $e=q_1 q_2=2$.  Thanks to \eqref{eq:BkMk}, we compute $B_0=e=2$,
  $B_1=0$, $B_2=3$ and $M_1=0$, $M_2=6$.  By Theorem
  \ref{prop:pseudo}, we deduce that all factors of $F$ have same
  characteristic exponents $\C(F_i)=(B_0;B_2)=(2;3)$ and same
  intersection sets $\Gamma_i(F)=(M_1,M_1)=(0,0)$ as required.
\end{xmp}

\section{Further comments}
\label{sec:conc}
We conclude this paper with some ongoing work that will deserve
further publication, providing the main perspectives. They are
twofold. We start by discussing a way to factorize the input
polynomial once non irreducibility has been discovered by our main
algorithm. Then we discuss the more general context of polynomials
defined over discrete valuation rings (e.g. $F\in\Qi_p[y]$) and
conclude by an open question concerning the assumption on the base
field we are making in this paper.

\paragraph{Analytic factorization.} Let's assume that $N_g>1$. Then
$\NP_g(F)$ has two distinct edges, or its \edgepoly{} factorizes. In
both cases, if $\val[]$ denotes the extended valuation defined by the
lower edge of $\NP_g(F)$, we get from the \edgepoly{} two polynomials
$G$ and $H$ such that $\val[](F-G\,H)>\val[](F)$. Then, using the classical
Hensel Lemma \cite[Section 15.4]{GaGe13}, we get a quadratic lifting
of $G$ and $H$. As in \cite{CaRoVa16}, we start with euclidean
division, denoting $\psi=\psi_g$ (it is important that $\psi$ is
monic, so that $\val[](\psi)\geq 0$), and \quorem{} the euclidean algorithm.

\begin{lem}
  \label{lem:quorem}
  Let $A$, $B\in\Ki[[x]][y]$ such that $B$ is monic in $\psi$
  (i.e. $B=\psi^b+\cdots$) and $\val[](B)=b\,\val[](\psi)$. Then,
  $Q,R=\quorem(A,B)$ satisfies $\val[](R)\geq \val[](A)$ and
  $\val[](Q)\geq \val[](A)-\val[](B)$.
\end{lem}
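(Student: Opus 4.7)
The plan is to proceed by induction on $\deg_\psi(A)$, mimicking the standard euclidean division while tracking the valuation. The key preliminary I would need is the formula
\[
  \val[](G)=\min_i \val[](g_i\psi^i)
\]
for any $\psi$-adic expansion $G=\sum_i g_i\psi^i$ with $\deg g_i<\deg\psi$. This is the analogue of point 1 of Lemma \ref{lem:samevk} for the new valuation $\val[]$ attached to the lower edge of $\NP_g(F)$, and I would derive it directly from Definition \ref{def:alternative} combined with multiplicativity $\val[](g_i\psi^i)=\val[](g_i)+i\,\val[](\psi)$ of the augmented valuation (see Remark \ref{rem:refMontes}).

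The base case $\deg_\psi(A)<b$ is immediate with $Q=0$ and $R=A$. Otherwise, let $n:=\deg_\psi(A)\geq b$ and $A=\sum_{i=0}^n a_i\psi^i$ its $\psi$-adic expansion. I would then define $q_0:=a_n\psi^{n-b}$ and $A':=A-q_0 B$, which has $\psi$-degree $<n$ since the leading $\psi$-term of $q_0 B$ is exactly $a_n\psi^n$. Multiplicativity of $\val[]$ together with $\val[](B)=b\,\val[](\psi)$ would yield
\[
  \val[](q_0 B)=\val[](a_n\psi^n)\geq \val[](A)
\]
(the inequality using the preliminary formula applied to $A$), hence $\val[](A')\geq \val[](A)$ by the triangular inequality. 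Similarly, $\val[](q_0)=\val[](a_n\psi^n)-\val[](B)\geq \val[](A)-\val[](B)$.

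The inductive hypothesis applied to $A'$ would then give $\quorem(A',B)=(Q',R)$ with $\val[](R)\geq \val[](A')\geq \val[](A)$ and $\val[](Q')\geq \val[](A')-\val[](B)\geq \val[](A)-\val[](B)$. Since $\quorem(A,B)=(q_0+Q',R)$, both claimed bounds follow, the one on $Q$ via the triangular inequality once again. The point I expect to be most delicate is establishing the two preliminary properties of $\val[]$ (the ``$\min$'' formula along the $\psi$-adic expansion, and strict multiplicativity) beyond the range covered explicitly by Lemma \ref{lem:samevk}; but Definition \ref{def:alternative} is designed exactly so that they go through verbatim, and the interpretation of $\val[]$ as an augmented MacLane valuation on $\Ki[[x]][y]$ guarantees multiplicativity.
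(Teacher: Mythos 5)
Your proof is correct and follows essentially the same route as the paper's: induction on the $\psi$-degree of $A$, peeling off the leading $\psi$-coefficient via $A'=A-a_n\psi^{n-b}B$, and combining the ``min along the $\psi$-adic expansion'' formula for $\val[]$ with its multiplicativity and the hypothesis $\val[](B)=b\,\val[](\psi)$. The only (immaterial) difference is that you track the bound on $Q$ through the induction, whereas the paper deduces it at the end from $\val[](Q\,B)=\val[](A-R)$.
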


\begin{proof}
  We focus on the computation of $R$. First note that it be computed
  as follows\footnote{in practice, we use the classical algorithm of
    $\Ai[y]$, this is only for this proof purpose}: write
  $A=\sum_{i=0}^m a_i\,\psi^i$ the $\psi$-adic expansion of $A$, then
  compute $\At=A-a_m\,B$, and apply recursively this strategy to
  $\At$. As $\deg(\At)<\deg(A)$, this procedure converges towards the
  unique remainder $R$.  We now prove the result by induction on the
  degree of $A$. Nothing has to be done whe $\deg(A)<d$. When
  $\deg(A)=d$, then $A=c\,\psi^b$ with $c\in\Ai$ and result is
  straightforward for $\psi^d$ (we have
  $\val[](B-\psi^d)\geq \val[](B)$ by assumption). Finally, when
  $\deg(A)>d$, apply the above step $\At=A-a_m\,B$. We have
  $\val[](a_m\,B)\geq \val[](A)$ so that $\val[](\At)\geq \val[](A)$,
  and $\deg(\At)<\deg(A)$, which proves the lemma for $R$
  recursively. Result for $Q$ is then a straightforward consequence,
  as $\val[](Q\,B)=\val[](A-R)$.
\end{proof}

From this Lemma, it is trivial to show that the Hensel
lemma, when starting with correct initial
polynomials, ``double the precision'' according to an extended
valuation $(\val[],\psi)$: given $F$, $G$, $H$, $S$, $T\in \Ai[Y]$ with
$H$ monic in $\psi$, and $n\in\Ni^*$ satisfying,
\begin{itemize}
\item $\val[](F-G\,H)\geq \val[](F)+n$
\item $\val[](S\,G+T\,H-1) \geq n$ with $\deg(S)<\deg(H)$,
  $\deg(T)<\deg(G)$, $\val[](S)=-\val[](G)$ and $\val[](T)=-\val[](H)$.
\end{itemize}
it outputs polynomials
$\Gt$, $\Ht$, $\St$, $\Tt\in\Ki[X,Y]$ with $\Ht$ monic in $\psi$ such that:
\begin{itemize}
\item $\val[](F-\Gt\,\Ht)>\val[](F)+2\,n$, with
  $\val[](\Gt-G)\geq n+\val[](G)$ and $\val[](\Ht-H)\geq n+\val[](H)$,
\item $\val[](\St\,\Gt+\Tt\,\Ht-1)>2\,n$ ; $\deg(\Tt)<\deg(\Gt)$,
  $\deg(\St)<\deg(\Ht)$, $\val[](\St)=-\val[](G)$ and $\val[](\Tt)=-\val[](H)$.%
\end{itemize}
We recall the algorithm (this is exactly \cite[Algorithm 15.10]{GaGe13}):
\begin{algorithm}[ht]
  \nonl\TitleOfAlgo{\onestep($F,G,H,S,T$)\label{algo:OneStep}}
  $\alpha\assign{}(F-G\, H)$\;%
  $Q,R\assign{}\quorem(S\,\alpha,H)$\;%
  $\Gt\assign{}G+\alpha\,{}T+Q\, G$\;%
  $\Ht\assign{}H+R$\;%
  $\beta\assign{}(S\,{}\Gt+T\,\Ht)-1$\;%
  $A,B\assign{}\quorem(S\,\beta,\Ht)$\;%
  $\St\assign{}S-B$\;%
  $\Tt\assign{}T-\beta\,{}T-A\,\Gt$\;%
  \Return{$\Ht$, $\Gt$, $\St$, $\Tt$}%
\end{algorithm}

\begin{prop}
  \label{prop:hensel}
  Algorithm \onestep{} is correct.
\end{prop}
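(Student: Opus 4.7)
The plan is to imitate the classical analysis of quadratic Hensel lifting (as in \cite[Theorem 15.17]{GaGe13}), but to carry out every valuation estimate with respect to the extended valuation $v$ by repeated appeal to Lemma~\ref{lem:quorem}. The argument splits naturally into two parallel halves: lifting the factorization $F = GH$, and lifting the B\'ezout cofactors $S,T$.

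First I would set up the valuation bookkeeping. From $v(F - GH) \geq v(F) + n > v(F)$ we get $v(F) = v(GH) = v(G) + v(H)$. Applying Lemma~\ref{lem:quorem} to the division $S\alpha = QH + R$ (using that $H$ is monic in $\psi$ with $v(H) = \deg_\psi(H)\,v(\psi)$, the standing assumption of the lemma) together with $v(S) = -v(G)$, the lemma yields $v(R) \geq v(H) + n$ and $v(Q) \geq n$. The updates $\tilde H = H + R$ and $\tilde G = G + \alpha T + QG$ then immediately produce the bounds $v(\tilde H - H) \geq v(H)+n$ and $v(\tilde G - G) \geq v(G) + n$, and $\tilde H$ remains monic of the same $\psi$-degree since $\deg_\psi(R) < \deg_\psi(H)$ by QuoRem.

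The key step of the first half is to expand $F - \tilde G\tilde H$ and, writing $\beta_0 := SG + TH - 1$ and substituting $F - GH = \alpha$, $SG + TH = 1 + \beta_0$ and $S\alpha = QH + R$, to derive the clean identity
\[
F - \tilde G\tilde H \;=\; -\alpha\,\beta_0 \,-\, \alpha\,R\,(T + Q),
\]
each summand of $v$-valuation at least $v(F) + 2n$ (using in particular $v(R T) \geq n$ and $v(\alpha)\geq v(F)+n$). For the second half I would first compute
\[
\beta \;:=\; S\tilde G + T\tilde H - 1 \;=\; \beta_0 + S\alpha T + SQG + TR,
\]
each summand of valuation at least $n$, hence $v(\beta) \geq n$. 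A second algebraic manipulation, using the division $S\beta = A\tilde H + B$ and the updates $\tilde S = S - B$ and $\tilde T = T - \beta T - A\tilde G$, then yields the remarkable identity
\[
\tilde S\tilde G + \tilde T\tilde H - 1 \;=\; \beta\bigl(1 - T\tilde H - S\tilde G\bigr) \;=\; -\beta^{2},
\]
whose $v$-valuation is at least $2n$. The degree constraints on $\tilde S, \tilde T$ are inherited directly from the two QuoRem calls.

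The main obstacle I expect is not the algebra, which becomes routine once the identities above are grouped correctly, but rather verifying that Lemma~\ref{lem:quorem} genuinely applies at both euclidean divisions: concretely, checking that after one step $\tilde H$ still satisfies $v(\tilde H) = \deg_\psi(\tilde H)\,v(\psi)$, so that the lemma can be reused. This should hold because $\tilde H - H$ has strictly larger $v$-valuation than the leading $\psi^{b}$-term of $H$, so both the monic leading $\psi$-term and the $v$-valuation are preserved under the lifting. Once this compatibility is in place, every valuation estimate in the proof is a direct consequence of the two algebraic identities and of Lemma~\ref{lem:quorem}.
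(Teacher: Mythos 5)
Your proposal follows the same route as the paper: expand the updated products algebraically and bound every term via Lemma~\ref{lem:quorem}. Two remarks on the algebra. First, your ``clean identity'' for the factorization half is not quite what the expansion gives: substituting $S\alpha=QH+R$ into $F-\Gt\Ht$ yields
\[
F-\Gt\Ht=-\alpha\,\beta_0-R\,(\alpha T+QG)=-\alpha\,\beta_0-R\,(\Gt-G),
\]
whereas you wrote $-\alpha\beta_0-\alpha R(T+Q)$, i.e.\ the term $QGR$ has been replaced by $\alpha RQ$; these differ by $QR(\alpha-G)$, which is nonzero in general. The slip is harmless for the conclusion, since $\val[](QGR)\ge n+\val[](G)+\val[](H)+n=\val[](F)+2n$ just as your (incorrect) term would, but as stated the identity is false and should be corrected. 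Second, your identity $\St\Gt+\Tt\Ht-1=\beta(1-S\Gt-T\Ht)=-\beta^2$ for the B\'ezout half is correct, and in fact cleaner than the grouping printed in the paper. Your observation that Lemma~\ref{lem:quorem} must be re-verified for the division by $\Ht$ is well taken and correctly resolved: $\deg(R)<\deg(H)$ and $\val[](R)\ge\val[](H)+n$ keep $\Ht$ monic in $\psi$ with $\val[](\Ht)=\val[](H)$. The only output conditions you leave implicit are $\val[](\St)=-\val[](G)$ and $\val[](\Tt)=-\val[](H)$, which follow, as in the paper, from $\val[](B)>\val[](S)$ and $\val[](\beta T+A\Gt)>\val[](T)$.
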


\begin{proof}
  We have
  $F-\Gt\,\Ht=(1-S\,G+T\,H)\,\alpha-S\,T\,\alpha^2-(S\,G-T\,H)\,Q\,\alpha+G\,H\,Q^2$.
  By assumption, we have $\val[](\alpha)\geq\val[](F)+n$,
  $\val[](S\,T)=-\val[](F)$, and by Lemma \ref{lem:quorem},
  $\val[](Q)\geq n$. This shows
  $\val[](F-\Gt\,\Ht)>\val[](F)+2\,n$. Similarly,
  $\val[](\Gt-G)=\val[](T\,\alpha+Q\,G)\geq n+\val[](G)$ and
  $\val[](\Ht-H)=\val[](R)\geq n+\val[](H)$. As for monicity of $\Ht$, it is
  a obvious as $\deg(R)<\deg(H)$.

  To conclude, as
  $\St\,\Gt+\Tt\,\Ht-1=\beta\,((\Gt-G)\,S+(\Ht-H)\,T-\beta)$,
  $\val[](\beta)\geq n$ (assumption), $\val[]((\Gt-G)\,S)>n$ and
  $\val[]((\Ht-H)\,T)>n$ (see above), we get
  $\val[](\St\,\Gt+\Tt\,\Ht-1)>2\,n$. As $\val[](B)>\val[](S)$ and
  $\val[](\beta\,T-A\,\Gt)>\val[](T)$, we obsiously have
  $\val[](\St)=-\val[](G)$ and $\val[](\Tt)=-\val[](H)$. Condition
  $\deg(\St)<\deg(\Ht)$ is obvious as $\deg(B)<\deg(\Ht)$. 
\end{proof}

To conclude this paragraph, let us illustrate how to get the initial
$G$, $H$, $S$ and $T$ with $H$ monic on an example.

\begin{xmp}
  \label{xmp:fact-init}
  Consider $F=\psi^3+y^2\,x^3\,\psi+x^6\,y\in\Qi[[x]][y]$ with
  $\psi=y^3-x^2$ and the associated valuation
  $V_1=(3,2,6)$. Considering the lower edge $((1,1),(3,0))$, we get
  $V=(6,4,13)$. Then, we can initialise $G$ and $H$ as respectively
  $\psi^2+y^2\,x^3$ and $\psi$, so that
  $\val[](F-G\,H)=\val[](x^6\,y)=40>39=\val[](F)$. We then use the
  extended euclidean algorithm over $\Qi[Z]$, getting $s=1$ and $t=-Z$
  such that $s\,(Z^2+1)+t\,Z=1$. Then, we can multiply $s$ and $t$ by
  a monomial of valuation $26$, so that $\val[](S)=-\val[](G)$ and
  $\val[](T)=-=\val[](H)$, getting $S=x^{-5}\,y$ and
  $T=-x^{-5}\,y\,\psi$. They indeed satisfy
  $\val[](S\,G+T\,H-1)=\val[](x^{-2}\psi)=1>0$.
\end{xmp}

Finally, not that finding the monomial $x^{-5}\,y$ in the above
example is always possible (see e.g. \cite[Lemma 4.23, page
24]{Ru14}).

Such a factorization done, we can recursively apply our main algorithm
on each factor, factorising again if needed, until we get the full
factorization. This provides an algorithm with complexity
$\Ot(n\,\rho\,\dy)$ to get the $\rho$ factors separable from precision
$n$, improving the bound $\Ot(n\,\dy^2)$ of \cite[Proposition
7]{PoWe17}. Nevertheless, this will not improve the overall complexity
of \cite[Section 7]{PoWe17} (we might need to take $n=\vF$), and the
divide and conquer strategy therein will still have to be used.
Details concerning this algorithm will be presented in a forthcoming
paper.

\paragraph{Working over $\Ai[y]$ and small characteristic.} The
algorithm using approximate roots described in our paper can be
adapted to the case where for instance $F\in\Qi_p[y]$, using $\val[p]$
instead of $\val$ as initial valuation. In such a context, we would
not define the valuations $v_k$ as in Section \ref{ssec:phi-main}, but
as augmented valuations (see \cite{Ma36a,Ma36b,Ru14} or
\cite{Mo99,GuMoNa12}). They would however be computed exactly as
described in Section \ref{subsec:resume} or - equivalently - as in
Definition \ref{def:alternative} of Section
\ref{sec:absolute}. This strategy improves
the computation of \emph{optimal representatives of types}
\cite[Section 3]{GuMoNa11}: the computed approximate root is actually
always optimal (in the sense they described in \cite{GuMoNa11}), and
we do not need the \emph{refinment process} anymore.

However, several points remain to be investigated: is there any need
to use some ``correcting terms'' as we are doing here with the
morphisms $\lambda_k$ ? And how to deal with the case where the
characteristic of the field divides $\dy$ (or more generally is less
than $\dy$ when considering the factorization algorithm described
above), in which case Proposition \ref{prop:approx-root} does not make
sense anymore. These points are being studied by the authors at the
time of the writing, and will be the topic of a further paper.

\paragraph{Computing Puiseux series using approximate roots ?}
As mentioned in Remark \ref{rmk:Puiseux}, our strategy does not
compute all terms of the Puiseux series of the input
polynomial. However, there might be ways to compute them.  We here
comment a few special cases. First note that the coefficients
corresponding to integer exponents are given by the $\dy$-th
approximate root. Then, if $N_1=\dy/2$, we then have
$\psi_1=\psi_0^2+X^{m_1}\,S_1(X)^2$, so that $S_1(X)$ can be computed
via quadratic Newton iteration (this provides all coefficients
corresponding to exponents with denominator $2$). When $q_1>2$, one
can probably compute additional coefficients of the Puiseux series by
solving a linear system. For instance, the case $q_1=3$ can be dealt
with as follows: if
$S_1(x)=x^{\frac 1 3}\,P_1(x)+x^{\frac 2 3}\,P_2(x)$, then we have
$\psi_1=\psi_0^3-3\,x\,P_1\,P_2\,\psi_0-x\,P_1^3-x^2\,P_2^3$, defining
the linear system to solve.


\bibliographystyle{abbrv} {\bibliography{tout}}
\addcontentsline{toc}{section}{References.}

\end{document}